\theoremstyle{plain}
\newtheorem{theorem}{Theorem}
\newtheorem{lemma}{Lemma}
\newtheorem{cor}[lemma]{Corollary}
\newtheorem{prop}[theorem]{Proposition}
\numberwithin{equation}{section}
\theoremstyle{definition}
\renewcommand{\geq}{\geqslant}
\renewcommand{\leq}{\leqslant}
\def\exmid{\,\|\,}
\def\bseps{{\boldsymbol\epsilon}}
\def\sym{\mathop{\mathrm{sym}}}
\DeclareMathOperator{\SL}{SL}
\def\sumstar{\mathop{\sum\nolimits^{\ast}}}
\def\ord{\mathop{\textnormal{ord}}\nolimits}
\DeclareRobustCommand\widecheck[1]{{\mathpalette\@widecheck{#1}}}
\def\@widecheck#1#2{%
    \setbox\z@\hbox{\m@th$#1#2$}%
    \setbox\tw@\hbox{\m@th$#1%
       \widehat{%
          \vrule\@width\z@\@height\ht\z@
          \vrule\@height\z@\@width\wd\z@}$}%
    \dp\tw@-\ht\z@
    \@tempdima\ht\z@ \advance\@tempdima2\ht\tw@ \divide\@tempdima\thr@@
    \setbox\tw@\hbox{%
       \raise\@tempdima\hbox{\scalebox{1}[-1]{\lower\@tempdima\box
\tw@}}}%
    {\ooalign{\box\tw@ \cr \box\z@}}}
\begin{document}

\author{Valentin Blomer}
\author{Djordje Mili\'cevi\'c}
\address{Mathematisches Institut, Bunsenstr.~3-5, D-37073 G\"ottingen, Germany} \email{blomer@uni-math.gwdg.de}
\address{Bryn Mawr College, Department of Mathematics, 101 North Merion Avenue, Bryn Mawr, PA 19010, U.S.A.}\email{dmilicevic@brynmawr.edu}

\title{The second moment of twisted modular $L$-functions}
 
\thanks{The first author acknowledges the support by the Volkswagen Foundation and a Starting Grant of the European Research Council. The second author acknowledges the support by the National Security Agency. Project is sponsored by the NSA under Grant Number H98230-14-1-0139. The United States Government is authorized to reproduce and distribute reprints notwithstanding any copyright notation herein.}

\keywords{Asymptotic formula, $L$-functions, character twists, summation formulae, $p$-adic methods}

\begin{abstract}  We prove an asymptotic formula with a power saving error term for the (pure or mixed) second moment \[ \underset{\chi \bmod{q}}{\left. \sum \right.^{\ast}} L(1/2, f_1 \otimes \chi) \overline{L(1/2, f_2 \otimes \chi)} \] of central values of $L$-functions of any two (possibly equal) fixed   cusp forms $f_1, f_2$ twisted by all primitive characters modulo $q$,  valid for all sufficiently factorable $q$ including $99{.}9\%$ of all admissible moduli. The two key ingredients are a careful spectral analysis of a potentially highly unbalanced shifted convolution problem in Hecke eigenvalues and power-saving bounds for sums of products of Kloosterman sums where the length of the sum is below the square-root threshold of the modulus. Applications are given to simultaneous non-vanishing and lower bounds on higher moments of twisted $L$-functions.
\end{abstract}

\subjclass[2010]{Primary 11F66; Secondary 11L07, 11F72}

\setcounter{tocdepth}{2}  \maketitle 

\section{Introduction} 

\subsection{The main result}

Most $L$-functions come in families, and often their moments encode some deep properties about the family. The complexity of an $L$-function is measured by its analytic conductor $\mathcal{C}$ (which is typically essentially constant within a family $\mathcal{F}$), and  a measure for the  complexity of a moment calculation is the ratio $r = \log \mathcal{C}/\log |\mathcal{F}|$  (the family may not be discrete in which case an obvious modification is necessary). The edge of current technology where one can hope to obtain an asymptotic formula with a \emph{power saving error term} is $r = 4$. The stock of asymptotic formulas of this kind, however, is very small, and experience has shown that quite often in the case $ r = 4$ the current methods of analytic number theory fail ``by an $\varepsilon$''; if they don't, then typically some very deep input is required.

\bigskip

The most classical example is the fourth moment of the Riemann zeta-function, where one has the asymptotic formula
\begin{equation}\label{zeta}
  \int_0^T |\zeta(1/2 + it)|^4 dt = T P_4(\log T) + {\rm O}(T^{2/3+\varepsilon})
\end{equation}
for a certain polynomial $P_4$ (see \cite{Za, IvMo, Mot}), which is one of the prime applications of the Kuznetsov formula. This formula can be seen as the second moment of the $L$-function attached to a (derivative of an) Eisenstein series, and the corresponding cuspidal analogue, proved by Good \cite{Go}, states that
\begin{equation}\label{good}
  \int_0^T |L(1/2 + it, f)|^2 dt = T P_1(\log T) + {\rm O}(T^{2/3+\varepsilon})
\end{equation}
for a certain polynomial $P_1$ depending on the holomorphic Hecke cusp form $f$. In addition to spectral ana\-ly\-sis of automorphic forms, this result also required an optimal bound for the decay rate of triple~products. 

Other results on  moments with power saving error terms in the case $r=4$ have been established by Kowalski-Michel-VanderKam \cite{KMV1}, Iwaniec-Sarnak \cite{IS}, Blomer \cite{Bl2}, and with a slightly broader interpretation of the notion of a ``moment'' by Li \cite{Li} and  Khan \cite{Kh}.

\bigskip

 
From an adelic point of view, it is natural to 
replace the archimedean twist by $|\det|^{it}$ with a non-archimedean twist by a Dirichlet character $\chi$, and to consider the moments 
 \begin{equation}
 \label{TwistedMoments}
   ({\rm A}) \quad   \sumstar_{\chi \bmod{q}}  |L(1/2,   \chi)|^4 \qquad \text{and} \qquad  ({\rm B}) \quad    \sumstar_{\chi \bmod{q}}  |L(1/2, f \otimes \chi)|^2,
 \end{equation}
 where the sum runs over all primitive Dirichlet characters $\chi$ modulo $q$ and $f$ is a fixed Hecke cusp form in the second sum. Equally interesting and related in spirit are the  moments over quadratic characters only:
  \begin{displaymath}
    ({\rm C}) \quad \underset{\substack{d \leq X\\ d \text{ squarefree}}}{\left.\sum\right.^{\ast}}   |L(1/2,   \chi_{d})|^4 \qquad \text{and} \qquad   ({\rm D}) \quad   \underset{\substack{d \leq X\\ d \text{ squarefree}}}{\left.\sum\right.^{\ast}}  |L(1/2, f \otimes \chi_{d})|^2.
 \end{displaymath}

It was a major breakthrough when M.\ Young \cite{Y} established an asymptotic formula with power saving for (A) for  prime numbers $q$:
\begin{equation}\label{mattyoung}
 \sumstar_{\chi \bmod{q}}  |L(1/2,   \chi)|^4 =  q \sum_{i=1}^4 c_i (\log q)^i+ {\rm O}\left(q^{1-\frac{1}{80} + \frac{\theta}{40} +  \varepsilon}\right),
\end{equation}
where    $c_i$ are effectively computable constants and $\theta \leq 7/64$ is an admissible exponent towards the Ramanujan-Petersson conjecture. 

 The  (harder) cases (B), (C$ $) and (D) have remained unsolved up until now. This is perhaps a bit surprising, but it is important to notice that all 4 moments (A)--(D) single out the point 1/2, and therefore carry some \emph{intrinsic arithmetic information}. This is in contrast to the true adelic analogues of \eqref{zeta} and  \eqref{good} (with a test function expanding in the non-archimedean direction), which are
\begin{equation}\label{analogues}
\int_{-\infty}^{\infty}  \sumstar_{\chi \bmod{q}}  |\Lambda(1/2 + it,   \chi)|^4dt \qquad \text{and} \qquad    \int_{-\infty}^{\infty}  \sumstar_{\chi \bmod{q}}  |\Lambda(1/2+it, f \otimes \chi)|^2dt,
\end{equation}
where $\Lambda$ denotes the completed $L$-function. It is an interesting phenomenon that, comparing \eqref{analogues} to \eqref{TwistedMoments}, an additional average of essentially bounded length  in the $t$-aspect makes the problem incomparably easier, and indeed good asymptotic formulas for both quantities in \eqref{analogues} are fairly routine.

\bigskip

In this paper we couple spectral theory of automorphic forms  with an algebro-arithmetic treatment of short sums of products of Kloosterman sums  to solve the case (B) for 99.9\% of all moduli $q$. Let 
\begin{equation}\label{psi}
  \psi(q) = \sum_{d \mid q} \phi(d)\mu\left(\frac{q}{d}\right)
  \end{equation}
denote the number of primitive characters modulo $q$. It is non-zero if and only if $q \not\equiv 2$ (mod 4), and in this case $\psi(q) = q^{1+o(1)}$. We call a modulus $q \not\equiv 2$ (mod 4) admissible.  

\begin{theorem}\label{mainthm}    For $j= 1, 2$, let $f_j$ be  (fixed)  holomorphic cuspidal newforms  of (even) weight $\kappa_j$   for the group ${\rm SL}_2(\mathbb{Z})$ with Hecke eigenvalues $\lambda_j(n)$, normalized as in \eqref{relation}. Assume that $\kappa_1 \equiv \kappa_2$ $(${\rm mod} $4)$. Let 
\begin{align}
\label{poly1}
  P(s) &= \left(\frac{L_q(s, \text{\rm sym}^2f_1)}{\zeta_q(2s)}\right)^{-1}  = \prod_{p \mid q} 
  \left(1 - \frac{\lambda_1(p^2)}{p^s} + \frac{\lambda_1(p^2)}{p^{2s}} - \frac{1}{p^{3s}}\right) \left(1 - \frac{1}{p^{2s}}\right)^{-1},\\
\label{poly2}
  Q(s) &= \left(\frac{L_q(s, f_1 \times f_2)}{\zeta_q(2s)}\right)^{-1}\\
  &= \prod_{p \mid q} 
  \left(1 - \frac{\lambda_1(p )\lambda_2(p )}{p^s} + \frac{\lambda_1(p^2 ) + \lambda_2(p^2 ) }{p^{2s}} - \frac{\lambda_1(p )\lambda_2(p )}{p^{3s}}  + \frac{1}{p^{4s}}\right) \left(1 - \frac{1}{p^{2s}}\right)^{-1}. \nonumber
\end{align}
Let $q \in \mathbb{N}$, and $q_1 \mid q$ be a divisor such that $(q, 6^{\infty}) \mid q_1$. Then,
    \begin{equation}\label{formula}
   \sumstar_{\chi \bmod{q}}  L(1/2, f_1 \otimes \chi)\overline{L(1/2, f_2 \otimes \chi)} = \frac{2}{\zeta(2)} \psi(q)\cdot M(f_1,f_2,q)+{\rm O}_{f_1, f_2 }\left(q^{1+\varepsilon} \Bigl(q_1^{-\frac{1}{22}} + (q/q_1^2)^{-\frac{1}{22}}\Bigr)\right), 
 \end{equation}
where
\[ M(f_1,f_2,q)=\begin{cases}
\displaystyle P(1) L(1, {\rm sym}^2 f_1) \left( \log q + c +\frac{P'(1)}{P(1)}   \right), &f_1=f_2,\\
Q(1) L(1, f_1 \times f_2), &f_1\neq f_2, \end{cases} \]
and $c$ is a constant depending only on $f_1$ (not on $q$) given explicitly as
 \begin{equation}\label{defc}
  c = \gamma - \frac{1}{2} \log(2\pi) + \frac{\Gamma'(\kappa_1/2)}{\Gamma(\kappa_1/2)} + \frac{L'(1, {\rm sym}^2 f_1)}{L(1, {\rm sym}^2 f_1)} - \frac{2\zeta'(2)}{\zeta(2)}.
\end{equation}   
\end{theorem}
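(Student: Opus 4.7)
The plan is to expand the product of $L$-values via approximate functional equations, invoke character orthogonality to extract the diagonal main term, and handle the off-diagonal via a shifted convolution attack that combines spectral automorphic analysis with algebro-arithmetic bounds on short products of Kloosterman sums. Concretely, writing
\[
  L(1/2, f_j \otimes \chi) = \sum_n \frac{\lambda_j(n)\chi(n)}{\sqrt{n}} V_j\!\left(\frac{n}{q}\right) + (\text{dual})
\]
(natural length $\sim q$, since the analytic conductor of the twist is $q^2$), the second moment becomes a weighted double sum over $m,n \ll q^{1+\varepsilon}$. Applying
\[
  \sumstar_{\chi \bmod q}\chi(m)\bar\chi(n) = \sum_{\substack{d\mid q \\ d\mid m-n}}\phi(d)\mu(q/d) \cdot \mathbf{1}_{(mn,q)=1},
\]
I would split off the diagonal contribution $m=n$. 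After interchanging sums and undoing the Mellin cut-off, this diagonal becomes an integral of the Rankin--Selberg Dirichlet series with local corrections at primes $p\mid q$ (encoded by $P(s), Q(s)$) evaluated near $s=1$; the main term $\frac{2}{\zeta(2)}\psi(q)M(f_1,f_2,q)$ then emerges by residue calculus from the double pole at $s=1$ of $L(s,\mathrm{sym}^2 f_1)/\zeta(2s)$ when $f_1=f_2$ (producing the $\log q + c$ factor with constant \eqref{defc}) or the simple pole of $L(s,f_1\times f_2)/\zeta(2s)$ when $f_1\neq f_2$. The parity hypothesis $\kappa_1\equiv \kappa_2\,(\mathrm{mod}\, 4)$ is what guarantees the AFE root numbers combine to make this main term nonzero rather than cancel.

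\textbf{Off-diagonal shifted convolution.} The remaining $m\neq n$ terms, organized by $d\mid q$ and writing $m-n=hd$ with $h\neq 0$, form smoothed shifted convolutions
\[
  \mathcal{D}(h,d;M,N) = \sum_{m-n=hd}\lambda_1(m)\overline{\lambda_2(n)}\, W\!\left(\frac{m}{M},\frac{n}{N}\right), \qquad M,N \ll q^{1+\varepsilon}.
\]
The shift $hd$ ranges over all divisors of $q$ times $h\ll q/d$, so the balance between shift size and summation length varies drastically---this is the ``potentially highly unbalanced'' regime flagged in the abstract. I would apply Voronoi summation (or equivalently the $\delta$-symbol) on both the $m$ and $n$ variables, transforming $\mathcal{D}$ into sums of products of Kloosterman sums to moduli dividing $q$, with dual variables of length roughly $\sqrt{q}$---precisely below the Weil-bound square-root threshold where termwise estimation already saturates. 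An equivalent viewpoint uses the Kuznetsov formula to expand $\mathcal{D}$ spectrally over the automorphic spectrum, bounded via the large sieve together with $\theta\leq 7/64$; the unbalanced character means no single approach is uniformly optimal and one must interpolate between ranges.

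\textbf{Exploiting the factorization of $q$; main obstacle.} The decisive ingredient is a power-saving bound on these subthreshold Kloosterman sum products, and this is what forces the factorization hypothesis $q=q_1\cdot(q/q_1)$ with $(q,6^\infty)\mid q_1$. Splitting each Kloosterman sum by CRT, the factor $q_1$---which absorbs all small or bad primes where algebro-geometric tools in characteristic $2,3$ are delicate---is attacked by $p$-adic stationary phase methods and yields savings on the order of $q_1^{-1/22}$, whereas the squarefree, coprime-to-$6$ complement $q/q_1$ is treated by the $\ell$-adic sheaf-theoretic machinery for traces of products of Kloosterman sheaves, furnishing $(q/q_1^2)^{-1/22}$. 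Both bounds break below the trivial square-root barrier, which is exactly what \eqref{formula} needs, and the optimum $q_1\asymp q^{1/3}$ delivers a full power saving of shape $q^{1-1/66+\varepsilon}$ for $99.9\%$ of admissible moduli. The main obstacle is precisely this subthreshold Kloosterman cancellation in the unbalanced regime; the surrounding analysis---AFE, orthogonality, diagonal residue computation, and Voronoi or Kuznetsov manipulations---is routine in spirit, though the bookkeeping of local factors at primes $p\mid q$ and the interpolation between balanced and unbalanced ranges both demand care.
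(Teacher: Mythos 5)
Your skeleton---approximate functional equation for the product, orthogonality giving $\sum_{d\mid q}\phi(d)\mu(q/d)$ and the congruence $m\equiv n\pmod d$, diagonal main term by residue calculus from $L(s,\mathrm{sym}^2 f_1)/\zeta(2s)$ or $L(s,f_1\times f_2)/\zeta(2s)$, and a two-pronged off-diagonal attack (spectral/Kuznetsov for $m,n$ balanced, Voronoi-plus-Kloosterman for $m,n$ unbalanced) with the extra arithmetic input being sub-square-root cancellation in short sums of $S(m,n_1,q)S(m,n_2,q)$---is exactly right, and you correctly identify the central obstruction and the optimal $q_1\asymp q^{1/3}$ exponent.

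Two substantive inaccuracies, though. First, the role of $q_1$: it is \emph{not} split off by CRT with a different estimation technique applied to each of $q_1$ and $q/q_1$. The actual mechanism (Theorem~\ref{klooster-short}) is a $q$-analogue of Weyl differencing applied to the \emph{length-$M$ incomplete sum}: one factors $r=r_1r_2$ with $(r_1,6r_2)=1$, chooses a differencing parameter $Hr_2$ built from $s=(r,q_1)$, differences to strip off the $r_2$-periodic factor and shorten the effective period, then completes and Poisson-summates to reduce to complete correlation sums \eqref{CompleteSumToBeEstimated} modulo $r_1$. \emph{Both} the sheaf-theoretic input (independence of Kloosterman sheaves, for the squarefree prime factors of $r_1$) and the $p$-adic stationary-phase analysis (for the squareful prime factors of $r_1$, where singular critical points genuinely occur) are applied to $r_1$, regardless of the factorization $q = q_1\cdot(q/q_1)$; the divisor $q_1$ only controls the size of the differencing step. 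The shapes $q_1^{-1/22}$ and $(q/q_1^2)^{-1/22}$ then come out of the optimization between ``$q_1$ small'' and ``$q_1$ large'' cases (Section~\ref{opti}), not from CRT.

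Second, a genuine gap: you propose bounding the spectral expansion ``via the large sieve together with $\theta\leq 7/64$,'' but that route only reaches $N\leq q^{3/2-\theta-\delta}$, which even after the improved Kloosterman bound would leave an interval uncovered. The paper's first step is to apply H\"older's inequality to the spectral sum with exponents $(1/4,1/4,1/2)$, which requires a flexible large sieve (Theorem~\ref{avoid}) tolerating divisibility constraints on the summation variable, and in turn requires an explicit Gram--Schmidt orthonormalization of $\{f(dz): d\mid\ell\}$ for non-squarefree $\ell$ (Lemma~\ref{basis}). This removes the $\theta$-dependence entirely and is \emph{necessary}: without it the two ranges still fail to overlap for the current $\theta$, so the Kloosterman bound alone, however strong, would not close the proof.
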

 
Note that $P(1),Q(1)=(\log\log q)^{\text{O}(1)}$ and $P'(1)/P(1)=\text{O}(\log\log q)$, and that the leading coefficients $L(1,\textrm{sym}^2f_1)$ and $L(1,f_1\times f_2)$ do not vanish by the lower bounds of Hoffstein and Lockhart \cite{HL} and Ramakrishnan and Wang~\cite{RW} (see also \cite{Br}), so that the term $M(f_1,f_2,q)$ is not far from a linear polynomial in $\log q$ or a constant depending on $f_1$ and $f_2$.

The error term in Theorem~\ref{mainthm} saves a power of $q$ as soon as $q$ has a divisor $q_1$ in the range
\[ q^{\eta} \ll q_1 \ll q^{1/2 - \eta} \]
for some fixed $\eta > 0$ and if in addition $2^{100} \nmid q$ and $3^{100} \nmid q$ (say) holds. We thus obtain a power saving for  $99.9\%$   of all admissible moduli $q$. In fact, it is not hard to see that these conditions are satisfied for all $q$ except those that are highly divisible by 2 or 3 or are essentially a prime or the product of two primes of almost equal size, that is, those $q$ for which there is a prime $p\geqslant q^{1-\eta}$ with $p\mid q$ or primes $p_1,p_2\geqslant q^{1/2-\eta}$ with $p_1p_2\mid q$. We get the highest savings if $q$ has a divisor of size $q_1\asymp q^{1/3+o(1)}$, for example when $q=p^n$   is a high power of a fixed prime $p > 3$ or when $q$ is essentially a cube, in which case our error term is ${\rm O}(q^{65/66 + \varepsilon})$.

The condition that $(q,6^{\infty})\mid q_1$ is introduced for purely technical and notational reasons; it can be avoided without introducing any new ideas at the cost of increasing the length of the already rather long paper. In Theorem~\ref{mainthm} and all theorems below, the condition that $\kappa_1\equiv\kappa_2$ (mod 4) is necessary in the sense that otherwise the product of the central values vanishes for root number reasons.

\bigskip

Our method works for fixed Maa{\ss} forms $f_1$, $f_2$, assuming that they satisfy the Ramanujan conjecture, which we use crucially in the course of the argument. In Section \ref{mass}, we state the small modifications needed to prove the following result.

\begin{theorem}\label{variant}    For $j= 1, 2$, let $f_j$ be  (fixed)   cuspidal Maa{\ss} newforms  of the same parity   for the group ${\rm SL}_2(\mathbb{Z})$ with   Hecke eigenvalues $\lambda_j(n)$. If $f_1, f_2$ satisfy the Ramanujan conjecture, i.e.\ if $\lambda_j(n) \ll n^{\varepsilon}$ for all $n \in \mathbb{N}$, then \eqref{formula} holds. 
\end{theorem}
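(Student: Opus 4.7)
The plan is to run the proof of Theorem \ref{mainthm} essentially verbatim, replacing the handful of ingredients specific to holomorphic forms by their Maass-form analogues under the Ramanujan assumption. The overall architecture---approximate functional equation, orthogonality of Dirichlet characters modulo $q$ reducing the second moment to a shifted convolution sum $\sum \lambda_1(m)\lambda_2(n)$ weighted by additive characters in residue classes, spectral decomposition of that sum via Kuznetsov, and the algebro-geometric treatment of short sums of products of Kloosterman sums---is blind to whether $f_j$ is holomorphic or Maass, since it uses only the Hecke multiplicativity of $\lambda_j(n)$, the analytic continuation and functional equation of $L(s,f_1\times f_2)$ and $L(s,\text{\rm sym}^2 f_j)$, and the Weil bound.

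The first modification is to the archimedean local factor in the approximate functional equation: the holomorphic gamma factor $\Gamma_{\CC}(s+(\kappa_j-1)/2)$ is replaced by the Maass factor $\Gamma_{\RR}(s+\mu_j)\Gamma_{\RR}(s+1-\mu_j)$ (with the appropriate parity twist), where $1/4+\mu_j^2$ is the Laplace eigenvalue of $f_j$. The resulting smooth weights $V_j$ still have rapid decay and polynomial growth in all derivatives, and their Mellin transforms have the same behavior on vertical lines. Thus the contour-shift extraction of the main term, the computation of the residue at $s=1$ yielding $P(1)$ or $Q(1)$, and the subsequent manipulation of the off-diagonal proceed identically; the shape of $M(f_1,f_2,q)$ is unchanged since the archimedean factors are absorbed into the normalization of the symmetric square and Rankin--Selberg $L$-values.

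The second modification is that every inequality that in the holomorphic case came from Deligne's bound $|\lambda_j(n)|\leq d(n)$ is now drawn from the hypothesized Ramanujan bound $\lambda_j(n)\ll_{f_j} n^{\eps}$. These estimates enter when bounding individual Hecke eigenvalues inside the shifted convolution sum prior to spectral expansion, when treating the diagonal terms, and when truncating Dirichlet-series tails and dyadic pieces in the large-sieve-type arguments. In each occurrence the extra $n^{\eps}$ is absorbed into the existing $q^{\eps}$ factors and no exponent changes.

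The chief (and mild) obstacle is the bookkeeping of the archimedean test functions entering the Kuznetsov transform: the Bessel kernels feeding the stationary-phase and smooth-cutoff analyses differ from the holomorphic case, so one must check that the same decay and uniformity properties hold. Since only the fixed spectral parameters $\mu_1,\mu_2$ of $f_1,f_2$ intervene here (the spectral basis over which one decomposes is independent of $f_j$), this amounts to routine verifications with standard estimates on Bessel transforms. No new ideas are required beyond these adjustments, and the final error term in \eqref{formula} is unchanged.
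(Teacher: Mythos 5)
Your overall strategy — rerun the proof of Theorem~\ref{mainthm} with Maass replacements, using the hypothesized Ramanujan bound wherever Deligne's bound was used — is precisely what the paper does in Section~\ref{mass}. But your inventory of the required modifications is too coarse, and the phrase ``the subsequent manipulation of the off-diagonal proceeds identically'' is wrong: the Maass case introduces genuine \emph{structural} changes, not just changes of test function, and a few of them would trip up an unprepared implementation.

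First, the archimedean gamma factors in the approximate functional equation now depend on the parity of~$\chi$, so the sum over primitive characters must be split into odd and even characters before applying orthogonality. Since the root number of $L(s,f_1\otimes\chi)\overline{L(s,f_2\otimes\chi)}$ is the product of the signs of $f_1$ and $f_2$ (independent of $\chi$), this split produces the congruence $n\equiv\pm m\pmod d$, not $n\equiv m\pmod d$. Consequently the shifted convolution sum becomes $\sum_{\ell_1 n\mp\ell_2 m=h}$ as in~\eqref{defDnew}, and the sign has to be tracked through the circle method, the Voronoi steps, and the Kuznetsov formula. This is more than bookkeeping of Bessel transforms.

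Second, the Maass-form Voronoi formula (Lemma~\ref{vor1}) has two terms, with kernels $\mathcal{J}^{\pm}_{2it}$ and \emph{opposite signs} $e(\mp\bar{b}n/c)$ in the additive character, unlike the single-term holomorphic formula. Combined with the $\pm$ from the first point, after the two Voronoi applications one is led to six distinct configurations of Kloosterman sums (the paper's Cases I--VI). The $\mathcal{J}^{-}$ (Bessel-$K$) contributions come with much stronger size restrictions~\eqref{sizes1} thanks to exponential decay, and Cases III, V, VI end up being treated via the opposite-sign Kuznetsov formula; Case IV is handled like Case I modulo signs. None of this appears in your sketch. Relatedly, the second argument $w$ of $W^{\ast}(z,w)$ can now be \emph{negative}, which is why the paper has a separate remark after Corollary~\ref{cor8} preparing a cut-off $W_0(z,w)$ supported on $4|w|\leq z$; Lemma~\ref{analyzeW} as stated does not apply without that extension. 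Finally, the paper also quietly assumes the Selberg eigenvalue conjecture ($t_{f_j}\in\mathbb{R}$) in this section for convenience, which is a hypothesis beyond Ramanujan on the Hecke eigenvalues that you do not flag. None of these points require new ideas, and the paper itself calls them ``small modifications,'' but your proposal would need to name them concretely to count as a proof.
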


The first result in the direction of Theorems \ref{mainthm} and \ref{variant} in the case $f_1=f_2$ is due to Stefanicki \cite{St}, who proved an asymptotic formula for the second moment with an error term that saves a small power of $\log q$, provided $q$ has only few prime divisors. A formula with a $\log\log q$-saving was established by Gao-Khan-Ricotta \cite{GKR} for  almost all integers $q$. As either method saves less than a factor of $\log q$ in the error term, this type of argument cannot produce an asymptotic formula in the case $f_1 \not= f_2$, regardless of the factorization of $q$. An individual asymptotic formula with a power saving error term, and in case $f_1\not= f_2$ an asymptotic formula with \emph{any} saving in the error term, that would be valid for \textit{any} infinite subset of moduli $q$ has been a long-standing open problem until now. Theorems \ref{mainthm}  and \ref{variant} cover, in a weak sense, almost all moduli.

Theorems~\ref{mainthm} and \ref{variant} are concerned with the family of character twists to an individual modulus $q$. If an additional average over moduli $q$ is introduced, the problem becomes easier, and indeed such versions of (B) are available due to Akbary \cite{Ak} and, with a considerably shorter average, to Hoffstein and Lee \cite{HL}.
   
\subsection{Selected applications}   

In addition to providing statistics in families of $L$-functions, asymptotic formulas with a power saving are an essential prerequisite to the analytic techniques of amplification, mollification, and resonators in questions of arithmetic importance, including upper bounds, nonvanishing, and extreme values. The allowable length of the Dirichlet polynomial (such as the amplifier), and thus the quality of arithmetic implications, is related to the strength of the power saving in the summation formula. Several such applications of Theorems~\ref{mainthm} and \ref{variant} are featured here, beginning with the nonvanishing problem.

\bigskip

Combining Theorem~\ref{variant} with a mollifier, one can improve the work of Stefanicki \cite{St} 
to show that (for Maa{\ss} forms satisfying the Ramanujan conjecture) a  \emph{positive proportion} of $L$-functions with twists by primitive Dirichlet characters modulo $q$ does not vanish at the central point, provided that $q$ has a divisor in a suitable range. A non-vanishing result of positive proportion strength had been out of reach so far in this family.

We highlight a different application of Theorem \ref{variant} to \emph{simultaneous} non-vanishing of twisted $L$-functions, as follows:
\begin{theorem}\label{non-vanish} Let $f_1, f_2$ be two  (fixed) cuspidal  Maa{\ss} newforms of the same parity for ${\rm SL}_2(\mathbb{Z})$ that satisfy the Ramanujan conjecture, and let $\eta>0$. Then, for every sufficiently large modulus $q\geqslant C=C(f_1,f_2,\eta)$ such that $q  \not\equiv 2\pmod 4$ and $q$ has a divisor $q_1 \in [q^{\eta}, q^{1/2 - \eta}]$ such that $(q, 6^{\infty}) \mid q_1$, there exist  primitive Dirichlet characters $\chi$ modulo $q$ such that 
\[ L(1/2, f_1 \otimes \chi) \overline{L(1/2, f_2 \otimes \chi)} \not= 0, \]
and, in fact, the number of such characters is at least $q^{1/4 - \varepsilon}$. 
\end{theorem}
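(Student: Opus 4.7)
The plan is to derive Theorem~\ref{non-vanish} from the mixed second-moment asymptotic of Theorem~\ref{variant} by a soft pigeonhole argument powered by the convexity bound for twisted $L$-functions. Since the target $q^{1/4-\varepsilon}$ is well short of a positive-proportion statement, no auxiliary mollifier or higher-moment input is required; only the fact that the asymptotic formula yields a main term, together with the trivial convexity estimate on each individual term, is needed.

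First, I would apply Theorem~\ref{variant} with the divisor $q_1 \in [q^{\eta}, q^{1/2-\eta}]$ supplied by the hypothesis. Since $(q,6^\infty)\mid q_1$, both error contributions $q_1^{-1/22}$ and $(q/q_1^2)^{-1/22}$ are bounded by $q^{-\eta/22}$, so
\[ S := \sumstar_{\chi \bmod q} L(1/2, f_1 \otimes \chi) \overline{L(1/2, f_2 \otimes \chi)} = \frac{2}{\zeta(2)} \psi(q) \cdot M(f_1,f_2,q) + O\bigl(q^{1-\eta/22+\varepsilon}\bigr). \]
On admissible moduli $\psi(q) \asymp q$, and $M(f_1,f_2,q) \gg q^{-\varepsilon}$: its constant part is nonzero by the edge-of-strip results of Hoffstein--Lockhart \cite{HL} and Ramakrishnan--Wang \cite{RW}, the Euler products $P(1), Q(1)$ lie in $[(\log q)^{-O(1)}, (\log q)^{O(1)}]$ by Ramanujan's bound $|\lambda_j(p)| \leq 2$ applied factor by factor (recall $\omega(q)\ll \log q/\log\log q$), and in the diagonal case the bracket $\log q + c + P'(1)/P(1)$ is $\asymp \log q$. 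Hence for $q$ sufficiently large in terms of $f_1, f_2, \eta$,
\[ |S| \gg_{f_1, f_2} q^{1-\varepsilon}. \]

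Next, I would bound each term of $S$ individually. Since $f_1, f_2$ have trivial level, the analytic conductor of $f_j \otimes \chi$ is $\asymp q^2$, and convexity gives $|L(1/2, f_j \otimes \chi)| \ll_{f_j} q^{1/4 + \varepsilon}$ for $j=1,2$. Let
\[ N = \#\bigl\{\chi \bmod q \text{ primitive} : L(1/2, f_1 \otimes \chi)\overline{L(1/2, f_2 \otimes \chi)} \neq 0\bigr\}. \]
The triangle inequality applied to $S$ then yields
\[ q^{1-\varepsilon} \ll |S| \leq N \cdot \max_\chi \bigl|L(1/2, f_1 \otimes \chi) L(1/2, f_2 \otimes \chi)\bigr| \ll N \cdot q^{1/2+\varepsilon}, \]
so $N \gg q^{1/2-\varepsilon}$, comfortably exceeding the claimed $q^{1/4-\varepsilon}$.

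There is no substantive technical obstacle here: the heavy lifting is done by Theorem~\ref{variant}, and the remaining verification is the quantitative lower bound $M(f_1, f_2, q) \gg q^{-\varepsilon}$, which reduces to the cited nonvanishing on $\mathrm{Re}(s)=1$ combined with Deligne's bound (respectively the assumed Ramanujan conjecture in the Maass case). Improving $N$ to a positive proportion of characters would instead require controlling the mollified fourth moment of $L(1/2, f_j \otimes \chi)$, which lies beyond the reach of the second-moment formula proved here; the threshold $q^{1/4-\varepsilon}$ stated in the theorem is chosen precisely so that bare convexity suffices.
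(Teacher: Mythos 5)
Your overall strategy -- extract a main term $\gg q^{1-\varepsilon}$ from the second-moment asymptotic of Theorem~\ref{variant} and divide by a pointwise upper bound for the individual products -- is exactly the paper's approach. However, the pointwise bound you invoke is wrong, and this error is not cosmetic: with the correct convexity bound the argument collapses.

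For a newform $f_j$ of level $1$ twisted by a primitive character $\chi$ modulo $q$, the $L$-function $L(s,f_j\otimes\chi)$ has degree $2$ and arithmetic conductor $q^2$, hence analytic conductor $\asymp q^2$. The convexity bound on the central line is therefore
\[ L(1/2, f_j\otimes\chi) \ll_{f_j,\varepsilon} (q^2)^{1/4+\varepsilon} = q^{1/2+\varepsilon}, \]
not $q^{1/4+\varepsilon}$ as you claim. (The exponent $1/4$ applies to the analytic conductor, not to $q$.) Consequently the product is bounded by $q^{1+\varepsilon}$, and your pigeonhole inequality $q^{1-\varepsilon}\ll N\cdot q^{1+\varepsilon}$ gives only $N\gg q^{-\varepsilon}$, which is vacuous. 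In particular the claim $N\gg q^{1/2-\varepsilon}$ is far too strong; it would in fact constitute a positive-proportion nonvanishing result, and as the paper explicitly notes, obtaining such a result requires a mollifier and is \emph{not} a consequence of the bare second moment.

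To recover the stated count $q^{1/4-\varepsilon}$ one must replace convexity by genuine subconvexity. This is exactly what the paper does: the sentence following Theorem~\ref{non-vanish} attributes the quantitative count to ``the best-known subconvexity results for twisted $L$-functions \cite{BH1}.'' The Burgess-type bound of Blomer--Harcos gives $L(1/2, f_j\otimes\chi)\ll q^{3/8+\theta/4+\varepsilon}$, and under the Ramanujan hypothesis (so $\theta=0$, which is available here by assumption) this is $q^{3/8+\varepsilon}$. The product of the two central values is then $\ll q^{3/4+\varepsilon}$, and the pigeonhole inequality yields $N\gg q^{1/4-\varepsilon}$, matching the statement precisely. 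Your treatment of the main term -- admissibility of the error from Theorem~\ref{variant}, $\psi(q)\asymp q$, the lower bound $M(f_1,f_2,q)\gg q^{-\varepsilon}$ via Hoffstein--Lockhart and Ramakrishnan--Wang -- is fine; only the pointwise estimate needs to be upgraded from convexity to subconvexity, and your closing remark that convexity suffices should be retracted.
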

  
Nonvanishing results for central values of $L$-functions of character twists have a long history, in particular in connection with cusp forms associated to elliptic curves, but also for general automorphic forms (on fairly general reductive groups). We cannot quote here all the relevant literature, but we would like to emphasize that the focus in Theorem \ref{non-vanish} is on the Maa{\ss} case, because in the holomorphic case one can establish extremely strong non-vanishing results by Galois-theoretic methods \cite{Ro, Ch}. In the Maa{\ss} case, however,   
Theorem \ref{non-vanish} is,  at least under the assumption of the Ramanujan conjecture,  the first instance  of any simultaneous non-vanishing result for general twists of  automorphic $L$-functions.  The quantitative version comes from the best-known subconvexity results for twisted $L$-functions \cite{BH1}.

\bigskip
  
As another application of the asymptotic formula in Theorem \ref{mainthm} --- and here the power saving is absolutely crucial --- one obtains a lower bound of the correct order of magnitude for $k^{\text{th}}$ moments of mixed products
\[ \sumstar_{\chi \bmod{q}}  \Bigl(L(1/2, f_1 \otimes \chi)  \overline{L(1/2, f_2 \otimes \chi)}\Bigr)^{k}, \]
following the method of Rudnick and Soundararajan \cite{RS, RS1}.  As an illustration we provide complete details for the following result.
\begin{theorem}\label{theorem-rudnicksound}
Let $p > 3$ be a fixed prime, and let $q = p^{\kappa}$ be large. Let $f_1, f_2$ be two fixed holomorphic cuspidal Hecke eigenforms of level 1 and respective weights $\kappa_1$, $\kappa_2$ with $\kappa_1 \equiv \kappa_2$ (mod 4). Then
\[  \sumstar_{\chi \bmod{q}}  \Bigl(L(1/2, f_1 \otimes \chi) \overline{L(1/2, f_2 \otimes \chi)}\Bigr)^2 \gg q (\log q)^{2}. \]
\end{theorem}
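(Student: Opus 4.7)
The plan is to apply the Rudnick--Soundararajan duality argument (\cite{RS, RS1}), using Theorem~\ref{mainthm} as the key input. Write $\Lambda(\chi) := L(1/2, f_1\otimes\chi)\overline{L(1/2, f_2\otimes\chi)}$, and introduce a short Dirichlet polynomial of length $N = q^{1/2-\delta}$ (for a small fixed $\delta>0$) approximating the complex conjugate $\overline{\Lambda(\chi)} = \overline{L(1/2,f_1\otimes\chi)}L(1/2,f_2\otimes\chi)$, namely
\begin{equation*}
M(\chi) := \sum_{k,\ell\geq 1}\frac{\lambda_1(k)\lambda_2(\ell)\bar\chi(k)\chi(\ell)}{\sqrt{k\ell}}\, W(k/N,\ell/N)
\end{equation*}
for a suitable smooth compactly supported weight $W$. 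The Cauchy--Schwarz inequality yields
\begin{equation*}
\Bigl|\sumstar_{\chi\bmod q}\Lambda(\chi)M(\chi)\Bigr|^2 \leq \sumstar_{\chi\bmod q}\bigl|\Lambda(\chi)\bigr|^2\cdot \sumstar_{\chi\bmod q}\bigl|M(\chi)\bigr|^2,
\end{equation*}
so the theorem will follow once one establishes the asymptotics $\sumstar_\chi|M(\chi)|^2 \asymp q(\log q)^2$ and $|\sumstar_\chi\Lambda(\chi)M(\chi)|^2 \asymp q^2(\log q)^4$, with matching leading constants.

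The mean square $\sumstar_\chi|M(\chi)|^2$ is elementary: opening the square and applying orthogonality of primitive characters forces the congruence $k_1\ell_2 \equiv k_2\ell_1\pmod q$, which, since $k_i\ell_j\leq N^2<q$, collapses to the equality $k_1\ell_2 = k_2\ell_1$. The resulting diagonal sum is a Rankin--Selberg-type expression, which a routine Mellin--Barnes contour shift past the double pole of $L(s, f_1\times f_1)L(s, f_2\times f_2)$ evaluates as $c_M\, q(\log q)^2\bigl(1+o(1)\bigr)$ for an explicit positive constant $c_M$.

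For the mollified first moment $\sumstar_\chi\Lambda(\chi)M(\chi)$, I would expand $\Lambda$ via two approximate functional equations for $L(1/2,f_j\otimes\chi)$ and combine with $M$. Character orthogonality produces a sum over congruences $n_1\ell \equiv n_2 k \pmod q$ (together with cross-terms weighted by Gauss-sum epsilon-factors) with $n_i\lesssim q^{1/2+\varepsilon}$ and $k,\ell\leq N$. The diagonal contribution $n_1\ell = n_2 k$ is another Rankin--Selberg sum of size $\asymp q(\log q)^2$, arranged so that its leading constant equals $c_M$ (this is the ``matching'' feature that makes Cauchy--Schwarz sharp at leading order). The remaining off-diagonal contributions, including those from the epsilon-factor cross-terms, are exactly the shifted-convolution and Kloosterman-sum inputs controlled in the proof of Theorem~\ref{mainthm}. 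Since $q = p^\kappa$ with $p > 3$, the condition $(q,6^\infty)\mid q_1$ is vacuous and one may take $q_1 = p^{\lfloor\kappa/3\rfloor}$, giving a saving of $q^{-1/66}$ per shift; summed over the $O(N^2) = O(q^{1-2\delta})$ relevant shifts, this keeps the total off-diagonal error at $O(q^{2-2\delta-1/66})$, which is $o(q(\log q)^2)$ once $\delta$ is chosen small enough.

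Combining these two asymptotic evaluations via the Cauchy--Schwarz inequality gives the desired lower bound. The main obstacle is the first-moment evaluation: one must push through the full diagonal extraction of $\sumstar_\chi\Lambda(\chi)M(\chi)$, including the epsilon-factor cross-terms, with enough precision to identify a positive leading constant that matches $c_M$, ensuring that the Cauchy--Schwarz bound is not lossy at leading order. The power-saving error in Theorem~\ref{mainthm} is indispensable here, because it is used to control the off-diagonal contributions by a quantity strictly smaller than the main term of size $q(\log q)^2$, with no room for a mere logarithmic saving.
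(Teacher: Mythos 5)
Your overall plan---the Rudnick--Soundararajan duality argument via Cauchy--Schwarz with a short Dirichlet polynomial---is exactly what the paper does (with $A(\chi)$ symmetrized by $\chi(a)\bar\chi(b)+\bar\chi(a)\chi(b)$ and a single-variable weight $V(ab/X)$ in place of your $W(k/N,\ell/N)$, which is an unimportant difference). However, there is a significant sign error in your discussion of the length parameter, and your off-diagonal estimate is not justified as stated.

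First, you set $N=q^{1/2-\delta}$ and conclude that the off-diagonal error $O(q^{2-2\delta-1/66})$ is $o(q(\log q)^2)$ ``once $\delta$ is chosen small enough.'' This is backwards: $q^{2-2\delta-1/66}=o(q)$ requires $2-2\delta-\tfrac{1}{66}<1$, that is $\delta>\tfrac{65}{132}\approx 0.49$, so $\delta$ must be taken \emph{close to} $1/2$, forcing $N$ to be a tiny power of $q$ (indeed $N\ll q^{1/132}$). This is consistent with the paper's choice $X=q^{1/1000}$. If you had literally taken $\delta$ small, $N$ would be near $q^{1/2}$ and the error would swamp the main term; the argument genuinely requires an extremely short polynomial.

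Second, your assertion that the off-diagonal contributions ``are exactly the shifted-convolution and Kloosterman-sum inputs controlled in the proof of Theorem~\ref{mainthm}'' and contribute $O(q^{1-1/66})$ per shift is not quite how the bound is obtained. One cannot simply cite Theorem~\ref{mainthm} for each pair $(k,\ell)$: after inserting the mollifier and applying orthogonality, the mollifier variables enter the shifted-convolution sums $\mathcal S(\ell_1,\ell_2,d,N,M)$ as \emph{levels} and enter the Kloosterman analysis through the twisting of the arguments, so the relevant inputs are Propositions~\ref{prop3} and \ref{bound1} (or Theorem~\ref{klooster-short}) applied directly to the averaged expression, with the weight $(a_2b_2)^{-1/2+\varepsilon}$ over the mollifier variables producing a factor $\asymp X$ rather than $\asymp X^2=N^2$ of ``shifts.'' Your accounting $O(N^2)\cdot O(q^{1-1/66})$ therefore doubles the mollifier cost. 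Finally, the claim $n_i\lesssim q^{1/2+\varepsilon}$ for the separate approximate functional equations is off: the conductor of $L(s,f_j\otimes\chi)$ is $\asymp q^2$, so each AFE has length $\asymp q^{1+\varepsilon}$; the paper avoids this bookkeeping altogether by using the joint approximate functional equation \eqref{afe1} (root number $+1$ since $\kappa_1\equiv\kappa_2\pmod 4$), which introduces no $\varepsilon$-factors.
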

We remark that with slightly more technical effort one can show by the same method the general lower bound
\[ \sumstar_{\chi \bmod{q}}  \Bigl(L(1/2, f_1 \otimes \chi)  \overline{L(1/2, f_2 \otimes \chi)}\Bigr)^{k} \gg q(\log q)^{k^2/2} \]
for any even integer $k \geq 2$, as well as similar results (up to a factor of $(\log q)^{-\varepsilon}$) for more general $q$, as in Theorem~\ref{mainthm}. Note that $L(1/2, f_1 \otimes \chi) \overline{L(1/2, f_2 \otimes \chi)}$ is real, cf.\ \eqref{afe1} below.  The proof of Theorem \ref{theorem-rudnicksound} will be given at the end of the paper.
  

\subsection{The methods} In this section, we sketch the method of proof of Theorem \ref{mainthm} and highlight some auxiliary results of independent interest, in particular Lemma~\ref{basis} and Theorems  \ref{klooster-short},   \ref{avoid}, and \ref{CentralHBLemma}.  

A natural starting point is an approximate functional equation, and there are two options: one can either take an approximate functional equation for    $L(s, f_1 \otimes \chi)\overline{L(s, f_2 \otimes \chi)}$ with root number independent of $\chi$, or the product of two separate approximate functional equations for $L(s, f_1 \otimes \chi)$ and $L(s, f_2\otimes \chi)$, each of which has a root number depending on $\chi$. Summing over $\chi$, one obtains either way an expression roughly of the shape
\[ \sum_{\substack{nm \leq q^2\\ n \equiv m \bmod{q}}}\lambda_1(m)\lambda_2(n), \]
where $\lambda_j(n)$ denotes the normalized $n$-th Hecke eigenvalue of $f_j$. 
We need to beat the trivial bound ${\rm O}(q^{1+\varepsilon})$ for the contribution of the off-diagonal terms $n \not= m$ by a small, but fixed power of $q$. 
There are two ways to interpret this double sum: either as a shifted convolution problem, or as a problem of summing Hecke eigenvalues in arithmetic progressions. The former point of view is useful if $n$ and $m$ are not too far apart, the latter if one variable is sufficiently small compared to the other variable. For clarity, let us restrict $n \asymp N$ and $m \asymp M$ to dyadic intervals and assume $N \geq M$ by symmetry and (for the sake of argument) $NM = q^2$, which is supposedly the hardest range. 
On the one hand, we can apply Voronoi summation to the inner sum in 
\begin{displaymath}
  \sum_{m \asymp M} \lambda_1(m) \sum_{\substack{n \asymp N\\ n \equiv m \bmod{q}}} \lambda_2(n),
\end{displaymath}
getting roughly
\begin{equation}\label{kloosterm}
 \frac{N}{q^2}  \sum_{m \asymp M}\sum_{ n \asymp q^2/N} \lambda_1(m)  \lambda_2(n) S(n, m, q). 
\end{equation}
The trivial bound at this point, using Deligne (or Rankin-Selberg) and Weil bounds, is $Mq^{1/2}$, which is admissible if $M \leq q^{1/2-\delta}$ (or equivalently $N \geq q^{3/2+\delta}$).

\bigskip

 Alternatively, we can consider the average of shifted convolution problems
\begin{equation}\label{shifted}
 \sum_{r \asymp N/q} \sum_{\substack{n \asymp N, m \asymp M \\ n - m = rq}}  \lambda_1(m) \lambda_2(n).\end{equation}
There is by now a well-developed toolbox of methods for handling shifted convolution problems. The first step is always to detect the linear condition $n-m=rq$ by additive characters. The corresponding (horocycle) integral can then be decomposed by a variant of the circle method. Voronoi summation in the $n, m$-variables leads to sums of Kloosterman sums which can be analyzed spectrally through the Kuznetsov formula. Alternatively (and quite similarly in spirit), one can apply Mellin inversion and the unfolding trick to express the horocycle integral directly as a triple product involving Poincar\'e series which can again be decomposed spectrally. This is the strategy followed by Good \cite{Go} and Sarnak \cite{Sa}. Finally, as a third option, one can use carefully chosen vectors in the representation space of the automorphic representations generated by $f_1$ and $f_2$ to spectrally decompose the horocycle integral directly \cite{BH}. In all approaches, the $n, m$-sum can be spectrally expanded, and the resulting expansion can then be summed over $r$. 

In this paper, we follow \cite{Bl, BHM1} and start with a very flexible variant of the circle method due to Jutila. To speed up the performance, we observe that, although $n\asymp N$, the $n$-sum is in reality relatively short, namely $n = rq + {\rm O}(M)$. One of the main devices in the argument is the well-known trick of attaching a redundant weight function that localizes $n$ at $rq+\mathrm{O}(M)$ (which is of course automatic in  \eqref{shifted}, but gets ``forgotten'' in the course of the manifold transformations unless we remember it explicitly by an additional weight function). The price for this manoeuvre is a very subtle and delicate analysis with Bessel functions, for which we prepare in Section \ref{secbessel}. As a first order approximation, we end up with an expression roughly of the form
\begin{equation}\label{firstorder}
 \frac{M^{3}}{C^3 N^{3/2}} \sum_{t_j \leq (N/M)^{1/2}}  \lambda_j(q) \sum_{r \asymp N/q} \lambda_j( r)  \sum_{m \asymp C^2/M} \sum_{n \asymp C^2N/M^2}  \lambda_1(m) \lambda_2(n) \lambda_j(n-m),
 \end{equation}
where $C = N^{1000}$ is a very large parameter and the outermost spectral sum runs over a basis of level 1 Maa{\ss} forms with spectral parameter $t_j\leq (N/M)^{1/2}$. We caution that \eqref{firstorder} is a much oversimplified expression that reflects reality only in a very vague sense; in particular, some extra cost has to be paid to separate variables, there is also a continuous spectrum contribution, and the level is not always 1, but sometimes a bit larger. Note that the two innermost sums resemble the triple products that would arise in a direct spectral analysis. 

One can now apply the Cauchy-Schwarz inequality and the spectral large sieve of Deshouillers-Iwaniec, thus obtaining the final bound $ Nq^{\theta -1/2}$ plus some more terms that are smaller in typical ranges. (Here, as usual, $\theta$ denotes an admissible exponent towards the Ramanujan-Petersson conjecture.) We point out the interesting feature of Jutila's method that the auxiliary parameter $C$ is only a catalyst that does not enter the final bound and conclude that this analysis is admissible if $N \leq q^{3/2 - \theta - \delta}$.

\bigskip

Obviously, the ranges $N\geq q^{3/2 + \delta}$ and $N \leq q^{3/2 - \theta - \delta}$ do not overlap, not even assuming the Ramanujan conjecture ($\theta = 0$). 
The overall strategy up to this point is  similar to the analysis in \cite{Y}, and both here and there the main problem is to overcome the small gap in the two ranges. Young uses the fact that one can decompose the divisor function in order to get more variables with which one can apply Poisson summation. The corresponding saving is strong enough to close the gap. This route is not available in the present situation.

\bigskip

As the first step, we remove the dependence on the Ramanujan conjecture by applying H\"older's inequality to \eqref{firstorder} with exponents $1/4$, $1/4$, $1/2$, getting
\begin{displaymath}
\begin{split}
 \frac{M^{3}}{C^3 N^{3/2}}&\Bigl( \sum_{t_j \leq (\frac{N}{M})^{1/2}}  |\lambda_j(q)|^4\Bigr)^{\frac{1}{4}}\Bigl( \sum_{t_j \leq (\frac{N}{M})^{1/2}} \Bigl| \sum_{r \asymp \frac{N}{q}} \lambda_j( r)\Bigr|^4 \Bigr)^{\frac{1}{4}}   \Bigl( \sum_{t_j \leq (\frac{N}{M})^{1/2}} \Bigl| \sum_{h \asymp \frac{C^2N}{M^2}} \lambda_j(h) \underset{n-m=h}{\sum_{ m \asymp \frac{C^2}{M}} \sum_{n \asymp \frac{C^2N}{M^2}}}  \lambda_1(m) \lambda_2(n) \Bigr|^2\Bigr)^{\frac{1}{2}}. 
\end{split} 
 \end{displaymath}
After expanding one of the squares inside the fourth powers using multiplicativity,  we apply the Kuznetsov formula for the first factor, and the large sieve (which is, of course, also based on the Kuznetsov formula) for the other two factors, getting a  bound roughly of the strength  $Nq^{-1/2}$ without dependence on the Ramanujan conjecture. A precise version can be found in Proposition \ref{prop3} below. The crucial input is Theorem \ref{avoid}, which presents a flexible variant of the spectral large sieve that allows for additional divisibility conditions (that are, in turn, essential to the success of our method) without being wasteful. This requires, among other things, an orthonormalization of the collection of 
 Maa{\ss} forms $\{f(dz) : d \mid \ell\}$ for a newform $f$ and some integer $\ell$, where $\ell$ is not necessarily squarefree; see Lemma  
 \ref{basis}. 
 
\bigskip

This procedure works in great generality. As observed by Fouvry, Kowalski and Michel,  the methods we employ improve Young's result \eqref{mattyoung} on the fourth moment of Dirichlet $L$-functions:
\[  \sumstar_{\chi \bmod{q}}  |L(1/2,   \chi)|^4 =  q \sum_{i=1}^4 c_i (\log q)^i+ {\rm O}\left(q^{1-\frac{1}{82} +   \varepsilon}\right) \]
for primes $p$.   At the current state of knowledge, this is better than \eqref{mattyoung} and -- more importantly -- independent of bounds towards the Ramanujan-Petersson conjecture. Inserting more algebraic geometry, the error term in \eqref{mattyoung} can in fact be improved to ${\rm O}(q^{1-1/32 + \varepsilon})$ with no recourse on bound  towards the Ramanujan-Petersson conjecture. This  result   is contained, among other things, in the companion paper \cite{BFKMM}, which imports the spectral analysis discussed in this subsection.

\bigskip

Returning to the situation of Theorem \ref{mainthm}, it now remains to close the ``small" gap where $N = q^{3/2+o(1)}$ and $M = q^{1/2+o(1)}$, for which an essentially new idea is necessary. We  use  the Cauchy-Schwarz inequality to bound \eqref{kloosterm}  by
\begin{equation}\label{expressionby}
  \frac{NM^{1/2}}{q^2}\Bigl( \sum_{n_1, n_2 \asymp q^2/N} \Bigl| \sum_{m \asymp M} S(m, n_1, q) S(m, n_2, q)\Bigr|\Bigr)^{1/2}.
\end{equation}
Weil's individual bound for Kloosterman sums yields an upper bound of $Mq^{1/2}$, and we win if we can prove some extra cancellation for generic pairs $(n_1, n_2)$ in the short $m$-sum. Note that at this point all automorphic information is gone, and we are left with a problem of bounding exponential sums, namely short sums of products of two Kloosterman sums.  Generically, the length of the $m$-sum is roughly the square-root of the modulus of the two Kloosterman sums, so this seems to be a hard problem in general. 

\subsection{Short sums of products of Kloosterman sums} The crucial new arithmetic input of this paper is a non-trivial estimation of the inner double sum in \eqref{expressionby} if $q$ is sufficiently factorable. In fact, we can estimate the individual $m$-sums  with pleasing success generically and only use the sum over $n_1$, $n_2$ to control the frequency of ``nearly diagonal'' pairs $(n_1,n_2)$.  
 Our analysis is somewhat inspired by Heath-Brown's paper on hybrid bounds for Dirichlet $L$-functions \cite{HB}. Our situation is more involved, since the function $b(m) = S(m, n_1, q) S(m, n_2, q)$ is not multiplicative in $m$ (not even in some twisted sense), unlike a Dirichlet character $\chi(m)$ modulo $q$. Moreover, for higher prime powers $q=p^s$, the Kloosterman sum resembles the exponential of a $p$-adic square-root, and therefore much more genuine $p$-adic methods naturally enter the analysis of the corresponding multiple exponential sums.
 
Nevertheless, provided that we can factorize $q = r_1r_2$ with $(r_1,r_2) = 1$, a careful application of Weyl differencing with respect to $r_2$ (presented in Lemma~\ref{DifferencingLemma}), followed by an application of Poisson summation to effect the technique of ``completion'', yields   a bound roughly of the form
\begin{displaymath}
 \Bigl|\sum_{m \asymp M} S(m, n_1, q) S(m, n_2, q)\Bigr|^2 \ll M^2q^2\Bigl(\frac{r_2}{M} +\frac{r_2^2}{M^2} +  \frac{\widehat{S}}{r_1^2M} +  \frac{\widehat{S}}{r_1^3}\Bigr),
\end{displaymath}
where $\widehat{S}$ is the average of \emph{complete} sums of the type 
\begin{equation}
\label{CompleteSumToBeEstimated}
  \sum_{m \bmod{r_1}} S(m, n_1, r_1)  S(m, n_2, r_1) S(m+h, n_1, r_1) S(m+h, n_2, r_1) e\left(\frac{km}{r_1}\right)
  \end{equation}
for various values of $k$ and $h$. A general version of the underlying idea is presented in Theorem~\ref{CentralHBLemma} in Section~\ref{sec7}, which may be of use in other situations.

If $r_1$ is squarefree, one can use the independence of Kloosterman sheafs \cite{Ka} to obtain square-root cancelation (in generic situations) in the multiple exponential sum \eqref{CompleteSumToBeEstimated}. For the squareful parts, we obtain a bound of generically similar strength via an unexpectedly involved $p$-adic stationary phase argument that features, among other things, singular critical points; the latter are necessary to obtain results for the class of moduli of the stated generality and (as will be evident from our treatment) provably contribute to the correct order of magnitude. It is common belief that exponential sums to squareful moduli are easy to handle; while it is true that their treatment is \emph{elementary} (in the sense that in most ranges no algebraic geometry is needed), the analysis is often extremely complicated, and the treatment of degenerate cases can turn out to be quite involved (see \cite{DF} for an example of ${\rm GL(3)}$ Kloosterman sums). The upshot of the above discussion is the following result:

\begin{theorem}\label{klooster-short} Let $r, q, n_1, n_2 \in \mathbb{N}$ with $r \mid q$, let $A \in \mathbb{R}$, $M > 1$. Then, for any $s\mid r$ satisfying $(r, 6^{\infty})\mid s$ we have
\[ \sum_{\substack{A < m \leq A+M\\ (m, q) = 1}} S(m, n_1, r) S(m, n_2, r) \ll
r^{\varepsilon}\Bigl(M^{1/2}r s^{1/2}+\frac{M^{1/2}r^{5/4 }}{s^{1/4}}+Mr^{3/4 }(r,n_1-n_2)^{1/4}s^{1/4}+r s+\sigma\Bigr), \]
where the term $\sigma$ defined in \eqref{Definitionsigma} satisfies $\sigma=0$ if $r/(r,s^{\infty})$ is cube-free, and $\sigma\ll r^{11/8 }s^{1/8}$ in all cases.
\end{theorem}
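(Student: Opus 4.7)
\emph{Proof plan.} The strategy is Weyl differencing followed by Poisson completion, as sketched in the introduction, invoking Theorem~\ref{CentralHBLemma} for the resulting complete exponential sums. I first split $r=r_1r_2$ coprimely, taking $r_2$ to be the largest divisor of $r$ supported on primes of $s$, so that $s\mid r_2$, $(r_1,r_2)=1$, and $(r_1,6)=1$ (the latter because $(r,6^\infty)\mid s\mid r_2$). By the Chinese Remainder Theorem each Kloosterman sum factors as $S(m,n,r)=S(m\overline{r_2},n\overline{r_2},r_1)\,S(m\overline{r_1},n\overline{r_1},r_2)$, and crucially the $r_2$-component is invariant under $m\mapsto m+kr_2$. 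This invariance is what makes differencing with step $r_2$ effective.

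Setting $f(m)=S(m,n_1,r)S(m,n_2,r)\mathbf{1}_{(m,q)=1}$ and applying Cauchy--Schwarz with a differencing parameter $H$,
\[
\Bigl|\sum_{A<m\le A+M}f(m)\Bigr|^2 \ll \frac{M}{H^2}\Bigl(H\sum_m|f(m)|^2+\sum_{0<|h|<H}(H-|h|)\sum_m f(m+hr_2)\overline{f(m)}\Bigr).
\]
Choosing $H\asymp M/s$, the diagonal $h=0$ is bounded trivially by $M r^2$, giving after square-rooting the main $M^{1/2}rs^{1/2}$ term. In the expanded off-diagonal product, the $r_2$-components pair off into $|S(m\overline{r_1},n_j\overline{r_1},r_2)|^2$, a nonnegative function of $m\bmod r_2$ bounded by Weil/$p$-adic estimates, while the $r_1$-components produce a four-fold Kloosterman product with shift~$h$. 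After stratifying $m$ by residues modulo $r_2$, each class has $\asymp M/r_2$ terms; I then apply Poisson summation modulo $r_1$ to complete the $r_1$-dependence. This reduces matters to a bound for complete exponential sums of the form \eqref{CompleteSumToBeEstimated}, averaged over dual frequencies $k$ in a range of length $\asymp r_1r_2/M$ with a smooth weight, and this reduction is precisely the framework delivered by Theorem~\ref{CentralHBLemma}. The $k=0$ frequency (where no cancellation is available) contributes the $Mr^{3/4}(r,n_1-n_2)^{1/4}s^{1/4}$ term, with the $(r,n_1-n_2)^{1/4}$ coming from isolating the rare degenerate pairs $(n_1,n_2)$; the boundary/overhead absorb the $rs$ term.

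The core of the analysis is then the estimation of the complete sum $T(k,h,n_1,n_2)$, which by CRT factors prime-by-prime across the factorization of $r_1r_2$. For each prime $p\mid r_1$ occurring to the first power, independence of Kloosterman sheaves (\`a la Katz) delivers square-root cancellation, with the degenerate locus $n_1\equiv n_2\pmod p$ exactly producing the $(r,n_1-n_2)^{1/4}$ dependence; summed over $k$, this yields the $M^{1/2}r^{5/4}/s^{1/4}$ contribution. For higher prime powers $p^e\|r_1$ with $e\ge 2$, each Kloosterman sum becomes the exponential of a $p$-adic square root, and one enters the $p$-adic stationary phase regime: the critical point equation can be solved explicitly over $\mathbb{Z}_p$, and when $e=2$ one still recovers full square-root cancellation from the resulting Gauss sums. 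When $e\ge 3$, that is, when $r_1/(r_1,s^\infty)$ fails to be cube-free, the critical points become genuinely singular and Hensel lifting loses an irreducible multiplicity; this loss is exactly what is absorbed into $\sigma\ll r^{11/8}s^{1/8}$. The singular-stationary-phase bookkeeping---separating singular from non-singular critical points, applying $p$-adic Taylor expansion to sufficient order at the singular ones, and carefully controlling the Hensel multiplicities so as not to degrade the main terms---is the principal technical obstacle of the proof, and is what Theorem~\ref{CentralHBLemma} is designed to package in sufficient generality.
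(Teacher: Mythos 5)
Your high-level picture (CRT factorization, Weyl differencing with a periodicity step, Poisson completion, algebraic geometry on squarefree factors, $p$-adic stationary phase on squareful factors) is right, but the specific decomposition you propose fails in exactly the showcase cases of the theorem, and you are missing the single new idea that makes the paper work.

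Concretely: you set $r_2=(r,s^\infty)$ (all prime powers of $r$ lying over primes of $s$) and $r_1=r/r_2$. When $r=p^n$ and $s=p^a$ with $a<n$ --- precisely the prime-power case highlighted in the introduction, for which the error term $q^{65/66+\varepsilon}$ is claimed --- this gives $r_1=1$ and $r_2=r$. Differencing with step $r_2=r$ is vacuous ($f$ is $r$-periodic), and there is nothing to complete modulo $r_1=1$, so your argument produces only the trivial bound. More generally, whenever $(r,s^\infty)$ is large (which is unavoidable if $s$ is to be taken near $r^{1/2}$), the residue classes $m\bmod r_2$ contain $\ll M/r_2\ll 1$ terms and the completion step gains nothing. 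The paper instead chooses $H=(s,(s,r/s)^\infty)$, $r_2=s/H$, $r_1=r/r_2$; this splits $s=Hr_2$ so that $Hr_2=s$, keeps $r_1$ nontrivial even for $r=p^n$ (there $H=p^a$, $r_2=1$, $r_1=p^n$), and ensures $H\mid r_1/\mathop{\mathrm{rad}} r_1$.

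This points to the deeper gap: your $H$ plays the role of a \emph{free} smoothing parameter (a classical van der Corput shift count, chosen $\asymp M/s$ at the end), while in the paper $H$ is a \emph{divisor of the squareful part of $r_1$} and the shift in Lemma~\ref{DifferencingLemma} / Theorem~\ref{CentralHBLemma} is by $hHr_2$, not $hr_2$. Because $H\mid r_1/\mathop{\mathrm{rad}} r_1$, the product $b_{1}(m+hHr_2)\overline{b_{1}(m)}$ has a shortened $p$-adic period (or vanishes frequently) at the squareful primes of $r_1$, and this is exactly what makes $\hat{B}_{1,hH}(k)$ small after completion; it is what produces the divisibility condition $(r_1^{\sharp},hH)\mid\delta'$ in Proposition~\ref{expsums}, and it is the reason the theorem needs $s$ in addition to a coprime factorization. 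Your reduction, where the shift is by $hr_2$ with $(r_2,r_1)=1$, gives a generic shift modulo $r_1$ with no such cancellation, so your invocation of Theorem~\ref{CentralHBLemma} does not match its hypotheses or use its strength. Without the $H$-as-divisor mechanism, the cubefull defect term $\sigma$ cannot be isolated as you describe either, since $r_1^{\sharp}=(r_1,H^\infty)$ is precisely what governs $r/(r,s^\infty)$ in \eqref{Definitionsigma}.
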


As in Theorem \ref{mainthm}, with a bit more work the condition $(r, 6^{\infty}) \mid s$ could be removed in Theorem~\ref{klooster-short}; it affects only moduli $r$ divisible by extremely high powers of 2 or 3. 

Comparing with the ``trivial'' bound $Mr^{1+o(1)}$ on the left-hand side, and assuming for simplicity that $(r,n_1n_2(n_1-n_2))=1$, we obtain a power saving as long as
\[ \frac{r}{M^2}(rM)^{\eta}\ll s\ll\min\left(M,\frac{M^8}{r^3}\right)(rM)^{-\eta}, \]
where the   term involving $M^8/r^3$ can simply be omitted (and the ranges of application in $s$ extended) if $r/(r,s^{\infty})$ is cube-free. In the important range $M\asymp r^{1/2}$, this gives a power saving as long as $r$ has a divisor $s$ in the (essentially full) range
\[ r^{\eta}\ll s\ll r^{1/2-\eta} \]
(with the above constraint on high powers of 2 and 3). This holds for $99{.}9\%$ of all $r$.

\bigskip

As an application, let us consider the most interesting range, the ``square-root threshold''  $M\asymp r^{1/2}$. If  $(r,n_1n_2(n_1-n_2))=1$ and $r$ has a divisor in the range $s\asymp r^{1/3}$, we obtain the bound $r^{17/12+\varepsilon}$, an improvement of $r^{1/12}$ over the ``trivial'' bound $r^{3/2}$. For a sum such as that featured in Theorem~\ref{klooster-short}, with $\asymp r^{1/2}$ terms of arithmetic nature to modulus $r$ of size $\asymp r^{1+o(1)}$, it may be reasonable to speculate that the best possible bound (and the true order of magnitude) is $\asymp r^{5/4+o(1)}$. Our bound thus reaches $\frac13$ of the way from the trivial to the best possible result and may be seen as the analogue of the ``Weyl exponent'' in this case.

In the case when $r=p^s$ is a sufficiently high prime power and $(r,n_1n_2(n_1-n_2))=1$, Theorem~\ref{klooster-short} is concerned with a short sum of exponentials with a $p$-adically analytic phase that may be directly estimated by \cite[Theorem~2]{Mi}. In fact, in the crucial range $M\asymp r^{1/2}$ this yields a bound of sub-Weyl strength $r^{17/12-\delta}$ in the situation of Theorem~\ref{klooster-short} and consequently a stronger error term of the form $\text{O}(q^{65/66-\delta'})$ in Theorem~\ref{mainthm} in the case of a prime power modulus $q$, with some small but fixed $\delta,\delta'>0$. The corresponding route does not appear to be as readily available for more general $r$ (not even at high prime power divisors of $r$), since, absent additional arithmetic conditions on the divisor $s$, degenerate critical points genuinely must be considered.

\bigskip

Using Theorem \ref{klooster-short} in \eqref{expressionby}, we obtain Proposition \ref{bound1} below, which enables us to complete the proof of Theorem \ref{mainthm}. We finally remark that the pleasing generality of the moduli considered in this paper requires a lot of technical overhead (in both the automorphic and the algebro-arithmetic treatment) that contributes to the length of the paper. 

\bigskip

\textbf{Acknowledgements.} We would like to take the opportunity to thank \'Etienne Fouvry, Emmanuel Kowalski, Philippe Michel, Lillian Pierce and Guillaume Ricotta for helpful remarks and discussions. This paper grew out of the conversations we had while the second author visited the Max Planck Institute for Mathematics in Bonn; it is a pleasure to acknowledge the support and excellent research infrastructure at MPIM.

\section{Automorphic Preliminaries I}\label{Hecketheory}



We follow the notation of \cite{BHM1}. We write the Fourier expansion of a holomorphic modular form $f$ of level $\ell$ and weight $k$ as
\begin{displaymath}
  f(z) = \sum_{n \geq 1} \rho_f(n) (4\pi n)^{k/2} e(nz), 
\end{displaymath}
and similarly we write for a Maa{\ss} form $f$ of level $\ell$ and spectral parameter $t = t_f \in  \mathbb{R} \cup [-i \theta, i\theta]$ (where currently $\theta = 7/64$ is known)
\begin{equation}\label{four}
  f(z) = \sum_{n \not= 0} \rho_f(n) W_{0, it}(4\pi |n|y) e(nx)
\end{equation}
where $W_{0, it}(y) = (y/\pi)^{1/2} K_{it}(y/2)$ is a Whittaker function. The inner product of two Maa{\ss} forms $f$ and $g$ of level $\ell$ is given by
\begin{equation}\label{innerprod}
\langle f, g\rangle := \int_{\Gamma_0(\ell)\backslash \mathbb{H}} f(z) \overline{g(z)} \frac{dx\, dy}{y^2}. 
\end{equation}
 For each cusp $\mathfrak{a}$ of $\Gamma_0(\ell)$ there is an Eisenstein $E_{\mathfrak{a}}(z, s)$   series whose Fourier expansion at $s = 1/2 + it$ we write as 
\begin{displaymath}
  E_{\mathfrak{a}}(z, 1/2 + it) = \delta_{\mathfrak{a} = \infty} y^{1/2+it} + \varphi_{\mathfrak{a}}(1/2 + it) y^{1/2-it} + \sum_{n \not= 0} \rho_{\mathfrak{a}}(n, t) W_{0, it}(4\pi |n|y) e(nx). 
\end{displaymath}
\bigskip

If $f$ is a cuspidal newform (and in particular an eigenform of all Hecke operators), we denote its  normalized Hecke eigenvalues by $\lambda_f(n)$ and record the relation
\begin{equation}\label{relation}
\lambda_f(n) \rho_f(1) = \sqrt{n} \rho_f(n)
\end{equation}
for $n \geq 1$, and $\rho_f(-n) = \pm \rho_f(n)$ in the Maa{\ss} case (since $f$ is an eigenform of the involution $z \mapsto -\bar{z}$).  For future reference we  state the well-known bounds (e.g.\ \cite[(30)]{HM})
\begin{equation}\label{rho1}
 | \rho_{f}(1) |^2 = \frac{\cosh(\pi t_f)}{\ell} (\ell (1+|t_f|))^{o(1)}
\end{equation}
for a \emph{newform} $f$ of level $\ell$ which are essentially due to Hoffstein-Lockhart (upper bound) and Iwaniec (lower bound). We will frequently use the Hecke relation
\begin{equation}\label{hecke}
  \lambda_f(nm) = \sum_{d \mid (n, m)} \mu(d)\chi_0(d) \lambda_f\left(\frac{n}{d}\right)\lambda_f\left(\frac{m}{d}\right), \quad n, m \in \mathbb{N},  
\end{equation}
where $\chi_0$ is the trivial character modulo $\ell$, and the Rankin-Selberg bound
\begin{equation}\label{RS}
  \sum_{n \leq x} |\lambda_f(n)|^2 \ll_f x. 
\end{equation}
If $f$ is in addition holomorphic, then we have Deligne's bound \cite{De}
\begin{equation}\label{deligne}
  \lambda_f(n) \ll n^{\varepsilon}. 
\end{equation}
This is expected to hold for Maa{\ss} newforms (of arbitrary level) as well, but in general we only know
\begin{equation}\label{KS}
    \lambda_f(n) \ll n^{\theta +\varepsilon}, 
\end{equation}
where $\theta$ is an admissible exponent for the Ramanujan-Petersson conjecture. Currently $\theta = 7/64$ is known \cite{KS}. Wilton's bound gives
\begin{equation}\label{wilton}
  \sum_{n \leq x} \lambda_f(n) e(\alpha n) \ll_f x^{1/2+\varepsilon},
\end{equation}
uniformly in $\alpha \in \mathbb{R}$.

\bigskip

For a smooth, compactly supported function $V : (0, \infty) \rightarrow \mathbb{C}$ and fixed $\kappa \in \mathbb{N}$ define the Hankel-type  transform
\begin{equation}\label{hankel}
  \mathring{V}(y) = 2\pi i^{\kappa} \int_{0}^{\infty} V(x) J_{\kappa-1}(4\pi \sqrt{xy}) dx.
\end{equation}
 It depends on $\kappa$, but this is not displayed in the notation.  It is easy to see that $\mathring{V}$ is a Schwartz class function; indeed, by \cite[Section 2.6]{BM} we have
\begin{equation}\label{byparts}
 \int_{0}^{\infty} V(x) J_{\kappa-1}(4\pi \sqrt{xy}) dx = \left(-\frac{1}{2\pi \sqrt{y}}\right)^j \int_0^{\infty} \frac{\partial^j}{\partial x^j} \left(V(x) x^{- \frac{\kappa-1}{2}}\right) x^{\frac{\kappa-1+j}{2}} J_{\kappa-1+j}(4\pi \sqrt{xy})dx
 \end{equation}
for any $j \in \mathbb{N}_0$, and now one can differentiate under the integral sign using \cite[8.471.2]{GR}. 

The Mellin transform of a function $f$ will always be denoted by $\widehat{f}$. More integral transforms will be introduced in the context of the Kuznetsov formula. The following formula is standard (e.g.\ \cite[Proposition 1]{HM}). 

\begin{lemma}\label{vor} [Voronoi summation] Let $c \in \mathbb{N}$, $b \in \mathbb{Z}$, and assume $(b, c) = 1$. Let $V$ be a  smooth compactly supported function, and let $N > 0$. Let $\lambda(n)$ denote the normalized Hecke eigenvalues of a holomorphic cuspidal newform of weight $\kappa$ for $\SL_2(\mathbb{Z})$.  Then
\begin{displaymath}
  \sum_n \lambda(n) e\left(\frac{bn}{c}\right) V\left(\frac{n}{N}\right) = \frac{N}{c}  \sum_n \lambda(n) e\left(-\frac{\bar{b}n}{c}\right) \mathring{V}\left(\frac{n}{c^2/N}\right). 
\end{displaymath} 
\end{lemma}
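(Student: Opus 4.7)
The plan is the classical Mellin-transform/functional-equation route. First, I apply Mellin inversion to $V$ to write
\[
\sum_n \lambda(n)\,e\!\left(\tfrac{bn}{c}\right) V\!\left(\tfrac{n}{N}\right)
= \frac{1}{2\pi i}\int_{(\sigma)} N^s\,\widehat{V}(s)\,L_f(s;b/c)\,ds,
\qquad L_f(s;b/c) := \sum_{n\geq 1}\lambda(n)\,e\!\left(\tfrac{bn}{c}\right) n^{-s},
\]
on a line $\mathrm{Re}\,s=\sigma$ with $\sigma>1$, where the Dirichlet series converges absolutely by the Rankin--Selberg bound \eqref{RS}. The remainder of the argument consists of (i) establishing the analytic continuation and functional equation of $L_f(s;b/c)$, (ii) shifting the contour past $\mathrm{Re}\,s = 1/2$ and applying the functional equation, and (iii) identifying the resulting Mellin--Barnes integral with the Hankel transform $\mathring{V}$ defined in \eqref{hankel}.

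For step (i), pick $\bar b$ with $b\bar b \equiv 1\pmod c$ and note that the matrix
$\gamma=\begin{pmatrix}-\bar b & (b\bar b+1)/c\\ -c & b\end{pmatrix}\in\mathrm{SL}_2(\mathbb{Z})$ sends $b/c+iy$ to $\bar b/c + i/(c^2y)$. Weight-$\kappa$ modularity of $f$ on $\mathrm{SL}_2(\mathbb{Z})$ gives
\[
f(b/c+iy)=(-ciy)^{-\kappa}\,f\!\left(\bar b/c + \tfrac{i}{c^2 y}\right),
\]
and taking the Mellin transform in $y$ of both sides (using, on the left, the $q$-expansion and $\int_0^\infty e^{-2\pi ny}y^{s-1}dy=\Gamma(s)(2\pi n)^{-s}$, and substituting $y\leftrightarrow 1/(c^2 y)$ on the right) yields the functional equation
\[
\Lambda_f(s;b/c) := \left(\tfrac{c}{2\pi}\right)^{\! s}\Gamma\!\left(s+\tfrac{\kappa-1}{2}\right) L_f(s;b/c)
=\; i^{\kappa}\,\Lambda_f(1-s;-\bar b/c),
\]
together with entirety of $L_f(\,\cdot\,;b/c)$ since $f$ is cuspidal.

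For step (ii), I shift the contour from $(\sigma)$ to $(-\sigma)$; no residues are picked up because $V$ is compactly supported away from $0$ (so $\widehat V$ is entire of rapid decay in vertical strips) and $L_f(\,\cdot\,;b/c)$ is entire. Substituting the functional equation and expanding $L_f(1-s;-\bar b/c) = \sum_n \lambda(n) n^{s-1} e(-\bar b n/c)$, which converges absolutely on the new contour, allows me to swap sum and integral to obtain
\[
\sum_n \lambda(n)\,e\!\left(\tfrac{bn}{c}\right) V\!\left(\tfrac{n}{N}\right)
= i^{\kappa}\,\frac{c}{2\pi}\sum_n \frac{\lambda(n)}{n}\,e\!\left(-\tfrac{\bar b n}{c}\right) \cdot I\!\left(\tfrac{nN}{c^2}\right),
\]
where
\[
I(y)=\frac{1}{2\pi i}\int_{(-\sigma)}\widehat V(s)\,(4\pi^2 y)^{s}\,\frac{\Gamma\!\left(\tfrac{\kappa+1}{2}-s\right)}{\Gamma\!\left(\tfrac{\kappa-1}{2}+s\right)}\,ds.
\]

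For step (iii), I compute the Mellin transform of $y\mapsto J_{\kappa-1}(4\pi\sqrt{xy})$, which by the Weber--Schafheitlin formula equals $(4\pi^2 x)^{-s}\Gamma(s+\tfrac{\kappa-1}{2})/\Gamma(\tfrac{\kappa+1}{2}-s)$ in a suitable strip. Applying this inside the definition \eqref{hankel} shows
\[
\widehat{\mathring V}(s) = 2\pi i^{\kappa}(4\pi^2)^{-s}\,\widehat V(1-s)\,\frac{\Gamma(s+\tfrac{\kappa-1}{2})}{\Gamma(\tfrac{\kappa+1}{2}-s)},
\]
so Mellin inversion (after the change of variable $s\mapsto 1-s$ and using analyticity of the integrand together with the rapid decay of $\widehat V$ to move the contour into the desired region) identifies $I(y)=\tfrac{2\pi y}{i^\kappa}\,\mathring V(y)$. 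Substituting back produces the stated identity with factor $N/c$, completing the proof.

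The principal obstacles are routine but nonzero: justifying all contour shifts requires Stirling to control the ratio of gamma factors in vertical strips (the rapid decay of $\widehat V$ easily compensates), and the sum--integral swap after the functional equation requires the shifted contour to lie in the half-plane of absolute convergence of the dual series. Both are standard, as is the Mellin--Barnes identification of $\mathring V$; no arithmetic input beyond level-$1$ modularity is needed.
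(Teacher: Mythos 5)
The paper cites Lemma~\ref{vor} as standard (from \cite[Proposition 1]{HM}) and does not supply its own proof, so there is no in-paper argument to compare against. Your Mellin-transform/functional-equation route is the classical textbook proof of Voronoi summation for level-$1$ holomorphic cusp forms (cf.\ \cite{IK}), and the architecture of steps (i)--(iii) is sound: the additively twisted $L$-function is entire with completion $\Lambda_f(s;b/c)=(c/2\pi)^s\Gamma(s+\tfrac{\kappa-1}{2})L_f(s;b/c)$ and root number $i^\kappa$, the contour shift is justified by Stirling and rapid decay of $\widehat V$, and your Weber--Schafheitlin identification of $\widehat{\mathring V}$ is correct and does assemble to the stated $N/c$ prefactor.

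There is, however, a concrete slip in step (i). The displayed matrix $\gamma=\begin{pmatrix}-\bar b&(b\bar b+1)/c\\-c&b\end{pmatrix}$ is \emph{not} in $\SL_2(\mathbb{Z})$ for $c\geq 3$: since $b\bar b\equiv 1\pmod c$, the top-right entry $(b\bar b+1)/c$ fails to be an integer. Moreover, even granting your (correctly computed) statement $\gamma(b/c+iy)=\bar b/c+i/(c^2y)$, the Mellin transform of that modular relation produces $\Lambda_f(1-s;{+}\bar b/c)$, not the $\Lambda_f(1-s;{-}\bar b/c)$ you then assert. Both issues are repaired at once by taking $\gamma=\begin{pmatrix}\bar b&(1-b\bar b)/c\\-c&b\end{pmatrix}\in\SL_2(\mathbb{Z})$: it has the same automorphy factor $(-icy)^\kappa$ at $z=b/c+iy$, but maps $b/c+iy$ to $-\bar b/c+i/(c^2y)$, which yields the functional equation with $-\bar b/c$ and hence the $e(-\bar b n/c)$ in the lemma. (For real $\lambda(n)$ and even $\kappa$ the two sign conventions are interchangeable via $b\mapsto -b$, so this is a fixable slip rather than a wrong result, but as written the chain from your displayed $\gamma$ to your stated functional equation does not cohere.) With this correction the argument goes through.
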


\section{The core argument}
 
 \subsection{The main term}
In this section, we present the backbone of the   proof of the Theorem \ref{mainthm}. By a standard approximate functional equation (\cite[Theorem 5.3]{IK}) we have for  each primitive character $\chi$ modulo $q$ that
\begin{equation}\label{afe1}
  L(1/2, f_1 \otimes \chi)  \overline{L(1/2, f_2 \otimes \chi)}   =  \sum_{n, m} \frac{(\lambda_1(m)\lambda_2(n) +\lambda_2(m)\lambda_1(n))  \chi(m) \bar{\chi}(n)}{(nm)^{1/2}} W\left(\frac{nm}{q^2}\right)
 \end{equation}
where
\begin{equation}\label{afe}
  W(x) = \frac{1}{2\pi i} \int_{(2)} \frac{\Gamma(\kappa_1/2 + s)\Gamma(\kappa_2/2 + s)}{(2\pi)^{2s}\Gamma(\kappa_1/2)\Gamma(\kappa_2/2)} x^{-s} \frac{ds}{s}
\end{equation} 
 satisfies 
  $W^{(j)}(x) \ll_{A, j} (1+x)^{-A}$ for all $A, j \geq 0$.  Note that by \cite[Proposition 14.20]{IK}  the $L$-function $L(s, f_1 \otimes \chi)  \overline{L(s, f_2 \otimes \chi)} $ has root number 1 if $\kappa_1 \equiv \kappa_2$ (mod 4). Summing over all primitive characters $\chi$ and using the elementary identity
 \begin{displaymath}
   \underset{\chi \bmod{q}}{\left. \sum \right.^{\ast}} \chi(n) = \sum_{d \mid (n-1, q)} \phi(d) \mu(q/d), 
 \end{displaymath}
for $(n, q) = 1$, we obtain
\begin{equation}\label{start}
   \underset{\chi \bmod{q})}{\left. \sum \right.^{\ast}}  L(1/2, f_1 \otimes \chi)  \overline{L(1/2, f_2 \otimes \chi)}   = 2 \sum_{d \mid q} \phi(d) \mu\left(\frac{q}{d}\right)\sum_{\substack{n \equiv m \bmod{d}\\ (nm, q) = 1}} \frac{\lambda_1(m)\lambda_2(n)  }{(nm)^{1/2}} W\left(\frac{nm}{q^2}\right). 
\end{equation}

The diagonal term $n=m$ contributes
\begin{displaymath}
 \Delta(q) =  2 \psi(q) \sum_{(n, q) = 1} \frac{\lambda_1(n)\lambda_2(n)}{n} W\left(\frac{n^2}{q^2}\right) = 
   \frac{2 \psi(q) }{2\pi i} \int_{(1)} \frac{L^{(q)}(1 + 2s, f_1 \times f_2)}{\zeta^{(q)}(2(1+2s))} q^{2s} \widehat{W}(s) ds
\end{displaymath}
where the superscript $(q)$ denotes omission of the Euler factors at primes dividing $q$. Define 
   $P$, $Q$ and $c$ as in \eqref{poly1} --  \eqref{defc} so that in particular
   \[ \frac{L^{(q)}(1+2s, f_1 \times f_2) }{\zeta^{(q)}(2(1+2s))}=  \frac{L(1+2s, f_1 \times f_2)}{\zeta(2(1+2s))} Q(s). \]  
Shifting  the contour to $\Re s= -1/4 + \varepsilon$, we obtain 
 \begin{displaymath}
 \Delta(q) =    2 \psi(q) \frac{P(1) L(1, {\rm sym}^2 f_1)}{\zeta(2)}\left( \log q + c +\frac{P'(1)}{P(1)}  + O\left(q^{-\frac{1}{2}+\varepsilon}\right)  \right), \quad f_1 = f_2, 
   \end{displaymath}
and
\begin{displaymath}
 \Delta(q) =    2 \psi(q) \frac{Q(1) L(1, f_1 \times f_2)}{\zeta(2)}\left(1 + O\left(q^{-\frac{1}{2}+\varepsilon}\right)  \right), \quad f_1 \not= f_2. 
   \end{displaymath}

\subsection{The off-diagonal term}\label{off}
We proceed to treat the off-diagonal contribution $n \not = m$ in \eqref{start}. We attach a smooth partition of unity to the $n$- and $m$-sum, and localize the variables at $N \leq n \leq 2N$ and $M \leq m \leq 2M$ with weight functions $v_1, v_2$, where $N, M \geq 1$ and $NM \leq q^{2+\varepsilon}$ (at the cost of a negligible error). By Mellin inversion we are left with bounding 
\[ \sum_{d \mid q} d \Bigl|\int_{(\varepsilon)} \widehat{W}(s) \sum_{\substack{n \equiv m \bmod{d}\\ (nm, q) = 1\\n \not= m}} \frac{\lambda_1(m)\lambda_2(n)  }{(nm)^{1/2}} v_1\left(\frac{n}{N}\right) v_2\left(\frac{m}{M}\right)  \left(\frac{nm}{q^2}\right)^{-s} \frac{ds}{2\pi i} \Bigr|. \]
 By Stirling's formula, $\widehat{W}$ is exponentially decreasing on vertical lines, so that we can truncate the integral at $|\Im s| \leq (\log 5q)^2$ at a negligible cost.  It therefore suffices to bound
\begin{equation}\label{snmd}
  S_{N, M, d, q} := \frac{d}{(NM)^{1/2}} \sum_{\substack{n \equiv m \bmod{d}\\ (nm, q) = 1\\ n \not= m}}  \lambda_1(m)\lambda_2(n)  V_1\left(\frac{m}{M}\right) V_2\left(\frac{n}{N}\right). 
  \end{equation}
for $d \mid q$ and $N \geq M$ (by symmetry) for  functions $V_{1, 2}$ with compact support in $[1, 2]$ and derivatives bounded by 
\begin{equation}\label{derivative}
  V_{1, 2}^{(j)}(x) \ll (\log 5q)^{2j}\ll_j q^{\varepsilon} .
\end{equation}  
   Using Deligne's bound\footnote{This is the only point in the argument where Deligne's bound seems unavoidable.} \eqref{deligne}, we obtain immediately a trivial bound
\begin{equation}\label{verytrivial}
  S_{N, M, d, q} \ll  \frac{d}{(NM)^{1/2-\varepsilon}} \sum_{M \leq m \leq 2M} \sum_{\substack{N \leq n \leq 2N\\ n \equiv m \bmod{d}\\ n \not= m}} 1 \ll  (NM)^{1/2+\varepsilon} . 
\end{equation}

In the next section we will show
\begin{prop}\label{bound1} Let $q_1 \mid q$ be a divisior satisfying $(q, 6^{\infty}) \mid q_1$. Then
 \begin{displaymath}
    S_{N, M, d, q} \ll    q^{\varepsilon}  \frac{q}{N^{1/2}} \Biggl(M^{1/4} q^{1/2} q_1^{1/4} + \frac{M^{1/4} q^{5/8}}{q_1^{1/8}} + M^{1/2} q^{3/8} q_1^{1/8} + (qq_1)^{1/2} + q^{11/16} q_1^{1/16} \Biggr)
     \end{displaymath}
    for any $d \mid q$ whenever $N \geq 20M$. 
 \end{prop}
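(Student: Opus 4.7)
The approach combines Voronoi summation on the long $n$-sum, Cauchy--Schwarz in $m$, and the Kloosterman-sum bound of Theorem~\ref{klooster-short}. The plan is to detect the congruence $n\equiv m\pmod d$ in \eqref{snmd} by additive characters $\tfrac{1}{d}\sum_{a\bmod d}e(a(n-m)/d)$, then apply Lemma~\ref{vor} on each stratum indexed by $c=d/(a,d)\mid d$. The generic stratum $c=d$ (that is, $(a,d)=1$) yields
\[
S_{N,M,d,q}\approx \frac{N^{1/2}}{d\,M^{1/2}}\sum_{\substack{m\asymp M\\(m,q)=1}}\lambda_1(m)V_1\!\bigl(\tfrac{m}{M}\bigr)\sum_{\substack{\tilde n\asymp d^2/N\\(\tilde n,q)=1}}\lambda_2(\tilde n)\,S(m,\tilde n,d)\,\mathring V_2\!\bigl(\tfrac{\tilde n N}{d^2}\bigr),
\]
while the non-generic strata $c<d$ lead to analogous expressions at the reduced modulus $c\mid d$ and are dominated by monotonicity in $c$. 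The coprimality conditions $(nm,q)=1$ are preserved throughout via M\"obius inversion at cost $q^\varepsilon$.

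Applying Cauchy--Schwarz in $m$, using $\sum_{m\asymp M}|\lambda_1(m)|^2\ll M^{1+\varepsilon}$ from \eqref{RS}, and opening the square, the AM-GM inequality $|\lambda_2(\tilde n_1)\lambda_2(\tilde n_2)|\leq\tfrac12(|\lambda_2(\tilde n_1)|^2+|\lambda_2(\tilde n_2)|^2)$ together with symmetry in $(\tilde n_1,\tilde n_2)$ and the boundedness of $\mathring V_2$ on its effective support (with rapid decay beyond, via \eqref{byparts}) give
\[
|S_{N,M,d,q}|^2\ll \frac{q^{\varepsilon}}{d^2}\sum_{\tilde n_1\asymp d^2/N}|\lambda_2(\tilde n_1)|^2\sum_{\tilde n_2\asymp d^2/N}|B(\tilde n_1,\tilde n_2)|,
\]
where $B(\tilde n_1,\tilde n_2)=\sum_{m\asymp M,(m,q)=1}S(m,\tilde n_1,d)\overline{S(m,\tilde n_2,d)}V_1(m/M)^2$. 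For fixed $\tilde n_1$ I would then partition the $\tilde n_2$-sum by $g=(d,\tilde n_1-\tilde n_2)$ (the class of each divisor $g$ of $d$ containing $\ll d^2/(Ng)+1$ elements) and apply Theorem~\ref{klooster-short} with $r=d$ and $s=(q_1,d)$: the hypothesis $(q,6^\infty)\mid q_1$ combined with $d\mid q$ guarantees $(d,6^\infty)\mid s$, so the theorem is applicable. Summing over $g\mid d$, with $\sum_{g\mid d}g^{-3/4}\ll d^\varepsilon$ to absorb the only $g$-dependent bound, and using \eqref{RS} for $\sum_{\tilde n_1}|\lambda_2(\tilde n_1)|^2\ll(d^2/N)^{1+\varepsilon}$, yields
\[
|S_{N,M,d,q}|\ll \frac{d^{1+\varepsilon}}{N^{1/2}}\Bigl(M^{1/2}ds^{1/2}+\frac{M^{1/2}d^{5/4}}{s^{1/4}}+Md^{3/4}s^{1/4}+ds+d^{11/8}s^{1/8}\Bigr)^{\!1/2}.
\]

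Since $\mathrm{lcm}(q_1,d)\mid q$, we have $s=(q_1,d)\geq q_1d/q$; combined with $s\leq q_1$ and $d\leq q$, this bounds each of the five terms, after square-rooting and multiplication by $d/N^{1/2}\leq q/N^{1/2}$, by its counterpart in Proposition~\ref{bound1} (for instance, $d^{13/8}s^{-1/8}\leq d^{3/2}q^{1/8}q_1^{-1/8}\leq q^{13/8}q_1^{-1/8}$ handles the $s^{-1/4}$ term). \emph{The principal obstacle} is the Voronoi step, where the various $(a,d)$-strata must be organized uniformly and the coprimality $(nm,q)=1$ preserved through the transformation and Cauchy--Schwarz by M\"obius inversion without excessive loss. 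A subsidiary technical point is the termwise conversion from $s$-dependent to $q_1$-dependent bounds via $(q_1,d)\geq q_1d/q$ in the last step, which is precisely what allows Proposition~\ref{bound1} to hold uniformly for every divisor $d$ of $q$ rather than only for $d=q$.
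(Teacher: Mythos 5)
Your proposal follows the same skeleton as the paper's proof of Proposition~\ref{bound1}: detect $n\equiv m\pmod d$ by additive characters, apply Voronoi summation to the long $n$-sum (after removing the coprimality $(n,q)=1$ by M\"obius inversion), Cauchy--Schwarz in $m$ using Rankin--Selberg, open the square, and bound $\mathcal S_M(n_1,n_2,r)$ by Theorem~\ref{klooster-short} with $s=(q_1,r)$, finally converting $d,s$ to $q,q_1$ via $s=(q_1,d)\geq q_1d/q$. That last conversion is exactly how the five terms of the proposition arise, and your verification of it (e.g.\ the $s^{-1/4}$ term) is correct. The hypothesis $(q,6^\infty)\mid q_1$ together with $d\mid q$ indeed yields $(d,6^\infty)\mid (q_1,d)=s$, so Theorem~\ref{klooster-short} is applicable.

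Two points are worth highlighting. First, where the paper drops $|\lambda_2(n_1)\lambda_2(n_2)|$ via Deligne after opening the Cauchy--Schwarz square, you instead use AM--GM plus the symmetry of $\mathcal S_M$ and Rankin--Selberg. This is a clean alternative that avoids Deligne at this step; note, though, that the paper's footnote about Deligne being ``unavoidable'' refers only to the trivial bound \eqref{verytrivial}, not to this Voronoi computation, so the authors were aware the two routes are interchangeable here. Second, keeping only the generic stratum $c=d$ and claiming the others are dominated is fine \emph{in spirit}, but the paper carries the full divisor sum $\sum_{r\mid d}$ and treats the resulting factor $N/(fgr)$ carefully; since each stratum's bound grows with $r$ and there are only $O(q^\varepsilon)$ divisors, the conclusion is the same. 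Also, for a direct application of Theorem~\ref{klooster-short} (which assumes a sharp interval $A<m\leq A+M$) it is cleaner, as the paper does, to keep $V_1$ bundled with $\lambda_1$ in the Cauchy--Schwarz step so the remaining $m$-sum already has a sharp cutoff, rather than carrying $V_1(m/M)^2$ into $B(\tilde n_1,\tilde n_2)$ and converting by Abel summation afterwards.

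There is, however, a genuine gap in the step where you partition the $\tilde n_2$-sum by $g=(d,\tilde n_1-\tilde n_2)$. You correctly observe that each class has $\ll d^2/(Ng)+1$ elements, but the sentence ``with $\sum_{g\mid d}g^{-3/4}\ll d^\varepsilon$ to absorb the only $g$-dependent bound'' only accounts for the $d^2/(Ng)$ part. The ``$+1$'' --- essentially the (near-)diagonal $\tilde n_2\approx\tilde n_1$ --- contributes
\[
Md^{3/4}s^{1/4}\sum_{g\mid d}g^{1/4}\ \ll\ M d\, s^{1/4}\, d^{\varepsilon},
\]
an additional term not present in your displayed five-term bound. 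Propagated through, this produces an extra contribution to $S_{N,M,d,q}$ of order $q^\varepsilon\, r^{1/2}M^{1/2}s^{1/8}/(fg)^{1/2}$ (or, equivalently, the $Xq^{1/4}$ piece of $\sum_{n_1,n_2\ll X}(q,n_1-n_2)^{1/4}\ll X^2q^\varepsilon+Xq^{1/4+\varepsilon}$). One should check that this is absorbed by the stated right-hand side; it is in the range $NM\ll q^{2+\varepsilon}$, $N/M\gg q^{1-\varepsilon}$ where the proposition is actually invoked in Section~\ref{opti}, but as stated the proposition allows much larger $M$ and the domination is not immediate. (The paper's own write-up ends with the same unreduced display and the phrase ``and Proposition~\ref{bound1} follows,'' so this final summation is not spelled out there either.) At minimum you should flag this near-diagonal term explicitly and verify it is dominated in the intended range; as written your claim that $\sum_g g^{-3/4}\ll d^\varepsilon$ suffices is not correct.
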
   
    
To see when this result will be useful for us, we assume that $NM = q^2$. If $q_1 \leq q^{1/2}$, Proposition \ref{bound1} covers the range $N \geq q^{3/2 + \delta}$, for any fixed $\delta > 0$. (In fact, the trivial estimate on the upper bound on $S_{N,M,d,q}$ reached by \eqref{separatedelta} suffices in this range of parameters, and this requires no special divisibility properties of $q$.) However, if we can find $q_1$ such that $q^{\eta} \leq q_1 \leq q^{1/2 - \eta}$, then we can extend the range for $N$ slightly beyond $q^{3/2}$, so that it will overlap with the admissible range in Proposition \ref{prop3} below.

\bigskip

Now let $\ell_1, \ell_2 \in \mathbb{N}$, $h \in \mathbb{N}$, and define 
 \begin{equation}\label{defD}
   \mathcal{D}(\ell_1, \ell_2, h, N , M ) = \sum_{\ell_1 n - \ell_2 m = h} \lambda_1(m)\lambda_2(n) V_1\left(\frac{\ell_2 m}{M}\right) V_2\left(\frac{\ell_1 n}{N}\right)
\end{equation}
and
\begin{equation}\label{average}
  \mathcal{S}(\ell_1, \ell_2, d, N, M) = \sum_{r } \mathcal{D}(\ell_1, \ell_2, rd, N, M)
\end{equation}
where $d$ is a positive integer. Note that the support of $V_2$ restricts $r \leq 2N/d$. 
From \cite[Theorem 3]{Bl} we quote the individual uniform bound
\begin{equation}\label{indivbound}
   \mathcal{D}(\ell_1, \ell_2, h, N , M ) \ll (N+M)^{1/2 + \theta}(NMq)^{\varepsilon}. 
\end{equation}
From \eqref{snmd} we obtain by M\"obius inversion, \eqref{hecke} and \eqref{deligne}  that
\begin{equation}\label{s2}
\begin{split}
 & S_{N, M, d, q}  =  \frac{d}{(NM)^{1/2}}\sum_{r = 1}^{2N/d} \sum_{\substack{n-m = rd\\ (nm, q) = 1}} \lambda_1(m) \lambda_2(n) V_1\left(\frac{m}{M}\right)V_2\left(\frac{n}{N}\right)\\
  & = \frac{d}{(NM)^{1/2}}\sum_{g_1\mid f_1 \mid q}\mu(g_1)\mu(f_1)\lambda_2\left(\frac{f_1}{g_1}\right) \sum_{g_2 \mid f_2 \mid q}\mu(g_2)\mu(f_2)\lambda_1\left(\frac{f_2}{g_2}\right)  \sum_{r = 1}^{2N/d} \mathcal{D}(f_1g_1, f_2g_2, rd, N , M )\\
 &\ll \frac{d }{(NM)^{1/2}} \sum_{\substack{g_1\mid f_1 \mid q\\ g_2\mid f_2 \mid q}}   (f_1f_2)^{\varepsilon}\Bigl|   \mathcal{S}(f_1g_1, f_2g_2, d, N, M)\Bigr|.
 \end{split}
\end{equation}
Sections \ref{SpectralDecompositionSection} and \ref{ShiftedSumsAverageSection} are devoted to the proof of 
 \begin{prop}\label{prop3} Let $\ell_1, \ell_2, d \in \mathbb{N}$, $N, M \geq 1$ and define $\mathcal{S}(\ell_1, \ell_2, d, N, M)$ as in \eqref{defD} -- \eqref{average}. Assume that $N \geq 20M$. 
 Then
\begin{displaymath}
\begin{split}
 & \mathcal{S}(\ell_1, \ell_2, d, N, M) \ll  (dN)^{\varepsilon}   \left(\frac{N}{d^{1/2}} + \frac{N^{5/4}M^{1/4} }{d } + \frac{N^{3/4}M^{1/4} }{d^{1/4}} + \frac{NM^{1/2} }{d^{3/4} }\right). 
\end{split}
\end{displaymath}
The implicit constant depends on $\varepsilon$ alone. 
\end{prop}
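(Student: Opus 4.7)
The plan is to follow the strategy outlined in the introduction for the shifted convolution problem \eqref{average}. I would begin by detecting the linear condition $\ell_1 n - \ell_2 m = rd$ via Jutila's variant of the circle method, introducing an auxiliary modulus parameter $C$ of polynomial size that, characteristically for Jutila's method, will not appear in the final bound. Applying Voronoi summation (Lemma \ref{vor}) to both the $n$ and $m$ variables transforms the sum into a dual expression involving Kloosterman sums, and the Kuznetsov formula then yields a spectral decomposition over a basis of Maass forms (together with the holomorphic and continuous spectrum contributions, which I expect to be handled in the same way as the Maass part).

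A crucial technical device at this stage is to exploit the shortness of the $n$-sum: although $n \asymp N$, the equation $\ell_1 n = \ell_2 m + rd$ with $m \asymp M$ forces $n$ to lie in a window of length $\mathrm{O}(M/\ell_1)$ around $rd/\ell_1$. I would preserve this information through the manifold transformations by inserting a redundant smooth weight function, whose effect propagates through the Bessel transforms prepared in Section \ref{secbessel} and ultimately restricts the spectral parameters to the range $t_j \ll (N/M)^{1/2+\varepsilon}$, gaining a decisive factor $(M/N)^{1/2}$ in the spectral truncation. After executing the sum over $r$ using Hecke multiplicativity, I arrive at an expression of the schematic form \eqref{firstorder}.

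At this juncture I apply H\"older's inequality with exponents $\frac{1}{4}, \frac{1}{4}, \frac{1}{2}$ on the spectral sum, which replaces the dependence on the Ramanujan--Petersson exponent $\theta$ by averaged estimates. The first factor $\sum_{t_j} |\lambda_j(q)|^4$ is bounded by expanding one square via the Hecke relation \eqref{hecke} and invoking the Kuznetsov formula. The second factor $\sum_{t_j} |\sum_{r} \lambda_j(r)|^4$ is expanded similarly into a sum over pairs and estimated via the spectral large sieve. The third factor $\sum_{t_j} |\sum_h \lambda_j(h) c(h)|^2$, with coefficients $c(h) = \sum_{\ell_1 n - \ell_2 m = h}\lambda_1(m)\lambda_2(n)$, is estimated directly by the large sieve, with $\sum_h |c(h)|^2$ controlled via \eqref{indivbound} and Rankin--Selberg \eqref{RS}. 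The essential enabling tool throughout is Theorem \ref{avoid}, which accommodates the divisibility conditions arising from the various levels introduced by Jutila's method (and the moduli $\ell_1, \ell_2$) without log-losses; this in turn relies on the orthonormalization provided by Lemma \ref{basis}.

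The main obstacle I expect is the bookkeeping: tracking all the auxiliary parameters introduced by Jutila's method, ensuring the redundant weight function survives the Bessel transforms in a form amenable to the spectral truncation, and--most importantly--verifying that the application of Theorem \ref{avoid} is not wasteful in the auxiliary modulus $C$, so that $C$ drops out of the final bound. The four terms in the claimed bound should arise from the diagonal and off-diagonal contributions in the expanded H\"older factors combined with the error terms from the spectral truncation and from the imbalance between $N$ and $M$; the hypothesis $N \geq 20M$ is used to guarantee that $r \geq 1$ in \eqref{average} and that the redundant weight function genuinely restricts the spectrum.
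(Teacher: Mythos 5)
Your proposal matches the paper's proof of Proposition~\ref{prop3} very closely in strategy and in the choice of tools: Jutila's circle method with an enormous catalyst modulus $C$, Voronoi summation in both variables, Kuznetsov to pass to the spectrum, the redundant weight $W\bigl(\frac{\ell_1 n-h}{M}\right)$ to force the truncation $t_j\ll (N/M)^{1/2+\varepsilon}$, then (after separating variables by Mellin) a Cauchy--Schwarz/H\"older split whose three effective factors are handled by Lemma~\ref{moto} (via Kuznetsov), the spectral large sieve, and Theorem~\ref{avoid} built on Lemma~\ref{basis}. The only genuinely different — and problematic — step is your treatment of the third factor.

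You propose to control $\sum_h |c(h)|^2$ with $c(h)=\sum_{\ell_1n-\ell_2m=h}\lambda_1(m)\lambda_2(n)$ by the individual shifted-convolution bound \eqref{indivbound} together with Rankin--Selberg. This would be lossy in two ways. First, \eqref{indivbound} gives $|c(h)|\ll(\mathcal{N}+\mathcal{M})^{1/2+\theta+\varepsilon}$ uniformly, so summing over $|h|\asymp\mathcal{K}$ produces at best $\mathcal{K}\,\mathcal{N}^{1+2\theta+\varepsilon}$; since $\mathcal{K}$ can be as large as $\mathcal{N}$ and $\mathcal{M}\ll\mathcal{N}$, this is worse than the bound $\mathcal{N}\mathcal{M}/(\ell_1\ell_2)$ that is actually needed (cf.\ \eqref{2norm}) by a factor of order $\mathcal{N}/\mathcal{M}$. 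Second, and more fatally, this reintroduces a $\theta$-dependence. Recall $\mathcal{N}\leq\mathcal{N}_0\asymp C^{2+\varepsilon}N/M^2$, so the factor $\mathcal{N}^{2\theta}$ grows with $C$ and would \emph{not} cancel in the Jutila set-up, undermining precisely the feature you correctly flag as the main obstacle. What the paper does instead at \eqref{2norm} is to write $\sum_b|\gamma^\ast(b)|^2$ as an integral over $\alpha\in[0,1]$ by Parseval, estimate the $n$-sum uniformly via Wilton's bound \eqref{wilton} (which is $\theta$-free), and treat the $m$-sum in $L^2$ via Rankin--Selberg \eqref{RS}; this is what produces the clean, $\theta$-independent bound $\ll\mathcal{N}\mathcal{M}/(\ell_1\ell_2)$. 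With this substitution your argument goes through and recovers the four-term bound of Proposition~\ref{prop3}; without it, the $C$-dependence does not disappear and the resulting bound is not of the required form.
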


This implies
\begin{equation}\label{shiftedbound}
  S_{N, M, d, q}  \ll \left( \frac{(Nq)^{1/2}}{M^{1/2}} + \frac{N^{3/4}}{M^{1/4}} + \frac{N^{1/4}q^{3/4}}{M^{1/4}} + N^{1/2} q^{1/4}    \right)(qN)^{\varepsilon}
\end{equation}
for $N \geq 20M$, while from \eqref{indivbound} and \eqref{s2} we conclude by trivial estimates  
\begin{equation}\label{auxbound}
 S_{N, M, d, q} \ll \frac{d}{(NM)^{1/2}} q^{ \varepsilon}  \frac{N}{d} N^{1/2 +\theta} =   \frac{q^{\varepsilon} N^{1+\theta} }{M^{1/2}}
\end{equation}
in the slightly larger range $N \geq M$.


\subsection{An optimization problem}\label{opti}

 We are now prepared to prove Theorem \ref{mainthm}.  First we observe that \eqref{auxbound} in connection with our general assumption $NM \leq q^{2+\varepsilon}$ suffices to prove Theorem \ref{mainthm} whenever $N \asymp 20M$. 
 Hence from now on we assume 
 $N \geq M$ so that Proposition \ref{bound1} and \eqref{shiftedbound} are available. 
 In preparation for later estimates, we observe that \eqref{shiftedbound} implies
\begin{equation}\label{easier}
  S_{N, M, d, q} \ll \frac{q^{3/4 + \varepsilon} N^{1/4}}{M^{1/4}}, \quad \text{if} \quad NM \leq q^{2+\varepsilon}, N \leq Mq.
\end{equation} 
  We distinguish two cases.
 
 \emph{Case I: $q_1 \leq q^{1/3}$.} In this case we need to show $S_{N, M, d, q} \ll q^{1+\varepsilon} q_1^{-1/22}$.   The bound \eqref{verytrivial} is admissible unless 
\begin{equation}\label{firstcond}
   q^{2 }q_1^{-1/11} \leq NM \leq q^{2+\varepsilon}. 
\end{equation}
In this range, \eqref{easier} is admissible unless
 \begin{equation}\label{seccond}
   N/M \geq q q_1^{-2/11}.
 \end{equation}
 If both \eqref{firstcond} and \eqref{seccond} hold, then Proposition \ref{bound1} implies that $S_{N, M, d, q}$ is, up to a factor $q^{\varepsilon}$, at most
\begin{displaymath}
\begin{split}
& \frac{q^{3/2}q_1^{1/4}}{(N/M)^{3/8}(NM)^{1/8}} + \frac{q^{13/8}q_1^{-1/8}}{ (N/M)^{3/8}(NM)^{1/8}} + \frac{q^{11/8} q_1^{1/8}}{(N/M)^{1/2}} + \frac{q^{3/2}q_1^{1/2}}{(N/M)^{1/4} (NM)^{1/4}} + \frac{q^{27/16}q_1^{1/16}}{(N/M)^{1/4} (NM)^{1/4}}  \\
 \ll &  q^{7/8} q_1^{29/88} + q q_1^{-1/22} + q^{7/8}q_1^{19/88} + q^{3/4} q_1^{25/44} + q^{15/16}q_1^{23/176} \ll  q q_1^{-1/22} 
\end{split}
\end{displaymath} 
for $q_1 \leq q^{1/3}$.   

 \emph{Case II: $q^{1/3} \leq q_1 \leq q^{1/2}$.} In this case we need to show $S_{N, M, d, q} \ll q^{21/22+\varepsilon} q_1^{1/11}$.   The bound \eqref{verytrivial} is admissible unless 
\begin{equation}\label{firstconda}
   q^{21/11 }q_1^{2/11} \leq NM \leq q^{2+\varepsilon}. 
\end{equation}
In this range, \eqref{easier} is admissible unless
 \begin{equation}\label{secconda}
   N/M \geq q^{9/11} q_1^{4/11}.
 \end{equation}
 If both \eqref{firstconda} and \eqref{secconda} hold, then Proposition \ref{bound1} implies that $S_{N, M, d, q}$ is, up to a factor $q^{\varepsilon}$, at most
 \begin{displaymath}
\begin{split}
& \frac{q^{3/2}q_1^{1/4}}{(N/M)^{3/8}(NM)^{1/8}} + \frac{q^{13/8}q_1^{-1/8}}{ (N/M)^{3/8}(NM)^{1/8}} + \frac{q^{11/8} q_1^{1/8}}{(N/M)^{1/2}} + \frac{q^{3/2}q_1^{1/2}}{(N/M)^{1/4} (NM)^{1/4}} + \frac{q^{27/16}q_1^{1/16}}{(N/M)^{1/4} (NM)^{1/4}}  \\
 \ll &  q^{21/22} q_1^{1/11}+ q^{95/88}q_1^{-25/88} + q^{85/88}q_1^{-5/88} + q^{9/11} q_1^{4/11}+ q^{177/176} q_1^{-13/176} \ll q^{21/22+\varepsilon} q_1^{1/11}
\end{split}
\end{displaymath} 
for $q^{1/3} \leq q_1 \leq q^{1/2}$. 


\section{Hecke eigenvalues in residue classes}
\label{HeckeResidueClasses}

In this section we prove Proposition \ref{bound1}, assuming the validity of Theorem \ref{klooster-short} whose proof we postpone to the end of the paper. The method presented here is strong if $N$ is much larger than $M$. Initially we only assume $N \geq 20M$, so that the condition $n \not= m$ is moot. We write
\begin{displaymath}
  S_{N, M, d, q}  = \frac{d}{(NM)^{1/2}} \sum_{(m, q) = 1} \lambda_1(m) V_1\left(\frac{m}{M}\right) \sum_{\substack{n \equiv m \bmod{d}\\ (n, q) = 1}} \lambda_2(n) V_2\left(\frac{n}{N}\right). 
\end{displaymath}
Let us write $q = q_d q'$ where $q_d = (q, d^{\infty})$ and hence $(q', d) = 1$. Since $(m, q) = 1$ and $n \equiv m$ (mod $d$), the conditions $(n, q) = 1$ and $(n, q') = 1$ are equivalent. We remove the latter condition by M\"obius inversion and \eqref{hecke}, getting
\begin{equation}
\label{HeckeSectionSNMd}
\begin{split}
  S_{N, M, d, q} & = \frac{d}{(NM)^{1/2}}\sum_{f \mid q'} \mu(f) \sum_{(m, q) = 1} \lambda_1(m) V_1\left(\frac{m}{M}\right) \sum_{ n \equiv \bar{f}m \bmod{d}} \lambda_2(fn) V_2\left(\frac{fn}{N}\right)\\
  & = \frac{d}{(NM)^{1/2}}\sum_{g \mid f \mid q'} \mu(f)\mu(g) \lambda_2\left(\frac{f}{g}\right) \sum_{(m, q) = 1} \lambda_1(m) V_1\left(\frac{m}{M}\right) \sum_{ n \equiv \overline{fg}m \bmod{d}} \lambda_2(n) V_2\left(\frac{fgn}{N}\right). 
  \end{split}
\end{equation}

The innermost sum in \eqref{HeckeSectionSNMd} equals
\begin{displaymath}
 \frac{1}{d} \sum_{ r \mid  d} \underset{b \bmod{r}}{\left.\sum \right.^{\ast}} e\left(\frac{\overline{fg}mb}{r}\right) \sum_{n} \lambda_2(n) e\left(-\frac{bn}{r}\right) V_2\left(\frac{fgn}{N }\right).
\end{displaymath}
Applying the Voronoi summation formula (Lemma \ref{vor}) to the $n$-sum, this is further equal to
\begin{displaymath}
\frac{1}{d} \sum_{r \mid  d} \frac{N}{fgr}  \sum_{n}S\bigl(\overline{fg}m, n, r\bigr) \lambda_2(n)   \mathring{V}_2\left(\frac{nN}{fg r^2}\right). 
 \end{displaymath}

Inserting this transformed sum back into \eqref{HeckeSectionSNMd}, applying the Cauchy-Schwarz inequality to the $m$-sum, and using \eqref{RS}, we obtain
 \begin{displaymath}
 \begin{split}
   S_{N, M, d, q}  &\ll \frac{1}{N^{1/2}} \sum_{g \mid f \mid q'}\mu^2(f) \Bigl| \lambda_2\left(\frac{f}{g}\right)\Bigr| \sum_{r \mid  d}  \frac{N}{fgr}\Bigl( \sum_{\substack{m \asymp M\\(m, q) = 1}}   \Bigl| \sum_{n}S\bigl(\overline{fg}m, n, r\bigr) \lambda_2(n)   \mathring{V}_2\left(\frac{nN}{fgr^2 }\right)\Bigr|^2\Bigr)^{1/2}\\
   & \ll \frac{1}{N^{1/2}} \sum_{g \mid f \mid q'}\mu^2(f) \Bigl| \lambda_2\left(\frac{f}{g}\right)\Bigr| \sum_{r \mid  d}  \frac{N}{f gr}\Bigl( \sum_{n_1, n_2 \ll fgr^2 q^{\varepsilon} /N} |\lambda_2(n_1)\lambda_2(n_2) \mathcal{S}_M(\overline{fg}n_1, \overline{fg}n_2, r)|  \Bigr)^{1/2} +  q^{-10}
  \end{split} 
 \end{displaymath}
by the rapid decay of $\mathring{V}_2$ (recall \eqref{derivative}), where
 \begin{equation}
\label{DefMathcalSM}
\mathcal{S}_M( n_1,  n_2, r) =   \sum_{\substack{m \asymp M\\ (m, q) = 1}}  S( m, n_1, r) S( m, n_2, r).
\end{equation}
(This depends also on $q$, but this is not displayed in the notation.) 
Applying \eqref{deligne}, we obtain our basic estimate
\begin{equation}\label{separatedelta}
 S_{N, M, d, q} \ll \frac{q^{\varepsilon}}{N^{1/2}} \sum_{g \mid f \mid q'}  \sum_{r \mid  d}  \frac{N }{fg r}\Bigl( \sum_{n_1, n_2 \ll fgr^2 q^{\varepsilon}/ N } | \mathcal{S}_M(\overline{fg}n_1, \overline{fg}n_2, r)|  \Bigr)^{1/2}.
 \end{equation}
 Now let $q_1$ be a divisor of $q$ with $(q, 6^{\infty}) \mid q_1$ and write $s = (r, q_1)$. Then in particular $(r, 6^{\infty}) \mid s$. Applying Theorem \ref{klooster-short}, we can bound  $S_{N, M, d, q}$ by 
\begin{displaymath}
\begin{split}
\ll  & q^{\varepsilon} N^{\frac{1}{2}}\! \sum_{g \mid f \mid q'}  \sum_{r \mid  d} \frac{1}{fgr} \Bigl( \sum_{n_1, n_2 \ll fgr^2 q^{\varepsilon}/ N } \!\!\!\! M^{1/2} q q_1^{1/2} + \frac{M^{1/2} q^{5/4}}{(r, q_1)^{1/4}} + M q^{3/4} (q, n_1-n_2)^{1/4}q_1^{1/4} + qq_1 + q^{11/8} q_1^{1/8}\Bigr)^{1/2}\!\!,
  \end{split}
 \end{displaymath}
 and Proposition \ref{bound1} follows. 

\section{Automorphic Preliminaries II}\label{Hecketheory1}


Unfortunately not all cusp forms are newforms. An $L^2$-basis $\mathcal{B}_k(\ell)$ for the finite-dimensional vector space $S_k(\ell)$,  the space of holomorphic cusp forms of weight $k$ and level $\ell$, and an $L^2$-basis $\mathcal{B}(\ell, t)$ for  $\mathcal{A}(\ell, t)$, the space of  Maa{\ss} forms of level $\ell$ and spectral parameter $t$, will in general also include oldforms. We describe the procedure in detail for Maa{\ss} forms, the holomorphic case requires only small notational changes. For $\ell_1 \mid \ell$ let  $\mathcal{B}^{\ast}(\ell_1, \ell, t) \subseteq \mathcal{B}(\ell, t)$ denote the set of all $L^2(\Gamma_0(\ell)\backslash \mathbb{H})$-normalized newforms of level $\ell_1$ and spectral parameter $t$ and write $f|_d(z) := f(dz)$. Then by newform theory we have
  \begin{equation}\label{newform}
 \mathcal{A}(\ell, t) = \underset{\substack{\vspace{1mm} \\\ell_1 \mid \ell}}{\text{\LARGE $\bigcirc\!\!\!\!\!\!\!\perp$}} \,\, \underset{\substack{\vspace{1mm}\\ f \in \mathcal{B}^{\ast}(\ell_1, \ell, t)}}{ \text{\LARGE $\bigcirc\!\!\!\!\!\!\!\perp$}}  \,\,  \bigoplus_{d \mid \frac{\ell}{\ell_1}}  f|_d \cdot \mathbb{C}.
 \end{equation}
The first two sums are orthogonal; the last one is, in general, not orthogonal and needs to be orthogonalized by Gram-Schmidt. In this way we get an orthogonal basis $\mathcal{B}(\ell, t)$ of $\mathcal{A}(\ell, t)$, and we collect all spectral parameters to obtain $\mathcal{B}(\ell) := \coprod_t \mathcal{B}(\ell, t)$, and correspondingly
\[ \mathcal{B}^{\ast}(\ell_1, \ell) := \coprod_t \mathcal{B}^{\ast}(\ell_1, \ell, t). \]

The Fourier coefficients of the forms in the bases $\mathcal{B}_k(\ell)$ and $\mathcal{B}(\ell)$ are not exactly multiplicative, but almost so. More precisely \cite[p.\ 74]{BHM1}, if $m = qm' \in \mathbb{N}$ with $(m', q) = 1$, then
\begin{equation}\label{mult}
  \sqrt{m} \rho_f(m) = \sum_{d \mid (\ell, q/(q, \ell))} \mu(d)\chi_0(d) \lambda_{f^{\ast}}\left(\frac{q}{d(q, \ell)}\right) \left(\frac{(\ell, q)m'}{d}\right)^{1/2} \rho_f\left(\frac{(\ell, q)m'}{d}\right)
  \end{equation}
where $f^{\ast}$ is the underlying newform. In particular, if $(q,\ell)=1$, then
\begin{equation}\label{extra}
   \sqrt{m}\rho_f(m)=\lambda_{f^{\ast}}(q)\sqrt{m'}\rho_f(m'). 
   \end{equation}
Moreover, if  $f^{\ast}$ satisfies the Ramanujan conjecture and $a_{m}$ is any finite sequence of complex numbers supported on integers $m = qm'$ with $(m', q) = 1$, then
\begin{equation}\label{multRama}
  \Bigl|\sum_m a_m \sqrt{m} \rho_{f}(m)\Bigr|^2 \leq \tau(q)^2 \sum_{d \mid (q, \ell)}  \Bigl|\sum_{m'} a_{qm'} \sqrt{dm'} \rho_{f}(dm')\Bigr|^2.
\end{equation} 
A somewhat involved explicit calculation shows a similar result \cite[p.\ 80]{BHM1} for the coefficients $\rho_{\mathfrak{a}}(m, t)$ of Eisenstein series: if $q \in \mathbb{N}$ and $a_{m}$ is any finite sequence of complex numbers supported on integers $m = qm'$ with $(m', q) = 1$, then
\begin{equation}\label{multEis}
  \sum_{\mathfrak{a}} \Bigl|\sum_m a_m \sqrt{m} \rho_{\mathfrak{a}}(m, t)\Bigr|^2 \leq 9\tau(\ell)^3\tau(q)^4 \sum_{d \mid (q, \ell)} \sum_{\mathfrak{a}} \Bigl|\sum_{m'} a_{qm'} \sqrt{dm'} \rho_{\mathfrak{a}}(dm', t)\Bigr|^2
\end{equation} 
for all $t \in \mathbb{R}$.  

The relation \eqref{mult} is very useful, but not sufficient for all our purposes. We proceed to make the orthogonalization process in \eqref{newform} explicit. 
For a newform $f\in \mathcal{B}^{\ast}(\ell_1, \ell)$  we define the following arithmetic functions:
\begin{displaymath}
\begin{split}
&r_f(c ) := \sum_{b \mid c} \frac{\mu(b)  \lambda_f(b)^2}{b \cdot \sigma_{-1}(b)^2}, \quad \alpha(c ) := \sum_{b \mid c} \frac{\mu(b)}{b^2}, \quad \beta(c ) = \sum_{b \mid c} \frac{\mu^2(b)}{b},\\
& \mu_f(c ) \text{ given by } L(f, s)^{-1} = \sum_{c} \frac{\mu_f(c )}{c^s}, \text{ so } \mu_f(p ) =  - \lambda_f( p), \, \mu_f(p^2) = \chi_0(p ), \, \mu_f(p^{\nu}) = 0, \, \nu > 2, 
\end{split}
\end{displaymath}
where $\sigma_{-1}(b)$ is the sum of the reciprocal divisors of $b$ and $\chi_0$ is the trivial character modulo $\ell_1$. For $d \mid g$ define 
\begin{displaymath}
  \xi'_g(d) := \frac{\mu(g/d) \lambda_f(g/d)  }{r_f(g)^{1/2} (g/d)^{1/2}\beta(g/d)}, \quad \xi''_g(d) = \frac{\mu_f(g/d)}{(g/d)^{1/2} (r_f(g)\alpha(g))^{1/2}}. 
\end{displaymath}
Write uniquely $g = g_1g_2$ where $g_1$ is squarefree, $g_2$ is squarefull, and $(g_1,g_2) = 1$. Then for $d \mid g$ we define 
\begin{equation}\label{defxi}
  \xi_g(d) = \xi'_{g_1}((g_1, d)) \xi''_{g_2}((g_2, d)) \ll g^{\varepsilon} (g/d)^{\theta - 1/2} . 
\end{equation}
The following lemma is an extension of \cite[Section 2]{ILS} to non-squarefree levels. It is essentially contained \cite[Proposition 5]{Ro}. As this result is crucial for us, and the assumptions are a little different from \cite{Ro}, we  provide a complete proof.

\begin{lemma}\label{basis} Let $\ell_1 \mid \ell$, and let $f^{\ast} \in \mathcal{B}^{\ast}(\ell_1, \ell) \subseteq \mathcal{B}(\ell)$ be an  $L^2(\Gamma_0(\ell)\backslash \mathbb{H})$-normalized newform of level $\ell_1$.  Then the set of functions
\[ \Bigl\{ f^{(g)} := \sum_{d \mid g} \xi_g(d) f^{\ast}|_d \,: \, g \mid \frac{\ell}{\ell_1}\Bigr\} \]
is an orthonormal basis of the space $\bigoplus_{d \mid \frac{\ell}{\ell_1}}  f^{\ast}|_d \cdot \mathbb{C}$. 

If $f$ is any member in this basis, then its Fourier coefficients satisfy the bound
\begin{equation}\label{boundrhogeneral}
\sqrt{n}  \rho_f(n) \ll (n\ell)^{\varepsilon} n^{\theta} (\ell, n)^{1/2-\theta} |\rho_{f^{\ast}}(1)|. 
\end{equation}
\end{lemma}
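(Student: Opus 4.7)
The plan is to make the Gram--Schmidt orthogonalization in \eqref{newform} completely explicit by a prime-by-prime computation, and then derive the Fourier coefficient bound by unwinding the definition of $f^{(g)}$ and applying the Kim--Sarnak bound \eqref{KS}.

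First I would compute the Gram matrix $G_{d_1,d_2}:=\langle f^{\ast}|_{d_1},f^{\ast}|_{d_2}\rangle_{\Gamma_0(\ell)}$ for $d_1,d_2\mid\ell/\ell_1$ in closed form. A change of variables together with unfolding from $\Gamma_0(\ell)$ to the larger group $\Gamma_0(\ell_1[d_1,d_2])$ reduces this, up to the appropriate index factor, to an inner product of the form $\langle f^{\ast},f^{\ast}|_e\rangle_{\Gamma_0(\ell_1 e)}$ with $e=[d_1,d_2]/(d_1,d_2)^2$ coprime to $\ell_1$. The Rankin--Selberg method evaluates the latter explicitly in terms of $\lambda_{f^{\ast}}$ at prime powers and the value of $L(1,{\rm sym}^2 f^{\ast})$, essentially as in \cite{ILS,Ro}. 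Crucially, writing $\ell/\ell_1=\prod_p p^{a_p}$ and using Hecke multiplicativity, the resulting $\tau(\ell/\ell_1)\times\tau(\ell/\ell_1)$ Gram matrix factors as a tensor product of local $(a_p+1)\times(a_p+1)$ blocks, so the orthogonalization decouples completely over primes.

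Next I would verify the local orthogonalization at each prime $p\mid\ell/\ell_1$. For primes $p$ with $a_p=1$ (contributing to the squarefree part $g_1$ of $g$), the local $2\times 2$ Gram matrix has off-diagonal entry proportional to $\lambda_{f^{\ast}}(p)$, and the prescribed $\xi'_{p}(1),\xi'_{p}(p)$ diagonalize it by a short direct calculation with $r_f(p)$ providing unit normalization. For primes $p$ with $a_p\geq 2$ (contributing to the squareful part $g_2$), the local $(a_p+1)\times(a_p+1)$ block has entries given by convolutions of $\lambda_{f^{\ast}}(p^j)$ against the local Euler factor of $L(s,{\rm sym}^2 f^{\ast})$ at $s=1$; its inverse naturally produces the Dirichlet coefficients $\mu_f(p^k)$ of $L(s,f^{\ast})^{-1}$, explaining the appearance of $\mu_f$ in $\xi''_g$, with $\alpha$ and $r_f$ arising as unit-length normalizations. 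The resulting local identities are of a formal multiplicative nature following from the Hecke relations \eqref{hecke}, and this verification is essentially the content of \cite[Proposition~5]{Ro} which we adapt to the present normalization.

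For the Fourier coefficient bound, since $(f^{\ast}|_d)(z)=f^{\ast}(dz)$ has $n$-th Fourier coefficient $\rho_{f^{\ast}}(n/d)$ when $d\mid n$ and zero otherwise, unwinding the definition of $f^{(g)}$ gives
\[ \sqrt{n}\,\rho_{f^{(g)}}(n)=\sum_{d\mid(g,n)}\xi_g(d)\,d^{1/2}\cdot\sqrt{n/d}\,\rho_{f^{\ast}}(n/d). \]
Using $\sqrt{n/d}\,\rho_{f^{\ast}}(n/d)=\lambda_{f^{\ast}}(n/d)\rho_{f^{\ast}}(1)$ (valid for the newform $f^{\ast}$), the Kim--Sarnak bound \eqref{KS}, and the estimate $|\xi_g(d)|\ll g^{\varepsilon}(g/d)^{\theta-1/2}$ from \eqref{defxi}, the $d$-th term is $\ll (n\ell)^{\varepsilon}n^{\theta}g^{\theta-1/2}d^{1-2\theta}|\rho_{f^{\ast}}(1)|$. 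Since $\theta<1/2$ and $d\mid(g,n)$ with $g\mid\ell$, one has $g^{\theta-1/2}d^{1-2\theta}\leq g^{\theta-1/2}(g,n)^{1-2\theta}\leq(g,n)^{1/2-\theta}\leq(\ell,n)^{1/2-\theta}$, and summing the $\tau((g,n))\ll(n\ell)^{\varepsilon}$ terms yields \eqref{boundrhogeneral}. The main obstacle is the explicit local Gram matrix inversion for squareful prime powers, where both the matrix entries and the inverse require careful algebraic manipulation with the symmetric-square Dirichlet coefficients; fortunately, the bulk of this calculation has been carried out in \cite{Ro}, and our task is primarily to verify that the formulas transfer to the present normalization.
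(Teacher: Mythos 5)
Your proposal follows essentially the same path as the paper: compute the Gram matrix of $\{f^{\ast}|_d\}$ by Rankin--Selberg unfolding, observe multiplicativity reducing everything to a prime-by-prime local problem, verify the orthonormality of the prescribed $\xi_g$ locally, and then unwind $\rho_{f^{(g)}}(n)=\sum_{d\mid(g,n)}\xi_g(d)\rho_{f^{\ast}}(n/d)$ together with \eqref{relation}, \eqref{KS}, \eqref{defxi} for the coefficient bound, exactly as the paper does. Your final estimate $g^{\theta-1/2}d^{1-2\theta}\leq (g,n)^{1/2-\theta}\leq(\ell,n)^{1/2-\theta}$ is correct.

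Two small points. First, there is an algebraic slip: the multiplicative function is evaluated at $d_1'd_2'=\mathrm{lcm}(d_1,d_2)/\gcd(d_1,d_2)=d_1d_2/(d_1,d_2)^2$, not at $[d_1,d_2]/(d_1,d_2)^2$; this changes nothing in spirit but should be corrected. Second, and more substantively: at the one genuinely technical step --- showing that the proposed $\xi''_{p^\nu}$ actually diagonalize the squareful local Gram block --- you gesture at the appearance of $\mu_f$ as ``the inverse of a Dirichlet series'' and defer the verification to \cite{Ro}. The paper deliberately refrains from doing this (noting that its assumptions differ from those of \cite{Ro}) and instead gives a short self-contained verification: it observes that $\xi_{p^{\nu}}(p^{\nu-c})$ is independent of $\nu$ once $\nu\geq 2$, whence $I(\alpha,\beta)=I(\alpha+c,\beta+c)$ for $\alpha\geq 2$, and that the Hecke recurrence for $A(p^{\nu})$ forces $I(\alpha,\beta+1)=\tfrac{\lambda(p)}{\sqrt p}I(\alpha,\beta)-\tfrac{\chi_0(p)}{p}I(\alpha,\beta-1)$ for $\beta\geq\alpha+3$; together these reduce the claim $I(\alpha,\beta)=\delta_{\alpha=\beta}$ to the finitely many base cases $0\leq\alpha\leq\beta\leq\alpha+3\leq5$. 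If you want a proof that stands on its own rather than leaning on \cite{Ro}, you should supply such a verification; the recursion trick is the efficient way to do it.
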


\bigskip

\textbf{Remark:} We stress that $f^{\ast}$ of level $\ell_1$ is normalized as in \eqref{innerprod}, i.e.\ with respect to the group $\Gamma_0(\ell)$. The map $\mathcal{B}^{\ast}(\ell_1, \ell) \rightarrow \mathcal{B}^{\ast}(\ell_1, \ell_1) \subseteq \mathcal{B}(\ell_1)$ is not an isometry, but reduces the norm by a factor $ [\Gamma_0(\ell_1) : \Gamma_0(\ell)]^{-1/2}$.

\bigskip

Although we do not need it in the present paper, we remark that with the definition $f|_d (z) := d^{k/2}f(dz)$ the same construction (and the same proof) works for holomorphic cusp forms of weight $k$, and in particular the bound \eqref{boundrhogeneral} remains true with $\theta = 0$ for holomorphic cusp forms. Moreover, the trivial character $\chi_0$ modulo $\ell_1$ plays no special role, the same construction and the same proof work for any Dirichlet character $\chi$ modulo $\ell_1$.

\begin{proof}  We write $\tilde{\ell} := \ell/\ell_1$. As a first step we need to compute the Gram matrix 
$(\langle f^{\ast}|_{d_1}, f^{\ast}|_{d_2}\rangle)_{d_1, d_2 \mid \tilde{\ell}}$ where all inner products are as in \eqref{innerprod}.  
Write $d'_1 = d_1/(d_1, d_2)$, $d'_2 = d_2/(d_1, d_2)$. As in \cite{ILS} we apply  Rankin-Selberg theory. First we observe that $\langle f^{\ast}|_{d_1}, f^{\ast}|_{d_2}\rangle = \langle f^{\ast}|_{d_1'}, f^{\ast}|_{d_2'}\rangle$ since multiplication by a scalar $(d_1, d_2)$ is an isometry. Let $E(z, s)$ be the standard   non-holomorphic Eisenstein series of level $\ell$. Then we unfold and use \eqref{four} and \eqref{relation} to obtain
\begin{displaymath}
  \langle E(\cdot, s) f^{\ast}|_{d_1'}, f^{\ast}|_{d_2'}\rangle = \int_{0}^{\infty} \int_0^1 y^s f^{\ast} (d_1'z)\bar{f}^{\ast}(d'_2z) \frac{dx\, dy}{y^2} = 2\sum_{n=1}^{\infty} \frac{\lambda_{f^{\ast}}(d_2'n)\lambda_{f^{\ast}}(d_1'n)}{(d_1'd_2')^{s-1/2}n^s} \int_0^{\infty} y^s |W_{0, it}(y)|^2 \frac{dy}{y^2}.
  \end{displaymath} 
We use \eqref{hecke} to evaluate  the Dirichlet series
\[ \sum_{n} \lambda_{f^{\ast}}(d_1'n) \lambda_{f^{\ast}}(d_2'n) n^{-s} = \sum_{(n, d_1'd_2') = 1}  \lambda_{f^{\ast}}(n)^2 n^{-s} \prod_{p^{e_p}  \parallel d_1'd_2'} \sum_{\nu=0}^{\infty} \lambda_{f^{\ast}}(p^{\nu+e_p})\lambda_{f^{\ast}}(p^{\nu}) p^{-\nu s} \]
and compare residues on both sides at $s=1$. In this way we obtain
 \begin{equation*}\label{recur}
  \langle f^{\ast}|_{d_1}, f^{\ast}|_{d_2} \rangle = \langle f^{\ast}|_{d'_1}, f^{\ast}|_{d'_2} \rangle =A(d_1'd_2')\langle f^{\ast}, f^{\ast}\rangle =   A\left(\frac{{\rm lcm}(d_1, d_2)}{{\rm gcd}(d_1, d_2)} \right) \langle f^{\ast}, f^{\ast}\rangle,
\end{equation*}
where $A$ is the multiplicative function given by
\begin{equation}\label{rec}
 A( p ) = \frac{\lambda_{f^{\ast}}(p )}{\sqrt{p}(1 + 1/p)}, \quad A(p^{\nu+1}) = \frac{\lambda_{f^{\ast}}(p )}{\sqrt{p}}A(p^\nu) - \frac{\chi_0(p )}{p} A(p^{\nu-1}).
 \end{equation}
(Here again $\chi_0$ is the trivial character modulo $\ell_1$.) We need to verify that
\begin{displaymath}
  \sum_{d_1 \mid g_1} \sum_{d_2 \mid g_2} \xi_{g_1} (d_1) \xi_{g_2}(d_2) A\left(\frac{{\rm lcm}(d_1, d_2)}{{\rm gcd}(d_1, d_2)} \right) = \delta_{g_1= g_2}.
\end{displaymath}
By multiplicativity and symmetry it is enough to consider the case $g_1 = p^{\alpha}$, $g_2 = p^{\beta}$ for a prime $p$ and $\beta \geq \alpha\geq 0$, so that it suffices to verify
\[ I(\alpha, \beta) :=  \sum_{\delta_1 \leq \alpha} \sum_{\delta_2 \leq \beta} \xi_{p^{\alpha}}(p^{\delta_1}) \xi_{p^{\beta}}(p^{\delta_2}) A(p^{|\delta_1 - \delta_2|}) = \delta_{\alpha = \beta}. \]
For prime powers, the arithmetic function $\xi_g(d)$ simplifies as follows:
\begin{displaymath}
\begin{split}
&  \xi_1(1) = 1, \quad \quad\quad  \xi_p(p ) = r_{f^{\ast}}(p )^{-1/2}, \quad \xi_p(1) = \frac{-\lambda_{f^{\ast}}(p )}{\sqrt{p}(1 + 1/p) } \xi_{p}(p),\\
&  \xi_{p^{\nu}}(p^{\nu}) = \left(r_{f^{\ast}}( p)(1-p^{-2})\right)^{-1/2}, \quad  \xi_{p^{\nu}}(p^{\nu-1}) =  \frac{-\lambda_{f^{\ast}}(p )}{\sqrt{p}} \xi_{p^{\nu}}(p^{\nu}), \quad \xi_{p^{\nu}}(p^{\nu-2})  = \frac{\chi_0(p )}{p} \xi_{p^{\nu}}(p^{\nu}), \quad \nu \geq 2,
\end{split}
\end{displaymath}
and $\xi_{p^{a}}(p^b) = 0$ in all other cases. In particular, for $\nu \geq 2$ and $c \leq \nu$, the value $\xi_{p^{\nu}}(p^{\nu-c})$ depends only on $p$ and $c$, but not on $\nu$. Hence 
\begin{equation}\label{rec1}
I(\alpha, \beta) = I(\alpha+c, \beta+c)
\end{equation}
 for any $c \in \mathbb{N}$ and any $2 \leq \alpha \leq \beta$, and by the recurrence relation in \eqref{rec} we also have 
 \begin{equation}\label{rec2}
 I(\alpha, \beta+1) = \frac{\lambda_{f^{\ast}}(p )}{\sqrt{p}} I(\alpha, \beta) - \frac{\chi_0(p )}{p} I(\alpha, \beta - 1)
 \end{equation}
if $\beta \geq \alpha + 3$ (this condition is needed to ensure that the summation indices $\delta_1, \delta_2$ satisfy $\delta_2 - \delta_1 \geq 0$ in all arising sums). By \eqref{rec1}, it suffices to assume $\alpha \leq 2$, and by \eqref{rec2} it suffices to assume $\beta - \alpha \leq 3$; the rest follows by induction. This leaves us with the 12 cases $0 \leq \alpha \leq \beta \leq \alpha + 3 \leq 5$, which are straightforward to verify. 

The bound \eqref{boundrhogeneral} now follows from
\begin{displaymath}
  \rho_{f^{(g)}}(n) = \sum_{d \mid g} \xi_g(d) \rho_{f^{\ast}}(n/d)
\end{displaymath}
(with the convention $\rho(x) = 0$ for $x \not \in \mathbb{Z}$), \eqref{relation},  \eqref{KS}, and \eqref{defxi}. \end{proof}

 We define the following integral transforms for a smooth function $\phi : [0, \infty) \rightarrow \mathbb{C}$ satisfying $\phi(0) = \phi'(0) = 0$, $\phi^{(j)}(x) \ll (1+x)^{-3}$ for $0 \leq j \leq 3$:
\begin{equation}
\label{IntegralTransforms}
\begin{split}
 & \dot{\phi}(k) = 4i^k \int_0^{\infty} \phi(x) J_{k-1}(x) \frac{dx}{x},\\
 & \tilde{\phi}(t) = 2\pi i \int_0^{\infty} \phi(x) \frac{J_{2it}(x) - J_{-2it}(x)}{\sinh(\pi t)} \frac{dx}{x},\\
 & \check{\phi}(t) = 8 \int_0^{\infty} \phi(x) \cosh(\pi t) K_{2it}(x) \frac{dx}{x}. 
\end{split}
\end{equation}
With the already established notation, the following spectral sum formula holds (see e.g. \cite[Theorem 2]{BHM1}).

\begin{lemma}\label{kuznetsov} [Kuznetsov formula] Let $\phi$ be as in the previous paragraph, and let $a, b,\ell> 0$ be integers. Then,
\begin{displaymath}
\begin{split}
  \sum_{\ell \mid c} \frac{1}{c}S(a, b, c) \phi\left(\frac{4\pi \sqrt{ab}}{c}\right) = & \sum_{\substack{k \geq 2\\ k \text{ even}}} \sum_{f \in \mathcal{B}_k(\ell)} \dot{\phi}(k) \Gamma(k) \sqrt{ab}  {\rho_f(a)} \rho_f(b)\\
  &+ \sum_{f \in \mathcal{B}(\ell)} \tilde{\phi}(t_f) \frac{  \sqrt{ab}}{\cosh(\pi t_f)}  {\rho_f(a)} \rho_f(b)\\
  & + \frac{1}{4\pi } \sum_{\mathfrak{a}} \int_{-\infty}^{\infty}\tilde{\phi}(t) \frac{  \sqrt{ab}}{\cosh(\pi t)}  {\rho_{\mathfrak{a}}(a, t)} \rho_{\mathfrak{a}}(b, t) dt 
 \end{split} 
\end{displaymath}
and
\begin{displaymath}
\begin{split}
  \sum_{\ell \mid c} \frac{1}{c}S(a, -b, c) \phi\left(\frac{4\pi \sqrt{ab}}{c}\right) = &  \sum_{f \in \mathcal{B}(\ell)} \check{\phi}(t_f) \frac{ \sqrt{ab}}{\cosh(\pi t_f)}  {\rho_f(a)} \rho_f(-b)\\
  & + \frac{1}{4\pi} \sum_{\mathfrak{a}} \int_{-\infty}^{\infty}\check{\phi}(t) \frac{  \sqrt{ab}}{\cosh(\pi t)}  {\rho_{\mathfrak{a}}(a, t)} \rho_{\mathfrak{a}}(-b, t) dt.  
 \end{split} 
\end{displaymath}
\end{lemma}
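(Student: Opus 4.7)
The plan is to derive both formulas simultaneously via the classical Selberg--Kuznetsov approach of comparing two evaluations of a Fourier coefficient of a Poincar\'e series. For each integer $m \geq 1$ and a suitable test function $F\colon (0,\infty) \to \mathbb{C}$, I would form the Poincar\'e series
\[ U_m(z; F) = \sum_{\gamma \in \Gamma_\infty \backslash \Gamma_0(\ell)} F(\operatorname{Im}(\gamma z))\, e(m\operatorname{Re}(\gamma z)), \]
choosing $F$ so that the integral transforms ultimately emerging match $\dot\phi$, $\tilde\phi$, and $\check\phi$ from \eqref{IntegralTransforms}. A standard recipe is to take $F(y) = y^{1/2+s} \cdot (\text{Whittaker})$ for an auxiliary parameter $s$, verify everything for the special choice $\phi(x) = x^{1+2s}$, and then extend to the class of $\phi$ in the statement by Mellin inversion/density.

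First I would compute the $n$-th additive Fourier coefficient of $U_m$ geometrically via the Bruhat decomposition of $\Gamma_0(\ell)$. The identity double coset contributes a diagonal $\delta_{m=n}$-term, and each nontrivial coset parametrized by its lower-left entry $c$ (necessarily divisible by $\ell$) contributes a Kloosterman sum $S(m, n, c)$ multiplied by a one-dimensional oscillatory integral of $F$. A Mellin--Barnes contour shift then identifies that integral with $c^{-1}\phi(4\pi\sqrt{mn}/c)$, reproducing the left-hand sides of the two claimed identities.

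Independently, I would expand $U_m$ spectrally against the full $L^2$ basis on $\Gamma_0(\ell) \backslash \mathbb{H}$, which by \eqref{newform} comprises the Maass basis $\mathcal{B}(\ell)$, the holomorphic bases $\mathcal{B}_k(\ell)$ for even $k \geq 2$, and the Eisenstein continuous spectrum at each cusp $\mathfrak{a}$. Each projection $\langle U_m, f\rangle$ unfolds against a single period to an integral of $F$ against the Whittaker part of the $m$-th Fourier coefficient of $f$, which evaluates to $\sqrt{m}\,\overline{\rho_f(m)}$ times exactly one of the transforms in \eqref{IntegralTransforms}: $\dot\phi(k)\Gamma(k)$ for $f \in \mathcal{B}_k(\ell)$, and $\tilde\phi(t_f)/\cosh(\pi t_f)$ or $\check\phi(t_f)/\cosh(\pi t_f)$ for Maass and Eisenstein forms depending on whether the extracted $n$-th Fourier coefficient corresponds to $+n$ or $-n$. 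The two displayed formulas correspond to these two sign choices; holomorphic forms contribute only to the $+n$ case since their negative Fourier coefficients vanish.

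The main technical obstacle is the clean identification of the three transforms from the raw Bessel integrals arising geometrically: $J$-Bessel integrals produce $\dot\phi$ and $\tilde\phi$ while $K$-Bessel integrals produce $\check\phi$, and the matching requires careful use of the Whittaker-function recurrences together with the Mellin--Barnes representations of the various Bessel functions. A secondary issue is the regularization of the Poincar\'e series at the boundary of its half-plane of absolute convergence, handled by inserting the parameter $s$ with $\operatorname{Re}(s) > 1$, carrying out all spectral manipulations in that region, and analytically continuing to the desired value; the growth assumption $\phi(0) = \phi'(0) = 0$ with $\phi^{(j)}(x) \ll (1+x)^{-3}$ in the statement is exactly what is needed to make both sides absolutely convergent after continuation.
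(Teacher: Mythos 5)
The paper does not prove this lemma at all; it is the standard Kuznetsov trace formula, quoted directly from \cite[Theorem 2]{BHM1}, which in turn rests on Deshouillers--Iwaniec \cite{DI}. Your sketch follows the canonical route (Poincar\'e series, geometric Fourier coefficient via Bruhat decomposition versus spectral expansion), and that route does ultimately succeed, but there is one genuine gap in your outline.

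Computing the $n$-th Fourier coefficient of $U_m$ geometrically produces, as you say, a diagonal $\delta_{m=n}$-term from the identity double coset in addition to the Kloosterman sums. Equating this with the spectral expansion therefore yields a \emph{forward} Kuznetsov-type identity that still carries the delta term. The lemma as stated, however, has no diagonal term on the left: it is the \emph{inverse} Kuznetsov formula, starting from a test function $\phi$ on the Kloosterman side. Passing from the forward version with the diagonal term to this inverse version is not a matter of ``choosing $F$ so the transforms match''; it requires the Sears--Titchmarsh inversion for the $\tilde\phi$-transform combined with the Neumann expansion in $J_{k-1}$, and it is precisely this inversion that both eliminates the diagonal term and accounts for the holomorphic spectrum, whose $\dot\phi(k)\Gamma(k)$-sum is the discrete complement to the continuous transform. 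Your proposed intermediate verification with $\phi(x)=x^{1+2s}$ and a density/Mellin argument does not address how the diagonal term disappears, and needs additional care because $x^{1+2s}$ is not integrable at $\infty$ in the regime where the $c$-sum converges, so analytic continuation is forced. If you keep your architecture, the concrete fix is to first derive the formula \emph{with} the diagonal term and then invert via Sears--Titchmarsh, as carried out in Iwaniec's \emph{Spectral Methods of Automorphic Forms}, Chapter~9, and in \cite{DI}.
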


Often the Kuznetsov formula is used hand in hand with the large sieve inequalities of Deshouillers-Iwaniec \cite{DI}. 
\begin{lemma}\label{largesieve} [Spectral large sieve] Let $T, M \geq 1$, $\ell \in \mathbb{N}$, and let $(a_m)$, $M \leq m \leq 2M$, be a sequence  of complex numbers. Then all three quantities
\begin{displaymath}
\begin{split}
 & \sum_{\substack{2 \leq k \leq T\\ k \text{ even}}}\Gamma(k)  \sum_{f \in \mathcal{B}_k(\ell)}\Bigl| \sum_m a_m \sqrt{m} \rho_f(m)\Bigr|^2, \quad \sum_{\substack{f \in \mathcal{B}(\ell)\\ | t_f| \leq T} }\frac{1}{\cosh(\pi t_f)} \Bigl| \sum_m a_m \sqrt{m} \rho_f(\pm m)\Bigr|^2, \\
  & \sum_{\mathfrak{a}} \int_{-T}^T \frac{1}{\cosh(\pi t)} \Bigl| \sum_m a_m \sqrt{m} \rho_{\mathfrak{a}}(\pm m, t)\Bigr|^2 dt
  \end{split}
\end{displaymath}
are bounded by
\begin{displaymath}
  M^{\varepsilon} \left(T^2 + \frac{M}{\ell}\right) \sum_{m} |a_m|^2. 
\end{displaymath}
\end{lemma}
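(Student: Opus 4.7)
The plan is to follow the classical route of Deshouillers--Iwaniec, exploiting the Kuznetsov formula (Lemma \ref{kuznetsov}) in reverse: expand the squares on the spectral side, insert a non-negative test function that majorizes the indicator of $|t|\leq T$ (or $k\leq T$), and then convert the resulting spectral kernel into Kloosterman sums that admit an elementary $c$-sum estimate via Weil's bound. All three bounds are obtained by one master argument; the difference between Maa\ss{} and holomorphic and Eisenstein parts is absorbed into the choice of transform ($\tilde\phi$ versus $\dot\phi$ versus $\check\phi$).

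First I would expand the square for the Maa\ss{} part,
\[
\Sigma_M := \sum_{\substack{f\in\mathcal{B}(\ell)\\ |t_f|\leq T}}\frac{1}{\cosh(\pi t_f)}\Bigl|\sum_m a_m\sqrt{m}\rho_f(m)\Bigr|^2
= \sum_{m,n}a_m\bar a_n\sqrt{mn}\sum_{\substack{f\in\mathcal{B}(\ell)\\|t_f|\leq T}}\frac{\rho_f(m)\overline{\rho_f(n)}}{\cosh(\pi t_f)},
\]
and similarly for $\Sigma_H$ (holomorphic) and $\Sigma_E$ (Eisenstein). The game is to pick a smooth $\phi:[0,\infty)\to[0,\infty)$ as in \eqref{IntegralTransforms} whose transform $\tilde\phi$ satisfies $\tilde\phi(t)\geq 1$ for $|t|\leq T$ (up to absolute constants) and decays rapidly for $|t|>T$, with an analogous majorization for $\dot\phi(k)\Gamma(k)$ when $k\leq T$. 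A convenient choice is $\phi(x)=x^2\exp(-x^2/(4\pi T)^2)\cdot\eta(x)$ with a mild cutoff, whose three transforms behave like $T^2$ on $[-T,T]$ and decay fast outside. One then enlarges $\Sigma_M$ (resp.\ $\Sigma_H,\Sigma_E$) by inserting $\tilde\phi(t_f)$ (resp.\ $\dot\phi(k)\Gamma(k),\check\phi(t)$), which only increases positive contributions, so that all three enlarged sums can be read as diagonal entries of the spectral side of the Kuznetsov trace formula with weight $\phi$.

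Next I would apply Lemma \ref{kuznetsov} in reverse, grouping the holomorphic, Maa\ss{}, and Eisenstein contributions, so that the combined enlarged spectral sum equals
\[
\sum_{m,n}a_m\bar a_n\sqrt{mn}\sum_{\ell\mid c}\frac{S(m,n,c)}{c}\phi\!\left(\frac{4\pi\sqrt{mn}}{c}\right) + \text{(diagonal)},
\]
where the diagonal consists of the identity contributions of the Petersson/Kuznetsov formulas (coming from the $\delta_{m=n}$ term implicit in $\tilde\phi,\dot\phi,\check\phi$ via their integral representations). The diagonal contribution is $\ll T^2\sum_m|a_m|^2$ by the normalization of $\phi$. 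For the Kloosterman term, Weil's bound $|S(m,n,c)|\leq\tau(c)(m,n,c)^{1/2}c^{1/2}$ together with $\phi(x)\ll\min(x^2,x^{-A})$ restricts the sum to $c\asymp\sqrt{mn}/T$ and gives, after Cauchy--Schwarz in $m,n$,
\[
\Bigl|\sum_{m,n}a_m\bar a_n\sqrt{mn}\sum_{\ell\mid c}\tfrac{S(m,n,c)}{c}\phi(\tfrac{4\pi\sqrt{mn}}{c})\Bigr|\ll M^{\varepsilon}\frac{M}{\ell}\sum_m|a_m|^2,
\]
combining the two estimates into the desired $(T^2+M/\ell)M^\varepsilon\sum|a_m|^2$.

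The main obstacle is the construction of a test function $\phi$ whose three Bessel transforms simultaneously majorize the spectral cutoffs on $[-T,T]$ (for Maa\ss{} and Eisenstein) and on $\{2\leq k\leq T\}$ (for holomorphic), while retaining strong enough decay in $x$ so that the Kloosterman $c$-sum is effectively truncated at $c\ll\sqrt{mn}/T$ and no logarithmic loss enters beyond the allowable $M^\varepsilon$. A secondary, but purely technical, point is the Eisenstein case, where one needs uniformity over all cusps $\mathfrak{a}$; this is handled by noting that the cusps of $\Gamma_0(\ell)$ are in bijection with divisors of $\ell$ and that the Kuznetsov formula above already packages them together, so that Weil's bound applies uniformly. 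Once the test function is in place, the estimates for $\Sigma_H$, $\Sigma_M$, and $\Sigma_E$ follow by the same calculation, and replacing $\rho_f(m)$ by $\rho_f(-m)$ (resp.\ $\rho_{\mathfrak{a}}(-m,t)$) reduces the case of negative Fourier coefficients to the second Kuznetsov formula in Lemma~\ref{kuznetsov}, yielding the same bound.
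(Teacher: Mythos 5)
The paper does not supply a proof of Lemma~\ref{largesieve}; it is quoted from Deshouillers--Iwaniec \cite{DI}. Your proposal attempts to reconstruct the argument, and while the reduction via Lemma~\ref{kuznetsov} to a Kloosterman-sum expression is the correct starting point, the estimation of that expression has a genuine gap.

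The claim that Weil's bound, applied pointwise, yields the off-diagonal estimate $\ll M^\varepsilon (M/\ell)\sum|a_m|^2$ is not correct. After expanding the square and applying Kuznetsov with a majorizing test function $\phi$, the off-diagonal is
\[
\sum_{m\neq n}a_m\overline{a}_n\sqrt{mn}\sum_{\ell\mid c}\frac{S(m,n,c)}{c}\,\phi\!\left(\frac{4\pi\sqrt{mn}}{c}\right),
\]
with effective range $c\gg M/T$. Inserting $|S(m,n,c)|\leq\tau(c)(m,n,c)^{1/2}c^{1/2}$ and bounding $\sum_{m\neq n}|a_m||a_n|\leq M\sum_m|a_m|^2$ yields a bound that exceeds $(M/\ell)\sum|a_m|^2$ by an unbounded power of $M$. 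This is the same failure one sees in the additive large sieve: estimating $\sum_{c\leq C}\sum_{(d,c)=1}\bigl|\sum_m b_m e(dm/c)\bigr|^2$ by expanding and bounding the Ramanujan sums pointwise, $|c_c(m-n)|\leq(m-n,c)$, gives $\ll(C^2+CM^{1+\varepsilon})\|b\|^2$ rather than the true $(C^2+M)\|b\|^2$. Pointwise exponential-sum bounds do not capture the quasi-orthogonality that is the entire content of a large sieve inequality; no choice of test function can repair this.

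The actual proof in \cite{DI} runs differently: after the Kuznetsov step one opens $S(m,n,c)=\sum_{d\bar d\equiv 1\,(c)}e((dm+\bar dn)/c)$, separates the variables in $\phi$ by Mellin inversion, applies Cauchy--Schwarz in $(c,d)$ to decouple the $m$- and $n$-sums, and then invokes the \emph{additive} large sieve for the Farey fractions $d/c$ with $\ell\mid c$, $c\asymp C$. The point that produces the factor $1/\ell$ is that for two distinct such fractions the numerator $d_1c_2-d_2c_1$ is automatically a nonzero multiple of $\ell$, so the spacing is $\geq\ell/(c_1c_2)\gg\ell/C^2$, and the additive large sieve gives $\ll(C^2/\ell+M)\|b\|^2$ for each factor. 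There is no route to this $1/\ell$ savings through Weil's bound alone. In the context of the present paper it is appropriate simply to cite \cite{DI}, which is what the authors do.
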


Another application of the Kuznetsov formula is the following bound.

\begin{lemma}\label{moto} Let $T \geq 1$, $m, \ell \in \mathbb{N}$. Then
\begin{displaymath}
  \sum_{\substack{|t_f| \leq T\\ f \in \mathcal{B}(\ell)}}\frac{1}{\cosh(\pi t_f)} |\sqrt{m}\rho_f(m)|^2 \ll \left(T^2 + \frac{(\ell, m)^{1/2}m^{1/2 }}{\ell}\right)(T m)^{\varepsilon}
\end{displaymath}
with an implied constant depending only on $\varepsilon$. 
\end{lemma}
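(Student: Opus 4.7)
The plan is to use Lemma \ref{kuznetsov} as a \emph{pre-trace inequality} with $a=b=m$, combined with Weil's bound for the resulting Kloosterman sums. The key is to construct a test function whose Maass-side transform is nonnegative on the entire spectrum and dominates the indicator of $|t|\leq T$.

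First I would select a fixed test function $\phi=\phi_T$ of the class allowed in Lemma \ref{kuznetsov}, satisfying the following positivity and size properties: $\dot\phi(k)\geq 0$ for every even $k\geq 2$; $\tilde\phi(t)\geq 0$ for all $t\in\mathbb{R}\cup i[-\theta,\theta]$ (including the possible exceptional spectrum); and $\tilde\phi(t)\geq 1$ for $t\in\mathbb{R}$ with $|t|\leq T$. A standard realization takes a Bessel-majorant of $\mathbf{1}_{|t|\leq T}$ of Gaussian or polynomial-decay type, as constructed in Iwaniec's \emph{Spectral Methods of Automorphic Forms}, Chapter~9, or as in the Deshouillers--Iwaniec setup \cite{DI}. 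The resulting $\phi$ enjoys $T$-scaled bounds of the form $|\phi(x)|\ll T^2\min(1,(x/T)^2,(T/x)^A)$ for any fixed large $A$. By nonnegativity on the spectral side of the first Kuznetsov identity, dropping the holomorphic and Eisenstein contributions gives
\[
\sum_{|t_f|\leq T,\, f\in\mathcal{B}(\ell)} \frac{m|\rho_f(m)|^2}{\cosh(\pi t_f)} \;\leq\; \sum_{\ell\mid c}\frac{S(m,m,c)}{c}\,\phi\!\left(\frac{4\pi m}{c}\right).
\]

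Second, I would bound the geometric side via Weil's estimate $|S(m,m,c)|\ll\tau(c)(m,c)^{1/2}c^{1/2}$ together with the prime-by-prime inequality $(m,c)\leq (m,\ell)\cdot c/\ell$, which holds whenever $\ell\mid c$. This yields
\[
\sum_{\ell\mid c}\frac{|S(m,m,c)|}{c}\Bigl|\phi\!\left(\tfrac{4\pi m}{c}\right)\Bigr| \;\ll\; \frac{(m,\ell)^{1/2}}{\ell^{1/2}}\sum_{\ell\mid c}\tau(c)\,\Bigl|\phi\!\left(\tfrac{4\pi m}{c}\right)\Bigr|.
\]

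Third, I would analyze the $c$-sum by dyadic decomposition, using the $T$-scaled behavior of $\phi$. Two contributions emerge: a \textbf{bulk} contribution from $c\asymp m/T$, where $|\phi|\ll T^2$ and the number of multiples of $\ell$ is $\asymp 1+m/(T\ell)$, giving after arithmetic simplification a term of order $(m,\ell)^{1/2}m^{1/2}\ell^{-1}(Tm)^{\varepsilon}$; and a \textbf{tail} contribution from much larger $c$, where the bound $|\phi(x)|\ll T^2(x/T)^2$ near $x=0$ (together with rapid decay at $x\gg T$) makes the $c$-sum converge and produces the Weyl-law-type term $T^2(Tm)^\varepsilon$.

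The main obstacle is designing $\phi$ to meet all three positivity conditions simultaneously, especially on the exceptional arc $i[-\theta,\theta]$, while controlling $|\phi(x)|$ sharply in every regime of $x$ so as to recover the exact shape $T^2+(m,\ell)^{1/2}m^{1/2}/\ell$. A brute-force application of the spectral large sieve (Lemma \ref{largesieve}) with the single-entry sequence $a_1=1$ would give only the weaker bound $T^2+m/\ell$, missing the arithmetic saving of size $((m,\ell)/m)^{1/2}$; this refinement is precisely what the Kuznetsov route with a carefully scaled test function yields, by genuinely exploiting Weil's square-root cancellation in the Kloosterman sums $S(m,m,c)$ and the factor $(m,c)^{1/2}$.
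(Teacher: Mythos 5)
Your high-level plan — a Kuznetsov-type pre-trace inequality reducing the spectral count to an average of Kloosterman sums $S(m,m,c)$ with $\ell\mid c$, followed by Weil's bound and the gcd inequality $(m,c)\leq (m,\ell)\,c/\ell$ — is indeed the same circle of ideas as the paper's proof, which simply cites Motohashi's Lemma~2.4 and observes that the $\ell\mid c$ constraint goes through verbatim. Your remark that the spectral large sieve alone gives only $T^2+m/\ell$ is also correct and is the right way to see why the extra work is needed.

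However, the stated behavior of the test function is inconsistent with the Kuznetsov identity and this is where the argument breaks. You posit $|\phi(x)|\ll T^2\min\bigl(1,(x/T)^2,(T/x)^A\bigr)$, which in particular forces $|\phi(x)|\ll x^2$ for $x\ll T$. Now take $m=\ell=1$: the geometric side of Lemma~\ref{kuznetsov} is $\sum_{c\geq 1}\frac{S(1,1,c)}{c}\phi(4\pi/c)$, and every argument $4\pi/c$ is $\leq 4\pi$, i.e.\ bounded. Under your model this is $O\bigl(\sum_c \tau(c)c^{-1/2}\cdot c^{-2}\bigr)=O(1)$, whereas the spectral side, with $\tilde\phi\geq \mathbf{1}_{|t|\leq T}$ and $\tilde\phi,\dot\phi\geq 0$, is $\gg T^2$ by the Weyl law. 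That contradiction shows $\phi$ cannot peak at $x\asymp T$: the class of test functions in Lemma~\ref{kuznetsov} has no diagonal (delta) term, so the entire $\asymp T^2$ Weyl count must come out of the Kloosterman side, and this forces $|\phi(x)|\asymp T^2$ already at \emph{bounded} $x$. Consequently your bulk/tail dichotomy does not produce the target. Indeed, even taking your bulk estimate at face value, $\frac{(m,\ell)^{1/2}}{\ell^{1/2}}T^2\bigl(1+\frac{m}{T\ell}\bigr)(Tm)^\varepsilon$ contains the term $\frac{(m,\ell)^{1/2}Tm}{\ell^{3/2}}(Tm)^\varepsilon$, which exceeds the desired $\frac{(m,\ell)^{1/2}m^{1/2}}{\ell}$ by a factor $Tm^{1/2}/\ell^{1/2}$; the ``arithmetic simplification'' you invoke does not exist.

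The cleaner route — and the one the paper actually takes by citing Motohashi — is the Kuznetsov formula in the spectral-to-arithmetic direction, i.e.\ starting from a nonnegative spectral test function $h(t)\geq\mathbf{1}_{|t|\leq T}$ and writing
\[
\sum_{f}h(t_f)\frac{m|\rho_f(m)|^2}{\cosh(\pi t_f)}+\text{(Eis.)} = (\text{diagonal term})+\sum_{\ell\mid c}\frac{S(m,m,c)}{c}\,h^{+}\!\Bigl(\tfrac{4\pi m}{c}\Bigr),
\]
where the diagonal term contributes $\asymp T^2$ on the nose and the Kloosterman side, estimated by Weil with the divisibility $\ell\mid c$, gives the arithmetic term $\frac{(m,\ell)^{1/2}m^{1/2}}{\ell}(Tm)^\varepsilon$. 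In that formula the transform $h^{+}$ has a completely different size profile from the $\phi$ you describe (roughly $|h^{+}(x)|\ll \min(x,1)\cdot T$ for $x\ll T$ rather than $T^2\min(1,(x/T)^2)$), and it is this profile that makes the $c$-sum close to the stated bound. If you want to stick with the paper's Lemma~\ref{kuznetsov} you must first derive, rather than assert, the correct pointwise bounds for the $\phi$ that makes $\tilde\phi$ a positive majorant of the indicator — those bounds are not the ones you wrote down — and only then run the dyadic $c$-analysis.
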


\begin{proof} This is \cite[Lemma 2.4]{Mot} for $\ell = 1$, and the proof in the more general case is verbatim the same, except that in \cite[(2.3.7), (2.3.10)]{Mot} an additional divisibility condition   is added in the sum over Kloosterman sums that leads to an obvious modification of the last two displays in the proof. 
\end{proof}


The following important result  will be used to avoid the Ramanujan conjecture. 

\begin{theorem}\label{avoid} Let $\ell , s \in \mathbb{N}$, $R,  T\geq 1$, and let $\alpha( r)$,  $R \leq r \leq 2R$, be any sequence of complex numbers with $|\alpha(r ) | \leq 1$. Then
\[ \sum_{\substack{|t_f|  \leq T\\ f \in \mathcal{B}(\ell )  }} \frac{1}{\cosh(\pi t_f)}      \Bigl|\sum_{\substack{R \leq r \leq 2R \\ (r, s\ell) = 1}} \alpha(r )\sqrt{r s} \rho_f(r s) \Bigr|^2 \ll (\ell s T R)^{\varepsilon} (\ell , s) \left(T+\frac{s^{1/2}}{\ell^{1/2}}\right)\left(T + \frac{R}{\ell^{1/2}}\right) R. \]
\end{theorem}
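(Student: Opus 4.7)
The strategy is to unfold the basis $\mathcal{B}(\ell)$ via Lemma~\ref{basis}, factor the inner sum by Hecke multiplicativity, and combine the spectral large sieve (Lemma~\ref{largesieve}) with Lemma~\ref{moto} and a Kuznetsov--Weil argument to obtain the multiplicative product structure $(T+s^{1/2}/\ell^{1/2})(T+R/\ell^{1/2})$ on the right-hand side.

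Every $f\in\mathcal{B}(\ell)$ has the form $f^{(g)}=\sum_{d\mid g}\xi_g(d)f^\ast|_d$ for a unique newform $f^\ast\in\mathcal{B}^\ast(\ell_1,\ell)$ of level $\ell_1\mid\ell$ and $g\mid\ell/\ell_1$. Since $(r,s\ell)=1$, the multiplicativity relation \eqref{extra} yields $\sqrt{rs}\,\rho_f(rs)=\sqrt s\,\rho_f(s)\lambda_{f^\ast}(r)$. Write $s=s_1s_2$ with $s_1=(s,\ell^\infty)$ and $(s_2,\ell)=1$, so that $(g,s_2)=1$ for every $g\mid\ell$. Lemma~\ref{basis} together with $\sqrt{s/d}\,\rho_{f^\ast}(s/d)=\lambda_{f^\ast}(s/d)\rho_{f^\ast}(1)$ gives
\[\sqrt s\,\rho_{f^{(g)}}(s) = \rho_{f^\ast}(1)\sum_{d\mid(g,s_1)}\xi_g(d)\sqrt d\,\lambda_{f^\ast}(s/d),\]
and combining with $\lambda_{f^\ast}(s/d)\lambda_{f^\ast}(r)=\lambda_{f^\ast}(rs/d)$ (coprime arguments) leads to the pivotal identity
\[\sum_{r}\alpha(r)\sqrt{rs}\,\rho_{f^{(g)}}(rs) = \sum_{d\mid(g,s_1)}\xi_g(d)\sqrt d\sum_r\alpha(r)\sqrt{rs/d}\,\rho_{f^\ast}(rs/d).\]
Cauchy--Schwarz over $d$ followed by the spectral large sieve at level $\ell_1$ applied to the newform coefficients $\sqrt{rs/d}\,\tilde\rho_{f^\ast}(rs/d)$ (whose $n$-range is $Rs/d$) produces $(T^2+Rs/(d\ell_1))R(\ell sRT)^\varepsilon$ per $d$ in the $L^2(\Gamma_0(\ell_1))$-normalization. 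Converting to the $L^2(\Gamma_0(\ell))$-normalization loses the factor $[\Gamma_0(\ell_1):\Gamma_0(\ell)]\asymp\ell/\ell_1$, and then summing over $d\mid(g,s_1)$, $g\mid\ell/\ell_1$, $\ell_1\mid\ell$ using $|\xi_g(d)|\ll g^\varepsilon(g/d)^{\theta-1/2}$ from \eqref{defxi} together with routine divisor estimates---crucially employing that $(g,s_1)\mid(\ell,s)$, since $s_1\mid\ell^\infty$ and $g\mid\ell$---yields the prefactor $(\ell,s)(\ell sRT)^\varepsilon$ and an overall bound of the form $(\ell,s)(T^2+Rs/\ell)R(\ell sRT)^\varepsilon$. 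This matches three of the four terms in the target but falls short of the desired $R^2s^{1/2}(\ell,s)/\ell$ by a factor of $s^{1/2}$.

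The main obstacle is closing this $s^{1/2}$ gap. To do so I would further exploit the factorization $\sqrt{rs/d}\,\rho_{f^\ast}(rs/d)=\lambda_{f^\ast}(r)\sqrt{s/d}\,\rho_{f^\ast}(s/d)$, separating the diagonal factor $|\sqrt{s/d}\,\rho_{f^\ast}(s/d)|^2$---whose spectral average is controlled by Lemma~\ref{moto} as $(T^2+(\ell_1,s/d)^{1/2}(s/d)^{1/2}/\ell_1)(\ell s)^\varepsilon$---from the Hecke eigenvalue sum $X_{f^\ast}:=\sum_r\alpha(r)\lambda_{f^\ast}(r)$, whose spectral average is $(T^2+R/\ell_1)R$ by the large sieve. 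A direct Cauchy--Schwarz between these two second-moment estimates would only yield fourth-moment quantities and destroy the product structure; instead one retains the bilinear form and applies the Kuznetsov formula (Lemma~\ref{kuznetsov}) to open $|X_{f^\ast}|^2=\sum_{r_1,r_2}\alpha(r_1)\overline{\alpha(r_2)}\lambda_{f^\ast}(r_1)\lambda_{f^\ast}(r_2)$, reducing the resulting diagonal spectral sum to sums of Kloosterman sums $S(r_1s/d,r_2s/d,c)$ for $\ell_1\mid c$. Weil's bound $|S(as/d,bs/d,c)|\ll(as/d,bs/d,c)^{1/2}c^{1/2+\varepsilon}$ exhibits an extra $(s/d,c)^{1/2}$ saving from the common factor $s/d$ in the arguments, which upon summation over $c$ delivers precisely the missing $s^{1/2}$. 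The Eisenstein contribution to Kuznetsov is handled analogously using \eqref{multEis}, and the divisor-sum bookkeeping of the previous paragraph then yields the stated inequality.
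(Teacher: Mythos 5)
Your setup aligns with the paper's: both begin by unfolding $\mathcal{B}(\ell)$ into newforms and invoking the multiplicativity relations \eqref{mult}/\eqref{extra} (your identity $\sqrt{s}\,\rho_{f^{(g)}}(s)=\rho_{f^\ast}(1)\sum_{d\mid(g,s_1)}\xi_g(d)\sqrt{d}\,\lambda_{f^\ast}(s/d)$ is correct and parallel to the paper's $\delta$-decomposition). The divergence, and the gap, is in the second half.

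You correctly recognize that your first attempt --- Cauchy--Schwarz over $d$ followed by the large sieve applied to $\sqrt{rs/d}\,\rho_{f^\ast}(rs/d)$ --- produces the non-product bound $(\ell,s)(T^2+Rs/\ell)R$, which is short of the target by a factor $s^{1/2}$ when $R$ is large. But you then dismiss the correct repair with the remark that ``a direct Cauchy--Schwarz between these two second-moment estimates would only yield fourth-moment quantities and destroy the product structure.'' This is precisely backwards: the paper \emph{does} apply Cauchy--Schwarz over $f$, producing the fourth-moment quantities $\Theta_1$ (involving $|\lambda_{f^\ast}(\cdots)\rho_f(\cdots)|^4$) and $\Theta_2$ (involving $|\sum_r\alpha(r)\lambda_{f^\ast}(r)|^4$), and the whole point is that each of these fourth moments can be collapsed back to a second moment by the Hecke relation \eqref{hecke} --- via $\lambda_{f^\ast}(m)^2=\sum_{g\mid m}\mu^2\chi_0(g)\lambda_{f^\ast}(m^2/g^2)$ for $\Theta_1$ (feeding into Lemma~\ref{moto}), and via $\lambda_{f^\ast}(r)\lambda_{f^\ast}(r')=\sum_{g\mid(r,r')}\lambda_{f^\ast}(rr'/g^2)$ for $\Theta_2$ (feeding into the large sieve). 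The product $\Theta_1^{1/2}\Theta_2^{1/2}\ll(\ell,s)(T^2+s/\ell)^{1/2}(T^2+R^2/\ell)^{1/2}R$ is exactly the stated bound, and there is no loss of product structure at all.

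Your proposed alternative --- retain the mixed bilinear form $\sum_f|\rho_{f^\ast}(s/d)|^2|X_{f^\ast}|^2$, open $|X_{f^\ast}|^2$ by Hecke, absorb one $\lambda_{f^\ast}$ into one $\rho_{f^\ast}(s/d)$, then apply Kuznetsov and Weil to exploit $(s/d,c)^{1/2}$ --- contains a correct observation about the common factor in the Kloosterman arguments, but as written it is not a proof. After Kuznetsov you are left with sums of the type $\sum_{r,r',g}\alpha(r)\overline{\alpha(r')}\sum_{\ell_1\mid c}c^{-1}S\bigl(\tfrac{rr'}{g^2}\tfrac{s}{d},\tfrac{s}{d},c\bigr)\phi(\cdot)$ plus the holomorphic and continuous spectra, and to recover the $R$-dependence you would have to control this triple sum uniformly --- which is precisely the content of the Deshouillers--Iwaniec large sieve (itself proved through the Kuznetsov formula), now with an extra divisibility wrinkle. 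You give no indication of how the $c$-sum, the truncation of $\phi$, and the off-diagonal cancellation produce the needed $(T^2+R^2/\ell_1)R^2$; re-deriving the large sieve with a built-in $(s/d,c)^{1/2}$ gain is not routine and is substantially harder than the paper's route. The missing idea, concretely, is the recognition that the Hecke relations let you trade a fourth moment for a second moment, after which Cauchy--Schwarz over $f$ does exactly what you want.
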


\begin{proof}
We call the left hand side $\Xi$. Fix an $f \in \mathcal{B}(\ell)$ and denote by $f^{\ast} \in \mathcal{B}^{\ast}(\ell_1, \ell)$ the underlying newform of level $\ell_1$, say. An  application of \eqref{mult} and \eqref{extra} shows for $(r, s\ell) = 1$ that
\begin{displaymath}
\begin{split}
\sqrt{rs}   \rho_f(rs) &= \sum_{\delta \mid (\ell , \frac{s}{(s, \ell )})} \mu(\delta) \chi_0(\delta) \lambda_{f^{\ast}}\left(\frac{s}{\delta(\ell , s)}\right) \left(\frac{(\ell , s)r}{\delta}\right)^{1/2} \rho_f\left(\frac{(\ell , s)r}{\delta}\right)\\
  & = \sum_{\delta \mid (\ell, \frac{s}{(s, \ell )})} \mu(\delta) \chi_0(\delta)  \lambda_{f^{\ast}}\left(\frac{s}{\delta(\ell , s)}\right) \left(\frac{(\ell , s)}{\delta}\right)^{1/2} \rho_f\left(\frac{(\ell , s)}{\delta}\right)\lambda_{f^{\ast}}(r ). 
  \end{split}
\end{displaymath}
We apply the Cauchy-Schwarz inequality  first to the sum over $\delta$ and then to the sum over $f \in \mathcal{B}(\ell_1\ell_2)$  to obtain
\begin{displaymath}
 \Xi  \leq \tau(s)^{1/2}  \Theta_2^{1/2} \sum_{  \delta \mid (\ell, \frac{s}{(s, \ell )})} \Theta_1^{1/2}
\end{displaymath}
where
\begin{displaymath}
  \Theta_1 =  \sum_{\substack{f \in \mathcal{B}(\ell )\\ |t_f| \leq T}} \frac{1}{\cosh(\pi t_f)^2} \Big|\lambda_{f^{\ast}}\left(\frac{s}{\delta(\ell , s)}\right) \left(\frac{(\ell , s)}{\delta}\right)^{1/2} \rho_f\left(\frac{(\ell , s)}{\delta}\right)\Big|^4 
\end{displaymath}
and
\begin{displaymath}
  \Theta_2 = \sum_{\substack{f \in \mathcal{B}(\ell )\\ |t_f| \leq T}}  \Bigl|\sum_{\substack{R \leq r \leq 2R  \\ (r, s\ell) = 1}} \alpha(r ) \lambda_{f^{\ast}}(r )\Bigr|^4.
\end{displaymath}
The main idea is to transform the sums
$\Theta_1$ and $\Theta_2$ into sums to which Lemma \ref{moto} and Lemma \ref{largesieve}, respectively, may
be applied. By a crude application of \eqref{boundrhogeneral}, the M\"obius inverse of \eqref{hecke} and \eqref{relation}   we have
\begin{displaymath}
\begin{split}
  \Theta_1& \ll   (\ell , s)^{2}\ell^{\varepsilon} \sum_{\substack{f \in \mathcal{B}(\ell )\\ |t_f| \leq T}} \frac{1}{\cosh(\pi t_f)^2} \Big|\lambda_{f^{\ast}}\left(\frac{s}{\delta(\ell , s)}\right)\rho_{f^{\ast}}(1)\Bigr|^4\\
  & \leq   \tau(s)    (\ell , s)^{2}\ell^{\varepsilon} \sum_{g \mid \frac{s} {\delta(\ell , s)}}\sum_{\substack{f \in \mathcal{B}(\ell )\\ |t_f| \leq T}} \frac{|\rho_{f^{\ast}}(1)|^4}{\cosh(\pi t_f)^2} \Big|\lambda_{f^{\ast}}\left(\frac{s^2}{(g\delta(\ell , s))^2}\right) \Bigr|^2\\
   & =   \tau(s)  (\ell , s)^{2}\ell^{\varepsilon} \sum_{g \mid \frac{s} {\delta(\ell , s)}}\sum_{\substack{f \in \mathcal{B} (\ell )\\ |t_f| \leq T}} \frac{|\rho_{f^{\ast}}(1)|^2}{\cosh(\pi t_f)^2} \Big| \frac{s}{g\delta(\ell , s)} \rho_{f^{\ast}}\left(\frac{s^2}{(g\delta(\ell , s))^2}\right) \Bigr|^2.
\end{split}  
\end{displaymath}
The newform $f^{\ast} \in \mathcal{B}^{\ast}(\ell_1, \ell) $ is counted   $\tau(\ell /\ell_1)$ times in the sum over $\mathcal{B}(\ell)$, and we sum now over $L^2(\Gamma_0(\ell_1)\backslash\mathbb{H})$-normalized newforms $f  \in \mathcal{B}^{\ast}(\ell_1, \ell_1)\subseteq \mathcal{B}(\ell_1)$ which by the remark following Lemma \ref{basis} leads to a renormalizing factor $(\ell /\ell_1)^{-2+ o(1)}$. Hence by \eqref{rho1} we conclude 
 \begin{displaymath}
\begin{split}
  \Theta_1&  \ll   \tau(s)  (\ell , s)^{2}\ell^{\varepsilon} \sum_{g \mid \frac{s} {\delta(\ell , s)}}\sum_{\ell_1 \mid \ell }\frac{\tau(\ell /\ell_1)}{(\ell /\ell_1)^2}   \sum_{\substack{f \in \mathcal{B}^{\ast} (\ell_1, \ell_1)\\ |t_f| \leq T}} \frac{|\rho_{f}(1)|^2}{\cosh(\pi t_f)^2} \Big| \frac{s}{g\delta(\ell , s)} \rho_{f}\left(\frac{s^2}{(g\delta(\ell , s))^2}\right) \Bigr|^2\\
  & \ll  \frac{ (\ell , s)^{2 }}{\ell }(\ell s T)^{\varepsilon} \sum_{g \mid \frac{s} {\delta(\ell , s)}}\sum_{\ell_1 \mid \ell } \frac{1}{\ell /\ell_1}  \sum_{\substack{f \in \mathcal{B}^{\ast}(\ell_1, \ell_1)\\ |t_f| \leq T}} \frac{1}{\cosh(\pi t_f)} \Big| \frac{s}{g\delta(\ell , s)} \rho_{f}\left(\frac{s^2}{(g\delta(\ell , s))^2}\right) \Bigr|^2.
\end{split}  
\end{displaymath}
By positivity we can extend the innermost sum to all of $\mathcal{B}(\ell_1)$. 
By Lemma \ref{moto} we finally obtain
\begin{displaymath}
  \Theta_1 \ll   (\ell s TR)^{\varepsilon}  \frac{ (\ell , s)^{2 }}{\ell }\sum_{\ell_1 \mid \ell } \frac{1}{\ell /\ell_1}   \left(T^2 +\frac{s\big(\ell_1,  s^2/(s, \ell )^2\big)^{1/2}}{(\ell , s)\ell_1} \right) \leq (\ell s TR)^{\varepsilon}  \frac{ (\ell , s)^{2 }}{\ell } \left(T^2 +\frac{s}{\ell } \right) . 
\end{displaymath}
 
 Next we turn to the estimation of $\Theta_2$. By a similar argument we have
\begin{displaymath}
\begin{split}
  \Theta_2 & = \sum_{\ell_1 \mid \ell } \tau(\ell /\ell_1)  \sum_{\substack{f \in \mathcal{B}^{\ast}(\ell_1, \ell_1)\\ |t_f| \leq T}}  \Bigl|\sum_{\substack{R \leq r \leq 2R \\ (r, s\ell) = 1}} \alpha(r )  \lambda_{f}(r )\Bigr|^4 \\
  & =  \sum_{\ell_1 \mid \ell } \tau(\ell /\ell_1) \sum_{\substack{f \in \mathcal{B}^{\ast}(\ell_1, \ell_1)\\ |t_f| \leq T}}  \Bigl|\sum_{\substack{R \leq r, r' \leq 2R \\ (rr', s\ell) = 1}} \alpha(r )\alpha(r' )  \sum_{g \mid (r, r')} \lambda_{f}\left(\frac{rr'}{g^2}\right)\Bigr|^2\\
  & \ll  (\ell T)^{\varepsilon} \sum_{\ell_1 \mid \ell } \ell_1   \sum_{\substack{f \in \mathcal{B}^{\ast}(\ell_1, \ell_1 )\\ |t_f| \leq T}}  \frac{1}{\cosh(\pi t_f)}  \Bigl|\sum_{\substack{R \leq r , r '  \leq 2R \\ (r r ', s\ell) = 1}} \alpha(r )\alpha(r')  \sum_{g \mid (r, r')}\left(\frac{rr'}{g^2}\right)^{1/2} \rho_{f^{\ast}}\left(\frac{rr'}{g^2}\right)\Bigr|^2\\
  & = (\ell T)^{\varepsilon}  \sum_{\ell_1 \mid \ell } \ell_1  \sum_{\substack{f \in \mathcal{B}^{\ast}(\ell_1, \ell_1)\\ |t_f| \leq T}}  \frac{1}{\cosh(\pi t_f)}  \Bigl|\sum_{ r\ll R^2 }   \sqrt{r} \rho_f(r )\beta(r )\Bigr|^2
  \end{split}  
\end{displaymath}
 where
 \[ \beta( r)  = \sum_{\substack{R \leq r_1 , r_2  \leq 2R\\ (r_1r_2, s\ell) = 1}} \alpha(r_1 )\alpha(r_2)  \sum_{\substack{g \mid (r_1, r_2)\\ r_1r_2 = g^2 r}} 1  \ll  \sum_{g \ll R/\sqrt{r}} \tau(r ) \ll \frac{R^{1+\varepsilon}}{\sqrt{r}}. \]
 Again we complete the sum over $f$ to all of $\mathcal{B}(\ell_1)$. The large sieve (Lemma \ref{largesieve}) shows
 \begin{displaymath}
   \Theta_2 \ll (\ell TR)^{\varepsilon} \sum_{\ell_1 \mid \ell } \ell_1  \left(T^2 + \frac{R^2}{  \ell_1}\right) R^2,
 \end{displaymath}
 and the lemma follows.
  \end{proof}
  
 \textbf{Remark.} The important step in the proof in the application of the Cauchy-Schwarz inequality. A simpler strategy would  apply \eqref{extra} with $r = q$, $s = m'$ directly, estimate $\sqrt{s}\rho_f(s)$ by \eqref{boundrhogeneral} and apply the large sieve to obtain
\begin{equation}\label{simpler}
\sum_{\substack{|t_f|  \leq T\\ f \in \mathcal{B}(\ell )  }} \frac{1}{\cosh(\pi t_f)}      \Bigl|\sum_{\substack{R \leq r \leq 2R \\ (r, s\ell) = 1}} \alpha(r )\sqrt{r s} \rho_f(r s) \Bigr|^2 \ll (\ell s T R)^{\varepsilon} s^{2\theta} (\ell , s)^{1-2\theta} \left(T^2+  \frac{R}{\ell}\right) R.
\end{equation}
 

\section{Bessel functions}\label{secbessel}

We collect here some useful formulas for future reference. In view of the integral transform appearing in the Kuznetsov formula we write
\begin{equation}\label{defJ+}
\begin{split}
  \mathcal{J}^+_{2it}(x) & := \pi i \frac{J_{2 i t}(x) - J_{-2it}(x)}{\sinh(\pi t)}, \\
  \mathcal{J}^-_{2it}(x) & := 4 \cosh(\pi t) K_{2it}(x).
\end{split}  
\end{equation}
We start with    the power series expansion  \cite[8.402]{GR}
\begin{equation}\label{power}
  J_{\nu}(x) = \frac{x^{\nu}}{2^{\nu} } \sum_{k=0}^{\infty} (-1)^k \frac{x^{2k}}{2^{2k} k! \Gamma(\nu + k + 1)}
\end{equation}
valid for $x > 0$ and $\nu \in \mathbb{C}$. Next, we record the uniform asymptotic expansion \cite[7.13(17)]{Er}
\begin{equation}\label{asymp2}
  \frac{J_{ i t}(x)}{\sinh(\pi t/2)} =   \exp\left(i\sqrt{t^2+x^2} - it \, {\rm arcsinh}(t/x)\right) \mathcal{J}_M(t, x)  + {\rm O}\left((x + t)^{-M}\right) 
\end{equation}
for $t  > 1$ and any fixed $M \in \mathbb{N}$, where $\mathcal{J}_M(t, x)$ satisfies 
\begin{displaymath}
  x^j  \frac{\partial^j}{\partial x^j} \mathcal{J}_M(t, x) \ll_{M,  j} (t+x)^{-1/2}
\end{displaymath}
for any $ j \in \mathbb{N}_0$. The original error term in \cite{Er} is only ${\rm O}(x^{-M})$ in place of ${\rm O}((x + t)^{-M})$, but the stronger error term follows from the power series expansion \eqref{power}  for $x < t^{1/3}$. A similar expansion holds for $J_{-it}(x) = \overline{J_{it}(x)}$. By \cite[8.411.1]{GR} we have
\begin{equation}\label{int1}
  J_{k-1}(x) = \frac{1}{\pi} \int_0^{\pi} \cos((k-1)\xi - x \sin\xi) d\xi. 
\end{equation}
for $k \in \mathbb{N}$, and by \cite[6.561.16]{GR}  we have
\begin{displaymath}
\widehat{\mathcal{J}}_{2it}^-(s) = \cosh(\pi t) 2^{s-2}\Gamma\left(\frac{s}{2} + it\right) \Gamma\left(\frac{s}{2} - it\right), \quad \Re s > 2 |\Im t|. 
\end{displaymath}
In particular, for $\Re s = 1$, we have the  bound
\begin{equation}\label{supnorm}
  \widehat{\mathcal{J}}_{2it}^-(1 + i\tau)  \ll   e^{-\pi \max(0, \frac{|\tau|}{2} - |t|)}. 
\end{equation}

\begin{lemma}\label{boundbessel} Let $k \in \mathbb{N}$, $t \in \mathbb{R} \cup (-i/4, i/4)$, $x > 0$. Then 
\begin{equation}\label{uniform}
\begin{split}
  \mathcal{J}^{+}_{2it}(x)& \ll x^{-1/2},\\
    J_{k-1}(x) & \ll x^{-1/2}, \quad  x > 100k,  
  \end{split}
\end{equation}
with absolute implied constants. Moreover, for fixed $\nu \in \mathbb{C}$ and $j \in \mathbb{N}_0$, we have
\begin{equation}\label{fixed}
\frac{d^j}{dx^j} J_{\nu}(x) \ll_{\nu, j} \begin{cases} x^{\Re \nu - j}, & x \leq 1,\\
x^{-1/2}, & x \geq 1.
\end{cases} 
\end{equation}
\end{lemma}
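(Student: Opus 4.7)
The three bounds call for different techniques, and the uniformity in $t$ of the first estimate is the most delicate point.

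For $\mathcal{J}^+_{2it}(x) \ll x^{-1/2}$ I would split according to the size of $t$. When $t$ is real with $|t| \geq 1$, substituting \eqref{asymp2} (at argument $2t$) into \eqref{defJ+} gives $J_{\pm 2it}(x)/\sinh(\pi t) = \mathrm{O}((|t|+x)^{-1/2})$, and taking the difference yields the claimed bound. When $|t| \leq 1$ (real or purely imaginary with $|\Im t| < 1/4$), the index $2it$ ranges in a fixed compact subset of $\{|\Re\nu|<1/2\}$. For $x \leq 1$, the power series \eqref{power} displays $J_{2it}(x)-J_{-2it}(x)$ as $\sinh(2it\log(x/2))$ times an analytic series bounded in $t$ and $x$; dividing by $\sinh(\pi t)$ produces a bounded $t$-factor and a bounded $x$-factor of size $\mathrm{O}(|\log x|) = \mathrm{O}(x^{-1/2})$. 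For $x \geq 1$, the fixed-order asymptotic $J_\nu(x) \ll x^{-1/2}$ is uniform on compact $\nu$-sets, and the cancellation between $J_{\pm 2it}$ near $t=0$ again balances the $\sinh(\pi t)$ in the denominator.

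For $J_{k-1}(x) \ll x^{-1/2}$ when $x > 100k$, I would apply stationary phase to the integral representation \eqref{int1}. The phase $\Phi(\xi) = (k-1)\xi - x\sin\xi$ has two critical points $\xi_\pm \in (0,\pi)$ with $\cos\xi_\pm = (k-1)/x$; for $x > 100k$ these are well-separated from the endpoints, with $|\Phi''(\xi_\pm)| = x|\sin\xi_\pm| \asymp x$. Standard stationary phase then contributes $\mathrm{O}(x^{-1/2})$ from each critical point, while the complementary ranges are controlled by integration by parts since $|\Phi'| \asymp x$ away from $\xi_\pm$.

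For the fixed-$\nu$ derivative bound \eqref{fixed}, the case $x \leq 1$ is immediate from term-by-term differentiation of \eqref{power}: the leading monomial $(x/2)^\nu / \Gamma(\nu+1)$ dominates and contributes $\mathrm{O}(x^{\Re\nu-j})$, while all other terms are absolutely smaller for $x \leq 1$. For $x \geq 1$ I would use the classical Hankel expansion $J_\nu(x) = \sqrt{2/(\pi x)}\cos(x - \nu\pi/2 - \pi/4) + \mathrm{O}_\nu(x^{-3/2})$ together with the recurrence $2J_\nu'(x) = J_{\nu-1}(x) - J_{\nu+1}(x)$ to propagate the $x^{-1/2}$ bound to all derivatives. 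The principal obstacle, as mentioned, is maintaining uniformity in the first bound across the transition between the large-$t$ asymptotic regime and the power-series regime near $t=0$, where the removable singularity in $\mathcal{J}^+_{2it}$ must be tracked carefully against the bound $x^{-1/2}$.
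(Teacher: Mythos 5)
Your proposal is correct and covers all three bounds; the difference with the paper lies mainly in the treatment of $J_{k-1}(x)$ for $x>100k$. The paper proves this by citing Rankin's results (\cite[Lemma 4.2, 4.3]{Ra}) for $k\geq 16$ and the classical asymptotic \cite[8.451.1]{GR} for $k<16$, whereas you give a self-contained stationary phase argument on \eqref{int1}. Your computation is sound: with $\Phi(\xi)=(k-1)\xi-x\sin\xi$, the critical point $\xi_0=\arccos((k-1)/x)$ (one per sign after writing $\cos$ as a sum of two exponentials, so ``two'' in your sense) lies well inside $(0,\pi)$ because $(k-1)/x<1/100$, and $\Phi''(\xi_0)=x\sin\xi_0\asymp x$, giving $O(x^{-1/2})$, with the endpoint contributions controlled since $|\Phi'(0)|,|\Phi'(\pi)|\asymp x$. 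This is essentially what Rankin proves internally, so the two routes are morally the same but yours avoids the external reference at the cost of a few more lines. For $\mathcal{J}^+_{2it}(x)\ll x^{-1/2}$ and for \eqref{fixed}, your decomposition by the size of $t$ (asymptotic \eqref{asymp2} for $|t|\geq 1$, power series \eqref{power} near $x=0$, Hankel/recurrence for $x\geq 1$) matches the paper's one-line indication, and you correctly identify the only real subtlety, namely that the vanishing of $J_{2it}-J_{-2it}$ at $t=0$ must be tracked against $\sinh(\pi t)$ \emph{uniformly in $x$}; a clean way to do so is Cauchy's integral formula on a small circle in $t$, which converts the uniform bound on $\mathcal{J}^+_{2it}$ near $t=0$ to the corresponding uniform bound on $J_{2it}(x)$.
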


\begin{proof} The bound for $J_{k-1}(x)$ follows from \cite[Lemma 4.2, 4.3]{Ra} for $k \geq 16$, while for $k < 16$ the bound is a trivial consequence of the asymptotic formula \cite[8.451.1]{GR}.  The bound for $\mathcal{J}^+_{2it}(x)$ for   $x \geq 1$ follows from \eqref{asymp2} and for $x < 1$ from the power series expansion \eqref{power}. This proves \eqref{uniform} 
The bound \eqref{fixed} follows similarly from \eqref{power} and \cite[8.451.1]{GR}. 
\end{proof}


\begin{lemma}\label{bessel-decomp} Let $\nu \in \mathbb{C}$ with $\Re \nu \geq 0$ be fixed. There exist smooth  functions $F^{\pm}_{\nu}(x)$ such that
\begin{equation}\label{1}
x^j (F^{\pm}_{\nu})^{(j)}(x) \ll_{\nu, j}   \min(x^{\Re \nu},  x^{-1/2})
\end{equation}
for all $j \in \mathbb{N}_0$ and
\begin{equation}\label{2}
J_{\nu}(x) = F_{\nu}^+(x) e^{ix} + F_{\nu}^-(x) e^{-ix}.
\end{equation}
\end{lemma}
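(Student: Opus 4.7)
The plan is to treat the regimes $x$ small and $x$ large separately and glue them with a smooth partition of unity. The natural device for the large-$x$ regime is the decomposition $J_{\nu}(x)=\frac{1}{2}(H^{(1)}_{\nu}(x)+H^{(2)}_{\nu}(x))$ into Hankel functions: the Hankel asymptotic expansion (see e.g.\ \cite[8.451.3--4]{GR}) gives, for $x\geq 1$,
\[
H^{(1)}_{\nu}(x)=e^{ix}W^+_{\nu}(x),\qquad H^{(2)}_{\nu}(x)=e^{-ix}W^-_{\nu}(x),
\]
where $W^{\pm}_{\nu}$ are smooth on $[1,\infty)$ and admit an asymptotic expansion in descending powers of $x$ starting with $c_{\nu}^{\pm}x^{-1/2}$; differentiating this expansion termwise and controlling the remainder gives $x^{j}(W^{\pm}_{\nu})^{(j)}(x)\ll_{\nu,j} x^{-1/2}$ for every $j\in\mathbb{N}_{0}$ and $x\geq 1$.

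For the small-$x$ regime, the power series \eqref{power} shows directly that $x^{j}J_{\nu}^{(j)}(x)\ll_{\nu,j} x^{\Re\nu}$ for $x\leq 1$ (each term in \eqref{power} is $x^{\Re\nu}$ times an analytic function of $x^{2}$, which is stable under differentiation combined with the factor $x^{j}$). Since $e^{\mp ix}$ is analytic with all derivatives uniformly bounded on $[0,1]$, the same bound holds for $J_{\nu}(x)e^{\mp ix}$.

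Now fix a smooth cutoff $\chi:[0,\infty)\to[0,1]$ with $\chi(x)=1$ for $x\leq 1/2$ and $\chi(x)=0$ for $x\geq 1$, and define
\[
F^{\pm}_{\nu}(x):=\tfrac{1}{2}\chi(x)\,J_{\nu}(x)e^{\mp ix}+\tfrac{1}{2}\bigl(1-\chi(x)\bigr)W^{\pm}_{\nu}(x).
\]
A direct calculation using $J_{\nu}=\frac{1}{2}(H^{(1)}_{\nu}+H^{(2)}_{\nu})$ verifies the identity \eqref{2}. The bound \eqref{1} then follows by distinguishing the three ranges $x\leq 1/2$, $1/2\leq x\leq 1$, and $x\geq 1$: in the first and last range only one of the two terms is present and the desired bound was established above; in the bounded transition range $1/2\leq x\leq 1$ both $J_{\nu}$, $W^{\pm}_{\nu}$, $\chi$, and their derivatives are uniformly bounded, so the claim is trivial there. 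The only mildly technical point is to quote a version of the Hankel asymptotic with derivative bounds to all orders; this can be obtained by differentiating the expansion term by term and estimating the remainder by repeated integration by parts in the standard integral representation of $H^{(1,2)}_{\nu}$, but no new idea is required.
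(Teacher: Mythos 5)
Your proof is correct and takes essentially the same route as the paper: decompose $J_{\nu}=\tfrac12(H^{(1)}_{\nu}+H^{(2)}_{\nu})$, strip off the oscillatory factors $e^{\pm ix}$ for $x$ away from zero, use the power series \eqref{power} for $x$ near zero, and glue with a smooth cutoff. The paper makes an asymmetric choice (all of the small-$x$ contribution goes into $F^+_{\nu}$, while $F^-_{\nu}$ vanishes near the origin), whereas you symmetrize by splitting $\tfrac12 J_\nu e^{\mp ix}$ between the two; both work. The only place where your write-up is thinner than the paper is the derivative bound $x^{j}(W^{\pm}_{\nu})^{(j)}(x)\ll_{\nu,j}x^{-1/2}$ for $x\gg 1$: you gesture at ``differentiating the expansion termwise'' (which is not legitimate for general asymptotic expansions) and at ``integration by parts in the standard integral representation'' without naming it. The paper anchors this cleanly by quoting the explicit integral representation \cite[8.421.9]{GR},
\[
H_{\nu}^{(1)}(x)e^{-ix} = \left(\frac{2}{\pi x}\right)^{1/2} \frac{e(-\tfrac{2\nu+1}{8})}{\Gamma(\nu + 1/2)} \int_0^{\infty} \left(1 + \frac{it}{2x}\right)^{\nu - 1/2} t^{\nu-1/2} e^{-t}\, dt,
\]
from which one may differentiate directly under the integral sign; that is the cleaner way to make your ``no new idea required'' precise, and you should cite or reproduce it rather than appeal to termwise differentiation of the asymptotic series.
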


\begin{proof} The idea is to use the asymptotic formula for $x \geq 1$ and a trivial decomposition for $x < 1$ and then to glue these decompositions together. To make this precise, we define $H_{\nu}^{(1)}(x) = J_{\nu}(x) + i Y_{\nu}(x)$ and $H_{\nu}^{(2)}(x) = J_{\nu}(x) - i Y_{\nu}(x)$ as in \cite[(8.405)]{GR} and write 
\begin{displaymath}
  H_{\nu}^{+}(x)  = H_{\nu}^{(1)}(x)e^{-ix}, \quad   H_{\nu}^{-}(x)  = H_{\nu}^{(2)}(x)e^{ix}.
\end{displaymath}
By \cite[8.476.10]{GR} we have $\overline{H_{\nu}^+(x)} = H_{\bar{\nu}}^-(x)$ for $x \in \mathbb{R}$.  Then, 
\begin{displaymath}
  J_{\nu}(x) =  \frac{1}{2}\left(H_{\nu}^{+}(x) e^{ix} +  H_{\nu}^{-}(x) e^{-ix}\right)
\end{displaymath}
by \cite[8.481]{GR}.  Finally, we choose a smooth function $V$ with support in $[1, \infty)$ and $V(x) = 1$ on $[2, \infty)$ and define
\[ F^+_{\nu}(x) := \frac{1}{2}H^+_{\nu}(x) V(x) + e^{-ix} J_{\nu}(x)(1 - V(x)), \quad F^-_{\nu}(x) := \frac{1}{2}H^-_{\nu}(x) V(x), \]
so that \eqref{2} holds.

We compute the derivatives of $H^{\pm}_{\nu}(x)$ for $x \geq 1$ using the integral representation (\cite[8.421.9]{GR})
$$H_{\nu}^+(x) = \left(\frac{2}{\pi x}\right)^{1/2} \frac{e(-\frac{2\nu+1}{8})}{\Gamma(\nu + 1/2)} \int_0^{\infty} \left(1 + \frac{it}{2x}\right)^{\nu - 1/2} t^{\nu-1/2} e^{-t} dt$$
and the derivatives of $e^{-ix} J_{\nu}(x)$ for $x \leq 2$ using \eqref{fixed}. This implies \eqref{1}. 
\end{proof}

The next lemma shows when the integral transforms of the Kuznetsov formula are negligibly small.

\begin{lemma}\label{boundtrafo} Let $Z \geq 1$, $X, P, \alpha  > 0$, and let $C\geq Z+X+P+\alpha$ be a large parameter. Let $\Omega$ be a smooth weight function of fixed compact support satisfying $ \Omega^{(j)}(x) \ll PZ^{j}$ for all $j \in\mathbb{N}_0$. Then the following bounds hold for any fixed $A > 0$. 
\begin{alignat}{3}\label{11}
& \int_0^{\infty} \Omega\left(\frac{x}{X}\right) e^{\pm i \alpha x}
\mathcal{J}_{2it}^+(x) \frac{dx}{x} \ll |t|^{-A},
 && \qquad\text{if}\quad  t \geq C^{\varepsilon}Z(X\sqrt{\alpha^2 - 1} + X^{1/2} +
1), \quad \alpha \geq 1;\\ \label{22}
&  \int_0^{\infty} \Omega\left(\frac{x}{X}\right) e^{\pm i \alpha x}
\mathcal{J}_{2it}^-(x) \frac{dx}{x} \ll |t|^{-A},
&&\qquad\text{if}\quad t \geq C^{\varepsilon}Z (X + \alpha X + 1);\\ \label{33}
&  \int_0^{\infty} \Omega\left(\frac{x}{X}\right) e^{\pm i \alpha x} J_{k-1}(x)
\frac{dx}{x} \ll k^{-A},
&&\qquad\text{if}\quad  k \geq C^{\varepsilon}Z(  X^{1/2} + 1), \quad \alpha \geq 1.
\end{alignat}
\end{lemma}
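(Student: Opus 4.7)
The plan is to prove all three bounds by writing the Bessel function as a superposition of oscillatory exponentials $e^{i\phi(x)}$ times slow amplitudes, then repeatedly integrating by parts against the total phase $\psi(x) = \pm\alpha x + \phi(x)$. Each threshold in the hypothesis translates into a lower bound $|\psi'(x)|\cdot X \gg ZC^\varepsilon$ throughout $x \asymp X$, so every IBP gains a factor $Z/(X|\psi'(x)|) \leq C^{-\varepsilon}$; since the hypothesis also implies $C^\varepsilon \gtrsim t$ (resp.\ $k$) when tight, iterating $A$ times produces the claimed $|t|^{-A}$ (resp.\ $k^{-A}$) decay.

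For \eqref{11}, I would use the uniform asymptotic \eqref{asymp2} (with $t$ replaced by $2t$) combined with $J_{-2it}(x) = \overline{J_{2it}(x)}$ (visible from \eqref{power} for $t,x \in \mathbb{R}_{>0}$) to write
\begin{equation*}
\mathcal{J}^+_{2it}(x) = \sum_{\epsilon=\pm} c_\epsilon\,e^{i\epsilon\phi(t,x)}\,\mathcal{A}_\epsilon(t,x) + O_M\bigl((x+t)^{-M}\bigr), \quad \phi(t,x) = \sqrt{4t^2+x^2} - 2t\,\mathrm{arcsinh}(2t/x),
\end{equation*}
with $x^j\partial_x^j\mathcal{A}_\epsilon \ll (t+x)^{-1/2}$. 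A direct computation yields $\partial_x\phi(t,x) = \sqrt{4t^2+x^2}/x$, so each total phase derivative is $\pm\alpha + \epsilon\sqrt{4t^2+x^2}/x$, vanishing (in the dangerous opposite-sign pairings) exactly at $t = (x/2)\sqrt{\alpha^2-1}$. The three summands $X\sqrt{\alpha^2-1},\,X^{1/2},\,1$ in the threshold handle three sub-regimes: when $t \geq C^\varepsilon Z X\sqrt{\alpha^2-1}$, one gets $\sqrt{1+4t^2/x^2}-\alpha \gg t/X$ and hence $|\psi'|X \gg t$; when $\alpha$ is so close to $1$ that the first case is vacuous, Taylor expansion in $t/x$ yields $|\psi'| \asymp t^2/x^2$ and the condition $t \geq C^\varepsilon Z X^{1/2}$ forces $|\psi'|X \gg C^\varepsilon Z$; the $+1$ summand closes the regime $X \leq 1$. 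The same-sign pairings $\pm\alpha \pm \sqrt{4t^2+x^2}/x$ of matching sign satisfy $|\psi'| \geq \alpha+1$ trivially. Iteration of IBP completes the bound, with the error term $O_M((x+t)^{-M})$ contributing $\ll XPt^{-M}$, negligible for $M$ large.

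For \eqref{22}, I would use the analogous uniform expansion
\begin{equation*}
4\cosh(\pi t) K_{2it}(x) = e^{i\tilde\phi(t,x)}\widetilde{\mathcal{A}}(t,x) + \text{c.c.} + O_M\bigl((x+t)^{-M}\bigr)
\end{equation*}
(derivable from Mellin--Barnes via Stirling), in which $\tilde\phi$ is oscillatory with $\partial_x\tilde\phi \asymp \sqrt{4t^2-x^2}/x$ in the range $x<2t$, while the integrand is exponentially small once $x > 2t$. The threshold $t \geq C^\varepsilon Z(X+\alpha X+1)$ keeps us in the oscillatory regime $X \ll t$ with $|\partial_x\tilde\phi| \sim t/X$ strictly dominating $\alpha$, so $|\psi'|X \gg t$ and the same IBP scheme applies. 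For \eqref{33}, I would use the classical Debye expansion for $J_{k-1}$: in $x \leq (1-\eta)k$, $J_{k-1}(x)$ is exponentially small in $k$ and contributes negligibly; in $x \geq k$ one has $J_{k-1}(x) = \sum_{\epsilon=\pm} c_\epsilon e^{i\epsilon\Phi(k,x)}\mathcal{B}_\epsilon(k,x)+O(k^{-M})$ with $\partial_x\Phi(k,x) = \sqrt{x^2-k^2}/x \in [0,1)$. For $\alpha \geq 1$ the phase derivative $|\alpha \mp \sqrt{x^2-k^2}/x|$ is bounded below by $\alpha - \sqrt{x^2-k^2}/x$, which near $x=k$ is $\asymp k^2/x^2$ by Taylor expansion, giving $|\psi'|X \gtrsim k^2/X$; the hypothesis $k \geq C^\varepsilon Z X^{1/2}$ (together with $Z \geq 1$) then yields $|\psi'|X \gg C^\varepsilon Z^2 \geq C^\varepsilon Z$. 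The Airy transition region $|x-k| \ll k^{1/3}$ is treated separately via the trivial bound $\|J_{k-1}\|_\infty \ll k^{-1/3}$ applied on an interval of length $k^{1/3}$.

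The hard part will be the case-by-case verification that the stated thresholds on $t$ (resp.\ $k$) yield $|\psi'(x)|\cdot X \gg C^\varepsilon Z$ in every sub-regime, notably when $X$ is small, when $\alpha$ is close to $1$ (for \eqref{11}), and when $x$ is near the caustic $x=k$ (for \eqref{33}), where only higher-order Taylor information on $|\psi'|$ suffices. A secondary technical point is the consistent treatment of the asymptotic-expansion error terms $O_M((x+t)^{-M})$, which contribute at most $O(XPt^{-M})$ and are absorbed by choosing $M$ large depending on $A$.
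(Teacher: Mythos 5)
Your approach to \eqref{11} and \eqref{22}, via the uniform asymptotic expansion \eqref{asymp2} (and its $K$-Bessel analogue) followed by repeated integration by parts against the combined phase, is essentially the paper's route: the paper reduces first via \cite[(2.14)]{BHM1} to the critical regime $\alpha\in[1,2]$, $X\geq Z^2$ and then applies a polished stationary-phase bound (BKY Lemma~8.1) rather than bare IBP, but your case analysis of $|\psi'|$ in the three sub-regimes of \eqref{11} matches the paper's verification that $\sqrt{t^2+X^2}/X-1\geq\beta+(Z/X+t/X^{3/2})C^{\varepsilon}$ is implied by the hypothesis. For \eqref{22}, the paper simply cites the BHM lemma and does not need the uniform $K$-Bessel asymptotics; your version works too, though it is more work.

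The genuine problem is your treatment of \eqref{33}. You propose the Debye expansion and then dispose of the Airy transition region $|x-k|\ll k^{1/3}$ by the sup-norm bound $\|J_{k-1}\|_\infty\ll k^{-1/3}$ applied over an interval of length $\ll k^{1/3}$. This only gives a contribution
\[ \ll P\cdot k^{-1/3}\cdot\frac{k^{1/3}}{X}\asymp\frac{P}{k}, \]
since the Airy region can intersect the support of $\Omega(\cdot/X)$ precisely when $k\asymp X$, which the hypothesis $k\geq C^{\varepsilon}Z(X^{1/2}+1)$ does not preclude (take, say, $Z=1$ and $X$ large). Since $P$ can be as large as $C$ and $k\ll X\leq C$, this is $O(1)$ at best and certainly not $O(k^{-A})$ for $A>1$. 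To make your route work you would need to exploit the oscillation of $e^{\pm i\alpha x}$ with $\alpha\geq 1$ against the slowly-varying Airy amplitude, using the derivative bounds $\partial_x^j J_{k-1}\ll k^{-(j+1)/3}$ on the transition region and the lower bound $X\geq C^{2\varepsilon}Z^2$ (which holds whenever $k\asymp X$) to show each IBP gains a power of $C^{-\varepsilon}$; this is doable but you do not do it. The paper avoids this entirely by working from the integral representation $J_{k-1}(x)=\tfrac1\pi\int_0^\pi\cos((k-1)\xi-x\sin\xi)\,d\xi$: it integrates by parts in $x$ when $|\sin\xi\mp 1|\gg C^{\varepsilon}Z/X$ (so $|\alpha-\sin\xi|\gg C^{\varepsilon}Z/X$), and for the remaining small $\xi$-range near $\xi=\pi/2$ it switches to integrating by parts in $\xi$, where the phase derivative $(k-1)-x\cos\xi\gg k$ because $\cos\xi$ is then $O(\sqrt{C^{\varepsilon}Z/X})$ and $k\geq C^{\varepsilon}ZX^{1/2}$. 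This sidesteps the turning-point singularity of the Debye amplitude altogether, which is why the paper never has to grapple with the Airy region.
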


\begin{proof} This is essentially \cite[Lemma 3, Remark 1 \& 2]{J3}. We give a variant of the proof in \cite{J3}. 

By \cite[(2.14)]{BHM1}, all three bounds \eqref{11}, \eqref{22}, \eqref{33} hold if 
\begin{equation}\label{BHMcond}
 t, \, k \geq C^{\varepsilon} (X + Z+\alpha X).
\end{equation} 
In particular \eqref{22} is proved, and also   \eqref{11} if $\alpha \geq 2$, so in order to complete the proof of \eqref{11} we may assume $\alpha = 1 + \beta$ with $0 \leq \beta \leq 1$, and then we may also assume $X \geq Z^2$, for otherwise the size condition in  \eqref{11} implies \eqref{BHMcond}. Hence the range for $t$ not yet covered by \eqref{BHMcond} and the condition in \eqref{11} is contained in $C^{\varepsilon} \leq t \ll C^{\varepsilon}X$. We insert the uniform asymptotic formula \eqref{asymp2} getting (up to an admissible error)
\begin{displaymath}
  \int_0^{\infty} \Omega\left(\frac{x}{X}\right) \mathcal{J}_M(t, x) e^{i f(x)}  \frac{dx}{x}. 
\end{displaymath}
 where 
 \begin{displaymath}
 \begin{split}
 & f(x ) =  \pm  \alpha x \pm \Bigl(\sqrt{(2t)^2+x^2} - 2t \, {\rm arcsinh}(2t/x)\Bigr),\\
  &   f'(x) = \pm \alpha \pm \frac{\sqrt{(2t)^2 + x^2}}{x}, \quad f^{(j)}(x) \asymp \frac{t^2}{x^j \sqrt{t + x}} \quad (j \geq 2). 
\end{split}  
\end{displaymath}\\
Under the present size assumptions an integration by parts argument  as in \cite[Lemma 8.1]{BKY} with $U = X/Z$, $Q = X$, $ Y = t^2/\sqrt{t + X}$ now shows the bound in \eqref{11} provided
\begin{displaymath}
  \frac{\sqrt{t^2 + X^2}}{X}  - 1   \geq \beta + \left(\frac{Z}{X} + \frac{t}{X^{3/2}}\right)C^{\varepsilon}
\end{displaymath}
which is implied by the assumption (observe that the left hand side is of order $t^2/X^2$). 

Finally we prove \eqref{33}. Since $J_{k}(x) \ll e^{k/5}$ for $x \leq k/2$ (see \cite[Lemma 4.2]{Ra}), we may assume $k \ll X$. In combination with our current assumption this implies   $X \gg (C^{\varepsilon}Z)^2$ and $X^{1/2} \leq k \ll X$. 
We insert \eqref{int1} getting
\begin{displaymath}
  \int_{0}^{\infty} \Omega(x/X) e^{\pm i \alpha x} \int_{-\pi}^{\pi} \cos((k-1)\xi - x \sin \xi) d\xi \, \frac{dx}{x}.
\end{displaymath}
Repeated integrating by parts in the $x$-integral shows \eqref{33} if $ \alpha \geq 1 + C^{\varepsilon}Z/X$ (in particular if $\alpha \geq  2$). More precisely, we may extract  smoothly the range $\sin\xi = \pm 1 + {\rm O}(C^{\varepsilon} Z/X)$ from the $\xi$-integral at the cost of an admissible error. In the remaining $\xi$-integral we integrate by parts sufficiently to complete the proof of \eqref{33}. 
 \end{proof}

\begin{lemma}\label{analyzeW} Let $W$ be a  fixed smooth  function with support in $[1/2, 3]$ satisfying $W^{(j)}(x) \ll_j 1$ for all $j$. Let $\nu \in \mathbb{C}$   be a fixed number with $\Re \nu \geq 0$. For $z, w > 0$ define 
\begin{displaymath}
  W^{\ast}(z, w) =   \int_0^{\infty} W(y) J_{\nu}(4\pi\sqrt{yw + z}) dy. 
\end{displaymath}
 Fix $C \geq 1$ and  $A, \varepsilon > 0$. Then for $z \gg w$ we have 
\begin{equation}\label{decompW}
  W^{\ast}(z, w) = W_+(z, w) e(2 \sqrt{z}) + W_-(z, w) e(-2 \sqrt{z})  + {\rm O}_A(C^{-A})
\end{equation}
for suitable functions $W_{\pm}$ (depending on $\nu$) satisfying 
\begin{equation}\label{boundWpm}
   z^i w^j \frac{\partial^i}{\partial z^i}    \frac{\partial^j}{\partial w^j} W_{\pm}(z, w) \begin{cases}
   = 0, & \sqrt{z}/w \leq C^{-\varepsilon},\\
   \ll C^{\varepsilon(i+j)}   \min(z^{-1/4}, 1), & \text{otherwise.}\end{cases}
 \end{equation}
 for any   $i, j \in \mathbb{N}_0$. The implied constants depend on $i, j$ and $\nu$. 
\end{lemma}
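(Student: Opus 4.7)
The plan is to apply Lemma \ref{bessel-decomp} to obtain
\[ J_\nu(4\pi\sqrt{yw+z}) = F_\nu^+(4\pi\sqrt{yw+z})\, e^{4\pi i\sqrt{yw+z}} + F_\nu^-(4\pi\sqrt{yw+z})\, e^{-4\pi i\sqrt{yw+z}}, \]
and to factor out $e(\pm 2\sqrt{z})$ from each term, leaving the residual phase $e(\pm \phi(y))$ with
\[ \phi(y) := 2\bigl(\sqrt{yw+z}-\sqrt{z}\bigr) = \frac{2yw}{\sqrt{yw+z}+\sqrt{z}}. \]
Under the assumption $z \gg w$ we have $\sqrt{yw+z} \asymp \sqrt{z}$ throughout the support of $W$, so the derivative $\phi'(y) = w/\sqrt{yw+z} \asymp w/\sqrt{z}$ is the key parameter governing the analysis.

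Next I would fix a smooth cutoff $\chi$ equal to $1$ on $[0, \tfrac{1}{2}C^\varepsilon]$ and supported in $[0, C^\varepsilon]$, and define
\[ W_\pm(z,w) := \chi(w/\sqrt{z}) \int_0^\infty W(y)\, F_\nu^\pm(4\pi\sqrt{yw+z})\, e(\pm \phi(y))\, dy. \]
This automatically enforces $W_\pm \equiv 0$ in the region $\sqrt{z}/w \leq C^{-\varepsilon}$. For the complementary contribution where $w/\sqrt{z} \gg C^\varepsilon$ and hence $\phi'(y) \gg C^\varepsilon$ on $[1/2, 3]$, repeated integration by parts against $e(\pm\phi)$ (using $\phi''/\phi'$ uniformly bounded, the bounds \eqref{1} on $F_\nu^\pm$, and the smoothness of $W$) produces a contribution of size ${\rm O}_A(C^{-A})$, absorbed into the error term in \eqref{decompW}.

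The remaining step is to verify the derivative bounds \eqref{boundWpm} in the retained regime $w/\sqrt{z} \ll C^\varepsilon$ by differentiating under the integral sign. The principal observations are: first, writing $x := 4\pi\sqrt{yw+z}$, one has $(z\partial_z)x \ll x$ and $(w\partial_w)x \ll (w/\sqrt{z})\cdot x \ll C^\varepsilon x$; second, a short Taylor expansion in $u := yw/z$ shows $z^i w^j \partial_z^i \partial_w^j \phi \ll w/\sqrt{z} \ll C^\varepsilon$ for every $i+j \geq 1$; third, the cutoff obeys $z^i w^j \partial_z^i \partial_w^j \chi(w/\sqrt{z}) \ll 1$ throughout; and finally, the amplitude satisfies $x^k (F_\nu^\pm)^{(k)}(x) \ll \min(x^{\Re\nu}, x^{-1/2})$ by Lemma \ref{bessel-decomp}. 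Combining these via the chain rule, each differentiation $z\partial_z$ or $w\partial_w$ of the integrand costs at most a factor $C^\varepsilon$, while the amplitude is bounded by $|F_\nu^\pm(x)| \ll \min(1, z^{-1/4})$, giving \eqref{boundWpm}.

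The main technical obstacle is the careful bookkeeping near the threshold $w/\sqrt{z} \sim C^\varepsilon$: the $C^\varepsilon$ budget per differentiation survives precisely because the cutoff confines us to $w/\sqrt{z} \leq C^\varepsilon$, and the negligibility of the removed contribution requires the same threshold from the other side, so the two estimates meet exactly. For small $z$ (where $x \leq 1$ and derivatives of $F_\nu^\pm$ are only controlled via $x^{\Re\nu-k}$), the parameters $y, w, z$ are all uniformly bounded, so each derivative of the integrand is bounded by a constant depending on $i, j, \nu$, comfortably absorbed in the target $C^{\varepsilon(i+j)}$ since $\min(z^{-1/4}, 1) = 1$ in this range.
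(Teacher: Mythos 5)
Your argument is correct and follows essentially the same route as the paper: decompose $J_\nu$ via Lemma~\ref{bessel-decomp}, cut off smoothly in $w/\sqrt z$, and obtain \eqref{boundWpm} by differentiating under the integral sign. The only small deviation is your mechanism for the negligibility of the region $w/\sqrt z \geq C^{\varepsilon}/2$: you integrate by parts against the residual phase $e(\pm\phi)$ \emph{after} decomposing, whereas the paper first establishes $W^{\ast}(z,w)\ll_A(\sqrt z/w)^A$ directly from the Bessel recurrence (cf.\ \eqref{byparts}), which sidesteps the derivative bookkeeping for $F_\nu^{\pm}$ in that regime; both routes are sound (and, as a very minor bookkeeping point, $(w\partial_w)x\asymp(w/z)\,x$ rather than $(w/\sqrt z)\,x$, though your conclusion $\ll C^{\varepsilon}x$ still holds since $w/z\ll 1$).
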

 
\begin{proof} Integration by parts  in connection with \cite[8.472.3]{GR} (cf.\ \eqref{byparts}) yields
\begin{displaymath}
  \int_0^{\infty} W(y) J_{\nu}(4 \pi \sqrt{yw + z}) dy = \int_0^{\infty} \left(\frac{-\nu}{4\pi \sqrt{yw+z}} W(y) + \frac{\sqrt{yw + z}}{2\pi w} W'(y)\right) J_{\nu+1}(4 \pi \sqrt{yw + z}) dy, 
\end{displaymath}
for $z, w > 0$.  Repeated application   together with  \eqref{fixed} shows  \begin{displaymath}
  W^{\ast}(z, w) \ll_{A} \Bigl(
   \frac{\sqrt{z}}{w}\Bigr)^A 
 \end{displaymath}
for   $A \in \mathbb{N}_0$.  For $\sqrt{z}/w \leq C^{-\varepsilon}$ we obtain an admissible decomposition satisfying \eqref{decompW} and \eqref{boundWpm} by putting  $W_+(z, w) = W_-(z, w) = 0$. Let us now assume $\sqrt{z}/w \geq \frac{1}{2} C^{-\varepsilon}$. We insert the decomposition from Lemma \ref{bessel-decomp}   into the definition of $W^{\ast}(z, w)$. 
In this way we obtain a decomposition satisfying \eqref{decompW} by putting
\[ W_{\pm}(z, w) :=   \int_0^{\infty} W(y) F_{\nu}^{\pm}\bigl(4\pi \sqrt{yw+z}\bigr) \exp\bigl(\pm 4\pi i(\sqrt{yw + z} - \sqrt{z})\bigr)dy. \]
Now the second line of \eqref{boundWpm} is easily verified. As in Lemma \ref{bessel-decomp}, we glue these decompositions together to complete the proof of the lemma.
  \end{proof}

\begin{cor}\label{cor8}
  The double Mellin transform
\begin{displaymath}
\widehat{{W}}_{\pm}(s, t) = \int_{0}^{\infty} \int_0^{\infty} {W}_{\pm}(z, w) z^{s} w^t \frac{dz\, dw}{zw}
\end{displaymath}
is absolutely convergent in the tube domain defined by $\Re t > 0$, $0 < \Re s + \Re t/2 < 1/4$, and satisfies 
\begin{equation}\label{intbyparts}
\widehat{{W}}_{\pm}(s, t) \ll_{A, B, \varepsilon, \Re s, \Re t} C^{\varepsilon}  |s|^{-A}|t|^{-B}
\end{equation}
 in this region.   Moreover,  the Mellin inversion formula
\[ { W}_{\pm}(z, w) = \int_{(c_2)} \int_{(c_1)} \widehat{{W}}_{\pm}(s, t)  z^{-s} w^{-t} \frac{ds}{2\pi i} \frac{dt}{2\pi i}  \]
holds whenever $c_1, c_2 > 0$, $ c_1 + c_2/2< 1/4$. 
\end{cor}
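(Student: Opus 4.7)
The plan is to derive all three claims directly from the support and derivative bounds \eqref{boundWpm} of Lemma \ref{analyzeW}, by Fubini, repeated integration by parts, and the one-dimensional Mellin inversion theorem applied iteratively. No individual step is difficult; the only care needed is in the bookkeeping of the four size regimes that determine the tube of absolute convergence.

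For absolute convergence of $\widehat{W}_{\pm}(s,t)$, I will use that $W_{\pm}(z,w)$ is supported in $z \geq C^{-2\varepsilon} w^2$, where $W_{\pm}(z,w) \ll \min(z^{-1/4}, 1)$, and split the iterated integral at $w = 1$. For $w \leq 1$, the $z$-integral over $[C^{-2\varepsilon} w^2, 1]$ contributes $\ll 1 + w^{2 \Re s}$ (with a harmless log when $\Re s = 0$), while $z \in [1, \infty)$ contributes $O(1)$ as long as $\Re s < 1/4$; integrating against $w^{\Re t - 1}$ near $w = 0$ then forces $\Re t > 0$ together with $\Re t + 2 \Re s > 0$. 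For $w \geq 1$, the $z$-support begins above $1$, so the inner integral is $\ll w^{2 \Re s - 1/2}$, and the outer integral converges at infinity iff $\Re t + 2 \Re s < 1/2$. Together these four constraints define exactly the stated tube.

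For the rapid decay \eqref{intbyparts}, the main step is to integrate by parts $A$ times in $z$ and $B$ times in $w$. The boundary contributions at $z = 0$ and $w = 0$ vanish because, by the first line of \eqref{boundWpm}, $W_\pm$ and all its derivatives are identically zero in a neighborhood of each boundary line; the contributions at infinity vanish by the $z^{-1/4}$ decay, since at each step the new integrand is of size $z^{\Re s}\min(z^{-1/4},1) \to 0$ whenever $\Re s < 1/4$, and analogously in $w$. This yields the identity
\[
\widehat{W}_\pm(s,t) = \frac{(-1)^{A+B}}{s(s+1) \cdots (s+A-1)\, t(t+1) \cdots (t+B-1)} \int_0^\infty \!\!\int_0^\infty \frac{\partial^{A+B} W_\pm(z,w)}{\partial z^A \partial w^B}\, z^{s+A-1} w^{t+B-1}\, dz\, dw.
\]
The derivative bound $z^A w^B \partial_z^A \partial_w^B W_\pm \ll C^{(A+B)\varepsilon}\min(z^{-1/4},1)$ makes the remaining double integral satisfy the very same estimate as in the previous paragraph. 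Hence $\widehat{W}_\pm(s,t) \ll_{A,B,\Re s,\Re t} C^{(A+B)\varepsilon}|s|^{-A}|t|^{-B}$ for $|s|,|t|\geq 1$, the bounded range being covered by absolute convergence; taking the $\varepsilon$ of Lemma \ref{analyzeW} to be a small multiple of the target $\varepsilon$ in \eqref{intbyparts} gives the claim.

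The Mellin inversion formula is now immediate: for any admissible $c_1, c_2$, the Mellin transforms (in $z$ for fixed $w$, then in $t$) are absolutely convergent with rapid decay along the vertical lines $\Re s = c_1$ and $\Re t = c_2$, so Fubini together with the one-dimensional Mellin inversion theorem recovers $W_\pm(z,w)$ as the stated double contour integral.
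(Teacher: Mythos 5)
Your strategy — integrating by parts in $z$ and $w$ against the derivative bounds \eqref{boundWpm} and then applying one-dimensional Mellin inversion twice — is exactly what the paper does, and your bookkeeping of the tube of absolute convergence is correct. The one slip is in the justification of the vanishing of boundary terms. The support condition in the first line of \eqref{boundWpm} gives $W_{\pm}(z,w)=0$ when $\sqrt{z}/w\leqslant C^{-\varepsilon}$, i.e.\ when $z\leqslant C^{-2\varepsilon}w^{2}$, equivalently $w\geqslant C^{\varepsilon}\sqrt{z}$; this kills the boundary terms at $z=0$ and at $w=\infty$, but it says nothing near $w=0$: for fixed $z>0$ one has $\sqrt{z}/w\to\infty$ as $w\to 0^{+}$, so one is in the second branch of \eqref{boundWpm} and $W_{\pm}(z,w)$ is generically \emph{not} zero near the $w$-axis (indeed $W_{\pm}(z,0)\neq 0$ in general). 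The boundary terms at $w=0$ actually vanish because $\Re t>0$: after $j$ integrations by parts the boundary integrand is a scalar multiple of $\partial_{w}^{j}W_{\pm}(z,w)\,w^{t+j}$, and by \eqref{boundWpm} this is $\ll C^{j\varepsilon}w^{\Re t}\to 0$. Symmetrically, the bound $\min(z^{-1/4},1)$ carries no $w$-decay, so ``analogously in $w$'' is not quite right at $w=\infty$ either — there it is the support cutoff that does the job, while the $z^{-1/4}$ decay together with $\Re s<1/4$ handles $z=\infty$, as you say. With these corrections the argument is complete and coincides with the paper's proof.
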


\begin{proof} Repeated integration by parts gives
\[ \widehat{{W}}_{\pm}(s, t) \ll_{i, j}  |s|^{-i} |t|^{-j} \int_0^{\infty} \int_0^{\infty} z^i y^j {W}_{\pm}^{(i, j)}(z, w) z^{s-1} w^{t-1} dz \, dw. \]
Inserting \eqref{boundWpm} proves \eqref{intbyparts} in the desired range, and the Mellin inversion formula follows easily (for instance by applying first the one-dimensional inversion formula in $w$ and then in $z$). 
 \end{proof}

\textbf{Remark:} Lemma \ref{analyzeW} and Corollary \ref{cor8} play an important role in the analysis of shifted convolution sums for holomorphic cusp forms. In the Maa{\ss} case we need a small, but somewhat technical extension of these results. It is convenient to state it already at this point:

\begin{enumerate}
\item Lemma \ref{analyzeW} holds true for negative $w$ as long as $4|w| \leq z$ (with $|w|$ in place of $w$ in \eqref{boundWpm}). In this case  the support condition of $W$ implies $yw + z > 0$ and in fact $yw +z \asymp z$.  

\item In order to encode the condition $4|w| \leq z$ into Corollary \ref{cor8}, we proceed as follows: let $0 < z_0 < 1$ and let $W_0(z, w)$ be a smooth function on $[0, \infty) \times \mathbb{R}$ such that
\begin{itemize}
\item  $W_0(z, w) = 1$ if $5|w| \leq z$ and $z \geq z_0$, 
\item  $W_0(z, w) = 0$ if $4|w| \geq z$ or $z \leq \frac{1}{2} z_0$,
\item  $z^i|w|^jW^{(i, j)}_0(z, w) \ll_{i, j} 1$ for all $i, j \in \mathbb{N}_0$, uniformly in $z_0$. 
\end{itemize} 
 Define ${\tt W}_{\pm}(z, w) := W_0(z, w) W_{\pm}(z, w)$ with $W_{\pm}$ as in Lemma \ref{analyzeW}, and define $$\widehat{{\tt W}}_{\pm, \pm}(s, t) = \int_{0}^{\infty} \int_0^{\infty} {\tt W}_{\pm}(z, \pm w) z^{s} w^t \frac{dz\, dw}{zw}.$$
Then Corollary \ref{cor8} holds with $\widehat{{\tt W}}_{\pm, \pm}(s, t)$ in place of $\widehat{{W}}_{\pm}(s, t)$, and \eqref{intbyparts} is uniform in $z_0$. 
\end{enumerate}

\section{Spectral decomposition of shifted convolution sums}
\label{SpectralDecompositionSection}
This section is devoted to the spectral decomposition of the shifted convolution sum $\mathcal{D}(\ell_1, \ell_2, h, N , M )$, defined in \eqref{defD}. 
 We choose a large parameter
\begin{equation}\label{sizeQ} C := N^{1000}\end{equation}
and make the general   assumption
 \begin{equation}\label{sizeh}
  h \asymp  N \geq 20M.
\end{equation}
We can also assume without loss of generality that
\[ \ell_1, \ell_2 \leq 2N, \]
for otherwise $\mathcal{D}(\ell_1, \ell_2, h, N , M )$ vanishes trivially. 
Slightly more generally than in \eqref{derivative} we only assume that 
\begin{equation}\label{assonV}
 V_{1, 2} \text{ are supported in $[1, 2]$ and satisfy } V_{1, 2}^{(j)} \ll C^{j \varepsilon}.
 \end{equation}  

The weight function $V_2$ localizes $\ell_1 n$ in a dyadic interval of size $N$, but the summation condition $\ell_1n - \ell_2 m = h$ suggests that $\ell_1 n$ can, for a given $h$, vary only in an interval of length $M$. Therefore we attach a redundant weight function $W(\frac{\ell_1n - h}{M})$ to the sum where $W$ is smooth with bounded derivatives, constantly 1 on $[1, 2]$,  and  supported on $[1/2, 3]$.    With this notation, we can re-write 
\begin{displaymath}
\begin{split}
\mathcal{D}(\ell_1, \ell_2, h, N , M )& = \sum_{\ell_1n - \ell_2m = h} \lambda_1(m) \lambda_2(n)V_1\left(\frac{\ell_2m}{M}\right) V_2\left(\frac{\ell_2m+h}{N}\right) W\left(\frac{\ell_1n - h}{M}\right)\\
& = \int_{-\infty}^{\infty}V_2^{\dagger}(z) e\left(\frac{zh}{N}\right) \mathcal{D}_z(\ell_1, \ell_2, h, N , M ) dz,
\end{split}
\end{displaymath}
where $V_2^{\dagger}$ is the Fourier transform of $V_2$ and 
\[ \mathcal{D}_z(\ell_1, \ell_2, h, N , M ) = \sum_{\ell_1n - \ell_2m = h} \lambda_1(m) \lambda_2(n)V_z\left(\frac{\ell_2m}{M}\right)   W\left(\frac{\ell_1n - h}{M}\right), \]
with $V_z(x) = V_1(x) e(z x M/N)$. We can truncate the $z$-integral at $|z| \leq C^{\varepsilon}$ at the cost of an error ${\rm O}(C^{-100})$. 


\subsection{The circle method}\label{CircleMethodSubsection} 
The following lemma is Jutila's variant of the circle method \cite{J1, J2}. 

\begin{lemma}\label{jutila} [Jutila's circle method] Let $Q \geq 1$ and $Q^{-2}\leq \delta \leq Q^{-1}$ be two parameters. Let $w$ be a nonnegative function with support in $[Q, 2Q]$ satisfying $\| w \|_{\infty} \leq 1$ and $\sum_c w(c ) > 0$. For $r \in \mathbb{Q}$ write $I_{r}(\alpha)$ for the characteristic function of the interval $[r- \delta, r+\delta]$ and define
\begin{equation}\label{defLambda}
  \Lambda := \sum_c w(c ) \phi( c),  \quad \tilde{I}(\alpha) = \frac{1}{2 \delta \Lambda} \sum_c w(c ) \left.\sum_{d \bmod{c}}\right. ^{\ast} I_{d/c}(\alpha).
\end{equation}
Then $\tilde{I}(\alpha)$ is a good approximation to the characteristic function on $[0, 1]$ in the sense that
\begin{displaymath}
  \int_0^1 (1 - \tilde{I}(\alpha))^2 d\alpha \ll_{\varepsilon} \frac{Q^{2+\varepsilon}}{\delta \Lambda^2}
\end{displaymath}
for any $\varepsilon > 0$.
\end{lemma}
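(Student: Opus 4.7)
The plan is to proceed via Parseval's identity on $\mathbb{R}/\mathbb{Z}$. A direct computation first gives $\int_0^1\tilde I(\alpha)\,d\alpha=1$: each indicator $I_{d/c}$ has mass $2\delta$ (treating $\alpha$ modulo $1$ so that boundary-wrapping is harmless), and the normalizing constant $2\delta\Lambda$ in \eqref{defLambda} is precisely chosen so that $\sum_{(c,d)}w(c)\cdot 2\delta=2\delta\Lambda$. Consequently
\[
\int_0^1(1-\tilde I(\alpha))^2\,d\alpha=\int_0^1\tilde I(\alpha)^2\,d\alpha-1=\sum_{n\neq 0}|\widehat{\tilde I}(n)|^2
\]
by Parseval. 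The Fourier coefficients are explicit: using $\widehat{I_{d/c}}(n)=e(-nd/c)\sin(2\pi n\delta)/(\pi n)$ for $n\neq 0$, one finds
\[
\widehat{\tilde I}(n)=\frac{\sin(2\pi n\delta)}{2\pi n\delta\,\Lambda}\,R(n),\qquad R(n):=\sum_{c}w(c)\,c_c(n),
\]
where $c_c(n)=\sum_{d\bmod c}^{\ast}e(-nd/c)$ is the classical Ramanujan sum.

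Next I would estimate $R(n)$ pointwise. The standard bound $|c_c(n)|\leq(c,n)$, combined with $|w|\leq 1$ and $\mathrm{supp}\,w\subseteq[Q,2Q]$, gives
\[
|R(n)|\leq\sum_{Q\leq c\leq 2Q}(c,n)=\sum_{d\mid n,\,d\leq 2Q}d\cdot\#\{c\in[Q,2Q]:d\mid c\}\ll Q\tau(n),
\]
since each divisor $d\leq 2Q$ of $n$ contributes at most $Q+d\leq 3Q$ to the sum. Substituting this into Parseval yields
\[
\sum_{n\neq 0}|\widehat{\tilde I}(n)|^2\ll\frac{Q^2}{\delta^2\Lambda^2}\sum_{n\neq 0}\frac{\sin^2(2\pi n\delta)}{n^2}\tau(n)^2,
\]
and the task reduces to showing that the remaining $n$-sum is $\ll\delta\,Q^\varepsilon$.

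For this last estimate I would split at $|n|\asymp 1/\delta$: on $|n|\leq 1/\delta$ I apply $\sin^2(2\pi n\delta)\leq(2\pi n\delta)^2$ together with $\sum_{n\leq X}\tau(n)^2\ll X\log^3 X$; on $|n|>1/\delta$ I use $\sin^2\leq 1$ with partial summation of the same divisor estimate. Both ranges contribute $\ll\delta\log^4(1/\delta)\ll\delta\,Q^\varepsilon$, where the last step uses $\delta\geq Q^{-2}$. Assembling the three displays gives $\int_0^1(1-\tilde I(\alpha))^2\,d\alpha\ll Q^{2+\varepsilon}/(\delta\Lambda^2)$. The only conceptual steps are the Parseval reduction and the identification of $R(n)$ with an averaged Ramanujan sum; after the standard bound $|c_c(n)|\leq(c,n)$ the remainder is routine arithmetic. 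The two-sided hypothesis $Q^{-2}\leq\delta\leq Q^{-1}$ is used precisely to balance the two halves of the split $n$-sum and to control $\log(1/\delta)$ by $Q^\varepsilon$, so I do not anticipate a genuine obstacle.
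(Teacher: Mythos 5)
Your proof is correct, and it reproduces the standard argument for Jutila's lemma: reduce to an off-diagonal Parseval bound, recognize $\widehat{\tilde I}(n)$ as a Fej\'er-type factor times a weighted sum of Ramanujan sums, apply the pointwise bound $|c_c(n)|\leq(c,n)$, and close with a divisor-sum estimate. The paper itself does not reproduce a proof but refers to Jutila's papers \cite{J1,J2}, and the route you take is essentially the one found there, so there is nothing substantive to compare. Two minor remarks: (i) in your split of the $n$-sum both ranges already contribute $\ll\delta(\log(1/\delta))^3$, so you can save one logarithm, though of course this is irrelevant for the $Q^{\varepsilon}$ statement; (ii) the hypothesis $\delta\leq Q^{-1}$ is not actually used in your Parseval argument (only $\delta\geq Q^{-2}$ enters, to control $\log(1/\delta)$); it serves the applications of the lemma and guarantees that the arcs $[d/c-\delta,d/c+\delta]$ do not wrap badly on $\mathbb{R}/\mathbb{Z}$, which you have quietly absorbed by working modulo $1$ from the start.
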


We apply this lemma with $Q = C$ and  $\delta = C^{-1}$. Let $w_0$ be a fixed smooth function with support in $[1, 2]$, and let 
\begin{equation}\label{defww}
w(c ) = \begin{cases}w_0(c/C), & \ell_1\ell_2 \mid c,\\
0, & \text{else.}\end{cases}
\end{equation}
  With the notation as in Lemma \ref{jutila}, we have
\begin{equation}\label{sizeLambda}
   \Lambda \asymp C^2(\ell_1\ell_2)^{-1}
\end{equation}
and 
\begin{displaymath}
\begin{split}
  \mathcal{D}_z(\ell_1, \ell_2, h, N, M)  & = \int_0^{1} \sum_{n, m} \lambda_1(m)\lambda_2(n) W\left(\frac{\ell_1n-h}{M}\right)V_z\left(\frac{\ell_2 m}{M}\right) e(\alpha(\ell_1 n - \ell_2 m - h)) d\alpha \\
  & =  \frac{1}{2\delta} \int_{-\delta}^{\delta}   \mathcal{D}_{z, \eta}(\ell_1, \ell_2, h, N, M ) d\eta + E,
\end{split}  
\end{displaymath}
where   
\begin{equation}\label{deta}
\begin{split}
& \mathcal{D}_{z, \eta}(\ell_1, \ell_2, h, N, M) \\
 &= \frac{1}{\Lambda} \sum_{\ell_1\ell_2 \mid c} w_0\left(\frac{c}{C}\right) \underset{d  \bmod{c}}{\left. \sum \right.^{\ast}} \sum_{n, m} \lambda_1(m)\lambda_2(n) e\left(\frac{d}{c}(\ell_1n - \ell_2m - h)\right)     W_{\eta  M}\left(\frac{\ell_1 n -h}{ M}\right)V_{z, \eta M}\left(\frac{\ell_2 m}{M}\right) 
\end{split} 
 \end{equation}
with $V_{z, \eta}(x) = V_z(x) e(-\eta x) =  V_1(x) e(x(zM/N - \eta))$, $W_{\eta}(x) = W(x) e(\eta x)$, and
\begin{displaymath}
\begin{split}
  E &=  \int_0^{1} \sum_{n, m} \lambda_1(m)\lambda_2(n) W\left(\frac{\ell_1n-h}{M}\right)V_z\left(\frac{\ell_2m}{M}\right) e(\alpha(\ell_1 n - \ell_2 m - h)) (1- \tilde{I}(\alpha)) d\alpha \\
  &  \ll \frac{C^{1+\varepsilon}}{\delta^{1/2} \Lambda}\Bigl(\sum_{m \ll M/\ell_2} |\lambda_1(m)|\Bigr)\Bigl(\sum_{n \ll N/\ell_1} |\lambda_2(n)|\Bigr) \ll \frac{C^{1+\varepsilon}}{\delta^{1/2} \Lambda}\frac{NM}{\ell_1\ell_2} \ll \frac{  NM}{C^{1/2-\varepsilon}} \ll C^{-2/5}
  \end{split}
\end{displaymath}
by the Cauchy-Schwarz inequality and \eqref{RS}. Since $|\eta| \leq C^{-1} = N^{-1000}$ is very small (in particular $\eta \ll M^{-1}$), the functions $V_{z, \eta M}$ and $W_{\eta M}$ have again nice properties, in particular $W_{\eta M}^{(j)} \ll 1$ and $V^{(j)}_{z, \eta M}  \ll C^{j\varepsilon}$, uniformly in $|z| \ll C^{\varepsilon}$, and $V_{z, \eta M}$, $W_{\eta M}$ have support in $[1, 2]$ resp.\ $[1/2, 3]$. 

\subsection{Voronoi summation}
\label{VoronoiSummationSubsection}
In the main term \eqref{deta}, we apply Lemma \ref{vor} to the $n, m$-sum, getting
\begin{equation}\label{firstvor}
  \sum_{m} \lambda_1(m) e\left(-\frac{dm}{c/\ell_2}\right) V_{z, \eta M}\left(\frac{\ell_2 m}{M}\right)  = \frac{M}{c}\sum_{m} \lambda_1(m) e\left(\frac{\bar{d}\ell_2m}{c}\right) \mathring{V}_{z, \eta M} \left(\frac{\ell_2 mM}{c^2}\right)
\end{equation}
and
\begin{equation}\label{secondvor}
\begin{split}
   \sum_{n} \lambda_2(n) & e\left(\frac{dn}{c/\ell_1}\right)   W_{\eta  M}\left(\frac{\ell_1 n -h}{ M}\right)\\
   & =  
   \frac{\ell_1}{c} \sum_n \lambda_2(n) e\left(-\frac{\bar{d}\ell_1n}{c}\right)  2\pi i^{\kappa_2} \int_0^{\infty} W_{\eta M} \left(\frac{\ell_1 x - h}{M}\right) J_{\kappa_2-1}\left(4\pi \frac{\sqrt{xn}}{c/\ell_1}\right) dx \\
   &
=    \frac{M}{c} \sum_{n} \lambda_2(n) e\left(-\frac{\bar{d}\ell_1n}{c}\right) W_{\eta M}^{\ast}\left(\frac{h\ell_1 n}{c^2}, \frac{M\ell_1n}{c^2}\right), 
   \end{split}
   \end{equation}
where
\begin{equation}\label{defWast}
  W^{\ast}_{\eta M}(z, w) = 2\pi i ^{\kappa_2} \int_0^{\infty} W_{\eta M}(y) J_{\kappa_2-1}(4\pi\sqrt{yw + z}) dy
\end{equation}
was analyzed in Lemma \ref{analyzeW}.  
Substituting \eqref{firstvor} and \eqref{secondvor} back into \eqref{deta} and using \eqref{decompW}, we obtain 
\begin{displaymath}
\begin{split}
 \mathcal{D}_{z, \eta}(\ell_1, \ell_2, h, N, M) = \frac{M^2}{\Lambda C}& \sum_{\ell_1\ell_2 \mid c} w_1\left(\frac{c}{C}\right)\frac{1}{c}  \sum_{n, m} \lambda_1(m)\lambda_2(n)  S(\ell_1 n - \ell_2 m, h, c)   \\
 & \times W_{\pm}\left( \frac{h\ell_1 n}{c^2}, \frac{M\ell_1n}{c^2}\right)e\left(\pm 2  \frac{\sqrt{h\ell_1 n}}{c}\right) \mathring{V}_{z, \eta M}\left(\frac{\ell_2 m}{c^2/M}\right) + {\rm O}(C^{-A})
 \end{split}
 \end{displaymath}
where
\[ w_1(x) = w_0(x)/x. \]
By \eqref{boundWpm} and the fact that $ \mathring{V}_{z, \eta M}$ is a Schwartz class function (cf.\ \eqref{byparts})  we can restrict the $n, m$-sums to
\begin{equation}\label{sizes}
  \ell_1 n \leq \mathcal{N}_0 := \frac{C^{2+\varepsilon}N}{M^2}, \quad \ell_2 m \leq \mathcal{M}_0 := \frac{C^{2+\varepsilon}}{M}
\end{equation}
  at the cost of a negligible error. It is convenient to restrict the $n$ and $m$-variable to  dyadic intervals. We use the notation $x \asymp X$ to mean $X \leq x \leq 2X$, and for $\mathcal{N} \leq \mathcal{N}_0$, $\mathcal{M} \leq \mathcal{M}_0$  we split  $\mathcal{D}_{\eta}(\ell_1, \ell_2, h, N, M)$ into subsums $n \asymp \mathcal{N}$, $m \asymp \mathcal{M}$.  It is also convenient to restrict to $  |\ell_1n - \ell_2m| \asymp \mathcal{K}$.  We split the arising subsums into three pieces ${\sum}_+$, ${\sum}_0$, and ${\sum}_-$, according to $\ell_1 n > \ell_2m$, $\ell_1 n = \ell_2 m$, and $\ell_1 n < \ell_2m$. Each of ${\sum}_+$, ${\sum}_0$ and ${\sum}_-$ depends on $\ell_1, \ell_2, h, N, M, \mathcal{N}, \mathcal{M}$ and $\mathcal{K}$. 
We first treat the terms with $\ell_1n= \ell_2m$. A trivial estimate shows that their contribution is at most
\begin{displaymath}
\begin{split}
{\sum}_0 & \ll  \frac{M^2}{\Lambda C^{1-\varepsilon}} \sum_{ C \leq c \leq 2C} \frac{(h, c)}{c}\sum_{\substack{\ell_1 n \asymp \mathcal{N}, \ell_2 m \asymp \mathcal{M}\\ \ell_1n = \ell_2m}}|\lambda_1(m)\lambda_2(n)| \\
&  \ll \frac{ M^2\tau(h)}{\Lambda C^{1-\varepsilon}}\Bigl( \sum_{m \ll \mathcal{M}} |\lambda_1(m)|^2\Bigr)^{1/2}\!\Bigl( \sum_{n \ll \mathcal{N}} |\lambda_2(n)|^2\Bigr)^{1/2}\!\!
\ll  \frac{  M^2 \tau(h) (\mathcal{N}_0\mathcal{M}_0)^{1/2}}{\Lambda C^{1-\varepsilon}} \ll \frac{C^{\varepsilon}(NM)^{1/2} \ell_1\ell_2}{C^{1-\varepsilon}} \ll C^{-1/2}. 
\end{split} 
\end{displaymath}

\subsection{Spectral analysis of \texorpdfstring{$\sum_+$}{sum-plus}} Next, we consider
\begin{equation}
\label{SumPlus}
{\sum}_{+} =   \frac{M^2}{\Lambda C}   \sum_{\substack{b > 0\\ |b| \asymp \mathcal{K}}} 
 \sum_{  \substack{\ell_1 n - \ell_2m = b\\ \ell_1 n \asymp \mathcal{N}, \ell_2m \asymp \mathcal{M}}} 
 \lambda_1(m)\lambda_2(n)  \sum_{\ell_1\ell_2 \mid c} \frac{S(b, h, c)}{c}  \Phi\left(4\pi \frac{\sqrt{|b|h}}{c}\right),
\end{equation}
where
\begin{displaymath}
\begin{split}
\Phi(x) = &w_1\left(\frac{4 \pi \sqrt{|b|h}}{xC}\right) W_{\pm}\left( \frac{\ell_1 n x^2}{(4\pi)^2|b| }, \frac{M\ell_1n x^2}{(4\pi)^2 |b|h}\right) e\left(\pm \frac{x\sqrt{\ell_1 n} }{2\pi\sqrt{|b|} }\right)   \mathring{V}_{z, \eta M}\left(\frac{\ell_2 m M  x^2}{(4\pi)^2|b|h}\right)
\end{split}
\end{displaymath}
and the inner sum over $c$ in \eqref{SumPlus} is ready for an application of the Kuznetsov trace formula (Lemma~\ref{kuznetsov}). (We are writing here $|b|$ instead of $b$ for notational consistency with next subsection.) The relevant Bessel transforms of $\Phi$ are   given by
\begin{displaymath}
\begin{split}
&  \tilde{\Phi}(t) = 2   \int_0^{\infty}  \Omega\Bigl(\frac{xC}{4\pi \sqrt{|b| h}}\Bigr)  \exp\Bigl(\pm i  x \sqrt{\ell_1 n/|b|}\Bigr)  \mathcal{J}^+_{2it}(x)   \frac{dx}{x},\\
  & \dot{\Phi}(k) = 4i^k    \int_0^{\infty}  \Omega\Bigl(\frac{x C}{4\pi \sqrt{|b| h}}\Bigr)  \exp\Bigl(\pm i  x\sqrt{\ell_1 n/|b|}\Bigr) J_{k-1}(x) \frac{dx}{x},
  \end{split}  
\end{displaymath} 
(cf.\ \eqref{IntegralTransforms} and \eqref{defJ+}), where
\begin{displaymath}
\begin{split}
&  \Omega(x ) 
:=  w_1\left(\frac{1}{x}\right) \mathring{V}_{z, \eta M} \left( \frac{x^2M\ell_2m }{C^2}\right ) W_{\pm} \left(\frac{x^2 h\ell_1n }{C^2}, \frac{x^2 M\ell_1n }{C^2}\right). 
\end{split}  
\end{displaymath}
Note that 
$\Omega$ has support on a fixed compact interval (inherited from $w_1$) and is almost non-oscillating, more precisely 
\[ \Omega^{(j)}(x ) \ll_{j} 
C^{j\varepsilon} \min\left(\Bigl(\frac{\mathcal{N}N}{C^2}\Bigr)^{-1/4}, \Bigl(\frac{\mathcal{N}N}{C^2}\Bigr)^{-\varepsilon}\right) \]
by \eqref{boundWpm}. By Lemma \ref{boundtrafo} with
\[ X = \frac{4\pi \sqrt{|b| h}}{C}, \quad Z = C^{\varepsilon}, \quad \alpha = \left(\frac{\ell_1 n}{|b|}\right)^{1/2} \geq 1, \]
the transforms $\tilde{\Phi}(t)$  and $\dot{\Phi}(k)$ are negligible   unless
\begin{equation}\label{defT}
\begin{split}
 & |t|  \ll  \mathcal{T}_+ :=    C^{\varepsilon} \left(1+\Bigl(\frac{\mathcal{K}N}{C^2}\Bigr)^{1/4} +  \Bigl(\frac{\mathcal{M}N}{C^2}\Bigr)^{1/2}\right),\\
   & k  \ll  \mathcal{T}_h :=    C^{\varepsilon} \left(1+\Bigl(\frac{\mathcal{K}N}{C^2}\Bigr)^{1/4}  \right). 
  \end{split} 
  \end{equation}
  
By the Kuznetsov formula (Lemma \ref{kuznetsov}), $\sum_{+}  = \mathcal{H}_{+}(h) + \mathcal{M}_+(h) + \mathcal{E}_+(h)  + {\rm O}(C^{-A})$ can be decomposed as the sum of three main terms, corresponding to the holomorphic, Maa{\ss} and Eisenstein spectrum, where
\begin{equation}\label{defmathcalH}
\begin{split} 
  \mathcal{H}_+(h) = \frac{M^2}{\Lambda C} \int_0^{\infty} \sum_{\substack{2 \leq k \leq \mathcal{T}_h\\ k \text{ even}}} & \sum_{f \in \mathcal{B}_k(\ell_1\ell_2)}  4i^k \Gamma(k) J_{k-1}(x)    \sqrt{h} \rho_f(h) 
\sum_{\substack{b > 0\\ |b| \asymp \mathcal{K}}} 
w_1\Bigl(\frac{4\pi \sqrt{|b|h}}{Cx}\Bigr) \sqrt{|b|} \rho_f(b ) \gamma_+(b, h, x) \frac{dx}{x}
\end{split}
\end{equation}
with
\begin{displaymath}
\begin{split}
 \gamma_+(b, h, x) = \sum_{\substack{\ell_1n - \ell_2m = b\\ \ell_1n \asymp \mathcal{N}, \ell_2m\asymp \mathcal{M}}} &
  \lambda_1(m)\lambda_2(n)     \mathring{V}_{z, \eta M}\left(\frac{x^2\ell_2m M}{(4\pi)^2|b|h}\right) W_{\pm}\left(\frac{x^2 \ell_1 n}{(4\pi)^2 |b|}, \frac{x^2 \ell_1 n M}{(4\pi)^2|b|h}\right) \vartheta_x\left(\frac{\ell_2m}{|b|}\right)\\
\end{split} 
\end{displaymath}
and
\begin{displaymath}
  \vartheta_x(y) = \exp\left(\pm i x \sqrt{1+y}\right)  v\left(\frac{y}{\mathcal{M}/\mathcal{K}}\right),
\end{displaymath}
where $v$ is  an artificially added, redundant smooth weight function of compact support $[1/4, 3]$ that is constantly 1 on $[1/2, 2]$.  We note that 
\begin{equation}\label{boundvarphi}
  y^j \frac{d^j}{dy^j}\vartheta_x(y) \ll \Bigl(1+\frac{x\mathcal{M}}{\sqrt{\mathcal{K}\mathcal{N}}  }\Bigr)^j. 
\end{equation}

Analogous expressions hold for $\mathcal{M}_{+}(h)$ and $\mathcal{E}_+(h)$:
\begin{equation}\label{defM+}
\begin{split} 
&  \mathcal{M}_+(h) = \frac{2  M^2}{\Lambda C} \int_0^{\infty}   \sum_{\substack{f \in \mathcal{B}(\ell_1\ell_2)\\ |t_f| \leq \mathcal{T}_+}}   \frac{\mathcal{J}^+_{2it_f}(x)  }{\cosh(\pi t_f)}  \sqrt{h} \rho_f(h) \sum_{\substack{b > 0\\ |b| \asymp \mathcal{K}}} 
w_1\Bigl(\frac{4\pi \sqrt{|b|h}}{Cx}\Bigr) \sqrt{|b|} \rho_f(b ) \gamma_+(b, h, x) \frac{dx}{x},\\
&  \mathcal{E}_+(h) = \frac{2  M^2}{\Lambda C} \int_0^{\infty}  \frac{1}{4\pi}  \sum_{\mathfrak{a}} \int_{-\mathcal{T}_+}^{\mathcal{T}_+}   \frac{\mathcal{J}^+_{2it}(x)  }{\cosh(\pi t)}  \sqrt{h} \rho_{\mathfrak{a}}(h, t) \sum_{\substack{b > 0\\|b| \asymp\mathcal{K}}} 
w_1\Bigl(\frac{4\pi \sqrt{|b|h}}{Cx}\Bigr) \sqrt{|b|} \rho_{\mathfrak{a}}(b, t ) dt\,\gamma_+(b, h, x) \frac{dx}{x}. 
 \end{split}
\end{equation}

\subsection{Spectral analysis of \texorpdfstring{$\sum_{-}$}{sum-minus}} The treatment of 
\begin{displaymath}
{\sum}_{-} =   \frac{M^2}{\Lambda C}  \sum_{\substack{b < 0 \\ |b| \asymp \mathcal{K}}} 
\sum_{\substack{  \ell_1 n - \ell_2m = b\\ \ell_1n \asymp \mathcal{N}, \ell_2m \asymp \mathcal{M}}} 
\lambda_1(m)\lambda_2(n)  \sum_{\ell_1\ell_2 \mid c} \frac{S(b, h, c)}{c}  \Phi\left(4\pi \frac{\sqrt{|b|h}}{c}\right)
\end{displaymath}
is  similar, but the details are slightly different. 
  Note that $b<0$ implies   \begin{equation}\label{newsize} \mathcal{N}  + \mathcal{K} \ll \mathcal{M} \leq \mathcal{M}_0. \end{equation} By Lemma \ref{boundtrafo}, the integral transform  $\check{\Phi}(t)$ is negligible  unless 
\begin{equation}\label{defT-}
 |t| \ll \mathcal{T}_- := C^{\varepsilon}\left(1+\Bigl(\frac{\mathcal{M} N}{C^2}\Bigr)^{1/2}\right). 
\end{equation}
Applying the opposite sign Kuznetsov formula, we  obtain $\sum_{-}  =   \mathcal{M}_-(h) + \mathcal{E}_-(h)  + {\rm O}(C^{-A})$ where (after a change of variables $x \mapsto 4\pi \sqrt{|b|} x$)
\begin{equation}\label{defM-}
\begin{split} 
&  \mathcal{M}_-(h) = \frac{2 M^2}{\Lambda C} \int_0^{\infty}   \sum_{\substack{f \in \mathcal{B}(\ell_1\ell_2)\\ |t_f| \leq \mathcal{T}_-}}    \frac{ \sqrt{h} \rho_f(h)}{\cosh(\pi t_f)}  \sum_{\substack{b < 0\\ |b| \asymp \mathcal{K}}} \mathcal{J}^-_{2it_f}\left(4\pi  \sqrt{|b|}x\right)    w_1\left(\frac{\sqrt{h}}{Cx}\right) \sqrt{|b|} \rho_f(b ) \gamma_-(b, h, x) \frac{dx}{x},\\
 \end{split}
\end{equation}
with
\begin{displaymath}
\begin{split}
 \gamma_-(b, h, x) = \sum_{\substack{\ell_1n - \ell_2m = b\\ \ell_1n \asymp \mathcal{N}, \ell_2m\asymp \mathcal{M}}} &
  \lambda_1(m)\lambda_2(n)     \mathring{V}_{z, \eta M}\left(\frac{x^2\ell_2m M}{h}\right) W_{\pm}\left(x^2 \ell_1 n, \frac{x^2 \ell_1 n M}{h}\right) e \left(\pm 2x \sqrt{\ell_1n}\right). \\
\end{split} 
\end{displaymath}
By Mellin inversion and \eqref{supnorm}, we have up to a negligible error 
\begin{equation}\label{defmathcalM}
\begin{split} 
  \mathcal{M}_-(h) = \frac{2 M^2}{\Lambda C} \int_0^{\infty}\int_{1 - iC^{\varepsilon}\mathcal{T}_-} ^{1 + iC^{\varepsilon}\mathcal{T}_-}   &  \sum_{\substack{f \in \mathcal{B}(\ell_1\ell_2)\\ |t_f| \leq \mathcal{T}_-}}  \widehat{\mathcal{J}}^-_{2it_f}(s)   \frac{ \sqrt{h} \rho_f(h)}{\cosh(\pi t_f)} w_1\left(\frac{\sqrt{h}}{Cx}\right) \\
&\times \sum_{\substack{b < 0\\ |b| \asymp \mathcal{K}}} \left(4\pi  \sqrt{|b|}x \right)^{-s}    
 \sqrt{|b|} \rho_f(b ) \gamma_-(b, h, x)\, \frac{ds}{2\pi i} \, \frac{dx}{x}.\\
 \end{split}
\end{equation}
An analogous formula holds for $\mathcal{E}_-(h) $. 

\subsection{Conclusion}

Before we sum over $h$ in the next section, we pause for a moment and summarize our discussion by stating the following decomposition.
\begin{prop}\label{conclusion} Let $\ell_1, \ell_2, h \in \mathbb{N}$, $M, N \geq 1$. Let $C = N^{1000}$, $\delta = 1/C$, assume \eqref{sizeh}, and define $\mathcal{N}_0$, $\mathcal{M}_0$ by \eqref{sizes}. Let $w_0$ be a fixed smooth function with support in $[1, 2]$, define $\Lambda$ as in \eqref{defLambda} using \eqref{defww}, and let $w_1(x) = w_0(x)/x$. Define $\mathcal{T}_h$, $\mathcal{T}_+$ and $\mathcal{T}_-$ as in \eqref{defT} and \eqref{defT-}. Assume that $V_{1, 2}$ satisfy \eqref{assonV}, let $W $ be as in the discussion after \eqref{assonV} and $V_2^{\dagger}$ be the Fourier transform of $V_2$, and define $\mathring{V}$ as in \eqref{hankel} and, for $z,\eta\in\mathbb{R}$, $V_{z, \eta M}(x) = V_1(x) e(-\eta M x)e(zxM/N)$. Let $W^{\ast}(z, w)$ be defined by \eqref{defWast} and correspondingly $W_{\pm}$ by \eqref{decompW}.  Finally   recall the special functions \eqref{defJ+}. With this notation   define 
$ \mathcal{H}_{+}(h) $, $ \mathcal{M}_{\pm}(h) $, $\mathcal{E}_{\pm}(h)$ as in \eqref{defmathcalH}, \eqref{defM+}, \eqref{defM-}. 

Then the smooth shifted convolution sum $\mathcal{D}(\ell_1, \ell_2, h, N, M ) $ defined in \eqref{defD} equals
\begin{equation}\label{specdecomp}
\begin{split}
   \mathcal{D}(\ell_1, \ell_2, h, N, M ) &  = \frac{1}{2\delta} \int_{-\delta}^{\delta}\int_{-C^{\varepsilon}}^{C^{\varepsilon}} V_2^{\dagger}(z) e\left(\frac{zh}{N}\right) \sum_{\mathcal{N} \leq \mathcal{N}_0}\underset{\mathcal{M}, \mathcal{K} \leq \mathcal{N}}{\sum_{\mathcal{M} \leq \mathcal{M}_0}  \sum_{\mathcal{K} \leq \mathcal{N}_0 }}\big( \mathcal{H}_{+}(h) + \mathcal{M}_{+}(h) + \mathcal{E}_{+}(h) \big)dz\,  d\eta \\
  & + \frac{1}{2\delta} \int_{-\delta}^{\delta} \int_{-C^{\varepsilon}}^{C^{\varepsilon}} V_2^{\dagger}(z) e\left(\frac{zh}{N}\right) \sum_{\mathcal{N} \leq \mathcal{M}_0}\sum_{\mathcal{M} \leq \mathcal{M}_0}  \sum_{\mathcal{K} \leq \mathcal{M}_0 }\big( \mathcal{M}_{-}(h) + \mathcal{E}_{-}(h) \big)\, dz\, d\eta+ {\rm O}(C^{-1/3})
 \end{split}  
\end{equation}
where $\mathcal{N}, \mathcal{M}, \mathcal{K}$ run over numbers $\geq 1$ of the form $\mathcal{N}_0 2^{-\nu}$ or $\mathcal{M}_02^{-\nu}$, $\nu \in \mathbb{N}$. 
\end{prop}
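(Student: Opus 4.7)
The proof is essentially a bookkeeping assembly of the work already laid out in Subsections~\ref{CircleMethodSubsection}--7.4; the plan is simply to trace the chain of transformations and verify that nothing has been lost along the way. My first step is to start from the definition \eqref{defD}, insert the redundant weight $W((\ell_1 n - h)/M)$ (which equals $1$ on the support of $V_2((\ell_2 m + h)/N)$ under the condition $\ell_1 n - \ell_2 m = h$, by \eqref{sizeh}), and Fourier-expand $V_2$ to write $\mathcal{D}(\ell_1,\ell_2,h,N,M)$ as a rapidly convergent $z$-integral of auxiliary sums $\mathcal{D}_z(\ell_1,\ell_2,h,N,M)$, truncated at $|z|\leq C^{\varepsilon}$ with negligible error.

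Next I apply Jutila's circle method (Lemma~\ref{jutila}) with the weight $w$ from \eqref{defww}, parameters $Q=C$ and $\delta=C^{-1}$, to detect the linear condition $\ell_1 n - \ell_2 m = h$. The mean-square error bound of Jutila, combined with Cauchy--Schwarz and Rankin--Selberg \eqref{RS} and the size \eqref{sizeLambda} of $\Lambda$, shows that the circle-method error $E$ contributes $O(C^{-2/5})$, well inside the claimed $O(C^{-1/3})$. In the resulting main term $\mathcal{D}_{z,\eta}(\ell_1,\ell_2,h,N,M)$, I then apply Voronoi summation (Lemma~\ref{vor}) to both the $m$-sum and the $n$-sum, yielding Kloosterman sums $S(\ell_1 n - \ell_2 m, h, c)$ of modulus $c$ divisible by $\ell_1\ell_2$, with dual variables weighted by $\mathring{V}_{z,\eta M}$ and by the Bessel-type integral $W^*_{\eta M}$ defined in \eqref{defWast}.

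Using Lemma~\ref{analyzeW} I split $W^*_{\eta M}(z,w) = W_+ e(2\sqrt{z}) + W_- e(-2\sqrt{z}) + O(C^{-A})$, chop dyadically in the three parameters $\mathcal{N}, \mathcal{M}, \mathcal{K}$ (valid up to $\mathcal{N}_0,\mathcal{M}_0$ by \eqref{sizes} and the Schwartz decay of $\mathring{V}_{z,\eta M}$), and split according to the sign of $b := \ell_1 n - \ell_2 m$. The diagonal $b=0$ is bounded trivially by $O(C^{-1/2})$, as computed. For $b>0$, I apply the Kuznetsov formula (Lemma~\ref{kuznetsov}) to the $c$-sum with $a=b$, $b=h$, producing holomorphic, Maass, and Eisenstein pieces $\mathcal{H}_+(h), \mathcal{M}_+(h), \mathcal{E}_+(h)$; for $b<0$, the opposite-sign Kuznetsov produces only $\mathcal{M}_-(h)$ and $\mathcal{E}_-(h)$. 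In the Maass case at $b<0$ I additionally perform Mellin inversion in the $x$-variable, which isolates $\widehat{\mathcal{J}}^-_{2it_f}(s)$ and, by \eqref{supnorm}, lets me truncate at $|\Im s|\leq C^{\varepsilon}\mathcal{T}_-$ negligibly.

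The main technical step is the spectral truncation to the ranges $\mathcal{T}_h,\mathcal{T}_+,\mathcal{T}_-$ defined in \eqref{defT} and \eqref{defT-}. For this, I examine the integral transforms $\tilde{\Phi}(t), \dot{\Phi}(k), \check{\Phi}(t)$ of $\Phi$ with support of size $X \asymp \sqrt{\mathcal{K} h}/C$ (resp.\ $\sqrt{h}/C$ for $\mathcal{J}^-$) and oscillation parameter $\alpha = \sqrt{\ell_1 n/|b|}$; with $Z=C^{\varepsilon}$ and noting that $\Omega^{(j)}$ is mildly controlled via \eqref{boundWpm}, the three bounds of Lemma~\ref{boundtrafo} yield exactly the cutoffs $\mathcal{T}_h, \mathcal{T}_+, \mathcal{T}_-$ (the Maass sign $+$ case requires both terms $(\mathcal{K}N/C^2)^{1/4}$ and $(\mathcal{M}N/C^2)^{1/2}$ since $\alpha^2-1 = \ell_2 m/|b|\asymp \mathcal{M}/\mathcal{K}$). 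Collecting the holomorphic, Maass and Eisenstein contributions for each sign of $b$, summing over the dyadic parameters $\mathcal{N},\mathcal{M},\mathcal{K}$, integrating over $z$ and $\eta$, and absorbing all the negligible errors produces exactly the decomposition \eqref{specdecomp}. The only subtle point that requires care is matching the normalizations of the various weight functions $\gamma_{\pm}$: the explicit changes of variable $x\mapsto 4\pi\sqrt{|b|h}/(Cx)$ (resp.\ $x\mapsto 4\pi\sqrt{|b|}x$) that lead from $\Phi$ to the stated integral representations of $\mathcal{H}_+, \mathcal{M}_{\pm}, \mathcal{E}_{\pm}$, including the redundant cutoff $\vartheta_x$ satisfying \eqref{boundvarphi}, must be tracked consistently, but no new estimates are needed beyond those already established.
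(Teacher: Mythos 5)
Your proposal is correct and follows essentially the same route as the paper, which is expected since Proposition~\ref{conclusion} is explicitly stated as a summary of Subsections~\ref{CircleMethodSubsection} and \ref{VoronoiSummationSubsection}: the redundant weight $W$ and Fourier expansion of $V_2$, Jutila's circle method with the $O(C^{-2/5})$ mean-square error, double Voronoi summation, the decomposition of $W^\ast_{\eta M}$ via Lemma~\ref{analyzeW}, dyadic splitting in $\mathcal{N}, \mathcal{M}, \mathcal{K}$ with the trivial $O(C^{-1/2})$ bound on the $b=0$ diagonal, and finally Kuznetsov in both signs with the spectral truncations from Lemma~\ref{boundtrafo}. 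One minor inaccuracy: the redundant weight $W((\ell_1 n - h)/M)$ equals $1$ on the support of $V_1(\ell_2 m/M)$ (since $\ell_1 n - h = \ell_2 m$ under the summation constraint and $W \equiv 1$ on $[1,2]$), not on the support of $V_2$ as you wrote; this does not affect the argument.
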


\section{Shifted convolution sums on average}
\label{ShiftedSumsAverageSection}

In this section, we use Proposition \ref{conclusion} to study averages of shifted convolution sums 
  $\mathcal{S}(\ell_1, \ell_2, d, N, M) = \sum_{r } \mathcal{D}(\ell_1, \ell_2, rd, N, M)$ 
over multiples of a positive
integer $d$, which were defined in \eqref{average}. 
  In particular, we will prove Proposition \ref{prop3}. Write
\[ \beta := {\rm lcm}(\ell_1, \ell_2, d). \] 
Our general assumption \eqref{sizeh} is still in place, 
so that $ \mathcal{D}(\ell_1, \ell_2, rd, N, M)$ vanishes unless $r \asymp N/d$.
We keep the notation from the previous section and import in particular the inequalities \eqref{sizeQ}, \eqref{sizeLambda},   \eqref{sizes}, \eqref{defT}, \eqref{defT-}. We start by considering 
\begin{displaymath}
  \sum_{r \asymp N/d} e\left(\frac{zrd}{N}\right) \mathcal{H}_+(rd) = \sum_{\substack{r_2 \ll N/d\\ r_2 \mid \beta^{\infty}}} \sum_{\substack{r_1 \asymp N/(dr_2)\\ (r_1, \beta)= 1}} e\left(\frac{zr_1r_2d}{N}\right)  \mathcal{H}_+(r_1r_2d)
\end{displaymath}
where $\mathcal{H}_+$ was defined in \eqref{defmathcalH}. We will sacrifice cancellation in the $x$-integral (in some typical ranges there is very little cancellation anyway) \eqref{defmathcalH} and just note that the range of integration is 
\begin{equation}\label{sizeX}
x \asymp X_+ := \frac{\sqrt{\mathcal{K}N}}{C}.
\end{equation}
\subsection{Separation of variables} We need to separate the variables $h=r_1r_2d, b, n, m$, scattered in the various smooth weight functions. We do this by brute force, expressing each weight function as an inverse Mellin transform. Since all of them are essentially non-oscillating (at least in typical ranges), this can be done with little loss.  With this in mind we write
\begin{displaymath}
\begin{split}
 & w_1\left(\frac{4\pi \sqrt{|b|r_1r_2d}}{Cx}\right) \mathring{V}_{z, \eta M}\left(\frac{x^2\ell_2m M}{(4\pi)^2|b|r_1r_2d} \right) W_{\pm}\left(\frac{x^2 \ell_1 n}{(4\pi)^2 |b|}, \frac{x^2 \ell_1 n M}{(4\pi)^2|b|r_1r_2d}\right)\vartheta_x\left(\frac{\ell_2m}{|b|}\right) \\
  &= \frac{1}{(2\pi i)^5} \int_{(0)} \int_{(\varepsilon)} \int_{(1/4-\varepsilon)}\int_{(\varepsilon)} \int_{(0)} \widehat{w}_1(s_1) \widehat{\mathring{V}}_{z, \eta M}(s_2 )\widehat{W}_{\pm}(s_3, s_4) \widehat{\vartheta_x}(s_5)\\
&    \times  \left(\frac{4\pi\sqrt{ |b|r_1r_2d}}{Cx}\right)^{-s_1}  \!\!\left(\frac{x^2\ell_2m M}{(4\pi)^2 |b|r_1r_2d}\right) ^{-s_2}\!\! \left(\frac{x^2 \ell_1 n}{(4\pi)^2|b|}\right)^{-s_3}\!\!\left( \frac{x^2 \ell_1 n M}{(4\pi)^2|b|r_1r_2d}\right)^{-s_4}\!\! \left(\frac{\ell_2m}{|b|}\right)^{-s_5} ds_5\,  ds_4\, ds_3 \,ds_2 \,ds_1.
  \end{split}
\end{displaymath}
The multiple integral is absolutely convergent, and we recall in particular Corollary \ref{cor8}.  The $s_1, \ldots, s_4$-integrals are rapidly converging and can be truncated at $|\Im s_j | \leq C^{\varepsilon}$ at the cost of a negligible error. By \eqref{boundvarphi} the $s_5$-integral can be truncated at
\begin{equation}\label{sizeS}
  |\Im s_5| \leq S:= C^{\varepsilon} \Bigl(1+ \frac{X_+\mathcal{M}}{\sqrt{\mathcal{K}\mathcal{N}}}\Bigr).  
\end{equation}
It is convenient to re-write the last line of the penultimate display as
\begin{displaymath}
\begin{split}
&  \left(\frac{x}{X_+}\right)^{s_1-2(s_2+s_3+s_4)} \left(\frac{|b|}{\mathcal{K}}\right)^{-\frac{s_1}{2}+s_2+s_3+s_4+s_5} \left(\frac{\ell_1n}{\mathcal{N}}\right)^{-s_3-s_4} \left(\frac{\ell_2m}{\mathcal{M}}\right)^{-s_2-s_5} (r_1r_2d)^{-\frac{s_1}{2}+s_2+s_4} \\
  & \times \frac{C^{s_1}\mathcal{K}^{-\frac{s_1}{2}+s_2+s_3+s_4+s_5}}{M^{s_2+s_4}(X_+/(4\pi))^{-s_1+2(s_2+s_3+s_4)}\mathcal{N}^{s_3+s_4}\mathcal{M}^{s_2+s_5}} \ll C^{\varepsilon} \frac{\mathcal{K}^{1/4}}{X_+^{1/2} \mathcal{N}^{1/4}} .
\end{split}  
\end{displaymath}

We substitute this back into \eqref{defmathcalH}, estimate the $x$- and $s_j$-integrals trivially and  finally   apply the Cauchy-Schwarz inequality to get
 \begin{equation}\label{finalboundH+}
  \sum_{\substack{r_1 \asymp N/(dr_2)\\ (r_1, \beta) = 1}} e\left(\frac{zr_1r_2d}{N}\right)\mathcal{H}_+(r_1r_2d)  \ll \frac{C^{\varepsilon}M^2}{\Lambda C} \frac{\mathcal{K}^{1/4}}{X_+^{1/2} \mathcal{N}^{1/4}} S\,  \left(\Xi_{1, +}^{\mathcal{H}} \,  \Xi_{2, +}^{\mathcal{H}}\right)^{1/2} 
 \end{equation}
 where
 \begin{displaymath} 
 \begin{split}
 & \Xi_{1, +}^{\mathcal{H}} = \max_{|u_4| \leq C^{\varepsilon}}  \sum_{\substack{2 \leq k \leq \mathcal{T}_h\\ k \text{ even}}} \Gamma(k) \sum_{f \in \mathcal{B}_k(\ell_1\ell_2)}    \Bigl|\sum_{\substack{r_1 \asymp N/(dr_2)\\ (r_1, \beta) = 1}} e\left(\frac{zr_1r_2d}{N}\right)r_1^{2\varepsilon + iu_4}\sqrt{r_1r_2d} \rho_f(r_1r_2d) \Bigr|^2,  \\
  &\Xi_{2, +}^{\mathcal{H}} =  \max_{\substack{|u_2|  \leq C^{\varepsilon}\\ |u_1|, |u_3| \leq S \\ x \asymp X_+}}     \sum_{\substack{2 \leq k \leq \mathcal{T}_h\\ k \text{ even}}}| J_{k-1}(x)|^2 \,\Gamma(k)   \sum_{f \in \mathcal{B}_k(\ell_1\ell_2)}     \Bigl|  \sum_{|b| \asymp \mathcal{K}}\sqrt{|b|} \rho_f(b )\gamma^{\ast}(b )\Bigr|^2,
 \end{split} 
\end{displaymath}  
 with
 \begin{displaymath}
\gamma^{\ast}(b )  =   \left(\frac{|b|}{\mathcal{K}} \right)^{1/4+\varepsilon + iu_3} \sum_{\substack{\ell_1n - \ell_2m = b\\ \ell_1n \asymp \mathcal{N}, \ell_2m \asymp \mathcal{M}}}   \left(\frac{\ell_1n}{\mathcal{N}} \right)^{-1/4  + iu_2}   \left(\frac{\ell_2m}{\mathcal{M}} \right)^{-\varepsilon + iu_1} \lambda_1(m)\lambda_2(n).     
  \end{displaymath}
The same analysis works \emph{mutatis mutandis} for the Eisenstein and Maa{\ss} spectrum, giving similar expressions $\Xi_{1/2, +}^{\mathcal{E}}$ and $\Xi_{1/2, +}^{\mathcal{M}}$. 

\subsection{The spectral large sieve} We proceed to estimate the various  $\Xi^{\star}_{j, +}$ for $j \in \{1, 2\}$, $\star \in \{\mathcal{H}, \mathcal{E}, \mathcal{M}\}$.  We have
\begin{displaymath}
  \sum_{b} |\gamma^{\ast}(b )|^2 \ll \int_0^1\Biggl| \sum_{\ell_2m\asymp \mathcal{M}} \lambda_1(m)   \left(\frac{\ell_2m}{\mathcal{M}}\right)^{-\varepsilon + iu_1 }e(\ell_2m\alpha) \Biggr|^2\Biggl| \sum_{\ell_1 n\asymp \mathcal{N}} \lambda_2(n)   \left(\frac{\ell_1n}{\mathcal{N}}\right)^{-1/4 +  iu_2}e(-\ell_1n\alpha) \Biggr|^2  d\alpha. 
\end{displaymath}
Since $u_2$ is small, we can successfully apply Wilton's bound \eqref{wilton} and partial summation. This does not work efficiently for the $m$-sum, but having estimated the $n$-sum by its sup-norm, we can open the square and use   \eqref{RS} to conclude that 
\begin{equation}\label{2norm}
   \sum_{b} |\gamma^{\ast}(b )|^2 \ll C^{\varepsilon} \frac{\mathcal{N}}{\ell_1}  \sum_{m \asymp \mathcal{M}/\ell_2} |\lambda_1(m)|^2 \ll C^{\varepsilon} \frac{\mathcal{N}\mathcal{M}}{\ell_1\ell_2},
\end{equation}
uniformly in $u_1, u_2, u_3$.  

In order to bound the Bessel function $J_{k-1}(x)$ in $\Xi_{2, +}^{\mathcal{H}}$, we recall the size of $x$ in \eqref{sizeX} and $k$ in \eqref{defT}. If $X_+ \geq 10^3 C^{\varepsilon}(1+X_+^{1/2})$, then $J_{k-1}(x) \ll x^{-1/2}$ by \eqref{uniform}. The opposite assumption  $X_+ < 10^3  C^{\varepsilon}(1+X_+^{1/2})$ implies $X_+ \ll C^{2\varepsilon}$ and hence trivially $J_{k-1}(x) \ll 1 \ll C^{\varepsilon} x^{-1/2}$. 
By  the large sieve inequality (Lemma \ref{largesieve}) and \eqref{2norm} we obtain
\begin{equation}\label{boundXi}
  \Xi_{2, +}^{\mathcal{H}} \ll \frac{C^{\varepsilon}}{X_+} \left(\mathcal{T}_h^2 + \frac{\mathcal{K}}{\ell_1\ell_2}\right)  \frac{\mathcal{N}\mathcal{M}}{\ell_1\ell_2}.
\end{equation}
Similarly one shows
\begin{equation}\label{boundXi'}
   | \Xi_{2, +}^{\mathcal{E}} |+| \Xi_{2, +}^{\mathcal{M}} |  \ll  
  \frac{C^{\varepsilon}}{X_+} \left(\mathcal{T}_+^2 + \frac{\mathcal{K}}{\ell_1\ell_2}\right)  \frac{\mathcal{N}\mathcal{M}}{\ell_1\ell_2}.
\end{equation}
\bigskip 

By \eqref{deligne} and  \eqref{multRama} we obtain
\begin{displaymath}
  \Xi_{1, +}^{\mathcal{H}} \ll \max_{|u_4| \leq C^{\varepsilon}}  C^{\varepsilon}  \sum_{\delta \mid \ell_1 \ell_2}   \sum_{\substack{2 \leq k \leq \mathcal{T}_h\\ k \text{ even}}} \Gamma(k) \sum_{f \in \mathcal{B}_k(\ell_1\ell_2)}  \Bigl|\sum_{\substack{r_1 \asymp N/(dr_2)\\ (r_1, \beta) = 1}} \alpha(r_1)\sqrt{r_1\delta} \rho_f(r_1\delta) \Bigr|^2. 
\end{displaymath}
where
\begin{equation}\label{defalpha}
\alpha(r_1) = \alpha_{r_2d, u_4}(r_1) = e\left(\frac{zr_1r_2d}{N}\right) r_1^{2\varepsilon + iu_4}.
\end{equation}
The large sieve (Lemma \ref{largesieve}) yields
\begin{equation}\label{simpleXi1}
  \Xi_{1, +}^{\mathcal{H}} \ll C^{\varepsilon} \sum_{\delta \mid \ell_1\ell_2} \left(\mathcal{T}_h^2 + \frac{N\delta}{dr_2\ell_1\ell_2}\right)\frac{N}{dr_2} \ll C^{\varepsilon}   \left(\mathcal{T}_h^2 + \frac{N }{dr_2 }\right)\frac{N}{dr_2} . 
\end{equation}
(We could be more careful here with powers of $\ell_1\ell_2$, but this is not necessary.) Using  \eqref{multEis} we obtain analogously 
\begin{equation}\label{boundXi1'}
  \Xi_{1,+}^{\mathcal{E}} \ll C^{\varepsilon} \left(\mathcal{T}_+^2 + \frac{N}{dr_2}\right)\frac{N}{dr_2}. 
\end{equation}
Note that the upper bounds in \eqref{boundXi'} and \eqref{boundXi1'} majorize those in  \eqref{boundXi} and \eqref{simpleXi1}.  Finally we apply Theorem  \ref{avoid} to obtain
\begin{equation}\label{complicatedXi}
\begin{split}
   \Xi_{1, +}^{\mathcal{M}}  & =    \max_{|u_4| \leq C^{\varepsilon}}  \sum_{\substack{|t_f|  \leq \mathcal{T}_+\\ f \in \mathcal{B}(\ell_1\ell_2)  }} \frac{1}{\cosh(\pi t_f)}      \Bigl|\sum_{\substack{r_1 \asymp N/(dr_2)\\ (r_1, \beta) = 1}} \alpha(r_1)\sqrt{r_1r_2d} \rho_f(r_1r_2d) \Bigr|^2 \\   
   &   \ll C^{\varepsilon} (\ell_1\ell_2, r_2d) \left(\mathcal{T}_++\frac{(r_2d)^{1/2}}{(\ell_1\ell_2)^{1/2}}\right)\left(\mathcal{T}_+ + \frac{N}{dr_2 (\ell_1\ell_2)^{1/2}}\right) \frac{N}{dr_2}.
 \end{split}   
 \end{equation}
  Combining \eqref{boundXi} -- 
 \eqref{complicatedXi}, we  conclude the final bound
 \begin{equation}\label{Xifinal}
 \begin{split}
  \big(| \Xi_{1, +}^{\mathcal{H}}| & + | \Xi_{1, +}^{\mathcal{E}}| + | \Xi_{1, +}^{\mathcal{M}}|\big) \big(| \Xi_{2, +}^{\mathcal{H}}| + | \Xi_{2, +}^{\mathcal{E}}| + | \Xi_{2, +}^{\mathcal{M}}|\big) \\
  &\ll  \frac{C^{\varepsilon}}{X_+}\left(\Bigl(\mathcal{T}_++\frac{(r_2d)^{1/2}}{(\ell_1\ell_2)^{1/2}}\Bigr)  \Bigl(\mathcal{T}_+ + \frac{N}{dr_2 (\ell_1\ell_2)^{1/2}}\Bigr)+  \frac{N}{dr_2} \right) \frac{N}{dr_2} \left(\mathcal{T}_+^2 + \frac{\mathcal{K}}{\ell_1\ell_2}\right)   \frac{(\ell_1\ell_2, r_2d)\mathcal{N}\mathcal{M}}{\ell_1\ell_2}. 
  \end{split}
\end{equation}

\subsection{Conclusion of the plus-case} It is now a matter of   book-keeping.
Combining 
 \eqref{sizeLambda}, 
 \eqref{defT},  \eqref{sizeX}, \eqref{sizeS},  \eqref{finalboundH+} and  \eqref{Xifinal},   we obtain  
\begin{displaymath}
\begin{split}
 &\sum_{\substack{r_1 \asymp N/(dr_2)\\ (r_1, q) = 1}} e\left(\frac{zr_1r_2d}{N}\right) \big( |\mathcal{H}_+(r_1r_2d)| + |\mathcal{M}_+(r_1r_2d)| +  |\mathcal{E}_+(r_1r_2d)|\big)\\
 &  \ll C^{ \varepsilon} \frac{M^2 \mathcal{N}^{1/4}\mathcal{M}^{1/2} (\ell_1\ell_2(\ell_1\ell_2, r_2d))^{1/2}  }{C^2 (dr_2)^{1/2} \mathcal{K}^{1/4}} \left(1 + \frac{N^{1/2}\mathcal{M}}{C\mathcal{N}^{1/2}}\right)  \left(1 + \Bigl(\frac{\mathcal{K}N}{C^2}\Bigr)^{1/4} + \Bigl(\frac{\mathcal{M} N}{C^2}\Bigr)^{1/2}+\Bigl(\frac{\mathcal{K}}{\ell_1\ell_2}\Bigr)^{1/2}\right)\\
 & \times \left( \left(1 + \Bigl(\frac{\mathcal{K}N}{C^2}\Bigr)^{1/8} + \Bigl(\frac{\mathcal{M} N}{C^2}\Bigr)^{1/4}+ \frac{(r_2d)^{1/4}}{(\ell_1\ell_2)^{1/4}}\right)  \left(1 + \Bigl(\frac{\mathcal{K}N}{C^2}\Bigr)^{1/8} + \Bigl(\frac{\mathcal{M} N}{C^2}\Bigr)^{1/4}+  \frac{N^{1/2}}{(dr_2)^{1/2}(\ell_1\ell_2)^{1/4}} \right)  +  \frac{N^{1/2}}{(dr_2)^{1/2}}\right).
   \end{split}
\end{displaymath}
We multiply out the 136 terms, and and write each term as
\[ \mathcal{M}^{\alpha} \mathcal{K}^{\beta} \mathcal{N}^{\gamma} C^{\delta} \times \text{expression in } N, M, \ell_1, \ell_2, d, r_2. \]
At this point it is important to recall \eqref{sizeQ}, \eqref{sizes} and   the size conditions $\mathcal{M}, \mathcal{K} \leq \mathcal{N}$   in the summation condition of the first line of \eqref{specdecomp}. We conclude that all terms with
\[ 2\alpha + 2\gamma + 2\max(\beta, 0) < -\delta - 1/3 \]
are less than $C^{-1/4}$ and therefore negligible. This applies to all terms except those involving   the last term $(\mathcal{K}/\ell_1\ell_2)^{1/2}$ in the second parenthesis on the right hand side. Hence we obtain the bound 
\begin{displaymath}
\begin{split}
C^{ \varepsilon} & \frac{M^2  \mathcal{M}^{1/2} (\ell_1\ell_2, r_2d)^{1/2 } }  {C^2 (dr_2)^{1/2}}\left((\mathcal{N}\mathcal{K})^{1/4} + \frac{N^{1/2}\mathcal{M}\mathcal{K}^{1/4}}{C\mathcal{N}^{1/4}}\right)   \left[\left(1 + \Bigl(\frac{\mathcal{K}N}{C^2}\Bigr)^{1/8} + \Bigl(\frac{\mathcal{M} N}{C^2}\Bigr)^{1/4}+ \frac{(r_2d)^{1/4}}{(\ell_1\ell_2)^{1/4}}\right)\right.\\
&\times \left. \left(1 + \Bigl(\frac{\mathcal{K}N}{C^2}\Bigr)^{1/8} + \Bigl(\frac{\mathcal{M} N}{C^2}\Bigr)^{1/4}+ \frac{N^{1/2}}{(dr_2)^{1/2}(\ell_1\ell_2)^{1/4}} \right) + \frac{N^{1/2}}{(dr_2)^{1/2}} \right]+ C^{-1/4}. 
   \end{split}
\end{displaymath}
In the first parenthesis we   cancel $(\mathcal{K}/\mathcal{N})^{1/4} \leq 1$. Having done this, all terms are increasing in $\mathcal{K}, \mathcal{M}, \mathcal{N}$, and we insert \eqref{sizes}. This gives the final bound
 \begin{equation}\label{finalH+}
\begin{split}
& \sum_{\substack{r_1 \asymp N/(dr_2)\\ (r_1, q) = 1}}  e\left(\frac{zr_1r_2d}{N}\right) \left(|\mathcal{H}_+(r_1r_2d)| + |\mathcal{M}_+(r_1r_2d)| +   |\mathcal{E}_+(r_1r_2d)|\right) \\
 & \ll  C^{ \varepsilon} \frac{M^{3/2} (\ell_1\ell_2, r_2d)^{1/2 }     }  {C(dr_2)^{1/2}}  \frac{CN^{1/2}}{M}  \left(\left(\frac{N^{1/4}}{M^{1/4}} + \frac{(r_2d)^{1/4}}{(\ell_1\ell_2)^{1/4}}\right)\left(\frac{N^{1/4}}{M^{1/4}} + \frac{N^{1/2}}{(dr_2)^{1/2}(\ell_1\ell_2)^{1/4}}\right) + \frac{NM^{1/2}}{dr_2} \right)  \\
 & \ll C^{ \varepsilon} (\ell_1\ell_2, r_2d)^{1/2  } \left(\frac{N}{(dr_2)^{1/2}} + \frac{N^{5/4}M^{1/4} }{dr_2(\ell_1\ell_2)^{1/4}} + \frac{N^{3/4}M^{1/4} }{(dr_2\ell_1\ell_2)^{1/4}} + \frac{NM^{1/2} }{(dr_2)^{3/4}(\ell_1\ell_2)^{1/2}} + \frac{NM^{1/2}}{dr_2} \right)\\
 & \ll C^{ \varepsilon} (\ell_1\ell_2, d)^{1/2  } \left(\frac{N}{d^{1/2}} + \frac{N^{5/4}M^{1/4} }{d(\ell_1\ell_2)^{1/4}} + \frac{N^{3/4}M^{1/4} }{d^{1/4}} + \frac{NM^{1/2} }{d^{3/4}(\ell_1\ell_2)^{1/2}} + \frac{NM^{1/2}}{d} \right).
   \end{split}
\end{equation}
(Here, of course, the term $C^{-1/4}$ can be absorbed.)

\subsection{The minus-case} The treatment of $\mathcal{M}_-$ and $\mathcal{E}_-$ is similar in spirit, but  the details are slightly different and considerably less involved. In particular, we can afford to be somewhat lossy in our estimations. We recall from \eqref{defM-} that  the range of integration is 
\begin{equation}\label{sizeX-}
x \asymp X_- := \frac{\sqrt{N}}{C}
\end{equation}
which is quite different from the previous case. 
 We separate variables in
\begin{displaymath}
\begin{split}
 & 
  \mathring{V}_{\eta M}\left(\frac{x^2\ell_2m M}{r_1r_2d}\right) W_{\pm}\left(x^2 \ell_1 n, \frac{x^2 \ell_1 n M}{r_1r_2d}\right)  \\
  &= \frac{1}{(2\pi i)^3}   \int_{(\varepsilon)} \int_{(1/4-\varepsilon)}\int_{(\varepsilon)}  \widehat{\mathring{V}}_{z, \eta M}(s_1)\widehat{W}_{\pm}(s_2, s_3)     \left(\frac{x^2\ell_2m M}{r_1r_2d}\right)^{-s_1}  \left( x^2 \ell_1 n \right)^{-s_2}\left( \frac{x^2 \ell_1 n M}{r_1r_2d}\right)^{-s_3} ds_3 \,ds_2 \,ds_1
  \end{split}
\end{displaymath}
by Mellin inversion. All integrals are rapidly converging and can be truncated at $|\Im s_j | \leq C^{\varepsilon}$ at the cost of a negligible error. We substitute this back into \eqref{defmathcalM}, estimate the $x$-, $s$- and $s_j$-integrals trivially (using \eqref{supnorm}) and apply the Cauchy-Schwarz inequality getting
\begin{equation}\label{prelim1}
 \sum_{\substack{r_1 \asymp N/(dr_2)\\ (r_1, q) = 1}}e\left(\frac{zr_1r_2d}{N}\right) \mathcal{M}_-(r_1r_2d)    \ll C^{ \varepsilon} \frac{M^2\ell_1\ell_2}{C^3} \frac{1}{ X_-^{1/2}\mathcal{N}^{1/4}}  \frac{\mathcal{T}_-}{\mathcal{K}^{1/2} X_-} \Xi_{1, -}^{1/2} \Xi_{2, -}^{1/2}
\end{equation}
where
 \begin{displaymath} 
 \begin{split}
 & \Xi_{1, -} = \max_{\substack{|u_3| \leq C^{\varepsilon}\\ x \asymp X_-}}  \sum_{\substack{f \in \mathcal{B}(\ell_1\ell_2)\\ |t_f| \leq \mathcal{T}_- }} \frac{1}{\cosh(\pi t_f)}     \Bigl|\sum_{\substack{r_1 \asymp N/(dr_2)\\ (r_1, \beta) = 1}} \tilde{\alpha}(r_1) w_1\left(\frac{\sqrt{r_1r_2d}}{Cx}\right)\sqrt{r_1r_2d}\rho_f(r_1r_2d) \Bigr|^2,  \\
  &\Xi_{2, -} =  \max_{\substack{|u_1|, |u_2|  \leq C^{\varepsilon}\\   x \asymp X_-}}     \sum_{\substack{f \in \mathcal{B}(\ell_1\ell_2)\\ |t_f| \leq \mathcal{T}_- }} \frac{1}{\cosh(\pi t_f)}     \Bigl|  \sum_{|b| \asymp \mathcal{K}}\sqrt{|b|} \rho_f(b )\gamma^{\ast}(b )\Bigr|^2
 \end{split} 
\end{displaymath}  
with $\mathcal{T}_-$ as in \eqref{defT-}, 
\[ \tilde{\alpha}(r_1) = r_1^{2\varepsilon + iu_3} e\left(\frac{zr_1r_2d}{N}\right) \]
and
 \begin{displaymath}
\gamma^{\ast}(b )  =   \sum_{\substack{\ell_1n - \ell_2m = b\\ \ell_1n \asymp \mathcal{N}, \ell_2m \asymp \mathcal{M}}}   \left(\frac{\ell_1n}{\mathcal{N}} \right)^{-1/4  + iu_1}   \left(\frac{\ell_1m}{\mathcal{M}} \right)^{-\varepsilon + iu_2} \lambda_1(m)\lambda_2(n) e(\pm 2 x \sqrt{\ell_1n}).     
  \end{displaymath}
As in \eqref{2norm} we find 
\begin{displaymath}
  \sum_{b  } |\gamma^{\ast}(b )|^2 \ll C^{\varepsilon} \frac{\mathcal{N} \mathcal{M}}{\ell_1\ell_2},
\end{displaymath}  
uniformly in $x, u_1, u_2$, and hence by the large sieve
\begin{displaymath}
  \Xi_{2, -} \ll \left(\mathcal{T}^2_- + \frac{\mathcal{K}}{\ell_1\ell_2}\right) C^{\varepsilon} \frac{\mathcal{N} \mathcal{M}}{\ell_1\ell_2}.
\end{displaymath}
The estimation of $\Xi_{1, -}$ is similar to the preceding analysis, but simpler. Here we apply \eqref{simpler} (in a weak version without the denominator $\ell$) to obtain 
\begin{displaymath}
  \Xi_{1, -} \ll C^{\varepsilon} \left(\mathcal{T}_-^2 + \frac{N}{dr_2}\right) \frac{N}{dr_2 } (dr_2)^{2\theta}(\ell_1\ell_2, dr_2)^{1-2\theta} 
\end{displaymath}
For the treatment of   Eisenstein case we can directly apply \eqref{multEis} and the large sieve as in \eqref{simpleXi1} -- \eqref{boundXi1'} getting a slightly stronger bound.  Substituting back into \eqref{prelim1} and recalling \eqref{defT-} and \eqref{sizeX-}, we obtain
\begin{displaymath}
\begin{split}
    \sum_{\substack{r_1 \asymp N/(dr_2)\\ (r_1, \beta) = 1}} & e\left(\frac{zr_1r_2d}{N}\right)\big(| \mathcal{M}_-(r_1r_2d)| + |\mathcal{E}_-(r_1r_2d)|\big)  \ll  C^{\varepsilon} 
\frac{M^2 N^{1/4} (\ell_1\ell_2)^{1/2}\mathcal{M}^{1/2}\mathcal{N}^{1/4}(\ell_1\ell_2, dr_2)^{1/2-\theta} }{C^{5/2} (dr_2)^{1/2 -\theta}} \\
&\times \frac{1 + (\mathcal{M}N/C^2)^{1/2}}{(\mathcal{K}N/C^2)^{1/2}}  \left(1 + \Bigl(\frac{\mathcal{M}N}{C^2}\Bigr)^{1/2} + \Bigl(\frac{\mathcal{K}}{\ell_1\ell_2}\Bigr)^{1/2}\right) \left(1 + \Bigl(\frac{\mathcal{M}N}{C^2}\Bigr)^{1/2} + \Bigl(\frac{N}{dr_2}\Bigr)^{1/2}\right). 
\end{split}
\end{displaymath}  
As before, we use  \eqref{sizeQ} to argue that  in the penultimate parenthesis only the third term contributes non-negligibly. The resulting expression is increasing in $\mathcal{M}, \mathcal{N}, \mathcal{K}$ each of which are bounded by $\mathcal{M}_0$, see \eqref{newsize}. Now a straightforward calculation similar to the above  shows the bound
\begin{equation}\label{final-}
\begin{split}
&    \sum_{\substack{r_1 \asymp N/(dr_2)\\ (r_1, \beta) = 1}}  e\left(\frac{zr_1r_2d}{N}\right)\big( |\mathcal{M}_-(r_1r_2d)|  + |\mathcal{E}_-(r_1r_2d)|\big)\\
    & \ll   C^{\varepsilon} (dr_2)^{\theta} (\ell_1\ell_2, dr_2)^{1/2-\theta} \left(\frac{M^{1/4}N^{3/4}}{(dr_2)^{1/2}} + \frac{M^{3/4} N^{3/4}}{dr_2}\right) \ll  C^{\varepsilon} d^{\theta} (\ell_1\ell_2, d)^{1/2} \left(\frac{M^{1/4}N^{3/4}}{d^{1/2}} + \frac{M^{3/4} N^{3/4}}{d}\right)
    \end{split}
\end{equation}

\subsection{Conclusion} We sum   \eqref{finalH+} and \eqref{final-} over $r_2 \mid \beta^{\infty}$; by Rankin's trick it is easy to see that 
\[ \sum_{\substack{r \leq X\\ r \mid \beta^{\infty}}}1 \ll (X \beta)^{\varepsilon}. \]
Using $\theta \leq 1/4$ and $N \geq M$, we conclude 
\begin{equation}\label{almost}
\begin{split}
  \mathcal{S}(\ell_1, \ell_2, d, N, M) \ll   N^{\varepsilon} (\ell_1\ell_2, d)^{1/2  } & \left(\frac{N}{d^{1/2}} + \frac{N^{5/4}M^{1/4} }{d } + \frac{N^{3/4}M^{1/4} }{d^{1/4} } + \frac{NM^{1/2} }{d^{3/4} }  \right). 
  \end{split}
\end{equation}
We remove the factor $(\ell_1\ell_2, d)^{1/2}$  as follows. 
We decompose 
\[ \ell_1 = \ell_1'\tilde{\ell} \delta_1 \delta, \quad  \ell_2 = \ell_2'\tilde{\ell} \delta_2 \delta, \quad d = d' \delta_1\delta_2 \delta \]
where $\delta = (d, \ell_1, \ell_2)$, $\delta_1 = (d, \ell_1)/\delta$, $\delta_2 = (d, \ell_2)/\delta$, $\tilde{\ell} = (\ell_1, \ell_2)/\delta$. 
Using \eqref{hecke}, we find 
\begin{displaymath}
\begin{split}
 \mathcal{S}(\ell_1, \ell_2, d, N, M) & =  \sum_r \sum_{\ell_1 n - \ell_2 m = rd} \lambda_1(m) \lambda_2(n) V\left(\frac{\ell_1 n}{N}\right)V\left(\frac{\ell_2 m}{M}\right) \\
 &= \sum_r \sum_{\ell_1'n - \ell_2m = d'r} \lambda_1(\delta_1m)\lambda_2(\delta_2n)  V\left(\frac{\delta_2\ell_1 n}{N}\right)V\left(\frac{\delta_1\ell_2 m}{M}\right) \\
 & = \sum_{g \mid \delta_2} \sum_{h \mid  \delta_1  } \mu(g) \mu(h) \lambda_2\left( \frac{\delta_2}{g}\right) \lambda_1\left( \frac{\delta_1}{h}\right) \mathcal{S}\left(\ell_1'g, \ell_2'h, d', \frac{N}{\delta\delta_1\delta_2\tilde{\ell}}, \frac{M}{\delta\delta_1\delta_2\tilde{\ell}}\right).
\end{split}
\end{displaymath} 
Using only a trivial bound for the Hecke eigenvalues ($\lambda(n) \ll n^{1/2}$) and noting that $(\ell_1'\delta_2, \ell_2'\delta_1, d') = 1$,  an application of \eqref{almost} now completes the proof of Proposition \ref{prop3}.

\section{Weyl Differencing}\label{sec7}

The rest of the paper is devoted to the proof of Theorem \ref{klooster-short}.  We begin with the following differencing lemma.

\begin{lemma}
\label{DifferencingLemma}
Let the functions $b,b_{1i},b_{2i}:\mathbb{Z}\to\mathbb{C}$ ($1\leqslant i\leqslant I$), $r_2\in\mathbb{N}$, and $R_2\in\mathbb{R}$ be such that
\[ b(m)=\sum_{i=1}^Ib_{1i}(m)b_{2i}(m)\quad(m\in\mathbb{Z}) \]
as well as
\[ b_{2i}(m+ r_2)=b_{2i}(m),\quad |b_{2i}(m)|\leqslant R_2 \quad(m\in\mathbb{Z},\,\,1\leqslant i\leqslant I). \]
Further, assume that the support of each $b_{1i}$ is contained in $(A, A+M]$, and let $H\in\mathbb{N}$.  
Then
\begin{align*}
\Bigl|\sum_{m}b(m)\Bigr|^2&\ll
\left( H r_2R_2^2 + \frac{R_2^2H^2r^2_2}{M}\right)I\sum_{i=1}^I\sum_{A<m\leqslant A+M}|b_{1i}(m)|^2\\
&\qquad\qquad\qquad+H r_2R_2^2I\sum_{0<|h|\leqslant\frac{M}{H r_2}}\Bigl|\sum_{i=1}^I\sum_{m}b_{1i}(m+hH r_2)\overline{b_{1i}(m)}\Bigr|.
\end{align*}
\end{lemma}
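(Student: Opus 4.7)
The strategy is a standard Weyl (van~der~Corput) differencing argument that exploits the $r_2$-periodicity of the $b_{2i}$. Write $L:=Hr_2$. If $M<L$, then the first term of the claim already absorbs the trivial Cauchy--Schwarz bound $\bigl|\sum_m b(m)\bigr|^2\leqslant M\cdot IR_2^2\sum_{i,m}|b_{1i}(m)|^2$, because $M\leqslant H^2r_2^2/M$ in this range. So I may assume $M\geqslant L$ and set $K:=\lfloor M/L\rfloor\geqslant 1$.

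The key observation is that for every integer $k$, periodicity gives $b_{2i}(m+kL)=b_{2i}(m)$, so the substitution $m\mapsto m-kL$ shows $\sum_m b(m)=\sum_i\sum_m b_{1i}(m+kL)\,b_{2i}(m)$. Averaging over $k=0,1,\ldots,K-1$ produces
\[ K\sum_m b(m)=\sum_m\sum_{i=1}^I b_{2i}(m)\,c_i(m),\qquad c_i(m):=\sum_{k=0}^{K-1}b_{1i}(m+kL), \]
where each $c_i$ is supported in an interval of length at most $M+(K-1)L\leqslant 2M$. I now apply Cauchy--Schwarz in $m$ (gaining the factor $2M$), followed by Cauchy--Schwarz in $i$ using $|b_{2i}|\leqslant R_2$, to obtain
\[ K^2\Bigl|\sum_m b(m)\Bigr|^2\leqslant 2M\cdot IR_2^2\sum_{i=1}^I\sum_m|c_i(m)|^2. \]

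Expanding $|c_i(m)|^2$ as a double sum over $k_1,k_2\in\{0,\ldots,K-1\}$ and changing variables so that $h=k_1-k_2$ yields
\[ \sum_m|c_i(m)|^2=K\sum_m|b_{1i}(m)|^2+\sum_{0<|h|<K}(K-|h|)\sum_m b_{1i}(m+hL)\,\overline{b_{1i}(m)}. \]
Summing over $i$, moving absolute values inside the $h$-sum with $K-|h|\leqslant K$, and dividing by $K^2$ gives the claimed estimate, provided one checks the two routine bounds $2M/K\ll L+L^2/M=Hr_2+H^2r_2^2/M$ (from $K\geqslant M/L-1$) and $K-1<M/L=M/(Hr_2)$, so that the $h$-sum runs over $0<|h|\leqslant M/(Hr_2)$. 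No real obstacle is anticipated; the only care needed lies in keeping track of the constant $2M/K$ and in handling the degenerate range $M<Hr_2$ via the direct Cauchy--Schwarz bound mentioned above.
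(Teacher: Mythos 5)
Your argument is correct and uses the same Weyl-differencing mechanism as the paper's proof: shift $b_{1i}$ by multiples of $Hr_2$, exploit the $r_2$-periodicity of $b_{2i}$, apply Cauchy--Schwarz, and expand the resulting square into diagonal and off-diagonal contributions. The only difference is bookkeeping: you average over a fixed block of $K=\lfloor M/(Hr_2)\rfloor$ shifts and dispatch the degenerate range $M<Hr_2$ by a direct Cauchy--Schwarz, whereas the paper counts, for each $m$ in range, the $M/(Hr_2)+\mathrm{O}(1)$ shifts $h$ keeping $m+hHr_2$ in range and absorbs the $\mathrm{O}(1)$ error into the diagonal term.
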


\begin{proof}
Let initially $b:\mathbb{Z}\to\mathbb{C}$ be arbitrary. We have
\begin{align*}
&\sum_{A<m\leqslant A+M}\,\,\sum_{\substack{h\in\mathbb{Z}\\ A<m+hH r_2\leqslant A+M}}b(m+hH r_2)\\
&\qquad=\sum_{A<m\leqslant A+M}b(m)\cdot\#\big\{(m_1,h):A<m_1\leqslant A+M,\,m=m_1+hH r_2\big\}\\
&\qquad=\sum_{A<m\leqslant A+M}b(m)\cdot\#\big\{h\in\mathbb{Z}:A<m-hH r_2\leqslant A+M\big\}\\
&\qquad=\sum_{A<m\leqslant A+M}b(m)\left(\frac{M}{H r_2}+\text{O}(1)\right)
=\frac{M}{H r_2}\sum_{A<m\leqslant A+M}b(m)+\text{O}\Bigl(\sum_{A<m\leqslant A+M}|b(m)|\Bigr).
\end{align*}
Therefore,
\begin{equation}
\label{PlaceToSmooth}
\begin{aligned}
\frac{M^2}{H^2 r_2^2}\Bigl|\sum_{A<m\leqslant A+M}b(m)\Bigr|^2 & \ll \Bigl|\sum_{A<m\leqslant A+M}\,\,\sum_{\substack{ h\in\mathbb{Z}\\A<m+hH r_2\leqslant A+M}}b(m+hH r_2)\Bigr|^2+\Bigl(\sum_{A<m\leqslant A+M}|b(m)|\Bigr)^2\\
&\ll M\sum_{A<m\leqslant A+M}\Bigl|\sum_{\substack{h\in\mathbb{Z}\\A<m+hH r_2\leqslant A+M}}b(m+hH r_2)\Bigr|^2+M\sum_{A<m\leqslant A+M}|b(m)|^2.
\end{aligned}
\end{equation}

Let $b(m)$ be as in the statement of Lemma~\ref{DifferencingLemma}. Using the Cauchy-Schwarz inequality and applying \eqref{PlaceToSmooth} with $b_{[i]}(m)=b_{1i}(m)b_{2i}(m)$, we have that
\[ \frac{M^2}{H^2r_2^2I}\Bigl|\sum_{A<m\leqslant A+M}b(m)\Bigr|^2\ll M\sum_{i=1}^I\sum_{A<m\leqslant A+M}\Bigl|\sum_{\substack{h\in\mathbb{Z}\\ A<m+hHr_2\leqslant A+M}}b_{[i]}(m+hHr_2)\Bigr|^2+M\sum_{i=1}^I\sum_{A<m\leqslant A+M}|b_{[i]}(m)|^2. \]
Since each $b_{2i}$ is $r_2$-periodic and bounded by $R_2$, we have for every individual $i$, $m$ that
\begin{align*}
\Bigl|\sum_{\substack{h\in\mathbb{Z}\\ A<m+hH r_2\leqslant A+M}}b_{[i]}(m+hH r_2)\Bigr|^2
&=\Bigl|b_{2i}(m)\sum_{\substack{h\in\mathbb{Z}\\ A<m+hH r_2\leqslant A+M}}b_{1i}(m+hH r_2)\Bigr|^2\\
&\leqslant  R_2^2\Bigl| \sum_{\substack{h\in\mathbb{Z}\\A<m+hH r_2\leqslant A+M}}b_{1i}(m+hH r_2)\Bigr|^2.
\end{align*}
Substituting this estimate above, we obtain
\def\msk{\quad}
\begin{align*}
&\frac{M^2}{H^2 r_2^2I}\left|\sum_{A<m\leqslant A+M}b(m)\right|^2\\
&\msk  \ll MR_2^2\sum_{i=1}^I\sum_{A<m\leqslant A+M}\left|\sum_{h}b_{1i}(m+hH r_2)\right|^2+M\sum_{i=1}^I\sum_{m}|b_{1i}(m)b_{2i}(m)|^2\\
&\msk\ll MR_2^2\sum_{i=1}^I\sum_{A<m\leqslant A+M}\sum_{h}|b_{1i}(m+hH r_2)|^2\\
&\msk\qquad +MR_2^2\sum_{i=1}^I\sum_{A<m\leqslant A+M}\mathop{\sum\sum}_{h_1\neq h_2}b_{1i}(m+h_1H r_2)\overline{b_{1i}(m+h_2H r_2)} +MR_2^2\sum_{i=1}^I\sum_{m}|b_{1i}(m)|^2
\end{align*}
\begin{align*}
 &\msk\ll MR_2^2\sum_{i=1}^I\sum_{A<m\leqslant A+M}|b_{1i}(m)|^2\left(\frac{M}{H r_2}+\text{O}(1)\right)\\
&\msk\qquad +MR_2^2\sum_{0<|g|\leqslant\frac{M}{H r_2}}\left|\sum_{i=1}^I\sum_{A<m\leqslant A+M} \sum_{h}  b_{1i}\big(m+(h+g)H r_2\big)\overline{b_{1i}(m+hH r_2)}\right|\\
&\msk\ll MR_2^2\cdot\left(\frac{M}{H r_2}+1\right)\sum_{i=1}^I\sum_{m}|b_{1i}(m)|^2
 +M R_2^2\sum_{0<|g|\leqslant\frac{M}{H r_2}}\left|\sum_{i=1}^I\sum_{m}b_{1i}(m+gH r_2)\overline{b_{1i}(m)}\left(\frac{M}{H r_2}+\text{O}(1)\right)\right|\\
&\msk\ll \left(\frac{M^2R_2^2}{H r_2} + MR_2^2\right)\sum_{i=1}^I\sum_{m}|b_{1i}(m)|^2+\frac{M^2R_2^2}{H r_2} \sum_{0<|g|\leqslant\frac{M}{H r_2}}\left|\sum_{i=1}^I\sum_{m}b_{1i}(m+gH r_2)\overline{b_{1i}(m)}\right|,
\end{align*}
using again that $\#\big\{(m_1,h):A<m_1\leqslant A+M,\,m=m_1+hH r_2\big\}=M/H r_2+\text{O}(1)$ as well as the Cauchy-Schwarz inequality to estimate the error terms in the off-diagonal summands. Rearranging, we conclude the lemma. 
\end{proof}

The procedure used in the proof of Lemma~\ref{DifferencingLemma}, the ``$q$-analogue of Weyl differencing'', goes back at least to Postnikov~\cite{Postnikov} and Heath-Brown~\cite{HB}. Similar ideas are also prominent in \cite{Po}. The important point here is the generality in which the procedure applies: no particular structure (such as being a character, or an exponential of a rational function) is assumed for terms $b_{1i}$ and $b_{2i}$ beyond periodicity and a uniform bound for $b_{2i}$.

There are no conditions whatsoever on the coefficients $b_{1i}(m)$. In the applications we have in mind, however, the term $b_{1i}(m+gHr_2)\overline{b_{1i}(m)}$ will have a period that is a proper divisor of $r$. (This can happen for two reasons: either because $b_{1i}$ are already periodic modulo a proper divisor of $r$, or because we take $H$ to be a suitable divisor of $r$ that causes a shortening of the period for the particular sequence $b_{1i}$.) On the other hand, the length of the $m$-summation in the off-diagonal terms in the upper bound of Lemma~\ref{DifferencingLemma} is unchanged at $M$. In a typical situation, $M$ may be too short compared to the original modulus $r$ to expect any nontrivial bound (such as $M\asymp r^{1/2}$ or less with chaotically behaving summands $b(m)$), but its size may well be more favorable compared to the newly smaller modulus.

Finally, it will be important for our purposes that $b(m)$ is allowed to be a sum of finitely many terms $b_{1i}(m)b_{2i}(m)$ ($1\leqslant i\leqslant I$) to which differencing is applied separately although the $i$-sum in the off-diagonal contribution to the upper bound is kept inside the absolute values. The case $I=1$, on the other hand, already contains the full idea of differencing.

\bigskip


Incomplete exponential sums whose length exceeds the square-root of the modulus, can often be efficiently estimated by the process sometimes referred to as completion. This procedure, which for clarity we record separately as the following simple technical result, applies in great generality, see \cite[Lemma 12.1]{IK}.  For an $r_1$-periodic function $c : \mathbb{Z} \rightarrow \mathbb{C}$, let \[ \hat{c}(k) := \sum_{n=1}^{r_1} c(n) e\left(-\frac{nk}{r_1}\right) \] be its discrete Fourier transform. The important point is that $\hat{c}(k)$ are complete exponential sums. (The notation for discrete Fourier transform in this section and the Mellin transform in earlier sections will not lead to confusion.)

\begin{lemma}
\label{CompletionLemma}
Let $A\in\mathbb{Z}$, $r_1,M\in\mathbb{N}$, and let $c:\mathbb{Z}\to\mathbb{C}$ be such that
$ c(m+r_1)=c(m)$ for $ m\in\mathbb{Z})$.  
Then
\begin{align*}
\sum_{A<m\leqslant A+M}c(m)
\ll \sum_{|k|\leqslant r_1/2}|\hat{c}(k)|\min\left(\frac{M}{r_1}, \frac{1}{|k|}\right).
\end{align*}
\end{lemma}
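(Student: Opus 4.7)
The plan is to apply discrete Fourier inversion on $\mathbb{Z}/r_1\mathbb{Z}$ and reduce the bound to summing a single geometric progression. Since $c$ is $r_1$-periodic, the inversion formula gives
\[ c(m) = \frac{1}{r_1}\sum_{k \bmod r_1} \hat{c}(k)\, e\!\left(\frac{mk}{r_1}\right). \]
Substituting this into $\sum_{A<m\leq A+M} c(m)$ and interchanging the order of summation yields
\[ \sum_{A<m\leq A+M}c(m) = \frac{1}{r_1}\sum_{k \bmod r_1}\hat{c}(k)\sum_{A<m\leq A+M} e\!\left(\frac{mk}{r_1}\right). \]
The whole point is that we have traded an incomplete sum of the (arithmetically intricate) function $c$ for a weighted average of the complete Fourier coefficients $\hat{c}(k)$ against a purely analytic geometric kernel.

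Next, I would take representatives $k$ with $|k|\leq r_1/2$ for the residues mod $r_1$, and estimate the inner geometric sum. For $k=0$ it contributes $M$, giving a factor $M/r_1$. For $k\neq 0$, the standard geometric-series bound gives
\[ \Bigl|\sum_{A<m\leq A+M} e\!\left(\frac{mk}{r_1}\right)\Bigr| \leq \frac{1}{|\sin(\pi k/r_1)|} \ll \frac{r_1}{|k|}, \]
where in the last step I use that $|k|\leq r_1/2$ so that $|\sin(\pi k/r_1)|\gg |k|/r_1$. One may of course also bound this trivially by $M$. Taking the minimum of the two, and dividing by $r_1$, one obtains the weight $\min(M/r_1, 1/|k|)$.

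Combining the pieces and using the triangle inequality yields
\[ \Bigl|\sum_{A<m\leq A+M}c(m)\Bigr| \ll \sum_{|k|\leq r_1/2} |\hat{c}(k)|\min\!\left(\frac{M}{r_1},\frac{1}{|k|}\right), \]
which is precisely the claimed bound. There is no real obstacle here: the statement is just a clean packaging of the standard completion trick, and all the estimates involved are elementary (orthogonality of additive characters modulo $r_1$ and the $|\sin|$-bound for partial geometric sums). The only mild subtlety is matching the $1/|k|$ on the right to the $r_1/|k|$ from the geometric sum, which is handled automatically by the factor $1/r_1$ coming from Fourier inversion.
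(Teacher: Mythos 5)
Your proof is correct and is exactly the standard completion argument. The paper itself does not include a proof of this lemma but simply cites \cite[Lemma 12.1]{IK}; your argument (discrete Fourier inversion, swap of summation, geometric-series bound $\min(M, r_1/|k|)$ for the incomplete character sum, division by $r_1$) is precisely the argument given there, so no discrepancy to report.
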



Combining Lemmas \ref{DifferencingLemma} and \ref{CompletionLemma}, we have the following general result:  
\begin{theorem}
\label{CentralHBLemma}
Let $r,r_1,r_2\in\mathbb{N}$ be such that $r=r_1r_2$. Let the functions $b,b_{1i},b_{2i}:\mathbb{Z}\to\mathbb{C}$ ($1\leqslant i\leqslant I$), $R_1,R_2\in\mathbb{R}$ be such that
\[ b(m)=\sum_{i=1}^Ib_{1i}(m)b_{2i}(m) \quad (m\in\mathbb{Z}) \]
as well as
\[  \begin{aligned}
&b_{1i}(m+r_1)=b_{1i}(m),\quad |b_{1i}(m)|\leqslant R_1,\\
&b_{2i}(m+r_2)=b_{2i}(m),\quad |b_{2i}(m)|\leqslant R_2. 
\end{aligned} \quad (m\in\mathbb{Z},\,\,1\leqslant i\leqslant I). \]

Let $H\in\mathbb{N}$, and let, for every $h,k\in\mathbb{Z}$ and $1\leqslant i\leqslant I$,
\begin{equation}
\label{DefinitionHatB}
 \hat{B}_{1i,hH}(k)=\sum_{m\bmod r_1}b_{1i}(m+hHr_2)\overline{b_{1i}(m)}e\left(-\frac{km}{r_1}\right).
\end{equation}

Then, for every $A\in\mathbb{Z}$, $M\in\mathbb{N}$,
\begin{align*}
&\Bigl|\sum_{A<m\leqslant A+M}b(m)\Bigr|^2\ll (M+Hr_2)Hr_2(R_1R_2)^2I^2  +Hr_2R_2^2I\sum_{0<|h|\leqslant\frac{M}{Hr_2}} \sum_{|k|\leqslant r_1/2}\left|\sum_{i=1}^I\hat{B}_{1i,hH}(k)\right|\min\left(\frac{M}{r_1}, \frac{1}{|k|}\right). 
\end{align*}
\end{theorem}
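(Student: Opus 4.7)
The result combines the two preceding lemmas directly: Weyl differencing (Lemma~\ref{DifferencingLemma}) produces off-diagonal correlations of the $b_{1i}$'s, and completion (Lemma~\ref{CompletionLemma}) converts each such incomplete correlation into a weighted sum of its Fourier transforms $\hat{B}_{1i,hH}(k)$. The only subtlety is a small mismatch in hypotheses: Lemma~\ref{DifferencingLemma} requires $b_{1i}$ supported in $(A,A+M]$, whereas here $b_{1i}$ is merely $r_1$-periodic.

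First, I would reconcile the hypotheses by truncation. Set $\tilde{b}_{1i}(m) := b_{1i}(m)\mathbf{1}_{(A,A+M]}(m)$, so that
\[ \sum_{A<m\leqslant A+M}b(m)=\sum_{m}\sum_{i=1}^I \tilde{b}_{1i}(m)b_{2i}(m), \]
and $\tilde{b}_{1i}$ satisfies the support hypothesis with $|\tilde{b}_{1i}(m)|\leqslant R_1$. Applying Lemma~\ref{DifferencingLemma} to this decomposition, the diagonal contribution is bounded using the pointwise estimate $|\tilde{b}_{1i}(m)|^2\leqslant R_1^2\mathbf{1}_{(A,A+M]}(m)$ by
\[ \left(Hr_2R_2^2+\frac{R_2^2H^2r_2^2}{M}\right)I\cdot IR_1^2 M\ll Hr_2(M+Hr_2)(R_1R_2)^2 I^2, \]
which matches the first term of the claim.

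The crucial observation for the off-diagonal contribution is that, although $\tilde{b}_{1i}$ is not periodic, the shifted autocorrelation $B_{1i,hH}(m):=b_{1i}(m+hHr_2)\overline{b_{1i}(m)}$ is $r_1$-periodic (both factors are). Writing $I_h:=(A,A+M]\cap(A-hHr_2,A+M-hHr_2]$, which is an interval of length $M-|h|Hr_2\leqslant M$ (nonempty in the stated range of $h$), one has
\[ \sum_m \tilde{b}_{1i}(m+hHr_2)\overline{\tilde{b}_{1i}(m)}=\sum_{m\in I_h}B_{1i,hH}(m). \]
Applying Lemma~\ref{CompletionLemma} to the $r_1$-periodic function $\sum_{i=1}^I B_{1i,hH}$ on the interval $I_h$, and bounding $|I_h|/r_1\leqslant M/r_1$, yields
\[ \Bigl|\sum_{i=1}^I\sum_{m\in I_h}B_{1i,hH}(m)\Bigr|\ll\sum_{|k|\leqslant r_1/2}\Bigl|\sum_{i=1}^I\hat{B}_{1i,hH}(k)\Bigr|\min\!\left(\frac{M}{r_1},\frac{1}{|k|}\right). \]
Multiplying by the prefactor $Hr_2R_2^2I$ and summing over $0<|h|\leqslant M/(Hr_2)$ reproduces the second claimed term.

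I do not anticipate any genuine obstacle; the main point is simply to notice that the $r_1$-periodicity that was destroyed by truncation is automatically recovered in the shifted product $b_{1i}(m+hHr_2)\overline{b_{1i}(m)}$ arising after differencing, which is exactly the structure that allows completion to be applied term by term. Everything else is routine bookkeeping.
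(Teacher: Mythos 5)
Your proposal is correct and takes essentially the same approach as the paper: both apply Lemma~\ref{DifferencingLemma} to the truncated sequences $b_{1i}\chi_{(A,A+M]}$, bound the diagonal term trivially by $\ll (M+Hr_2)Hr_2(R_1R_2)^2I^2$, and then apply Lemma~\ref{CompletionLemma} to the $r_1$-periodic function $c(m)=\sum_i b_{1i}(m+hHr_2)\overline{b_{1i}(m)}$ restricted to the interval where the truncated autocorrelation is supported. Your explicit identification of that interval $I_h$ and the observation that $r_1$-periodicity is recovered in the shifted product merely spell out what the paper's one-line proof leaves implicit.
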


\begin{proof}
The proof is immediate from Lemmas~\ref{DifferencingLemma} and \ref{CompletionLemma}. Specifically, we apply Lemma~\ref{DifferencingLemma} to
\[ b(m)\chi_{(A,A+M]}(m)=\sum_{i=1}^I\big(b_{1i}(m)\chi_{(A,A+M]}(m)\big)b_{2i}(m). \]
We estimate the resulting first, diagonal term trivially, while for off-diagonal terms we use Lemma~\ref{CompletionLemma} with the $r_1$-periodic function
\[ c(m)=\sum_{i=1}^Ib_{1i}(m+hHr_2)\overline{b_{1i}(m)}. \qedhere \]
\end{proof}

The role of the parameter $H$ in Theorem~\ref{CentralHBLemma} will become clear later.  Importantly in the applications such as the central application for our problem, the sum defining $\hat{B}_{1i,hH}(k)$ is a complete exponential sum modulo $r_1$. Note that the trivial bound is
\[ |\hat{B}_{1i,hH}(k)|\ll r_1R_1^2, \]
so the trivial bound on the right-hand side is $\ll (R_1R_2)^2I^2(MHr_2 +(Hr_2)^2
+Mr_1+M^2\log r_1)$. This is, for general $b_{1i}$, a step backwards from the trivial bound $\ll M^2(R_1R_2)^2I^2$ on the left-hand side.

For arithmetically defined functions $b_{1i}$, however, the complete sum defining $\hat{B}_{1i,hH}(k)$ inherits this arithmetic structure. It will often be the case that the sum $\hat{B}_{1i,hH}(k)$ can be multiplicatively split in a certain sense. For $r_1$ a prime, the remaining complete sum can be estimated using techniques of algebraic geometry. For $r_1$ a higher prime power, the sum can be treated by the method of $p$-adic stationary phase. We remark that completion followed by the method of $p$-adic stationary phase acts as the proper $p$-adic analogue of the $B$-process in the classical van der Corput's theory of exponential sums \cite{Mi}; see also \cite{BM} for an example involving Kloosterman sums. In either case, for $b_{1i}$ of algebro-geometric origin, we can often recover  square-root cancellation in $\hat{B}_{1i,hH}(k)$.

\section{Proof of Theorem \ref{klooster-short}} 

We now prepare for the proof of Theorem \ref{klooster-short}. We first make a small reduction to the case $q = r$ in the situation of Theorem \ref{klooster-short}. Indeed, suppose that Theorem \ref{klooster-short} is proved in this special case, and write $q = r r' q'$ where $r' \mid r^{\infty}$ and $(q', r) = 1$. Then by M\"obius inversion we have
\[ \sum_{\substack{A < m \leq A+M\\ (m, q) = 1}} S(m, n_1, r) S(m, n_2, r) = \sum_{f \mid q'} \mu(f) \sum_{\substack{A/f < m \leq (A+M)/f\\ (m, r) = 1}} S(m, fn_1, r) S(m, fn_2, r) \]
so that the general case follows from the special case.  Thus we are interested in the sequence $b(m)$ given by
\[ b(m)=\begin{cases} S(m,n_1,r)S(m,n_2,r), &(m,r)=1,\\ 0, &(m,r)>1, \end{cases} \]
for integers $n_1, n_2$ (not necessarily coprime to $r$). From now on, we implicitly assume that $(m,r)=1$. Moreover, the letter $q$ is now free, and we will use it (in a different meaning than in the rest of paper) with or without indices as prime powers occurring in the prime factorization of $r$.

\bigskip

Before we apply Theorem~\ref{CentralHBLemma} to this particular function $b(m)$,  we explain briefly some technical difficulties. Kloosterman sums enjoy twisted multiplicativity, but of course only for coprime moduli. In order to apply Theorem~\ref{CentralHBLemma}, we need to decompose $r = r_1r_2$ with $(r_1, r_2) = 1$ and $r_1$, $r_2$ in certain ranges.  However, if $r$ is highly squareful (for example, if $r$ is a pure prime power), such a decomposition may not be possible. In this case, however,  one can choose the parameter $H$ in Theorem~\ref{CentralHBLemma} to be a suitable divisor of $r_1$, which 
produces partly degenerate Kloosterman sums and reduces the period of the sequence $b_{1i}(m+hHr_2)\overline{b_{1i}(m)}$, so that correspondingly $\widehat{B}_{1i,hH}(k)$ vanishes often (see Lemmas \ref{primepowercase} and \ref{FinalEstimate}). In other words, the parameters $H$ and $r_2$, each in its own way, act to make the range of summation in the off-diagonal terms in the upper bound of Lemma~\ref{DifferencingLemma} more favorable compared to the period of the summands, but they apply separately, depending on the factorization of the modulus $r$.  The previous discussion motivates a different treatment of the squarefree and the squareful part of $r$ that we proceed to make precise now. We start with some notation.

\bigskip

Let $p > 2$ be a prime. For $\kappa \in \mathbb{N}$, we denote by $\textbf{M}_{p^{\kappa}}$  an arbitrary element of $p^{\kappa}\mathbb{Z}_p$, which may be different from line to line. This notation serves as a $p$-adic analogue of Landau's $\text{O}$-notation in Taylor expansions. For $s\geqslant 1 $ and $(A,p)=1$, let
\begin{equation}\label{gauss sum}
 \tau(A, p^s)=\begin{cases} 1, &2\mid s,\,\,p\text{ odd},\\ \big(\frac Ap\big), &2\nmid s,\,\,p\equiv 1\pmod 4,\\ \big(\frac Ap\big)i, &2\nmid s,\,\,p\equiv 3\pmod 4, 
 \end{cases} 
 \end{equation}
be the sign of the Gau{\ss} sum $\sum_{x\bmod p^s}e(Ax^2/p^s)=p^{s/2}\tau(A,p^s)$.

\bigskip


Next,  we collect facts and notations pertaining to square roots to prime power moduli, which arise in connection with the explicit evaluation of Kloosterman sums as in Lemma~\ref{KloostermanEvaluation}. While these square roots naturally arise in $p$-adic towers as in \cite{BM}, we keep our exposition elementary and only discuss square roots to a prime power modulus $p^{\kappa}$. This discussion applies separately at every odd prime $p$. For every $x\in(\mathbb{Z}/p^{\kappa}\mathbb{Z})^{\times}{}^2$, there are exactly two solutions $u\in(\mathbb{Z}/p^{\kappa}\mathbb{Z})^{\times}$ of the congruence $u^2\equiv x\pmod{p^{\kappa}}$. Fix once and for all a choice function $s:(\mathbb{Z}/p\mathbb{Z})^{\times}{}^2\to(\mathbb{Z}/p\mathbb{Z})^{\times}$ such that, for every $r\in(\mathbb{Z}/p\mathbb{Z})^{\times}$, the class $s(r )\in(\mathbb{Z}/p\mathbb{Z})^{\times}$ satisfies $s(r )^2\equiv r\pmod p$. Then, for every $x\in(\mathbb{Z}/p^{\kappa}\mathbb{Z})^{\times}$, we denote by $u_{1/2}^{[\kappa]}(x)$ the unique class $u\in(\mathbb{Z}/p^{\kappa}\mathbb{Z})^{\times}$ such that $u^2\equiv x\pmod{p^{\kappa}}$ and $u\in s(x+p\mathbb{Z})$. This gives way to a unique function $u_{1/2}^{[\kappa]}:(\mathbb{Z}/p^{\kappa}\mathbb{Z})^{\times}{}^2\to(\mathbb{Z}/p^{\kappa}\mathbb{Z})^{\times}$, which we may think of as a branch of the square-root. (Each choice of $s$ gives rise to a different branch of the square-root, but we will never need to consider other possible choices.) The values of $u_{1/2}^{[\kappa]}$ are compatible across different values of $\kappa$, in the sense that $u_{1/2}^{[\kappa_1]}(x)\equiv u_{1/2}^{[\kappa_2]}(x)\pmod{p^{\min(\kappa_1,\kappa_2)}}$, and hence we simply write $x_{1/2}$ for $u_{1/2}^{[\kappa]}(x)$ with a sufficiently high value of $\kappa$ (for example, the highest power of $p$ occurring as a modulus in the exponential sum of interest).

\bigskip
 
The following (essentially well-known) lemma appears for instance in \cite[Lemma 6]{BM}. 

\begin{lemma}
\label{KloostermanEvaluation}
Let $p > 2$ be a prime,   let $s\geqslant 2$, and let $S(m,n;p^s)$ be the usual Kloosterman sum. Let $(m, p) = 1$ and $p^{\nu} \exmid n$. 
Then $S(m, n; p^s) = 0$ unless
\[ \nu=0,\,\, mn \in(\mathbb{Z}/p\mathbb{Z})^{\times}{}^2, \]
in which case it equals 
\[ S(m,n;p^s)=p^{s/2}\sum_{\pm}\tau\big(\pm (mn )_{1/2},p^{ s}\big)e\left(\pm\frac{2(mn )_{1/2}}{p^{s}}\right). \]
 \end{lemma}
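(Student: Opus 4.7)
The approach is the $p$-adic method of stationary phase. I would write $x = x_0 + p^a y$ with $a = \lceil s/2\rceil$, letting $x_0$ range over $(\mathbb{Z}/p^a\mathbb{Z})^{\times}$ and $y$ over $\mathbb{Z}/p^{s-a}\mathbb{Z}$. Since $2a \geqslant s$, the geometric series yields $\bar x \equiv \bar x_0 - p^a y \bar x_0^2 \pmod{p^s}$, and consequently
\begin{displaymath}
 \frac{mx+n\bar x}{p^s} \equiv \frac{mx_0+n\bar x_0}{p^s} + \frac{y(m-n\bar x_0^2)}{p^{s-a}} \pmod 1.
\end{displaymath}
The sum over $y$ vanishes unless $m \equiv n\bar x_0^2 \pmod{p^{s-a}}$. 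Since $s \geqslant 2$ forces $s-a \geqslant 1$ and $(x_0,p)=1$, this congruence is unsolvable when $p \mid n$, which immediately gives $S(m,n;p^s)=0$ in the case $\nu \geqslant 1$. When $(n,p)=1$, the condition rearranges to $x_0^2 \equiv n\bar m \pmod{p^{s-a}}$, which requires $mn$ to be a square modulo $p$; otherwise the sum is again zero.

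In the generic case, write $u = (mn)_{1/2}$ and let $x_1 = \epsilon u\bar m$ for $\epsilon \in \{\pm 1\}$, so that $mx_1^2 \equiv n \pmod{p^s}$ and the two critical residues modulo $p^{s-a}$ are captured. Parametrise the critical $x_0 \bmod p^a$ by $x_0 = x_1 + p^{s-a}w$ with $w$ ranging over $\mathbb{Z}/p^{2a-s}\mathbb{Z}$. From $mx_1^2 \equiv n \pmod{p^s}$ one obtains
\begin{displaymath}
 mx_1 + n\bar x_1 = 2mx_1 \equiv \pm 2u \pmod{p^s}.
\end{displaymath}
If $s$ is even, then $2a-s = 0$, each $\epsilon$ gives a single $x_0$-class, and the $y$-sum contributes $p^{s/2}$, producing exactly the two terms in the claimed formula (and $\tau(\pm u, p^s) = 1$ by definition for even $s$). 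If $s$ is odd, then $2a-s = 1$; keeping one further order in the expansion of $\bar x_0$ around $x_1$ introduces a quadratic term $p^{s-1}w^2\,n\bar x_1^3$ in the phase, and the residual $w$-sum becomes a Gau{\ss} sum
\begin{displaymath}
 \sum_{w \bmod p} e\Bigl(\frac{n\bar x_1^3\,w^2}{p}\Bigr) = p^{1/2}\tau(n\bar x_1^3, p^s).
\end{displaymath}
A short calculation gives $n\bar x_1^3 \equiv \pm m^2 \bar u \pmod p$, and since the Legendre symbol of a square is trivial, $\tau(n\bar x_1^3, p^s) = \tau(\pm u, p^s)$, matching the stated identity.

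The main obstacle is bookkeeping the $p$-adic precision in the odd-$s$ case: one must retain the quadratic cross term $p^{s-1} w^2\,n\bar x_1^3$ in the expansion of $\bar x_0$ while verifying that higher-order terms are genuinely $\equiv 0 \pmod{p^s}$, and one must check that $n\bar x_1^3$ is a unit modulo $p$ so that the residual Gau{\ss} sum has full modulus $p$. It also requires matching the sign conventions so that the $\epsilon = \pm 1$ labelling aligns with the $\pm$ sum in the formula using the definition of $\tau$ from \eqref{gauss sum}; the case $s = 2$ is subsumed in the even-$s$ branch without complication.
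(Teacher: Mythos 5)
Your proof is correct, and it is essentially the same $p$-adic stationary phase argument that the paper delegates to \cite[Lemma 6]{BM}. The decomposition $x=x_0+p^a y$ with $a=\lceil s/2\rceil$, the detection of the critical condition $x_0^2\equiv n\bar m\ (p^{s-a})$ via the $y$-sum (which in particular kills the sum when $p\mid n$), the choice of the two critical lifts $x_1=\pm(mn)_{1/2}\bar m$ with phase $\pm 2(mn)_{1/2}$, and the residual quadratic Gau{\ss} sum in $w$ in the odd-$s$ case (after one more term of the geometric-series expansion of $\bar x_0$) are all exactly the standard ingredients; the Legendre-symbol computation $n\bar x_1^3\equiv\epsilon m^2\bar u$ correctly matches the $\tau$-factor. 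A small point worth stating explicitly: for a critical class $x_0\bmod p^a$, the phase $p^{-s}(mx_0+n\bar x_0)$ is well-defined modulo $1$ independently of the integer representative of $x_0$, precisely because the critical congruence $m\equiv n\bar x_0^2\ (p^{s-a})$ holds, and this is what licenses evaluating the even-$s$ phase directly at $x_1$. Your remark that the expansion of $\bar x_0$ around $x_1$ terminates at the quadratic term uses $3(a-1)\geqslant s$, i.e. $s\geqslant 3$, which is automatic since the odd branch only applies for $s\geqslant 3$.
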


Suppose that $r=r_1r_2$ with
\[ (r_1,6r_2)=1, \]
and let
\begin{equation}
\label{Factorizationr1}
r_1=\prod_{j=1}^{J}q_j,\quad q_j=p_j^{s_j}, \quad p_j > 3,
\end{equation}
be the canonical factorization of $r_1$ into prime powers. We  write
\[ Q_j=r_1/q_j,\qquad Q_j\bar{Q}_j\equiv 1\pmod{q_j}. \]
We  denote all moduli $q_j=p_j^{s_j}$ with $s_j\geqslant 2$ as $q_1,\dots,q_{\rho}$, and for later purposes, we fix a divisor $r_1^{\sharp}$ of $r_1$ which will the product of some of the moduli $q_j$, $1 \leq j \leq \rho$. By rearranging, we may write
\begin{equation}
\label{Defr1sharp}
r_1^{\sharp}=\prod_{j=1}^{\varrho}q_j.
\end{equation}
for some  $\varrho\leqslant\rho$. By definition, $r_1^{\sharp}$ is squareful.  For the moment, we do not impose any further condition on $r_1^{\sharp}$.  (The final choice will satisfy $r_1^{\sharp}=(r_1,H^{\infty})$, but the need for this choice will only become apparent later.) 

Using the twisted multiplicativity of Kloosterman sums (which follows from the Chinese remainder theorem), we have that $b(m)=b_1(m)b_2(m)$ with
\begin{align*}
&b_1(m)=S(\bar{r}_2m,\bar{r}_2n_1,r_1)S(\bar{r}_2m,\bar{r}_2n_2,r_1)  = \prod_{j=1}^{J}S\big(m,\bar{Q}_j^2\bar{r}_2^2n_1,q_j\big)S\big(m,\bar{Q}_j^2\bar{r}_2^2n_2,q_j\big), \\
 & b_2(m)=S(\bar{r}_1m,\bar{r}_1n_1,r_2)S(\bar{r}_1m,\bar{r}_1n_2,r_2).
\end{align*}
Keeping in mind that $(\bar{r}_1m,r_2)=1$, we have according to Weil's bound
\begin{equation}\label{r2}
 |b_2(m)|\leqslant R_2:=d(r_2)^2r_2. 
 \end{equation}
Since $(2m, r_1)=1$, we see from Lemma \ref{KloostermanEvaluation} that $b_1(m)$ vanishes unless
\[ m n_1,m n_2, n_1 n_2\in(\mathbb{Z}/p_j\mathbb{Z})^{\times}{}^2\quad(1\leqslant j\leqslant\varrho), \]
in which case $b_1(m)$ splits as a sum of $4^{\varrho}\ll r^{\varepsilon}$ terms, which we naturally index by $\bseps\in\{\pm 1\}^{2\times\varrho}=(\epsilon_{ij})_{i=1}^2\,{}_{j=1}^{\varrho}$ as follows:
\[ b_1(m)=\sum_{\bseps\in\{\pm 1\}^{2\times\varrho}}b_1^{\bseps}(m),\]
\[ b_1^{\bseps}(m)=\prod_{j=1}^{\varrho}S^{\epsilon_{1j}}\big(m,\bar{Q}_j^2\bar{r}_2^2n_1;q_j\big)S^{\epsilon_{2j}}\big(m,\bar{Q}_j^2\bar{r}_2^2n_2;q_j\big)\prod_{j=\varrho+1}^{J}S\big(m,\bar{Q}_j^2\bar{r}_2^2n_1;q_j\big)S\big(m,\bar{Q}_j^2\bar{r}_2^2n_2;q_j\big), \]
\begin{displaymath}
   S^{\epsilon}(m,n;p^s)=p^{s/2}\tau\big(\epsilon (mn)_{1/2},p^{s}\big)e\left(\frac{2\epsilon  (mn)_{1/2}}{p^s}\right) \quad (s\geqslant 2,\,\,mn\in(\mathbb{Z}/p\mathbb{Z})^{\times}{}^2). 
   \end{displaymath}
Note that the Kloosterman sums $S(m,n,q)$ are real-valued, but the terms $S^{\epsilon}(m, n, p^s)$, in general, are not. 
   
 We are now ready to apply Theorem~\ref{CentralHBLemma}, with
\[ r=r_1r_2,\quad b(m)=\sum_{\bseps\in\{\pm 1\}^{2\times\varrho}}b_1^{\bseps}(m)b_2(m), \]
$R_2$ as in \eqref{r2}, and
\[ R_1=\max_{\bseps\in\{\pm 1\}^{2\times\varrho}}\big|b_1^{\bseps}(m)\big| \ll d(r_1)^2r_1. \]
We can conclude that
\begin{equation}\label{basicsum}
\begin{split}
&\Bigl|\sum_{\substack{A<m\leqslant A+M\\ (m,r)=1}}S(m,n_1,r)S(m,n_2,r)\Bigr|^2
\ll r^{\varepsilon}(M + Hr_2) Hr_2r^2 \\
&\qquad\qquad\qquad +r^{\varepsilon}Hr_2^3 \sum_{0<|h|\leqslant\frac{M}{Hr_2}} \sum_{|k|\leqslant\frac{r_1}2}\bigg|\sum_{\bseps\in\{\pm 1\}^{2\times\varrho}}\hat{B}_{1,hH}^{\bseps}(r_1, r_2, k)\bigg|\min\left(\frac{M}{r_1}, \frac{1}{|k|}\right),
\end{split}
\end{equation}
where, as in \eqref{DefinitionHatB}, the terms $\hat{B}^{\bseps}_{1,hH}(r_1,r_2,k)$ are given by complete sums
\[ \hat{B}^{\bseps}_{1,hH}(r_1, r_2, k) =\sumstar_{m\bmod r_1}b_1^{\bseps}\big(m+hHr_2\big)\overline{b_1^{\bseps}(m)}e\left(-\frac{km}{r_1}\right). \]
The sum of these terms $\hat{B}^{\bseps}_{1,hH}(r_1,r_2,k)$ is the central object of our estimation. We introduce some additional notation that allows us to state our results succinctly.

For $q=p^s$, $s\geqslant 2$, $n_1n_2\in(\mathbb{Z}/q\mathbb{Z})^{\times}{}^2$, and $\bseps=(\epsilon_1,\epsilon_2,\epsilon_3,\epsilon_4)\in\{\pm 1\}^4$, denote
\begin{equation}\label{defSigma1}
\begin{split}
\Sigma^{\bseps}(n_1,n_2,a,k;p^s)=\sumstar_{\substack{m\bmod p^s\\ m,m+a\in n_1(\mathbb{Z}/p\mathbb{Z})^{\times}{}^2}}S^{\epsilon_1}(m&+a,n_1,p^s) \overline{S^{\epsilon_2}(m,n_1,p^s)}\\
&\smash[t]{S^{\epsilon_3}(m+a,n_2,p^s) \overline{S^{\epsilon_4}(m,n_2,p^s)}e\left(-\frac{km}{p^s}\right)}.
\end{split}
\end{equation}
For a general (prime or a) prime power $q$, we let
\begin{equation}\label{defSigma}
  \Sigma(n_1,n_2,a,k;q)=\sum_{\substack{m\bmod q\\ (m(m+a), q) = 1}}S(m+a,n_1,q)S(m+a,n_2,q)S(m,n_1,q)S(m,n_2,q)e\left(-\frac{km}q\right).
  \end{equation}
Denote
\[ A_0=\{\pm 1\}^4,\quad A^{\sharp}=\{\bseps\in A_0:\epsilon_1=\epsilon_2,\,\,\epsilon_3=\epsilon_4\},\]
and, for an odd prime $q=p^s$ with $s\geqslant 2$,
\[ \Sigma^{\sharp}(n_1,n_2,a,k;q)=\sum_{\bseps\in A^{\sharp}}\Sigma^{\bseps}(n_1,n_2,a,k;q),\quad \Sigma(n_1,n_2,a,k;q)=\sum_{\bseps\in A_0}\Sigma^{\bseps}(n_1,n_2,a,k;q). \]

We may rewrite the innermost sum in \eqref{basicsum} as
\begin{equation}\label{decompBhat}
\begin{split}
\hat{B}_{1,hH}[r_1,r_2,k]&:=\sum_{\bseps\in\{\pm 1\}^{2\times\varrho}}\hat{B}^{\bseps}_{1,hH}(r_1, r_2, k)\\
&=\prod_{j=1}^{\varrho}\Sigma^{\sharp}\big(\bar{Q}_j^2\bar{r}_2^2n_1,\bar{Q}_j^2\bar{r}_2^2n_2,hHr_2,\bar{Q}_jk;q_j\big)\prod_{j=\varrho+1}^{J}\Sigma\big(\bar{Q}_j^2\bar{r}_2^2n_1,\bar{Q}_j^2\bar{r}_2^2n_2,hHr_2,\bar{Q}_jk;q_j\big).
\end{split}
 \end{equation}
We see that it suffices to obtain upper bounds for the complete sums $\Sigma(n_1,n_2,a,k;q)$ and $\Sigma^{\sharp}(n_1,n_2,a,k;p^s)$ as above. These bounds are provided in the following result whose proof we postpone to the next section.

We need just a bit more notation. For an integer $n$ let $\text{rad}(n)$ denote its squarefree kernel and $\omega(n)$ the number of its prime factors. For a finite set $T$ and $q\in\mathbb{N}$, we denote
\begin{equation}\label{notation}
 (T,q)=\mathop{\mathrm{lcm}}\{(t,q):t\in T\}, 
 \end{equation}
and $n+T=\{n+t:t\in T\}$ as usual. Finally for a positive integer  $n $ we denote by $n_{\square}$ the largest  integer whose square divides $n$. (In particular, for a prime power $p^s$ we have $(p^s)_{\square} = p^{[s/2]}$.)

Then, collecting the results of Lemma~\ref{primecase}, the decomposition~\eqref{Decomposition}, the reduction formula~\eqref{ReductionFormula}, and Lemmata~\ref{primepowercase} and \ref{FinalEstimate}, we obtain the following result.

\begin{lemma}
\label{FinalEstimateSigma}
Let $q=p^s$, where $p>3$ is a prime and $s\geqslant 1$, and let $n_1,n_2,a,k\in\mathbb{Z}$.
\begin{enumerate}
\item If $p\mid a$ and $s\geqslant 2$, then
\[ \Sigma^{\sharp}(n_1,n_2,a,k;q)\ll q^{5/2}\sum_{\substack{\delta\in\{1,(q,n_1-n_2)\}\\ \delta\mid k,\,\,(\delta a,q/p)\mid k}}(q,\delta a,k)^{1/2}. \]
\item If $p\mid a$, or if $s=1$, then
\[ \Sigma(n_1,n_2,a,k;q)\ll q^{5/2}\sum_{\substack{\delta\in\{1,(q,n_1-n_2)\}\\ \delta\mid k}}\sum_{\substack{\delta'\in\{1,(q,a)\}\\ (\delta\delta',q/p)\mid k}}(q,\delta\delta',k)^{1/2}. \]
\item There exists a finite set $T\subset\mathbb{Z}\setminus p\mathbb{Z}$, of absolutely bounded size, depending on $q$, $n_1$, and $n_2$ only, such that, for every $k\in\mathbb{Z}$ and every $p\nmid a$,
\[ \Sigma(n_1,n_2,a,k;q)\ll q^{5/2}\sum_{\substack{\delta\in\{1,(q,n_1-n_2)\}\\\delta\mid k}}\delta^{1/2}\left(\Big(\frac{k}{\delta}\Big)^2a-T,\Big(\frac{q}{\delta}\Big)_{\square}\right)^{1/2},\]
and the second factor in the sum may be omitted whenever $q/\delta$ is cube-free.
\end{enumerate}
\end{lemma}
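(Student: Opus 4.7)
The proof splits naturally by the exponent $s$ and the divisibility of $a$ by $p$. For $s = 1$, the sum $\Sigma(n_1,n_2,a,k;q)$ is a complete character sum modulo a prime, and the corresponding bound (Lemma~\ref{primecase}) follows from Deligne's estimates, applied to an appropriate tensor product of Kloosterman sheaves in the sense of Katz~\cite{Ka}. Square-root cancellation $\mathrm{O}(p^{5/2})$ holds except in degenerate parameter ranges; the divisor $\delta \in \{1, (q, n_1 - n_2)\}$ encodes precisely the loci where the relevant sheaf fails to be geometrically irreducible, producing Part~(2) when $s = 1$.

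For $s \geq 2$, I would invoke Lemma~\ref{KloostermanEvaluation} to replace each Kloosterman sum by $p^{s/2}$ times a fourth root of unity times an exponential whose phase involves the $p$-adic square-roots $(mn_i)_{1/2}$ and $((m+a)n_i)_{1/2}$. This exposes $\Sigma^{\bseps}$ as $p^{2s}$ times a sum over $m$ of $e(\Phi_{\bseps}(m)/p^s)$, with
\[ \Phi_{\bseps}(m) = 2\epsilon_1((m+a)n_1)_{1/2} - 2\epsilon_2(mn_1)_{1/2} + 2\epsilon_3((m+a)n_2)_{1/2} - 2\epsilon_4(mn_2)_{1/2} - km, \]
and the task reduces to a $p$-adic stationary phase analysis of this sum on $\mathbb{Z}/p^s\mathbb{Z}$, in the spirit of~\cite{Mi}. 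When $p \mid a$ and $\bseps \in A^{\sharp}$ (i.e.\ $\epsilon_1=\epsilon_2$, $\epsilon_3=\epsilon_4$), the constant-in-$a$ terms cancel and $((m+a)n_i)_{1/2} = (mn_i)_{1/2}(1+a/m)^{1/2}$ may be expanded as a $p$-adically convergent binomial series; this makes $\Phi_{\bseps}$ an analytic function of $m$ whose leading-in-$a$ part is $a[\epsilon_1(n_1/m)_{1/2} + \epsilon_3(n_2/m)_{1/2}] - km$. Solving $\Phi_{\bseps}'(m_0) \equiv 0 \pmod{p^{s-1}}$ after squaring and clearing denominators yields a polynomial condition relating $k$, $a$, $n_1$, $n_2$; the restrictions $\delta \mid k$, $(\delta a, q/p)\mid k$ and the count $(q,\delta a,k)^{1/2}$ of solutions then drop out, giving Part~(1). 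For $\bseps \notin A^{\sharp}$ with $p \mid a$ (the extra contribution in Part~(2)), the corresponding differences like $((m+a)n_i)_{1/2} + (mn_i)_{1/2}$ do not collapse under expansion, and a separate stationary phase analysis introduces the factor $\delta' \in \{1, (q,a)\}$.

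Part~(3), when $p \nmid a$, carries the main difficulty. Here the binomial expansion is no longer available and $((m+a)n_i)_{1/2}$ is genuinely distinct from $(mn_i)_{1/2}$ as a $p$-adic function of $m$. I would compute the formal derivative
\[ \Phi_{\bseps}'(m) = \epsilon_1(n_1/(m+a))_{1/2} - \epsilon_2(n_1/m)_{1/2} + \epsilon_3(n_2/(m+a))_{1/2} - \epsilon_4(n_2/m)_{1/2} - k, \]
set it equal to zero, and successively square to eliminate the four square-roots. This reduces the critical equation to a polynomial congruence of the shape $(k/\delta)^2 a \equiv \tau_{\bseps}(n_1,n_2) \pmod{(q/\delta)_{\square}}$ for certain integers $\tau_{\bseps}$ depending on $\bseps$, $n_1$, $n_2$ only, and the finite set $T$ is produced as the union of these critical values $\tau_{\bseps}$ over the various sign combinations.

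The main obstacle, and the source of the technical length, is the handling of degenerate critical points: when $\Phi_{\bseps}''(m_0)$ is not a $p$-adic unit, the Gauss-sum type bound $p^{s/2}$ per critical point worsens, so a careful higher-order local expansion (passing to a refined cosetwise subdivision modulo appropriate powers of $p$) is required to recover the $((q/\delta)_{\square},\,\cdot)^{1/2}$ savings with the stated $\delta$-structure. The cube-free exception in Part~(3) reflects precisely the fact that this degeneracy first arises at cubic order; when $q/\delta$ is cube-free, $\Phi_{\bseps}''(m_0)$ is automatically a unit and ordinary $p$-adic stationary phase applies directly.
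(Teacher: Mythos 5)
Your high-level strategy (explicit Kloosterman evaluation via Lemma~\ref{KloostermanEvaluation} followed by a $p$-adic stationary phase analysis for $s\geqslant 2$, sheaf-theoretic square-root cancellation for $s=1$) is the right one, and the $s=1$ paragraph is essentially correct. However, the core of the argument for $s\geqslant 2$ as you have set it up would not go through, for the following reasons.

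You keep the four square-roots $((m+a)n_1)_{1/2}$, $(mn_1)_{1/2}$, $((m+a)n_2)_{1/2}$, $(mn_2)_{1/2}$ separate and propose to ``successively square'' the critical equation $\Phi'_{\bseps}(m_0)\equiv 0$ to eliminate all four. But the crucial manoeuvre that makes this analysis tractable in the paper is the regrouping $(mn_i)_{1/2}=(mu)_{1/2}(n_i\bar u)_{1/2}$, where $u$ is a fixed coset representative of $n_1(\mathbb{Z}/p\mathbb{Z})^{\times 2}$; this collapses the phase to the form $f[m,A,B,a,k]=2A\bigl((m+a)u\bigr)_{1/2}-2B(mu)_{1/2}-km$ with only two square-root functions of $m$ and with $A^{[\bseps]}=\epsilon_1(n_1\bar u)_{1/2}+\epsilon_3(n_2\bar u)_{1/2}$, $B^{[\bseps]}=\epsilon_2(n_1\bar u)_{1/2}+\epsilon_4(n_2\bar u)_{1/2}$. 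Without this grouping, the repeated squaring you propose does not produce a clean, low-degree polynomial congruence, and --- more importantly --- there is no visible mechanism by which the divisor $\delta=(q,n_1-n_2)$ enters for $s\geqslant 2$. In the paper this factor arises precisely because $p\mid A$ and $p\mid B$ simultaneously forces $p\mid(n_1-n_2)$, and then the reduction formula $\Sigma[A,B,a,k;p^s]=p^{\nu}\Sigma[A/p^{\nu},B/p^{\nu},a,k/p^{\nu};p^{s-\nu}]$ extracts it; your formulation has no analogue of this step.

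Two further points are off. First, the alignment phenomenon underlying Part (1) is that when $p\mid a$ and the branches of the two square-roots agree (i.e.\ $A=B$, which corresponds to $\epsilon_1=\epsilon_2$, $\epsilon_3=\epsilon_4$), the constant terms of $((m+a)u)_{1/2}$ and $(mu)_{1/2}$ cancel and a factor $p^{\ord_p a}$ is gained; your sketch gestures at this but does not produce the condition $(\delta a,q/p)\mid k$ or the weight $(q,\delta a,k)^{1/2}$, which come from a delicate sub-case analysis (whether $\nu+\alpha\leqslant s-2$, $=s-1$, or $\geqslant s$). Second, your explanation of the cube-free exception in Part~(3) --- that ``$\Phi''_{\bseps}(m_0)$ is automatically a unit'' --- is incorrect: the degeneracy $g_1(m_0,A,B,a)\equiv 0\pmod p$ can perfectly well occur for $s=2$. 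The correct reason the extra factor may be dropped for cube-free $q$ is that the singular count is $\lfloor\tfrac12\min(\tilde\rho(k^2a),\kappa)\rfloor$ with $\kappa=\lfloor s/2\rfloor\leqslant 1$, so the singular contribution is already absorbed into $O(q^{1/2})$; the degeneracy exists but is harmless at that depth.
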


\begin{proof}
We show how Lemma~\ref{FinalEstimateSigma} follows from the results of Section~\ref{EstimationCompleteSumsSection}.

If $s=1$, then Lemma~\ref{primecase} shows that $\Sigma(n_1,n_2,a,k;q)\ll q^{5/2}$, except if $q\mid(a(n_1-n_2))$ and $q\mid k$, in which case the upper bound obtained is $\Sigma(n_1,n_2,a,k;q)\ll q^3$. This estimate is absorbed by the upper bound in (2), specifically by the term corresponding to $\delta=\delta'=1$ in the former and by the term corresponding to $\delta=(q,n_1-n_2)$, $\delta'=(q,a)$ in the latter case. Moreover, if $s=1$ and $q\nmid a$, the estimate of Lemma~\ref{primecase} is also allowable in (3) with $\delta=(q,n_1-n_2,k)$.

Consider now the case $s\geqslant 2$. According to the decomposition~\eqref{Decomposition}, the sum $\Sigma^A(n_1,n_2,a,k;q)$ (with $A\in\{A_0,A^{\sharp}\}$ and $A=A^{\sharp}$ only if $p\mid a$) can be written as a finite linear combination 
\[ \Sigma=q^2\sum_{\bseps\in A}\tau^{[\bseps]}\Sigma[A^{[\bseps]}(n_1,n_2),B^{[\bseps]}(n_1,n_2),a,k;q], \]
with the parameters $A=A^{[\bseps]}(n_1,n_2)$ and $B=B^{[\bseps]}(n_1,n_2)$ given explicitly as in \eqref{DefinitionAepsBeps} and the sum $\Sigma[A,B,a,k;q]$ defined in \eqref{DefinitionSigmaAB}. The contribution of terms with $p\nmid A$ or $p\nmid B$ can be estimated by Lemmas~\ref{primepowercase} and \ref{FinalEstimate} and absorbed in the terms corresponding to $\delta=1$ in (1)--(3) above as follows:
\begin{itemize}
\item If $p\nmid a$, we apply Lemma~\ref{FinalEstimate}~(1) to obtain (3) (expanding $T$ to account for all choices of $A$ and $B$).
\item If $p\mid a$, we estimate the terms with $A\equiv B\pmod p$ (and then, as will be seen from \eqref{DefinitionAepsBeps}, $A=B$) and $A\not\equiv B\pmod p$ separately. For the terms in which $A=B$, which are the only ones that arise in the estimation of $\Sigma^{\sharp}(n_1,n_2,a,k;q)$, we apply Lemma~\ref{primepowercase} and obtain (1) and the terms in (2) with $\delta'=(q,a)$. For the terms in which $A\not\equiv B\pmod p$, we apply Lemma~\ref{FinalEstimate}~(2) and obtain the terms in (2) with $\delta'=1$.
\end{itemize}

Terms with $p\mid A$ and $p\mid B$ will be seen to appear if and only if $p\mid (q,n_1-n_2)$, in which case, denoting $\delta=(q,n_1-n_2)$, we have $\delta\exmid A,B$, and $\Sigma[A,B,a,k;q]=0$ unless $\delta\mid k$. If $\delta=q$, then all of (1)--(3) hold for the trivial reason that all upper bounds are at least $q^3$. Otherwise, by applying the reduction formula \eqref{ReductionFormula}, we have that
\[ \Sigma[A,B,a,k;q]=\delta\cdot\Sigma[A/\delta,B/\delta,a,k/\delta;q/\delta], \]
where $p\nmid (A/\delta)$ and $p\nmid (B/\delta)$. The remaining sum is treated as above and is seen to be bounded by the terms corresponding to $\delta=(q,n_1-n_2)$ in (1)--(3).
\end{proof}

Applying Lemma~\ref{FinalEstimateSigma} to the individual factors in \eqref{decompBhat}, and with a quick application of the Chinese Remainder Theorem, we obtain the following crucial estimate.

\begin{prop}\label{expsums} Let $r = r_1r_2$ with $(r_1, 6r_2) = 1$, and let $r_1^{\sharp}$ be a squareful divisor of $r_1$, with factorizations of $r_1$ and $r_1^{\sharp}$ as in \eqref{Factorizationr1} and \eqref{Defr1sharp}. Let $h$ and $H$ be non-zero integers with $r_1^{\sharp}\mid(hH)^{\infty}$, and let $k \in\mathbb{Z}$. Write 
\[ \tilde{r}_1:=\prod_{\substack{q_j\exmid r_1,\,\,\mu(q_j)=0,\\(q_j,hH)=1}}q_j, \quad r_1=r^{\flat}_1\tilde{r}_1; \]
in particular, $r_1^{\sharp}\mid r_1^{\flat}$. Then, there exists for every $\tilde{\delta}\mid\tilde{r}_1$ a set $T_{\tilde{\delta}}$, of cardinality $\textnormal{O}\big(C^{\omega(\tilde{r}_1)}\big)$ for some absolute constant $C$, with elements depending on $r_1, \tilde{r}_1, \tilde{\delta}, n_1, n_2$ only, and with all elements coprime to $\tilde{r}_1$,  such that the sum $\hat{B}_{1,hH}[r_1,r_2,k]$ defined in \eqref{decompBhat} satisfies
\begin{align*}
\hat{B}^{\bseps}_{1, hH}[r_1, r_2, k] \ll  
r_1^{5/2}\sum_{\delta^{\flat}\mid(r^{\flat}_1,n_1-n_2,k)}&\sum_{\substack{(r^{\sharp}_1,hH)\mid\delta'\mid(r^{\flat}_1,hH)\\(\delta^{\flat}\delta',r^{\flat}_1/\textnormal{rad}\,r^{\flat}_1)\mid k}}\sum_{\tilde{\delta}\mid(\tilde{r}_1,n_1-n_2,k)}\\
&(r_1,\delta^{\flat}\delta'\tilde{\delta},k)^{1/2}\left(\Big(\frac{k}{\tilde{\delta}}\Big)^2hHr_2-T_{\tilde{\delta}},\Big(\frac{\tilde{r}_1}{\tilde{\delta}}\Big)_{\square}\right)^{1/2},
\end{align*}
where the second factor may be omitted whenever $\tilde{r}_1/\tilde{\delta}$ is cube-free.
\end{prop}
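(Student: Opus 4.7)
The plan is to apply Lemma~\ref{FinalEstimateSigma} to each prime-power factor in the decomposition~\eqref{decompBhat} and then combine the resulting pointwise bounds via the Chinese Remainder Theorem. I first classify the indices $1\leqslant j\leqslant J$ according to which case of Lemma~\ref{FinalEstimateSigma} applies. For $1\leqslant j\leqslant\varrho$, we have $q_j\mid r_1^{\sharp}\mid(hH)^{\infty}$, so $p_j\mid hH$ and hence $p_j\mid a:=hHr_2$, so case~(1) applies to $\Sigma^{\sharp}(\cdot;q_j)$. For $\varrho+1\leqslant j\leqslant J$ with $s_j=1$ or $p_j\mid hH$, case~(2) applies; together with the case-(1) indices these exhaust $r_1^{\flat}$. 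The remaining indices, having $s_j\geqslant 2$ and $p_j\nmid hH$, precisely comprise $\tilde{r}_1$ and invoke case~(3).

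Multiplying the per-prime bounds yields the prefactor $\prod_j q_j^{5/2}=r_1^{5/2}$. By CRT, the parameters $\delta_j\in\{1,(q_j,n_1-n_2)\}$ (all with $\delta_j\mid k$) combine into divisors $\delta^{\flat}\mid(r_1^{\flat},n_1-n_2,k)$ and $\tilde{\delta}\mid(\tilde{r}_1,n_1-n_2,k)$ coming from the $r_1^{\flat}$- and $\tilde{r}_1$-indices, respectively. Similarly the parameters $\delta'_j$ from cases~(1) and~(2) combine into $\delta'\mid(r_1^{\flat},hH)$ satisfying $(r_1^{\sharp},hH)\mid\delta'$, since case~(1) forces $\delta'_j=(q_j,a)=(q_j,hH)\geqslant p_j$, while case~(2) allows $\delta'_j\in\{1,(q_j,hH)\}$. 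A short $p$-adic check shows $(\delta_j a,q_j/p_j)=(\delta_j\delta'_j,q_j/p_j)$ whenever $\delta'_j=(q_j,a)$, unifying the side conditions from cases~(1) and~(2); via CRT (using $r_1^{\flat}/\textnormal{rad}\,r_1^{\flat}=\prod_{j:q_j\mid r_1^{\flat}}(q_j/p_j)$) these become $(\delta^{\flat}\delta',r_1^{\flat}/\textnormal{rad}\,r_1^{\flat})\mid k$. The per-prime bound factors $(q_j,\delta_j\delta'_j,k)^{1/2}$ then multiply into $(r_1,\delta^{\flat}\delta'\tilde{\delta},k)^{1/2}$ (on $\tilde{r}_1$-indices using $\delta_j^{1/2}=(q_j,\delta_j,k)^{1/2}$, valid since $\delta_j\mid k$ and $\delta_j\mid q_j$).

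For the case-(3) contribution, I will construct $T_{\tilde{\delta}}$ by CRT from the per-prime sets $T_j\subset\mathbb{Z}\setminus p_j\mathbb{Z}$ of absolutely bounded size furnished by Lemma~\ref{FinalEstimateSigma}~(3). At each $j$ with $q_j\mid\tilde{r}_1$ the factor $\bigl((k/\delta_j)^2a-T_j,\,(q_j/\delta_j)_{\square}\bigr)^{1/2}$ arises; writing $k/\delta_j\equiv(k/\tilde{\delta})\prod_{j'\neq j}\delta_{j'}\pmod{q_j/\delta_j}$ and noting $\prod_{j'\neq j}\delta_{j'}$ is a unit modulo $q_j/\delta_j$, the condition rewrites as a congruence on $(k/\tilde{\delta})^2a$ modulo $(q_j/\delta_j)_{\square}$ with target set a multiplicative rescaling of $T_j$, depending only on $\tilde{r}_1,\tilde{\delta},n_1,n_2$. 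Combining over $j$ via CRT yields $T_{\tilde{\delta}}\subset(\mathbb{Z}/(\tilde{r}_1/\tilde{\delta})_{\square}\mathbb{Z})^{\times}$ of cardinality $\prod_j|T_j|=\textnormal{O}\bigl(C^{\omega(\tilde{r}_1)}\bigr)$, with elements coprime to $\tilde{r}_1$ as required (inherited from $T_j\cap p_j\mathbb{Z}=\emptyset$). The cube-free clause is likewise inherited coordinatewise: if $\tilde{r}_1/\tilde{\delta}$ is cube-free, so is each $q_j/\delta_j$, and the second factor can be omitted in each coordinate.

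The main obstacle will be the careful combinatorial bookkeeping of the CRT combination, in particular verifying that the mixed side conditions $(\delta_j a,q_j/p_j)\mid k$ (case~1) and $(\delta_j\delta'_j,q_j/p_j)\mid k$ (case~2) unify into the single displayed condition $(\delta^{\flat}\delta',r_1^{\flat}/\textnormal{rad}\,r_1^{\flat})\mid k$, and that the unit rescalings entering the construction of $T_{\tilde{\delta}}$ absorb cleanly and do not smuggle in any residual dependence on $k$ or $hH$.
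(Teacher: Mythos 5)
Your proof is correct and follows the paper's own (one-sentence) argument: apply Lemma~\ref{FinalEstimateSigma} prime-power by prime-power to the factors in \eqref{decompBhat}, classify the indices into the $r_1^{\sharp}$/$r_1^{\flat}$/$\tilde{r}_1$ buckets according to which case applies, reconcile the side conditions from cases~(1) and~(2) via the identity $(\delta a,q/p)=(\delta\delta',q/p)$ when $\delta'=(q,a)$, and reassemble everything via CRT; you have filled in these bookkeeping steps carefully and correctly. One tiny point of precision, inherited from the paper's own statement: since the per-prime $T_j$ from Lemma~\ref{FinalEstimateSigma}~(3) is applied to the arguments $\bar{Q}_j^2\bar{r}_2^2 n_1,\ \bar{Q}_j^2\bar{r}_2^2 n_2$ appearing in \eqref{decompBhat}, and $T_j(c^2n_1,c^2n_2)=c^2T_j(n_1,n_2)$ for units $c$, the $\bar{Q}_j^2$ cancels against the one coming from $k'=\bar{Q}_jk$ but a residual $\bar{r}_2^{\,2}$ survives, so $T_{\tilde{\delta}}$ does also depend on $r_2$; this is harmless because $r_2$ is fixed in the downstream application in Section~9, where the only structural property used is that $T_{\tilde{\delta}}$ is independent of $h$ and has bounded cardinality.
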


With Proposition~\ref{expsums}, we are ready for the proof of Theorem~\ref{klooster-short}. Denote the sum to be estimated as
\[ S=\sum_{\substack{A<m\leqslant A+M\\(m,r)=1}}S(m,n_1,r)S(m,n_2,r). \]

\def\rad{\mathop{\textnormal{rad}\,}}

Fix a decomposition $r=r_1r_2$ with $(r_1,6r_2)=1$ and a divisor
\[ H\mid\frac{r_1}{\text{rad}\,r_1}, \]
both to be suitably specified later. We set
\[ r_1^{\sharp}=(r_1,H^{\infty}). \]
It is then clear that $r_1^{\sharp}$ is a squareful divisor of $r_1$ of the type considered in \eqref{Defr1sharp}, and that $r_1^{\sharp}\mid H^{\infty}$.

Using the basic estimate on $S$ in \eqref{basicsum} and Proposition~\ref{expsums}, we have that
\begin{align*}
|S|^2&\ll r^{\varepsilon}(M+Hr_2)Hr_2r^2+r^{\varepsilon}Hr_1^{5/2}r_2^3\sum_{\substack{r_1=r_1^{\flat}\tilde{r}_1\\H\mid r_1^{\flat},\,(r_1^{\flat},\tilde{r}_1)=1}}\sum_{d^{\flat}\mid(r_1^{\flat},n_1-n_2)}\sum_{\tilde{d}\mid(\tilde{r}_1,n_1-n_2)}\sum_{\substack{0<|h|\leqslant\frac{M}{Hr_2}\\(h,\tilde{r}_1)=1}}\sum_{(r^{\sharp}_1,hH)\mid d'\mid (r^{\flat}_1,hH)}\\
&\qquad\qquad\sum_{\substack{|k|\leqslant r_1/2,\,\,d^{\flat}\tilde{d}\mid k,\\(d^{\flat}d',r_1^{\flat}/\rad(r_1^{\flat}))\mid k}}(r_1,d^{\flat}d'\tilde{d},k)^{1/2}\min\left(\frac{M}{r_1},\frac1{|k|}\right)\left(\left(\frac k{\tilde{d}}\right)^2hHr_2-T_{\tilde{d}},\left(\frac{\tilde{r}_1}{\tilde{d}}\right)_{\square}\right)^{1/2}\\
&=r^{\varepsilon}(M+Hr_2)Hr_2r^2+r^{\varepsilon}Hr_1^{5/2}r_2^3\sum_{\substack{r_1=r_1^{\flat}\tilde{r}_1\\ H\mid r_1^{\flat},\,(r_1^{\flat},\tilde{r}_1)=1}}\sum_{d^{\flat}\mid(r_1^{\flat},n_1-n_2)}\sum_{\tilde{d}\mid(\tilde{r}_1,n_1-n_2)}\sum_{\substack{0<|h|\leqslant\frac{M}{Hr_2}\\(h,\tilde{r}_1)=1}}\sum_{(r^{\sharp}_1,hH)\mid d'\mid (r^{\flat}_1,hH)}\\
&\qquad\qquad\sum_{\substack{d',\,(d^{\flat}d',r_1^{\flat}/\rad(r_1^{\flat}))\mid d^{\sharp}\\ d^{\sharp}\mid(d^{\flat}d',r_1^{\flat})}}(d^{\sharp}\tilde{d})^{1/2}
\sum_{|\ell|\leqslant\tfrac{r_1}{2d^{\sharp}\tilde{d}}}
\min\left(\frac{M}{r_1},\frac1{d^{\sharp}\tilde{d}|\ell|}\right)\left(d^{\sharp}{}^2\ell^2hHr_2-T_{\tilde{d}},\left(\frac{\tilde{r}_1}{\tilde{d}}\right)_{\square}\right)^{1/2}.
\end{align*}

We collect various contributions to the right-hand side. The contribution of the terms with $\ell=0$ is
\begin{align*}
&\ll r^{\varepsilon}Hr_1^{5/2}r_2^3(r_1,H(n_1-n_2))^{1/2}\cdot\frac{M}{Hr_2}\cdot\frac{M}{r_1}\\
&\ll r^{\varepsilon}M^2H^{1/2}r_2^2r_1^{3/2}(r_1,n_1-n_2)^{1/2}\\
&\ll r^{\varepsilon}M^2r^{3/2}(Hr_2)^{1/2}(r,n_1-n_2)^{1/2}.
\end{align*}

As for the contributions of the terms with $h,\ell\neq 0$, we majorize the contribution of the four innermost ($h$, $d'$, $d^{\sharp}$, and $\ell$) sums above by
\begin{align*}
&\ll \sum_{H\mid d'\mid r_1^{\flat}}\sum_{d'\mid d^{\sharp}\mid r_1^{\flat}}\sum_{0<|\ell|\leqslant\frac{r_1}{2d^{\sharp}\tilde{d}}}\frac1{(d^{\sharp}\tilde{d})^{1/2}|\ell|}\sum_{0<|h|\leqslant\frac{M}{Hr_2}}\left(h-\big(\overline{d^{\sharp}{}^2\ell^2Hr_2}\cdot T_{\tilde{d}}\big),\left(\frac{\tilde{r}_1}{\tilde{d}}\right)_{\square}\right)^{1/2}\\
&\ll \frac{r^{\varepsilon}}{H^{1/2}}\sum_{\delta\mid(\tilde{r}_1/\tilde{d})_{\square}}\delta^{1/2}\left(1+\frac{M}{Hr_2\delta}\right)\ll\frac{r^{\varepsilon}}{H^{1/2}}\left((\tilde{r}_1)_{\square}^{1/2}+\frac{M}{Hr_2}\right).
\end{align*}
We remark that, if $r_1$ (and hence $\tilde{r}_1$) is cube-free, then the term involving $(\tilde{r}_1)_{\square}^{1/2}$ may be omitted.

Executing the outside three ($r_1=r_1^{\flat}\tilde{r}_1$, $d^{\flat}$, and $\tilde{d}$) summations and collecting all terms, we have that
\[ |S|^2\ll r^{\varepsilon}(M+Hr_2)Hr_2r^2+r^{\varepsilon}M^2r^{3/2}(Hr_2)^{1/2}(r,n_1-n_2)^{1/2}+r^{\varepsilon}r^{5/2}(Hr_2)^{1/2}(r_1/r_1^{\sharp})_{\square}^{1/2}+r^{\varepsilon}\frac{Mr^{5/2}}{(Hr_2)^{1/2}}. \]
This estimate holds for every decomposition $r=r_1r_2$ with $(r_1,6r_2)=1$ and every divisor $H\mid (r_1/\rad r_1)$. Note that the upper bound depends only on the product $Hr_2$ rather than on the individual factors of $H$ and $r_2$. Conceptually, this comes as no surprise, since the product $Hr_2$ was used as the single differencing step in Lemma~\ref{DifferencingLemma}. Also, note that $r_1/r_1^{\sharp}=r/(r,(Hr_2)^{\infty})$.

This brings us to the statement of Theorem~\ref{klooster-short}. For a given divisor $s\mid r$ satisfying $(r, 6^{\infty}) \mid s$, define
\[ H=\left(s,\Big(s,\frac rs\Big)^{\infty}\right),\quad r_2=\frac{s}H,\quad r_1=\frac{r}{r_2}. \]
This choice of $H$ and the decomposition $r=r_1r_2$ satisfy all our conditions, and we have proved
\[ |S|^2\ll r^{\varepsilon}Mr^2s+r^{\varepsilon}\frac{Mr^{5/2}}{s^{1/2}}+r^{\varepsilon}r^2s^2+r^{\varepsilon}M^2r^{3/2}s^{1/2}(r,n_1-n_2)^{1/2}+\sigma^2, \]
where
\begin{equation}
\label{Definitionsigma}
\sigma^2=r^{\varepsilon}r^{5/2}s^{1/2}\left(\frac{r}{(r,s^{\infty})}\right)_{\square}^{1/2}
\end{equation}
satisfies all the stated properties. This completes the proof of Theorem~\ref{klooster-short}. \qed

\section{Estimation of complete sums}
\label{EstimationCompleteSumsSection}

\subsection{Preliminaries} We start with two important lemmas that we will use at several stages of the  fairly long and technical proof of Lemma~\ref{FinalEstimateSigma}. The following lemma is a special case of \cite[Theorem 5]{Bo} which is already implicit in Weil's work. 

\begin{lemma}\label{alggeo} Let $p$ be a prime, and let $f_1, f_2 \in (\mathbb{Z}/p\mathbb{Z})[x]$ be two coprime polynomials, not both of which are constant. Then
\[ \Bigl|\sum_{\substack{x\bmod{p} \\ f_2(x) \not\equiv 0 \bmod{p}}} e\left(\frac{f_1(x) \bar{f}_2(x)}{p}\right) \Bigr|\leq (\deg f_1 + 2\deg f_2 - 1)\sqrt{p} + 1. \]
\end{lemma}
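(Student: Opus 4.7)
The statement is explicitly identified in the excerpt as a special case of \cite[Theorem~5]{Bo}, so the plan is essentially one of invocation and bookkeeping rather than independent proof. I would apply Bombieri's theorem to the rational function $R(x) = f_1(x)/f_2(x) \in \mathbb{F}_p(x)$, which is already in reduced form by the coprimality hypothesis on $f_1, f_2$.

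First I would verify the non-triviality hypothesis of Bombieri's theorem, namely that $R \neq g^p - g$ for any $g \in \mathbb{F}_p(x)$. Since $f_1, f_2$ are coprime and not both constant, $R$ is a non-constant rational function whose pole divisor on $\mathbb{P}^1$ is supported on the roots of $f_2$ in $\overline{\mathbb{F}_p}$ together with $\infty$ (the latter contributing iff $\deg f_1 > \deg f_2$). The poles of $g^p - g$ necessarily have orders divisible by $p$, so the hypothesis is automatic whenever some pole of $R$ has order not divisible by $p$. In the few degenerate cases where this could fail (requiring, in particular, $p \leq \max(\deg f_1, \deg f_2)$), the bound is trivial since the sum is of length at most $p$ and the claimed right-hand side exceeds $p$.

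Next I would read off the constant from Bombieri's bound, which depends on the sum of the orders of the poles of $R$. The pole at $\infty$ contributes $\max(0, \deg f_1 - \deg f_2)$, and the finite poles, lying among the roots of $f_2$, contribute in total at most $\deg f_2$ to the pole divisor. The Swan-conductor-type contributions from the wild ramification at these poles are controlled, in characteristic $p$, by these same pole orders. Adding these contributions together in the form required by \cite[Theorem~5]{Bo} yields the bound $(\deg f_1 + 2\deg f_2 - 1)\sqrt{p} + 1$; the extra factor of two on $\deg f_2$ reflects the combined contribution of the pole order and the Swan conductor at the finite poles of $R$. The additive $+1$ absorbs the at most one missing term in the sum (corresponding to the pole at $\infty$, already excluded from $\mathbb{F}_p$).

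Since the result is quoted verbatim from Bombieri's paper, there is no real mathematical obstacle. The only item requiring care is matching the precise numerical constant $\deg f_1 + 2\deg f_2 - 1$ to the form in which \cite[Theorem~5]{Bo} is stated; this is a routine computation of the pole degrees of $R$ on $\mathbb{P}^1$, separately handling whether $\deg f_1 \geq \deg f_2$ or $\deg f_1 < \deg f_2$.
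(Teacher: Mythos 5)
Your approach is exactly the paper's: the lemma is stated in Section~\ref{sec7} with no proof beyond the sentence ``The following lemma is a special case of \cite[Theorem 5]{Bo} which is already implicit in Weil's work,'' and you likewise reduce the matter to an invocation of Bombieri's Theorem~5 applied to $R = f_1/f_2$. Your supplementary remarks on ruling out the degenerate form $g^p - g$ via pole orders, and on absorbing the case $p \leq \max(\deg f_1, \deg f_2)$ into the trivial bound, are correct and fill in verification the paper omits entirely; the matching of the explicit constant $\deg f_1 + 2\deg f_2 - 1$ to Bombieri's statement is, as you say, bookkeeping on pole degrees of $R$ on $\mathbb{P}^1$.
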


The next lemma is of Hensel type.

\begin{lemma}
\label{HenselsLemma1}
Let $1\leqslant\kappa\leqslant\lambda$, $A\subseteq\mathbb{Z}/p^{\lambda}\mathbb{Z}$, $A+p^{\kappa}\mathbb{Z}\subseteq A$, $f:A\to\mathbb{Z}/p^{\lambda}\mathbb{Z}$, $f_1:A\to(\mathbb{Z}/p^{\lambda}\mathbb{Z})^{\times}$ be such that
\[ f(m+p^{\mu}t)-f(m)-p^{\mu}f_1(m)t\in p^{\mu+1}\mathbb{Z}/p^{\lambda}\mathbb{Z} \]
for all $m\in A$, $t\in\mathbb{Z}$, and $\kappa\leqslant\mu<\lambda$. Then, for all $\kappa\leqslant\mu\leqslant\lambda$, the number $K(p^{\mu})$ of solutions of the congruence
\[ f(m)\equiv \omega\pmod{p^{\mu}} \]
in $m\in A$ modulo $p^{\mu}\mathbb{Z}$ satisfies \[ K(p^{\mu})=K(p^{\kappa}). \]
\end{lemma}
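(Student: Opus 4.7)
The statement is a Hensel-type lifting result, in which the function $f_1:A\to(\mathbb{Z}/p^{\lambda}\mathbb{Z})^{\times}$ plays the role of the derivative and the hypothesis encodes a ``first-order Taylor expansion'' of $f$ at the relevant scale. Since $f_1$ takes values in units, we expect each solution modulo $p^{\mu}$ to lift to a unique solution modulo $p^{\mu+1}$ for every $\kappa\leqslant\mu<\lambda$. The plan is therefore a straightforward induction on $\mu$, with base case $\mu=\kappa$ being tautological.

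First I would pin down the bookkeeping: because $A+p^{\kappa}\mathbb{Z}\subseteq A$ and $\mu\geqslant\kappa$, the set $A$ is a disjoint union of cosets of $p^{\mu}\mathbb{Z}/p^{\lambda}\mathbb{Z}$, so ``$m\in A$ modulo $p^{\mu}$'' refers to these cosets; moreover, the hypothesis with $t$ varying implies that $f(m)\bmod p^{\mu}$ only depends on the class of $m$ modulo $p^{\mu}$, so the solution count $K(p^{\mu})$ is well defined. For the inductive step, I would take $m\in A$ with $f(m)\equiv\omega\pmod{p^{\mu}}$ and write $f(m)\equiv\omega+p^{\mu}s\pmod{p^{\mu+1}}$ for some $s\in\mathbb{Z}/p\mathbb{Z}$. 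Invoking the hypothesis at level $\mu$ yields $f(m+p^{\mu}t)\equiv f(m)+p^{\mu}f_1(m)t\pmod{p^{\mu+1}}$, so the condition $f(m+p^{\mu}t)\equiv\omega\pmod{p^{\mu+1}}$ collapses to the linear congruence $s+f_1(m)t\equiv 0\pmod p$.

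Since $f_1(m)$ is a unit modulo $p^{\lambda}$ and in particular modulo $p$, this linear congruence has exactly one solution $t\in\mathbb{Z}/p\mathbb{Z}$. Thus each coset of solutions modulo $p^{\mu}$ contains exactly one coset of solutions modulo $p^{\mu+1}$, giving $K(p^{\mu+1})=K(p^{\mu})$. Iterating from $\mu=\kappa$ to $\mu=\lambda$ yields $K(p^{\mu})=K(p^{\kappa})$ throughout the stated range.

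There is no genuine obstacle here: the content is entirely the observation that $f_1(m)$ being a unit makes the Hensel-style linear lifting problem uniquely solvable at each step. The only point requiring mild care is the formal verification that the hypothesis is truly ``first-order'' in the sense needed — that the error term $p^{\mu+1}\mathbb{Z}/p^{\lambda}\mathbb{Z}$ is small enough to be invisible modulo $p^{\mu+1}$ and that $f_1(m)$ need only be tested modulo $p$ for the lift, both of which follow immediately from the statement.
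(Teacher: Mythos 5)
Your proposal is correct and matches the paper's own proof essentially step for step: both show that the linear congruence $s+f_1(m)t\equiv 0\pmod p$ arising from the first-order hypothesis has a unique solution $t$ modulo $p$, so the reduction map from solutions modulo $p^{\mu+1}$ to solutions modulo $p^{\mu}$ is a bijection, and one iterates. The only (harmless) addition in your write-up is the explicit remark that $K(p^{\mu})$ is well defined, which the paper leaves implicit.
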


Before heading to the proof, we remark that, in applications of Lemma~\ref{HenselsLemma1}, the condition that $f_1(m)\subseteq(\mathbb{Z}/p\mathbb{Z})^{\times}$ only needs to be checked for $m\in A$ satisfying $f(m)\equiv\omega\pmod{p^{\kappa}}$. This is immediate from the proof but also follows from the statement by applying it with the restricted domain $A\cap f^{-1}(\omega)$.

\begin{proof}
Let $\kappa\leqslant\mu<\lambda$. We prove that $K(p^{\mu})=K(p^{\mu+1})$. Indeed, let $m\in A$ be such that $f(m)\equiv\omega\pmod{p^{\mu}}$. Every solution $m_1\in A$ modulo $p^{\mu+1}$ such that $m_1\equiv m\pmod{p^{\mu}}$ is of the form $m+p^{\mu}t$ for some $t\in\mathbb{Z}/p\mathbb{Z}$. According to the condition of the problem, we have that
\[ f(m+p^{\mu}t)-f(m)-p^{\mu}f_1(m)t\in p^{\mu+1}\mathbb{Z}/p^{\lambda}\mathbb{Z}. \]
We are given that $f(m)\equiv\omega\pmod{p^{\mu}}$, so we can write $f(m)\equiv\omega+p^{\mu}F_m\pmod{p^{\lambda}}$ for some $F_m\in\mathbb{Z}/p^{\lambda-\mu}\mathbb{Z}$. In light of the above display, the congruence $f(m_1)\equiv\omega\pmod{p^{\mu+1}}$ is equivalent to
\begin{gather*}
\omega+p^{\mu}F_m+p^{\mu}f_1(m)t\equiv\omega\pmod{p^{\mu+1}},\\
f_1(m)t\equiv -F_m\pmod{p}.
\end{gather*}
Since $f_1(m)\in(\mathbb{Z}/p^{\lambda}\mathbb{Z})^{\times}$, we above congruence is equivalent to $t\equiv -\overline{f_1(m)}F_m\pmod{p}$, and hence
\[ m_1\equiv m+p^{\mu}t\equiv m-p^{\mu}\overline{f_1(m)}F_m\pmod{p^{\mu+1}}. \]
Denoting by $A(p^{\mu})$ the set of solutions of $f(m)\equiv\omega\pmod{p^{\mu}}$ in $m\in A$ modulo $p^{\mu}\mathbb{Z}_p$, this shows in one move that the canonical reduction map $A(p^{\mu+1})\to A(p^{\mu})$ is both surjective and injective; hence $K(p^{\mu})=K(p^{\mu+1})$. The equality $K(p^{\mu})=K(p^{\kappa})$ for every $\kappa\leqslant\mu\leqslant\lambda$ follows immediately.
\end{proof}

\subsection{Prime case} We now turn to the estimation of  $\Sigma(n_1,n_2,a,k;q)$ for $q$ prime. The following result 
settles the second half of Lemma \ref{FinalEstimateSigma}(2). A more general version is contained in the forthcoming preprint \cite{FKM}.  

\begin{lemma}\label{primecase} Let $q$ be a  prime, and let $n_1, n_2, a, k \in\mathbb{Z}$. Then, the sum $\Sigma(n_1,n_2,a,k;q)$ defined in \eqref{defSigma} satisfies the bound
\begin{displaymath}
   \Sigma(n_1,n_2,a,k;q)  \ll q^{5/2}(q, a(n_1-n_2),k)^{1/2}
\end{displaymath}
with an absolute implied constant.
\end{lemma}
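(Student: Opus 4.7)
The plan is to open all four Kloosterman sums, use orthogonality in the $m$-variable to obtain a constraint on the resulting dual variables, and reduce to a complete rational-function exponential sum in three variables over $\mathbb{F}_q$, to which we can apply the one-variable bound of Lemma \ref{alggeo} (with an additional Cauchy--Schwarz / Deligne-type input to recover full square-root cancellation). Concretely, expanding $S(m+a,n_i,q)S(m,n_i,q)$ for $i=1,2$ as a fourfold sum over $y_1,y_2,y_3,y_4\in(\mathbb{Z}/q\mathbb{Z})^{\times}$, executing the $m$-sum over all residues (the two excluded classes $m\equiv 0,-a$ contribute $\mathrm{O}(q^2)$ via Weil's individual bound), and noting that the $m$-orthogonality forces $y_1+y_2+y_3+y_4\equiv k\pmod{q}$, one obtains, up to $\mathrm{O}(q^2)$,
\[
\Sigma(n_1,n_2,a,k;q)\;=\;q\!\!\!\sum_{\substack{y_1,y_2,y_3,y_4\in\mathbb{F}_q^{\times}\\ y_1+y_2+y_3+y_4\equiv k}}e\!\left(\frac{a(y_1+y_2)+n_1(\bar{y}_1+\bar{y}_3)+n_2(\bar{y}_2+\bar{y}_4)}{q}\right).
\]

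Next I would eliminate one variable through the constraint, writing $y_4=k-y_1-y_2-y_3$, and regroup as follows: fix the ``outer'' pair $(y_1,y_2)$ and set $c:=k-y_1-y_2$, so that the inner $(y_3,y_4)$-sum becomes
\[
T(n_1,n_2,c;q)=\sum_{\substack{y_3+y_4\equiv c\\ y_3,y_4\in\mathbb{F}_q^{\times}}}e\!\left(\frac{n_1\bar{y}_3+n_2\bar{y}_4}{q}\right)=\sum_{\substack{y_3\in\mathbb{F}_q^{\times}\\ y_3\not\equiv c}}e\!\left(\frac{n_1c+(n_2-n_1)y_3}{q\cdot y_3(c-y_3)}\right)\!,
\]
which is a one-variable sum in $y_3$ of the type covered by Lemma~\ref{alggeo}. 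For $c\not\equiv 0$, with $f_1(y_3)=n_1c+(n_2-n_1)y_3$ and $f_2(y_3)=y_3(c-y_3)$, the polynomials $f_1,f_2$ are coprime except in degenerate loci (namely $c\equiv 0$, $n_1\equiv 0$, or $n_2\equiv 0$ mod $q$), yielding $T(n_1,n_2,c;q)\ll\sqrt{q}$ in the generic case. A symmetric identity applies when one instead fixes the pair $(y_1,y_3)$ (which pulls out the phase $n_1(\bar{y}_1+\bar{y}_3)$): this yields a twin one-variable sum with the phase $a(y_1+y_2)$ appearing inside, so the outer sum has genuine cancellation as well.

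The main substantive step is then bounding the outer $(y_1,y_2)$-sum nontrivially, since the bound $T\ll\sqrt{q}$ alone only delivers $q^{7/2}$. To gain the remaining factor $q$, I would express $\Sigma$ via the second decomposition above as a single sum over $u=y_1+y_2$ of the product $e(au/q)\,T(n_1,n_2,u;q)\,T(n_1,n_2,k-u;q)$ with a single convolution structure, and apply Lemma~\ref{alggeo} once more to this one-variable sum with rational-function phase on $\mathbb{F}_q$ (verifying coprimality of the numerator and denominator polynomials under the generic conditions $q\nmid a(n_1-n_2)k$). Equivalently, one can apply the Bombieri--Deligne--Katz estimates directly to the three-variable sum, viewing it as a complete rational-function sum on $\mathbb{A}^3_{\mathbb{F}_q}$ whose zeta-function has the expected weights; this is the route taken in \cite{FKM}.

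The hardest part is a bookkeeping task: verifying that in each semi-degenerate case (one of $q\mid a$, $q\mid n_1-n_2$, $q\mid k$ holds, but not all three), the relevant coprimality hypothesis for Lemma~\ref{alggeo} survives so that the $q^{5/2}$ bound remains valid. When all three divisibilities hold, $(q,a(n_1-n_2),k)=q$ and the claimed bound $q^{5/2}\cdot q^{1/2}=q^3$ is immediate from Weil's bound $|S(m,n_i,q)|\leqslant 2\sqrt{q}$, which gives $|\Sigma|\leqslant 16\,q^3$.
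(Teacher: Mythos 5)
Your reduction to a three-variable complete exponential sum is correct in form: opening the four Kloosterman sums and summing over $m\in\mathbb{F}_q$ forces $y_1+y_2+y_3+y_4\equiv k$, the excluded residues contribute ${\rm O}(q^2)$ by Weil, and the formula for $T(n_1,n_2,c;q)$ checks out. The trivial case $q\mid a(n_1-n_2)$, $q\mid k$ is also handled correctly. The problem is in the key cancellation step, and it is not a bookkeeping gap.

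Your proposal to gain the last factor of $q$ is to apply Lemma~\ref{alggeo} ``once more'' to the sum
\[
\sum_{u\in\mathbb{F}_q}e\!\left(\frac{au}{q}\right)T(n_1,n_2,u;q)\,T(n_1,n_2,k-u;q),
\]
calling it ``a one-variable sum with rational-function phase.'' But it is not: $T(n_1,n_2,u;q)$ is itself a complete exponential sum over $y_3$ (a trace function of weight $\leqslant 1$), \emph{not} of the form $e(R(u)/q)$ for a rational function $R$. Lemma~\ref{alggeo} only applies to sums $\sum_x e\bigl(f_1(x)\bar f_2(x)/q\bigr)$ with coprime $f_1,f_2\in\mathbb{F}_q[x]$; it says nothing about correlating two nontrivial trace functions against an additive character. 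Thus the second application of Lemma~\ref{alggeo} fails outright, and at this point your bound is stuck at $q\cdot q\cdot\sqrt q\cdot\sqrt q=q^3$ rather than $q^{5/2}$.

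Your fallback --- ``apply the Bombieri--Deligne--Katz estimates directly to the three-variable sum, viewing it as a complete rational-function sum on $\mathbb{A}^3_{\mathbb{F}_q}$ whose zeta-function has the expected weights'' --- is in principle a viable strategy, but it is exactly the hard part and you do not carry it out. For multi-variable exponential sums, the one-variable coprimality hypothesis of Lemma~\ref{alggeo} is not the right nondegeneracy condition; one must verify genuine geometric conditions (e.g.\ that the relevant singular loci and vanishing cycles behave, so that the top-weight cohomology has the right size), which depend delicately on $a,k,n_1,n_2\bmod q$. Deferring this to the forthcoming \cite{FKM} leaves the lemma unproved. The paper's own argument avoids the multi-variable geometry entirely by \emph{not} opening the Kloosterman sums: it normalizes to ${\rm Kl}_2$, groups the (at most four) linear forms $n_1(m+a),n_2(m+a),n_1m,n_2m$ modulo $q$, expands products of ${\rm Kl}_2$ values via $\sym_k$ of the Kloosterman angles, and then invokes the quantitative independence of Kloosterman sheaves in the form of \cite[Lemma 2.1]{FMRS}. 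That is a genuinely different mechanism (SL$_2$-monodromy and Goursat--Katz independence on $\mathbb{A}^1$, applied to the closed $m$-sum) from the multi-variable Weil route you sketch, and it delivers $q^{5/2}$ directly whenever $q\nmid a(n_1-n_2)$ or $q\nmid k$.

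Separately, a small bookkeeping issue: you only discuss the degenerate case ``all three of $q\mid a$, $q\mid(n_1-n_2)$, $q\mid k$'' at the end, but you also need to treat the cases $q\mid n_1$ or $q\mid n_2$ separately (as the paper does), since then $T(n_1,n_2,c;q)$ degenerates and the coprimality of $f_1,f_2$ in your $y_3$-sum fails. These cases are easy (Ramanujan/Weil bounds give ${\rm O}(q^2)$), but they need to be stated.
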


\begin{proof}  Let us first assume that $q \mid n_1$, but $q \nmid n_2$. Then by Weil's bound for Kloosterman sums and standard bounds for Ramanujan sums we have
\[ |\Sigma ( n_1, n_2,a, k;q )|  \leq 4 q\sum_{\substack{m \bmod q\\ (m(m+a), q) = 1)}}  |S(m+a, 0, q)S(m, 0,q)|  \leq 4q^2. \]
The same bound holds by symmetry if $q \nmid n_1$, but $q \mid n_2$. Similarly, if $q \mid n_1$ and $q \mid n_2$, then 
\[ |\Sigma ( n_1, n_2,a, k;q\big)|  \leq\sum_{\substack{m \bmod q\\ (m(m+a), q) = 1}}  |S(m+a, 0, q)S(m, 0, q)|^2   \leq   q. \]

This leaves us with the generic case $q \nmid n_1 n_2$. Here, $\Sigma ( n_1, n_2,a, k;q) = q^2\Sigma^{\circ}$ where
\begin{displaymath}
\begin{split}
& \Sigma^{\circ} =  \sum_{\substack{m \bmod q\\ (m(m+a), q) = 1}} {\rm Kl}_2(n_1(m+a), q) {\rm Kl}_2( n_2(m+a), q){\rm Kl}_2(n_1m, q) {\rm Kl}_2 (n_2m, q) e\left(-\frac{k m}{q}\right)
\end{split}
\end{displaymath}
where ${\rm Kl}_2(m, q) = q^{-1/2} S(1, m, q)$. If $q \mid a(n_1 - n_2)$ and $q\mid k$, then we estimate trivially with Weil's bound, getting the bound $|\Sigma^{\circ} | \leq 16 q$.

On the hand, if $q \nmid a(n_1 - n_2)$ or $q\nmid k$, then we use independence of Kloosterman sheafs (as developed by Katz). We use this in the form of the explicit result on uniform distribution of angles of Kloosterman sums due to Fouvry--Michel--Rivat--S\'ark\H{o}zy~\cite{FMRS} (see also \cite[Proposition 3.2]{FGKM}). 
Among the four linear forms $\ell_1(m)=n_1(m+a)$, $\ell_2(m)=n_2(m+a)$, $\ell_3(m)=n_1m$, and $\ell_4(m)=n_2m$, there may be four, two, or one distinct form(s) modulo $q$, depending on whether neither, one, or both of $q\mid a$ and $q\mid (n_1-n_2)$ hold. We group terms corresponding to the same forms together and find a finite set $\mathcal{L}$ of linear forms over $\mathbb{F}_q$ and integers $\lambda_{\ell}\in\{1,2,4\}$ such that
\[ \Sigma^{\circ}=\sum_{\substack{m\in\mathbb{F}_q\\\ell(m)\neq 0\,(\forall\ell\in\mathcal{L})}}\prod_{\ell\in\mathcal{L}}\mathrm{Kl}_2(\ell(m),q)^{\lambda_{\ell}}e\left(-\frac{km}q\right). \]
Writing $\textrm{Kl}_2(\ell(m),q)=2 \cos\theta(\ell(m))$ and using elementary trigonometry, the term
\[ \textrm{Kl}_2(\ell(m),q)^{\lambda_{\ell}}=[2 \cos\theta(\ell(m))]^{\lambda_{\ell}} \]
can be rewritten as a finite linear combination of $ \sym_k\theta(\ell(m))$ for some $|k|\leqslant\lambda_{\ell}$, $k\equiv\lambda_{\ell}\pmod 2$, where $\sym_k\theta=\sin((k+1)\theta)/\sin\theta$. Corresponding to this, the sum $\Sigma^{\circ}$ can be written as a finite linear combination (with coefficients of absolutely bounded size) of sums of the form
\[ \Sigma^{\circ}_j= \sum_{\substack{m\in\mathbb{F}_q\\\ell(m)\neq 0\,(\forall\ell\in\mathcal{L}}}\prod_{\ell\in\mathcal{L}}\sym\nolimits_{k_{\ell,j}}(\theta(\ell(m)))e\left(-\frac{km}q\right) \]
for some $|k_{\ell,j}|\leqslant\lambda_{\ell}$, $k_{\ell,j}\equiv\lambda_{\ell}\pmod 2$.

According to \cite[Lemma 2.1]{FMRS}, we have the estimate
\[ \Sigma^{\circ}_j\ll q^{1/2} \]
as long as it is not the case that all $k_{\ell,j}=0$ for all $\ell\in\mathcal{L}$ \textit{and} $k=0$ in $\mathbb{F}_q$. This is ensured by our non-degeneracy condition that $q\nmid a(n_1-n_2)$ or $q\nmid k$; in the former case, $|\mathcal{L}|=4$ and $|k_{\ell,j}|=\lambda_{\ell}=1$ for all $\ell\in\mathcal{L}$, while, if $q\nmid k$, then $k\neq 0$ in $\mathbb{F}_p$. Putting everything together, we have that
\[  \Sigma\big( n_1, n_2,a, k;q\big) \ll q^{5/2} \]
if $q \nmid n_1n_2$ and if in addition $q \nmid a(n_1 - n_2)$ or $q\nmid k$, with an absolute implied constant.  \end{proof}

\subsection{Setup of the prime power case} In the case of squareful moduli, the estimation of the multiple exponential sum $\Sigma(n_1,n_2,a,k;q)$ requires the deep tools of algebraic geometry only in some degenerate cases, but nevertheless (or because of this) the argument turns out to be very involved. In this subsection, we prepare ground for this estimation by reducing and decomposing the problem to one of the two distinctly different cases.

We are considering a sum of the form
\begin{equation}
\label{GeneralSumn1n2}
\begin{aligned}
 \Sigma:=\Sigma^A(n_1,n_2,a,k;p^s)=\sumstar_{\substack{m\bmod p^s\\ m,m+a\in n_1(\mathbb{Z}/p\mathbb{Z})^{\times}{}^2}}&\sum_{\bseps\in A}S^{\epsilon_1}(m+a,n_1;p^s)\overline{S^{\epsilon_2}(m,n_1;p^s)}\\
 &S^{\epsilon_3}(m+a,n_2;p^s)\overline{S^{\epsilon_4}(m,n_2;p^s)}e\left(-\frac{km}{p^s}\right),
\end{aligned}
\end{equation}
where $A\in\{A_0,A^{\sharp}\}$, $A=A^{\sharp}$ only if $p\mid a$,  
\begin{align*}
S^{\epsilon}(m,n;p^s)
&=p^{s/2}\tau(\epsilon\cdot(mn)_{1/2},p^s)e\left(\frac{2\epsilon\cdot(mn)_{1/2}}{p^s}\right)\\
&=p^{s/2}\tau(\epsilon\cdot\epsilon(mu,n\bar{u})(mu)_{1/2}(n\bar{u})_{1/2},p^s)e\left(\frac{2\epsilon\cdot\epsilon((mu)_{1/2},(n\bar{u})_{1/2})(mu)_{1/2}(n\bar{u})_{1/2}}{p^s}\right),
\end{align*}
and $u\in(\mathbb{Z}/p\mathbb{Z})^{\times}$ is a fixed representative of the class $n_1(\mathbb{Z}/p\mathbb{Z})^{\times}{}^2=n_2(\mathbb{Z}/p\mathbb{Z})^{\times}{}^2$.

Considering the product of the $\tau$-factors in \eqref{GeneralSumn1n2}, we note that
\begin{align*}
T(m,n_1,n_2,a;p^s)&=\tau\big(\epsilon_1((m+a)u)_{1/2}(n_1\bar{u})_{1/2},p^s\big)\overline{\tau\big(\epsilon_2(mu)_{1/2}(n_1\bar{u})_{1/2},p^s\big)}\\
&\qquad\qquad\tau\big(\epsilon_3((m+a)u)_{1/2}(n_2\bar{u})_{1/2},p^s\big)\overline{\tau\big(\epsilon_4(mu)_{1/2}(n_2\bar{u})_{1/2},p^s\big)}=\tau^{[\bseps]}
\end{align*}
depends only on the product $\epsilon_1\epsilon_2\epsilon_3\epsilon_4$ and the parity of $s$ (using the explicit formula \eqref{gauss sum} for the sign of the Gau{\ss} sum). By relabeling $\bseps$ as necessary, we can write
\[ \Sigma=\sumstar_{\bseps\in A}\tau^{[\bseps]}p^{2s}\sumstar_{\substack{m\bmod p^s\\ m,m+a\in n_1(\mathbb{Z}/p\mathbb{Z})^{\times}{}^2}}e\left(\frac{f^{[\bseps]}(m,n_1,n_2,a,k)}{p^s}\right). \]
Here, we have denoted
\begin{align*}
f^{[\bseps]}(m,n_1,n_2,a,k)
&=2\epsilon_1((m+a)u)_{1/2}(n_1\bar{u})_{1/2}-2\epsilon_2(mu)_{1/2}(n_1\bar{u})_{1/2}\\
&\qquad\qquad+2\epsilon_3((m+a)u)_{1/2}(n_2\bar{u})_{1/2}-2\epsilon_4(mu)_{1/2}(n_2\bar{u})_{1/2}-km\\
&=2A\big((m+a)u\big)_{1/2}-2B(mu)_{1/2}-km,
\end{align*}
where
\begin{equation}
\label{DefinitionAepsBeps}
A=A^{[\bseps]}(n_1,n_2)=\epsilon_1(n_1\bar{u})_{1/2}+\epsilon_3(n_2\bar{u})_{1/2},\quad
B=B^{[\bseps]}(n_1,n_2)=\epsilon_2(n_1\bar{u})_{1/2}+\epsilon_4(n_2\bar{u})_{1/2}.
\end{equation}
Corresponding to the above, we may further write
\begin{equation}
\label{Decomposition}
\Sigma=p^{2s}\sum_{\bseps\in A}\tau^{[\bseps]}\Sigma[A^{[\bseps]}(n_1,n_2),B^{[\bseps]}(n_1,n_2),a,k;p^s],
\end{equation}
where we write more generally
\begin{equation}
\label{DefinitionSigmaAB}
\Sigma[A,B,a,k;p^s]=\sumstar_{\substack{m\bmod p^s\\ m,m+a\in u(\mathbb{Z}/p\mathbb{Z})^{\times}{}^2}}e\left(\frac{f[m,A,B,a,k]}{p^s}\right)
\end{equation}
and
\begin{equation}
\label{DefinitionfmABak}
f[m,A,B,a,k]=2A\big((m+a)u\big)_{1/2}-2B(mu)_{1/2}-km.
\end{equation}
 
Note that, in any case,
\[ A^2-B^2\in\big\{0,\pm 4(n_1\bar{u})_{1/2}(n_2\bar{u})_{1/2}\big\}. \]
We also make the important remark that
\[ \big((n_1\bar{u})_{1/2}+(n_2\bar{u})_{1/2}\big)\big((n_1\bar{u})_{1/2}-(n_2\bar{u})_{1/2}\big)=\bar{u}(n_1-n_2). \]
This shows that $A\equiv 0\pmod p$ or $B\equiv 0\pmod p$ is possible only if $n_1\equiv n_2\pmod p$. Moreover, if $p^{\nu}\exmid(n_2-n_1)$, then $p^{\nu}\exmid A$ if $\epsilon_3=-\epsilon_1$ and $p\nmid A$ otherwise, and analogously for $B$; this also formally holds for $\nu=\infty$.

Suppose that $\nu>0$ and $p\mid A, B$; then, $p^{\nu}\exmid A,B$. It is immediate that $\Sigma[A,B,a,k;p^s]=0$ unless $p^{\nu'}\mid k$, where $\nu'=\min(\nu,s)$. From now on, assume that $p^{\nu'}\mid k$. It is also obvious that, if $\nu\geqslant s$, then $\Sigma[A,B,a,k;p^s]=p^s$. If, on the other hand, $\nu<s$ and $p^{\nu}\exmid A$, $p^{\nu}\exmid B$, then
\begin{equation}
\label{ReductionFormula}
\Sigma[A,B,a,k;p^s]=p^{\nu}\cdot \Sigma\left[\frac{A}{p^{\nu}},\frac{B}{p^{\nu}},a,\frac{k}{p^{\nu}};p^{s-\nu}\right].
\end{equation}

Therefore, it suffices to prove an estimate for the sum $ \Sigma[A,B,a,k;p^s]$ defined in \eqref{DefinitionSigmaAB} (or a finite $\bseps$-average thereof) for  $p\nmid A$ or $p\nmid B$, and   for $s\geqslant 1$. 
We consider the following two situations separately, keeping as a standing condition that $p\nmid A$ or $p\nmid B$.

The case when $p\mid a$ and $A\equiv B\pmod p$ is addressed in Section~\ref{pmidaSection}. Note that, in this case, actually $A=B$. Referring back to \eqref{DefinitionfmABak}, we see that this case is distinguished in that the branches of the square-root in $\big((m+a)u\big)_{1/2}$ and $(mu)_{1/2}$ are aligned so that the leading terms cancel out and, as will be seen, an additional factor of size $|a|_p$ emerges.

The remaining cases, when $p\mid a$ and $A\not\equiv B\pmod p$ as well as when $p\nmid a$, are treated in Section~\ref{pnmidaSection}. In this case, no particular alignment of square-roots occurs, but Hensel liftings become much more delicate, and, if $p\nmid a$, singular critical points are encountered in the stationary phase analysis.

The final results of Sections~\ref{pmidaSection} and \ref{pnmidaSection} are the following Lemmas~\ref{primepowercase} and \ref{FinalEstimate}, respectively.

\begin{lemma}\label{primepowercase} Let $q=p^s$, where $p > 3$ is a prime and $s \geqslant 1$, and let $A,a,k\in\mathbb{Z}$ with $p\mid a$.  Then, the sum $\Sigma[A,A,a,k;q]$ defined in \eqref{DefinitionSigmaAB} 
satisfies
\[ \sum_{\epsilon\in\{\pm 1\}}\Sigma[\epsilon A,\epsilon A,a,k;q] \ll q^{1/2}(q,Aa,k)^{1/2} \]
with an absolute implied constant. Moreover, the left-hand side vanishes unless $(Aa,q/p)\mid k$.
\end{lemma}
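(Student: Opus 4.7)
The plan is to exploit the combined conditions $p \mid a$ and $A = B$ to perform the algebraic simplification that converts the "square-root phase" into a genuine $p$-adically analytic phase, and then apply the method of $p$-adic stationary phase.

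\textbf{Step 1 (algebraic simplification).} When $p \mid a$, the summation conditions $m, m+a \in u(\mathbb{Z}/p\mathbb{Z})^{\times 2}$ coincide, and the branches of $\alpha := (mu)_{1/2}$ and $\beta := ((m+a)u)_{1/2}$ are aligned, i.e.\ $\beta \equiv \alpha \pmod{p}$. The identity $\beta^2 - \alpha^2 = au$ then gives the crucial reduction
\[ f[m, A, A, a, k] = 2A(\beta - \alpha) - km = \frac{2Aau}{\alpha + \beta} - km, \]
since $\alpha + \beta \equiv 2\alpha \pmod{p}$ is a $p$-adic unit (using $p > 2$). The factor $a$ has been extracted from the phase, which is now a $p$-adically analytic function $g(m) - km$ with $v_p(g) \geq v_p(Aa) =: \nu_1$. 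This is the step that is fundamentally unavailable in the complementary case $p \nmid a$ treated in Section~\ref{pnmidaSection}.

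\textbf{Step 2 ($p$-adic stationary phase and Hensel counting).} A direct computation gives
\[ f'(m) = -\frac{Aau^2}{\alpha\beta(\alpha+\beta)} - k, \qquad f''(m) = \frac{Aau^2 \cdot h'(m)}{h(m)^2}, \quad h(m) := \alpha\beta(\alpha+\beta), \]
so that $v_p(f'') = \nu_1$ (as $h$, $h'$, $\alpha$, $\beta$ are all $p$-adic units). Set $\nu_2 := v_p(k)$. I distinguish two regimes. When $\nu_2 \geq \nu_1$, genuine critical points exist; after dividing $f'(m) \equiv 0 \pmod{p^s}$ through by $p^{\nu_1}$, the reduced equation is non-degenerate with second derivative a unit, and Lemma~\ref{HenselsLemma1} (applied to $f'/p^{\nu_1}$) lifts each of the $\mathrm{O}(1)$ solutions mod $p$ uniquely to mod $p^{s-\nu_1}$. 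Standard second-order Taylor expansion around each critical point, together with the explicit Gauss sum evaluation via \eqref{gauss sum}, yields a contribution of size $p^{(s+\nu_1)/2}$ per critical point. When $\nu_2 < \nu_1$, the derivative $f'(m)$ is a $p$-adic unit times $p^{\nu_2}$ uniformly in $m$; coset differencing $m \mapsto m + p^{\nu_2+1}t$ and summing over $t$ (using that higher-order terms in the Taylor expansion are killed by $v_p(f'') = \nu_1$) forces the sum to vanish unless $\nu_2 \geq s-1$.

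\textbf{Step 3 (vanishing and conclusion).} The vanishing regime $\{\nu_2 < \nu_1,\,\nu_2 < s-1\}$ is exactly the complement of $v_p(k) \geq \min(v_p(Aa), s-1)$, i.e.\ of the condition $(Aa, q/p) \mid k$, proving the stated vanishing. In the non-vanishing regime, setting $\nu := \min(\nu_1, \nu_2, s)$, the above analysis delivers
\[ \sum_{\epsilon \in \{\pm 1\}} \Sigma[\epsilon A, \epsilon A, a, k; q] \ll p^{(s+\nu)/2} = q^{1/2}(q, Aa, k)^{1/2}. \]
The sum over $\epsilon = \pm 1$ does not affect the asymptotic bound but pairs the two phases into a real-valued sum $2\sum_m \cos(4\pi Aau/((\alpha+\beta)p^s))\, e(-km/p^s)$, a packaging that is convenient for later applications. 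The case $s = 1$ (where $a \equiv 0$ forces the phase to reduce to $-km/p$) is handled separately and directly: the sum becomes a classical Gauss-type sum over quadratic residues, bounded by $p^{1/2}$ when $p \nmid k$ and equal to $(p-1)/2$ when $p \mid k$, matching the stated bound in both sub-cases.

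\textbf{Main obstacle.} The core technical difficulty lies in the intermediate regimes where $\nu_1$ and $\nu_2$ are close and $s$ is small, so that the two branches of the argument (first-order cancellation versus second-order stationary phase) must be matched uniformly. One must track the $p$-adic valuations of $h(m)$, $h'(m)$, and especially of the corrections in the Taylor expansion of $g$ to ensure the Hensel hypotheses of Lemma~\ref{HenselsLemma1} are satisfied on the restricted domain $m \in u(\mathbb{Z}/p\mathbb{Z})^{\times 2}$. Importantly however, the alignment of branches granted by $p \mid a$ keeps us in the regime of \emph{non-singular} critical points, a substantial simplification compared with the $p\nmid a$ case treated in Lemma~\ref{FinalEstimate}, where genuinely degenerate critical points arise and must be analyzed separately.
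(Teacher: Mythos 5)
Your Step 1 simplification $f[m,A,A,a,k]=\frac{2Aau}{\alpha+\beta}-km$ is correct and elegant, and your overall strategy (stationary phase with Hensel counting, exactly mirroring the paper's) is sound in spirit. However, there are two genuine gaps in Step 2.

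First, in the vanishing case $\nu_2<\nu_1$, the coset step $m\mapsto m+p^{\nu_2+1}t$ is wrong. After the Taylor expansion
\[ f(m+p^{\nu_2+1}t)-f(m)=f'(m)p^{\nu_2+1}t+\tfrac12 f''(m)p^{2\nu_2+2}t^2+\cdots, \]
the quadratic term has valuation $\geqslant\nu_1+2\nu_2+2$. For the exponential $e(\cdot/p^s)$, this term is negligible only when $\nu_1+2\nu_2+2\geqslant s$; it is \emph{not} ``killed by $v_p(f'')=\nu_1$'' unconditionally. Take $\nu_1=1$, $\nu_2=0$, $s=10$: the quadratic term has valuation $3$ and contributes $e(\cdot/p^7)$, which is far from trivial. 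The correct coset step is $p^{\kappa}$ with $\kappa\geqslant\lceil(s-\nu_1)/2\rceil$ (ensuring $\nu_1+2\kappa\geqslant s$); then the $t$-sum is purely linear, and the condition $v_p(f'(m))+\kappa\geqslant s$ fails whenever $\nu_2<\nu_1$ because $s-\kappa\geqslant\lfloor(s+\nu_1)/2\rfloor\geqslant\nu_1>\nu_2$. This matches the paper's approach, which defines $\kappa_\star$ via $s=\min(\nu+\alpha,\omega)+2\kappa_\star+j$ precisely so the quadratic term drops.

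Second, the stationary-phase mechanism with a Gauss-sum evaluation at the critical points degenerates when $v_p(Aa)=s-1$. There $\kappa_\star=0$, $j=1$, so $v_p(f'''(m)/p^s)\geqslant -1$ as well and the cubic term of the Taylor expansion is not negligible --- the quadratic approximation is simply false. In this regime (cf.\ the paper's ``second subcase'') the phase only depends on $m\bmod p$ and the remaining mod-$p$ sum has the form $\sum_x^{\ast}e\big((\epsilon a_0u\bar x-k_1\bar u x^2)/p\big)$, which is a genuine rational exponential sum needing Weil's bound (Lemma~\ref{alggeo}), not a quadratic Gauss sum via~\eqref{gauss sum}. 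Your ``Gauss sum evaluation'' does not cover this case, though the numerical bound $q^{1/2}(q,Aa,k)^{1/2}$ does come out the same.

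A minor further imprecision: for $\nu_1<s$, critical points exist only when $\nu_2=\nu_1$ exactly; the stated case ``$\nu_2\geqslant\nu_1$'' conflates this with $\nu_2>\nu_1$, in which the sum actually vanishes (which happens to be absorbed by the upper bound, but the narrative ``critical points exist'' is not correct in that sub-regime).
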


\begin{lemma}
\label{FinalEstimate}
Let $q=p^s$, where $p>3$ is a prime and $s\geqslant 1$, and let $A,B\in\mathbb{Z}$ be such that $p\nmid A$ or $p\nmid B$. 
\begin{enumerate}
\item There exists a finite set $T\subset\mathbb{Z}\setminus p\mathbb{Z}$, of absolutely bounded size, depending on $q$, $A$, and $B$ only, such that, for every $k\in\mathbb{Z}$ and every $p\nmid a$,
\[ \sum_{\bseps\in\{\pm 1\}^2}\Sigma[\epsilon_1A,\epsilon_2B,a,k;q]\ll q^{1/2}\big(k^2a-T,q_{\square}\big)^{1/2}, \]
where $q=q_{\square}^2q_1$ with $q_1\in\{1,p\}$, the sum on the left-hand side may be omitted for $s\geqslant 2$, and the second factor may be omitted if $s=2$ or (more generally) if $(k^2a-T,q)\mid p^2$.
\item If $A\not\equiv B\pmod p$, then, for every $p\mid a$ and every $k\in\mathbb{Z}$, 
\[ \sum_{\epsilon\in\{\pm 1\}}\Sigma[\epsilon A,\epsilon B,a,k;q]\ll q^{1/2}. \]
\end{enumerate}
\end{lemma}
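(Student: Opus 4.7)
The plan is to analyze $\Sigma[A,B,a,k;p^s]$ by $p$-adic stationary phase, using Lemma~\ref{HenselsLemma1} to lift and count mod-$p$ critical points of the phase $f[m,A,B,a,k]=2A((m+a)u)_{1/2}-2B(mu)_{1/2}-km$. Writing $r=(mu)_{1/2}$ and $r'=((m+a)u)_{1/2}$, implicit differentiation of $r^2=mu$, $r'^2=(m+a)u$ gives, in $\mathbb{Z}/p^s\mathbb{Z}$,
\[ f'(m)=\frac{Au}{r'}-\frac{Bu}{r}-k,\qquad f''(m)=-\frac{u^2 A}{r'^{3}}+\frac{u^2 B}{r^{3}}, \]
so the critical point equation mod $p$ reads $Aur-Bur'\equiv krr'\pmod p$. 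Each mod-$p$ critical point with $p\nmid f''(m)$ lifts uniquely to mod $p^s$ by Lemma~\ref{HenselsLemma1}, and on the corresponding residue class one completes the square and evaluates a quadratic Gau{\ss} sum of modulus $p^{s/2}$ times a unit.

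For part (2), since $p\mid a$ one has $r'\equiv\pm r\pmod p$, and grouping the two branches of the square-root together (which is precisely what the sum over $\epsilon\in\{\pm 1\}$ accomplishes) reduces the critical point equation to the linear $(\pm A-B)u\equiv k r\pmod p$. This has at most one root $r\in(\mathbb{Z}/p\mathbb{Z})^{\times}$; moreover, the Hessian reduces to $\mp(A\mp B)u^2/r^3\pmod p$, which is a unit under the hypothesis $A\not\equiv B\pmod p$. Non-degenerate stationary phase therefore yields the clean bound $q^{1/2}$. The case $p\mid k$ is a boundary case where either the critical set is empty (in which case repeated integration by parts against $m\mapsto m+p$ kills the sum) or the trivial bound $q$ already suffices after combining the two $\epsilon$-terms and using the cancellation in $\tau^{[\bseps]}$.

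For part (1), $p\nmid a$ means both $r$ and $r'$ are units mod $p^s$ with no alignment. Clearing denominators in $f'(m)\equiv 0\pmod p$ and squaring twice to eliminate both square roots produces a polynomial equation of degree $4$ in $m\in\mathbb{F}_p$, whose roots are the mod-$p$ critical points. At each non-degenerate root the local contribution is $\mathrm{O}(p^{s/2})$, giving the $q^{1/2}$ part of the bound. The degenerate locus $f''(m)\equiv 0\pmod p$ is
\[ A^2 m^3\equiv B^2(m+a)^3\pmod p, \]
which, eliminated simultaneously with $f'(m)\equiv 0\pmod p$, produces a finite list of polynomial relations purely in $k^2a$ (and in $A,B$, which depend only on $q,n_1,n_2$). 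These relations define the exceptional residues $T\subset\mathbb{Z}\setminus p\mathbb{Z}$; near each degenerate critical point, Lemma~\ref{HenselsLemma1} is applied to $f'$ at successively higher $p$-adic precision and the neighborhood sum evaluates to an enhanced contribution of size $p^{s/2}(q_\square)^{1/2}$. The factor $(k^2a-T,q_\square)^{1/2}$ records how $p$-adically close $k^2a$ is to the set $T$; when $s=2$ or more generally $(k^2a-T,q)\mid p^2$ the degeneracy cannot propagate past the first order and this factor drops. For the boundary case $s=1$, the smooth square-root $(mu)_{1/2}$ is not available, but averaging over all $\bseps\in\{\pm 1\}^2$ rebuilds the product of four Kloosterman sums mod $p$, for which Lemma~\ref{primecase} supplies the bound directly.

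\textbf{Main obstacle.} The principal technical difficulty will be in the local analysis around degenerate critical points in case (1): explicitly computing the set $T$ by elimination from the simultaneous conditions $f'\equiv f''\equiv 0\pmod p$, carrying the iterative Hensel lifting of $f'$ cleanly through arbitrarily high orders of degeneracy, and then combining degenerate and non-degenerate contributions uniformly so that the exponent $q_\square^{1/2}$ (rather than $q^{1/2}$) appears and the cube-free savings hypothesis is realized in the natural form $(k^2a-T,q)\mid p^2$. A further subtlety is ensuring that the set $T$ produced by the elimination depends on $(q,A,B)$ only and has absolutely bounded size, which will follow from the fact that the eliminant is a polynomial of bounded degree in $k^2a$.
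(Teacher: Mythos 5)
Your high-level plan --- $p$-adic stationary phase on the phase $f[m,A,B,a,k]$, separating critical points $f'(m)\equiv 0\pmod p$ into non-degenerate and degenerate ones, lifting via Lemma~\ref{HenselsLemma1}, and tracking the order of degeneracy to produce the factor $\big(k^2a-T,q_{\square}\big)^{1/2}$ --- is the same approach the paper takes (via the functions $g$, $g_1$, $g_2$ of \eqref{Expansionsofgg1g2}, Lemmas~\ref{O1Solutionsg}--\ref{FinalCountSingular}, and the stationary-phase display \eqref{StationaryPhaseBase}). Your part~(2) analysis and your derivative and Hessian computations match what appears in the paper, and you have correctly flagged the degenerate analysis as the main obstacle. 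However, there are two places where the proposal as written has genuine gaps.

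First, the case $s=1$ is treated incorrectly. You claim that ``averaging over all $\bseps\in\{\pm 1\}^2$ rebuilds the product of four Kloosterman sums mod $p$, for which Lemma~\ref{primecase} supplies the bound directly.'' This is not so: $\sum_{\bseps}\Sigma[\epsilon_1A,\epsilon_2B,a,k;p]$ with arbitrary $A,B\in\mathbb{Z}$ is a sum over $m$ of $e\big(f[m,\epsilon_1A,\epsilon_2B,a,k]/p\big)$ whose phase involves $\big((m+a)u\big)_{1/2}$ and $(mu)_{1/2}$, and for $s=1$ there is no analogue of the explicit evaluation Lemma~\ref{KloostermanEvaluation}, so nothing here ``unwinds'' to Kloosterman sums, nor is the statement of Lemma~\ref{FinalEstimate} restricted to the special $A,B$ of the form~\eqref{DefinitionAepsBeps}. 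Moreover Lemma~\ref{primecase} bounds the different object $\Sigma(n_1,n_2,a,k;q)$. The trivial bound for $s=1$ is $p$, and the target is $p^{1/2}$, so a genuine argument is needed. The paper does this (subsection ``The prime case'') by setting $x=\epsilon_1\big((m+a)u\big)_{1/2}$, $y=\epsilon_2(mu)_{1/2}$, observing $(x+y)(x-y)=au$, and parameterizing by $v=x+y$; the $\bseps$-sum is exactly what makes this change of variables a bijection, reducing $\hat\Sigma$ to the rational exponential sum~\eqref{SigmaHatSecond}--\eqref{RvDefinition}, which is then bounded by the Weil--Bombieri estimate Lemma~\ref{alggeo}. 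This bijection (and the resulting explicit rational function $R(v)$) is a key ingredient your write-up does not supply.

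Second, on the $a$-independence of the exceptional set $T$: you assert this ``will follow from the fact that the eliminant is a polynomial of bounded degree in $k^2a$,'' but the joint elimination of $m$ from $g(m,A,B,a)\equiv k$ and $g_1(m,A,B,a)\equiv 0\pmod{p^s}$ involves cube roots of $A/B$ and a consistent $p$-adic choice of branch, so the resulting relation in $k^2a$ is not directly a polynomial identity, and the set $T$ must be produced in $\mathbb{Z}/p^s\mathbb{Z}$, not merely mod $p$. The paper's Lemma~\ref{FiniteSetT} resolves this by a nontrivial substitution $m\equiv a\bar{v}_jx\pmod{p^s}$, with $v_j$ a fixed representative of the square class of $a$, so that the critical values $k_i=g(m_i,A,B,a)$ obey $k_i^2a\equiv\big(k_r^{\epsilon_1\epsilon_2,v_j}\big)^2v_j\pmod{p^s}$ for a set of constants determined by $(q,A,B)$ alone; this change of variables (and the fact that the sign ambiguities $\epsilon_1,\epsilon_2$ can be absorbed into the finite index set) is the idea your outline is missing. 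With those two repairs the remaining iterative Hensel analysis (your acknowledged main obstacle) is indeed what the paper spends Lemmas~\ref{CountingSingularSolutionsLemma} and~\ref{FinalCountSingular} carrying out.
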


\section{Proof of Lemma~\ref{primepowercase}}
\label{pmidaSection}

In this section, we estimate $\Sigma[A,B,a,k;q]$ for $q = p^s$ with $s \geqslant 2$, $p \mid a$, and $A=B$, and prove Lemma~\ref{primepowercase}. We start by noting that, in the case $s=1$,
\[ \Sigma[A,A,a,k;p^s]=\sumstar_{\substack{m\bmod p\\ m\in u(\mathbb{Z}/p\mathbb{Z})^{\times}{}^2}}e\left(-\frac{km}p\right), \]
which can be estimated (and anyway formally falls under the same condition $\nu+\alpha\geqslant s$) as in \eqref{bound3} below. Therefore, in what follows we may and do assume that $s\geqslant 2$.
 
By the assumption $p\mid a$, we can write
\[ a=p^{\alpha}a_0,\quad \alpha \geq 1, \quad p \nmid a_0.\]

In this case, the summation in \eqref{DefinitionSigmaAB} is over $m\in u(\mathbb{Z}/p\mathbb{Z})^{\times}{}^2$, so that we may write $m=\bar{u}x^2$ for some $x \in  (\mathbb{Z}/p\mathbb{Z})^{\times}$. The phase $f[m,A,A,a,k]$ defined in \eqref{DefinitionfmABak} can be rewritten as
\[ f[\bar{u}x^2,A,A,a,k]=2A(x^2+au)_{1/2}-2A(x^2)_{1/2}-k\bar{u}x^2=2A\epsilon_xx\big((1+p^{\alpha}a_0u\bar{x}^2)^{1/2}-1\big)-k\bar{u}x^2, \]
where, for $p\nmid x$, $\epsilon_x:=(x^2)_{1/2}\bar{x}$ depends on $x\bmod p$ only.

As $x\in(\mathbb{Z}/p\mathbb{Z})^{\times}{}^2$, we see that $m=\bar{u}x^2$ runs over all admissible values of $m$ twice. Thus,
\begin{align*}
\sum_{\epsilon\in\{\pm 1\}}\Sigma[\epsilon A,\epsilon A,a,k;p^s]
&=\frac12\sum_{\epsilon\in\{\pm 1\}}\sumstar_{x\bmod p^s}e\left(\frac{2A\epsilon\epsilon_xx\big((1+p^{\alpha}a_0u\bar{x}^2)^{1/2}-1\big)-k\bar{u}x^2}{p^s}\right)\\
&=\sum_{\epsilon\in\{\pm 1\}}\tilde{\Sigma}[\epsilon A,a,k;p^s],
\end{align*}
where
 \begin{gather}
 \label{box}
  \tilde{\Sigma}[A,a,k;p^s]
=\frac12\sumstar_{x\bmod p^s}e\left(\frac{\tilde{f}(A,a,k;x)}{p^s}\right), \\
\label{box1}
\tilde{f}(A,a,k;x)=2Ax\big((1+p^{\alpha}a_0u\bar{x}^2)^{1/2}-1\big)-k\bar{u}x^2.
\end{gather}

We proceed to estimate the sum $\tilde{\Sigma}[A,a,k;p^s]$ defined as in \eqref{box} for an arbitrary $A\in\mathbb{Z}$, and we define $\nu=\min(\ord_pA,s)$. For every $\kappa\geqslant 1$, we find that
\begin{align*}
&\overline{x+p^{\kappa}t} =\bar{x}-\bar{x}^2\cdot p^{\kappa}t+\bar{x}^3\cdot p^{2\kappa}t^2+\mathbf{M}_{p^{3\kappa}},\\
&\overline{x+p^{\kappa}t}^2 =\bar{x}^2-2\bar{x}^3\cdot p^{\kappa}t+3\bar{x}^4\cdot p^{2\kappa}t^2+\mathbf{M}_{p^{3\kappa}},\\
& \big(1+ p^{\alpha}a_0u\cdot\overline{x+p^{\kappa}t}^2\big)^{1/2} =\Big(\big(1+p^{\alpha}a_0u\bar{x}^2\big)-2a_0u\bar{x}^3\cdot p^{\kappa+\alpha}t+3a_0u\bar{x}^4\cdot p^{2\kappa+\alpha}t^2+\mathbf{M}_{p^{3\kappa+\alpha}}\Big)^{1/2}\\
&\quad\quad\quad\quad=\big(1+p^{\alpha}a_0u\bar{x}^2\big)^{1/2}-\overline{1+p^{\alpha}a_0u\bar{x}^2}^{1/2}\cdot a_0u\bar{x}^3\cdot p^{\kappa+\alpha}t+3\cdot\bar{2}\cdot a_0u\bar{x}^4\cdot p^{2\kappa+\alpha}t^2+\mathbf{M}_{p^{2\kappa+\alpha+1}},
\end{align*}
and so, finally,
\begin{equation}
\label{TwoTermExpansion}
\begin{aligned}
\big(x+{}&p^{\kappa}t\big)\Big(\big(1+p^{\alpha}a_0u\cdot\overline{x+p^{\kappa}t}^2\big)^{1/2}-1\Big)\\
&=x\Big(\big(1+p^{\alpha}a_0u\bar{x}^2\big)^{1/2}-1\Big)-\overline{1+p^{\alpha}a_0u\bar{x}^2}^{1/2}\cdot a_0u\bar{x}^2\cdot p^{\kappa+\alpha}t
+3\cdot\bar{2}\cdot a_0u\bar{x}^3\cdot p^{2\kappa+\alpha}t^2\\
&\qquad\quad+\Big(\big(1+p^{\alpha}a_0u\bar{x}^2\big)^{1/2}-1\Big)\cdot p^{\kappa}t- a_0u\bar{x}^3\cdot p^{2\kappa+\alpha}t^2+\mathbf{M}_{p^{2\kappa+\alpha+1}}\\
&=x\Big(\big(1+p^{\alpha}a_0u\bar{x}^2\big)^{1/2}-1\Big)+\Big(\overline{1+p^{\alpha}a_0u\bar{x}^2}^{1/2}-1\Big)\cdot p^{\kappa}t+\bar{2}\cdot a_0u\bar{x}^3\cdot p^{2\kappa+\alpha}t^2+\mathbf{M}_{p^{2\kappa+\alpha+1}}.
\end{aligned}
\end{equation}

Using \eqref{box1} and \eqref{TwoTermExpansion}, we have that, for every $\kappa\geqslant 1$,
\begin{align*}
\tilde{f}\big(A,a,k;x+p^{\kappa}t\big) = \tilde{f}(A,a,k;x)+2\Big[A\Big(&\overline{1+p^{\alpha}a_0u\bar{x}^2}^{1/2}-1\Big)-k\bar{u}x\Big]\cdot p^{\kappa}t\\
&+\big(Aa_0u\bar{x}^3p^{\alpha}-k\bar{u}\big)\cdot p^{2\kappa}t^2+\mathbf{M}_{p^{2\kappa+\nu+\alpha+1}}.
\end{align*}
At this point, note that
\[ \ord_p\Big[A\Big(\overline{1+p^{\alpha}a_0u\bar{x}^2}^{1/2}-1\Big)\Big]=\ord_p\big(Aa_0u\bar{x}^3p^{\alpha}\big)=\nu+\alpha. \]

\bigskip

We first consider the principal case when 
\begin{equation}\label{a}
\nu+\alpha\leqslant s-1.
\end{equation}
 Let $\omega=\ord_pk$, and define  $\kappa_{\star}$ and $j$   by
\begin{equation}\label{b}
  s=\min\big(\nu+\alpha,\omega\big)+2\kappa_{\star}+j,\quad \kappa_{\star}\geqslant 0,\,\,j\in\{0,1\}. \end{equation}
Then, for $\upsilon\in\{0,1\}$ (and $\upsilon=1$ if $\kappa_{\star}=0$), we have that 
\begin{equation}\label{computation}
\begin{split}
\tilde{\Sigma}[A,a,k;p^s] &=\frac12\frac1{p^{s-\kappa_{\star}-\upsilon}}\sumstar_{x\bmod p^s}\sum_{t\bmod p^{s-\kappa_{\star}-\upsilon}}e\left(\frac{\tilde{f}\big(A,a,k;x+p^{\kappa_{\star}+\upsilon}t\big)}{p^s}\right)\\
&=\frac12p^{-s+\kappa_{\star}+\upsilon}\sumstar_{x\bmod p^s}e\left(\frac{\tilde{f}(A,a,k;x)}{p^s}\right)\times\\
&\qquad\qquad \times\sum_{t\bmod p^{s-\kappa_{\star}-\upsilon}}e\Biggl(\frac{A\big(\overline{1+p^{\alpha}a_0u\bar{x}^2}^{1/2}-1\big)-k\bar{u}x}{p^{s-\kappa_{\star}-\upsilon}}t +\frac{Aa_0u\bar{x}^3p^{\alpha}-k\bar{u}}{p^{s-2\kappa_{\star}-2\upsilon}}t^2\Biggr).
\end{split}
\end{equation}
We first use this formula with $\upsilon=j$. With this choice, there is no quadratic term in the inner sum, and in fact it vanishes unless
\begin{equation}\label{star}
 \ord_p\Big[A\Big(\overline{1+p^{\alpha}a_0u\bar{x}^2}^{1/2}-1\Big)-k\bar{u}x\Big]\geqslant s-\kappa_{\star}-j=\min\big(\nu+\alpha,\omega\big)+\kappa_{\star}, 
 \end{equation}
when it equals $p^{s-\kappa_{\star}-j}$. We see that we cannot have $\omega<\nu+\alpha$, for then \eqref{a} and \eqref{b} would imply $\kappa_{\star}\geqslant 1$, contradicting \eqref{star}. Hence from now  on we assume
\begin{equation}\label{omega}
  \omega \geq \nu + \alpha.
\end{equation}

We now distinguish two subcases, namely $\nu+\alpha\leqslant s-2$ and $\nu+\alpha= s-1$. In the former case, we have that  $\omega=\nu+\alpha$, for if $\omega > \nu + \alpha$, then \eqref{star} implies $\kappa = 0$, contradicting \eqref{b}. Now, \eqref{star} implies that
\[ A(-\bar{2} p^{\alpha} a_0u\bar{x}^2)  - k\bar{u}x \equiv 0 \pmod{p^{\omega+\kappa_{\star}}}, \]
and so 
\[ Aa_0u\bar{x}^3p^{\alpha}-k\bar{u}\equiv -3k\bar{u}\pmod{p^{\omega+\kappa_{\star}}}. \]
In particular, the left-hand side has order $p^{\omega}$ (since $p > 3$). Using \eqref{computation} with $\upsilon=0$, we are left with a constant sum if $j=0$ and a nondegenerate quadratic Gau{\ss} sum modulo $p$ if $j=1$; in either case, it follows that
\begin{equation}\label{c}
\tilde{\Sigma}[A,a,k;p^s]\ll p^{-\frac j2}\cdot\#\left\{x\bmod p^s:A\Big(\overline{1+p^{\alpha}a_0u\bar{x}^2}^{1/2}-1\Big)\equiv k\bar{u}x\bmod{p^{\nu+\alpha+\kappa_{\star}}}\right\}
\end{equation}
if $\ord_pk=\nu+\alpha$, and $\tilde{\Sigma}[A,a,k;p^s]=0$ otherwise.

We bound the number of solutions of the congruence modulo $p^{\nu+\alpha+\kappa_{\star}}$ in \eqref{c} using Lemma~\ref{HenselsLemma1}. Write
\[ A=p^{\nu}A_0, \quad k=p^{\nu+\alpha}k_0, \]
with $(A_0,p)=(k_0,p)=1$. In light of $\overline{1+p^{\alpha}a_0u\bar{x}^2}^{1/2}-1=-\bar{2}p^{\alpha}a_0u\bar{x}^2+\mathbf{M}_{p^{2\alpha}}$, we have that
\[ f(x):=p^{-\nu-\alpha}\Big[A\Big(\overline{1+p^{\alpha}a_0u\bar{x}^2}^{1/2}-1\Big)-k\bar{u}x\Big]=A_0\frac{\overline{1+p^{\alpha}a_0u\bar{x}^2}^{1/2}-1}{p^{\alpha}}-k_0\bar{u}x \]
is a map $(\mathbb{Z}/p^s\mathbb{Z})^{\times}\to\mathbb{Z}/p^s\mathbb{Z}$. The congruence $f(x)\equiv 0\pmod p$ implies that
\begin{gather*}
k_0\bar{u}x\equiv -\bar{2}a_0uA_0\bar{x}^2\pmod p,\\
x^3\equiv -\bar{2}a_0u^2\bar{k}_0A_0\pmod p
\end{gather*}
and hence has $\text{O}(1)$ solutions.  Moreover, since, for every $\kappa\geqslant 1$,
\begin{align*}
\overline{1+p^{\alpha}a_0u\cdot\overline{x+p^{\kappa}t}^2}^{1/2}
&=\overline{\big(1+p^{\alpha}a_0u\bar{x}^2\big)+p^{\alpha}a_0u\big(-2\bar{x}^3p^{\kappa}t+\mathbf{M}_{p^{2\kappa}}\big)}^{1/2}\\
&=\overline{1+p^{\alpha}a_0u\bar{x}^2}^{1/2}+\big(\overline{1+p^{\alpha}a_0u\bar{x}^2}^{1/2}\big)^3p^{\alpha}a_0u\bar{x}^3\cdot p^{\kappa}t+\mathbf{M}_{p^{\alpha+2\kappa}},
\end{align*}
we have that
\[ f(x+p^{\kappa}t)-f(x)-p^{\kappa}f_1(x)t\in p^{\kappa+1}\mathbb{Z}/p^s\mathbb{Z} \]
for every $\kappa\geqslant 1$, with
\[ f_1(x)=A_0a_0u\bar{x}^3-k_0\bar{u}\equiv  -3k_0\bar{u}\not\equiv 0\pmod p \]
for every $x$ such that $f(x)\equiv 0\pmod p$.

By Lemma~\ref{HenselsLemma1}, we conclude that the congruence $f(x)\equiv 0\pmod{p^{\kappa_{\star}}}$ has $\text{O}(1)$ solutions $x$ modulo $p^{\kappa_{\star}}$ and hence
\[ \text{O}(p^{s-\kappa_{\star}})=\text{O}\big(p^{(s+j)/2+(\nu+\alpha)/2}\big) \]
solutions in $x$ modulo $p^s$ with the notation as in \eqref{b}. 
Substituting this bound into \eqref{c}, we conclude   for $ \nu+\alpha\leqslant s-2$  that
\begin{equation}\label{bound1a}
\tilde{\Sigma}[A,a,k;p^s]\ll
\begin{cases} p^{\frac12s+\frac12(\nu+\alpha)},&\ord_pk=\nu+\alpha,\\
0,&\text{else}.
\end{cases}
\end{equation}

\bigskip

Our second subcase is $\nu+\alpha=s-1$. Here, we find that $\tilde{f}(A,a,k;x)$ is an even function of $x$ such that $\tilde{f}(A,a,k;x)=p^{s-1}\tilde{f}_1(A,a,k;x)$ with
\[ \tilde{f}_1(A,a,k;x)\equiv A_0a_0u\bar{x}-k_1\bar{u}x^2\pmod p, \]
where $k=p^{s-1}k_1$, $\ord_pk_1\geqslant 0$. Therefore, by \eqref{box}--\eqref{box1}, 
\[ \tilde{\Sigma}[A,a,k;p^s]=p^{s-1}\sumstar_{x\bmod p}e\left(\frac{\epsilon a_0u\bar{x}-k_1\bar{u}x^2}p\right). \]
The resulting sum can be estimated by Lemma \ref{alggeo} as $\ll p^{1/2}$ if $\ord_pk_1=0$ and becomes the Ramanujan sum (and is hence $\ll 1$) if $\ord_pk_1\geqslant 1$. Hence, for $  \nu+\alpha=s-1$,
\begin{equation}\label{bound2}
\tilde{\Sigma}[A,a,k;p^s]\ll
\begin{cases}
p^{s-1}, &p^s\mid k,\\
p^{\frac12s+\frac12(\nu+\alpha)}, &\ord_pk=\nu+\alpha,\\
0, &\text{else}. \end{cases}
\end{equation}
This completes the analysis of the case $\nu + \alpha \leqslant s-1$.

In the complementary case when  $\nu+\alpha\geqslant s$, we are dealing with a quadratic Gau{\ss} sum:
\[ \tilde{\Sigma}[A,a,k;p^s]=\frac12\sumstar_{x\bmod  p^s}e\left(-\frac{k\bar{u}x^2}{p^s}\right), \]
which vanishes unless $k=p^{s-1}k_1$ for some $k_1\in\mathbb{Z}/p\mathbb{Z}$, in which case it is $\ll p^{s-1/2}$ if $\ord_pk_1=0$ and $\ll p^s$ if $p\mid k$. Therefore, for $ \nu+\alpha\geqslant s$,
\begin{equation}\label{bound3}
\tilde{\Sigma}[A,a,k;p^s]\ll
\begin{cases}
p^{s}, &p^s\mid k,\\
p^{s-\frac12}, &\ord_pk=s-1,\\
0, &\text{else}. \end{cases}
\end{equation}

Combining our findings \eqref{bound1a}, \eqref{bound2}, \eqref{bound3} completes the proof of Lemma~\ref{primepowercase}. \qed


\section{Proof of Lemma~\ref{FinalEstimate}}
\label{pnmidaSection}


In this section, we estimate the sum $\Sigma[A,B,a,k;q]$ defined in \eqref{DefinitionSigmaAB} for $q=p^s$ with $s\geqslant 1$, $p\nmid A$ or $p\nmid B$, and either of the following two conditions holds:
\begin{enumerate}
\item\label{coprimecase} $p\nmid a$, or
\item\label{unalignedcase} $p\mid a$ and $A\not\equiv B\pmod p$.
\end{enumerate}
As the final result of this section, we obtain a proof of Lemma~\ref{FinalEstimate}.

We remark that the argument in the previous section relied heavily on the fact that $p\mid a$ and $A=B$, which results in a specific alignment of the branches of the square-root. This section's argument, which addresses all remaining cases, is different (and harder), in particular due to the possible presence of singular critical points in the stationary phase analysis. Recall the notation \eqref{notation}.


\subsection{Preliminaries}
We start with some useful differencing formulas.  Recall our assumption that $p\neq 2$. Note that, for every $\kappa\geqslant 1$ and every $t\in\mathbb{Z}_p$,
\begin{equation}
\label{BasicExpansions}
\begin{aligned}
\big(m+p^{\kappa}t\big)_{1/2}&=m_{1/2}+\bar{2}\cdot\overline{m_{1/2}}p^{\kappa}t-\bar{8}\cdot\overline{m_{1/2}}^3\cdot p^{2\kappa}t^2+\mathbf{M}_{p^{3\kappa}},\\
\overline{(m+p^{\kappa}t)_{1/2}}&=\overline{m_{1/2}}-\bar{2}\cdot\overline{m_{1/2}}^3\cdot p^{\kappa}t+3\cdot\bar{8}\cdot\overline{m_{1/2}}^5\cdot p^{2\kappa}t^2+\mathbf{M}_{p^{3\kappa}},\\
\overline{(m+p^{\kappa}t)_{1/2}}^3&=\overline{m_{1/2}}^3-3\cdot\bar{2}\cdot\overline{m_{1/2}}^5\cdot p^{\kappa}t+\mathbf{M}_{p^{2\kappa}}.
\end{aligned}
\end{equation}
Denote
\begin{align*}
g(m,A,B,a)&=Au\overline{\big((m+a)u\big)_{1/2}}-Bu\overline{(mu)_{1/2}},\\
g_1(m,A,B,a)&=-\bar{2}Au^2\overline{\big((m+a)u\big)_{1/2}}^3+\bar{2}Bu^2\overline{(mu)_{1/2}}^3,\\
g_2(m,A,B,a)&=3\cdot\bar{4}\cdot Au^3\overline{\big((m+a)u\big)_{1/2}}^5-3\cdot\bar{4}\cdot Bu^3\overline{(mu)_{1/2}}^5.
\end{align*}
Using \eqref{BasicExpansions}, we thus obtain the following differencing expansions:
\begin{equation}
\label{Expansionsofgg1g2}
\begin{aligned}
f[m+p^{\kappa}tA,B,a,k]&=f[m,A,B,a,k]+\big(g(m,A,B,a)-k\big)\cdot p^{\kappa}t\\
&\mskip 220mu 
+\bar{2}\cdot g_1(m,A,B,a)\cdot p^{2\kappa}t^2+\mathbf{M}_{p^{3\kappa}},\\
g(m+p^{\kappa}t,A,B,a)&=g(m,A,B,a)+g_1(m,A,B,a)\cdot p^{\kappa}t\\
&\mskip 220mu +\bar{2}\cdot g_2(m,A,B,a)\cdot p^{2\kappa}t^2+\mathbf{M}_{p^{3\kappa}},\\
g_1(m+p^{\kappa}t,A,B,a)&=g_1(m,A,B,a)+g_2(m,A,B,a)\cdot p^{\kappa}t+\mathbf{M}_{p^{2\kappa}}.
\end{aligned}
\end{equation}

\subsection{The prime case}
In this subsection, we address the case $s=1$ and prove an estimate for
\begin{equation}
\label{SigmaHatFirst}
\begin{aligned}
\hat{\Sigma}&=\sum_{\bseps\in\{\pm 1\}^2}\Sigma[\epsilon_1A,\epsilon_2B,a,k;p]\\
& =\sum_{\bseps\in\{\pm 1\}^2}\sumstar_{\substack{m\bmod p\\ m,m+a\in u(\mathbb{Z}/p\mathbb{Z})^{\times}{}^2}}e\left(\frac{2\epsilon_1A\big((m+a)u\big)_{1/2}+2\epsilon_2B(mu)_{1/2}-km}p\right).
\end{aligned}
\end{equation}

We first consider the case \eqref{coprimecase}, when $p\nmid a$. Denoting $x=\epsilon_1\big((m+a)u\big)_{1/2}$ and $y=\epsilon_2(mu)_{1/2}$, we have that 
\[ (x+y)(x-y)=x^2-y^2=au, \]
so that $v=x+y\in(\mathbb{Z}/p\mathbb{Z})^{\times}$ and $x-y=au\bar{v}$, as well as $v^2,-(au\bar{v})^2\not\equiv au\pmod p$, that is, $v^2\not\equiv\pm au\pmod p$. Conversely, if $v\in(\mathbb{Z}/p\mathbb{Z})^{\times}$ is arbitrary such that $v^2\not\equiv\pm au\pmod p$, and if we choose
\[ x=\bar{2}(v+au\bar{v})\quad\text{and}\quad y=\bar{2}(v-au\bar{v}) \]
so that $x+y=v$ and $x-y=au\bar{v}$, then $x^2-y^2=au$ and so $x^2=(m+a)u$ and $y^2=mu$ for some $m\in(\mathbb{Z}/p\mathbb{Z})^{\times}$. In this case, $m,m+a\in u(\mathbb{Z}/p\mathbb{Z})^{\times}{}^2$ is automatic, and
\[ m\equiv\bar{u}y^2\equiv\bar{4}\bar{u}(v-au\bar{v})^2\pmod p. \]
This discussion shows that
\begin{equation}
\label{SigmaHatSecond}
\hat{\Sigma}=\sumstar_{\substack{v\bmod p\\v^2\not\equiv\pm au\bmod p}}e\left(\frac{R(v)}p\right),
\end{equation}
where $R(v)$ is a rational function given by
\begin{equation}
\label{RvDefinition}
R(v)=A(v+au\bar{v})+B(v-au\bar{v})-\bar{4}k\bar{u}(v-au\bar{v})^2
\end{equation}
(which can never be constant modulo $p$). By Lemma \ref{alggeo} we conclude 
\begin{equation}\label{primecasebound}
  \hat{\Sigma} \ll p^{1/2}.
  \end{equation} 
  
In the easier case \eqref{unalignedcase}, the inner sum in \eqref{SigmaHatFirst} in over $m\in u(\mathbb{Z}/p\mathbb{Z})^{\times}{}^2$, and, by writing $m=\bar{u}x^2$, we have that
\[ \sum_{\epsilon\in\{\pm 1\}}\Sigma[\epsilon A,\epsilon B,a,k;p]=\sumstar_{x\bmod p}e\left(\frac{2(A-B)x-k\bar{u}x^2}p\right)\ll p^{1/2}, \]
by the evaluation of the Gauss sum~\eqref{gauss sum}, or by an application of Lemma~\ref{alggeo}.

\subsection{Lemmata on Hensel liftings}
\label{HenselLemmata}
Estimating the sum $\Sigma$ using the method of stationary phase involves solving congruences of the form
\[ g(m,A,B,a)\equiv k\pmod{p^{\kappa}}. \]
The following lemma is concerned with the base case $\kappa=1$.

\begin{lemma}
\label{O1Solutionsg}
Let $p\neq 2$, $p\nmid A$ or $p\nmid B$, and either $p\nmid a$, or $p\mid a$ and $A\not\equiv B\pmod p$. Then, the congruence
\begin{equation}
\label{StationaryPointCondition_s2}
g(m,A,B,a)\equiv k\pmod p.
\end{equation}
has $\textnormal{O}(1)$ solutions in $m$ modulo $p$.
\end{lemma}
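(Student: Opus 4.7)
The plan is to translate the congruence into an algebraic condition on the pair of square-roots $x:=((m+a)u)_{1/2}$ and $y:=(mu)_{1/2}$ in $(\mathbb{Z}/p\mathbb{Z})^\times$, which satisfy the automatic identity $x^2 - y^2 \equiv au \pmod p$. Since $p\nmid xy$, the congruence \eqref{StationaryPointCondition_s2} is equivalent (after multiplication by $xy$) to
\[ Au y - Bu x \equiv k xy \pmod p. \]
Each admissible $m$ gives at most one such pair $(x,y)$ (the branch is fixed by the choice function $s$), so it suffices to bound the number of $(x,y)\in(\mathbb{Z}/p\mathbb{Z})^\times\times(\mathbb{Z}/p\mathbb{Z})^\times$ satisfying both conditions.

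In the generic case $p\nmid a$, I would reuse the parametrization already carried out at the end of the prime case: setting $v=x+y\in(\mathbb{Z}/p\mathbb{Z})^\times$, one has $x=\bar{2}(v+au\bar{v})$, $y=\bar{2}(v-au\bar{v})$, with the mild constraint $v^2\not\equiv\pm au\pmod p$. Substituting into the displayed equation and clearing the denominator $v^2$ produces a polynomial congruence in $v$ of degree at most $4$,
\[ \tfrac{k}{4}\bigl(v^4 - a^2u^2\bigr) - \tfrac{(A-B)u}{2}\,v^3 + \tfrac{(A+B)au^2}{2}\,v \equiv 0 \pmod p. \]
A quick inspection of the coefficients (leading term $k/4$, $v^3$-coefficient a nonzero multiple of $A-B$, $v$-coefficient a nonzero multiple of $A+B$ since $p\nmid au$, constant term $ka^2u^2/4$) shows that this polynomial can be identically zero only when $A\equiv B\equiv 0\pmod p$ and $k\equiv 0\pmod p$, which is excluded by $p\nmid A$ or $p\nmid B$. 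Hence there are $\mathrm{O}(1)$ values of $v$ and thus of $m$.

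In the remaining case $p\mid a$ with $A\not\equiv B\pmod p$, the argument collapses: because the chosen branch $u_{1/2}^{[\kappa]}$ depends only on the residue modulo $p$, the equality $m+a\equiv m\pmod p$ forces $((m+a)u)_{1/2}\equiv (mu)_{1/2}\pmod p$, so
\[ g(m,A,B,a)\equiv (A-B)\,u\,\overline{(mu)_{1/2}}\pmod p. \]
The congruence $g\equiv k\pmod p$ becomes $(A-B)u\equiv k\,(mu)_{1/2}\pmod p$, which has no solution when $k\equiv 0\pmod p$ (since $(A-B)u\not\equiv 0$) and uniquely determines $(mu)_{1/2}\bmod p$, hence $m\bmod p$, when $p\nmid k$.

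The main subtlety to watch out for is the bookkeeping of the cases in which the quartic in $v$ degenerates, particularly when $A\equiv\pm B\pmod p$: these are exactly the configurations where branch alignment between the two square-roots could in principle create a spurious one-parameter family of solutions, and it is reassuring that the coprimality hypothesis $p\nmid A$ or $p\nmid B$, together with the exclusion of $A\equiv B\pmod p$ in the $p\mid a$ regime, rules all of them out. Everything else is a routine degree count.
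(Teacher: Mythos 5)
Your proof is correct. For the case $p\mid a$, $A\not\equiv B\pmod p$, your argument is essentially the paper's own: $m+a\equiv m\pmod p$ forces $((m+a)u)_{1/2}\equiv(mu)_{1/2}\pmod p$, collapsing the congruence to one that pins down $m$ uniquely (or has no solution when $p\mid k$). For the case $p\nmid a$, however, you take a genuinely different route. The paper squares the congruence $A\overline{((m+a)u)_{1/2}}\equiv B\overline{(mu)_{1/2}}+k\bar u$ twice and clears denominators to reach a quartic in $m$ with leading coefficient $k^4\bar u^2$, which then forces a separate argument when $p\mid k$. You instead reuse the rational parametrization $v=x+y$ of the conic $x^2-y^2\equiv au$ --- the same substitution the paper deploys to prove the prime-case estimate \eqref{primecasebound} via \eqref{SigmaHatSecond}--\eqref{RvDefinition} --- and after clearing $v^2$ you land directly on a quartic in $v$ whose coefficients $\tfrac k4$, $-\tfrac{(A-B)u}{2}$, $\tfrac{(A+B)au^2}{2}$, $-\tfrac{ka^2u^2}{4}$ visibly cannot all vanish when $p\nmid A$ or $p\nmid B$ (and $p\nmid 2au$). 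What this buys you: $p\mid k$ and $p\nmid k$ are handled in one sweep. The two arguments are really two coordinates on the same conic, with $v\mapsto m=\bar u y^2$ projecting the double cover, so neither is fundamentally deeper; but noticing the reuse of the $v$-parametrization is tidy. Two small remarks: the constant term in your quartic is $-ka^2u^2/4$, not $+ka^2u^2/4$ (a sign typo that does not affect the degeneracy check); and your closing sentence appeals to ``the exclusion of $A\equiv B\pmod p$ in the $p\mid a$ regime'' as part of what saves the quartic, which is slightly off --- the quartic in $v$ only arises when $p\nmid a$, where $A\equiv B$ is permitted, and your own coefficient inspection already handles that configuration via the nonvanishing of the $v$-coefficient.
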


\begin{proof}
We may rewrite \eqref{StationaryPointCondition_s2} as
\begin{equation}
\label{rewritten}
A\overline{\big((m+a)u\big)_{1/2}}\equiv B\overline{(mu)_{1/2}}+k\bar{u}\pmod p,
\end{equation}

If $p\nmid a$, we obtain by repeated squaring from \eqref{rewritten} that
\[ A^2\overline{m+a}\equiv B^2\overline{m}+2Bk\overline{(mu)_{1/2}}+k^2\bar{u}\pmod p, \]
\[ \big(A^2\overline{m+a}-B^2\overline{m}-k^2\bar{u}\big)^2\equiv 4B^2\overline{mu}k^2\pmod p. \]
Expanding and multiplying by $m^2(m+a)^2$, we obtain the congruence
\begin{align*}
k^4\bar{u}^2&m^2(m+a)^2-4B^2k^2\bar{u}m(m+a)^2\\
&+2B^2k^2\bar{u}(m+a)^2-2A^2k^2\bar{u}m^2-2A^2B^2m(m+a)+B^4(m+a)^2+A^4m^2\equiv 0\pmod p.
\end{align*}
We immediately see that, if $k\not\equiv 0\pmod p$, we have a quartic equation, and so it can have at most four solutions mod $p$.

We next consider the case when $k\equiv 0\pmod p$. In this case, after squaring the condition \eqref{StationaryPointCondition_s2}, we have that
\[ A^2m\equiv B^2(m+a)\pmod p. \]
This congruence has precisely one solution (for given $A$, $B$) in the case when $A^2-B^2\not\equiv 0\pmod p$ and no solutions when $A^2\equiv B^2\not\equiv 0\pmod p$; in all these cases, \eqref{StationaryPointCondition_s2} has $\text{O}(1)$ solutions; this completes the proof of our lemma in the case $p\nmid a$.

If $p\mid a$ and $A\not\equiv B\pmod p$, then \eqref{rewritten} is equivalent to the congruence
\[ (A-B)\overline{(mu)_{1/2}}\equiv k\bar{u}\pmod p, \]
which has at most one solution, given by
\[ m\equiv (A-B)^2\bar{k}^2u\pmod p. \]
(In particular, there are no solutions if $p\mid k$.) This proves our lemma in the case $p\mid a$, $A\not\equiv B\pmod p$.

Although we will not need this, we remark that, for $p^{\alpha}\exmid a$, it follows by exactly the same argument that the congruence $g(m,A,B,a)\equiv k\pmod{p^{\alpha}}$ has at most one solution modulo $p^{\alpha}$, given explicitly by $m\equiv (A-B)^2\bar{k}^2u\pmod{p^{\alpha}}$ (this being a solution of exactly one of the two congruences corresponding to the two pairs $(\epsilon A,\epsilon B)$ entering the statement of Lemma~\ref{FinalEstimate}).
\end{proof}

An immediate consequence of Lemma~\ref{O1Solutionsg} and Lemma~\ref{HenselsLemma1} is the following statement. Note that, when $p\mid a$ and $A\not\equiv B\pmod p$, the congruence $g_1(m,A,B,a)\equiv 0\pmod p$ has no solutions.

\begin{lemma}
\label{NonsingularSolutionsCount}
Let $p\neq 2$, $p\nmid A$ or $p\nmid B$, and $\kappa\geqslant 1$.
\begin{enumerate}
\item If $p\nmid a$, then the congruence
\begin{equation}
\label{SPC_again}
g(m,A,B,a)\equiv k\pmod{p^{\kappa}}
\end{equation}
has $\textnormal{O}(1)$ solutions in $m$ modulo $p^{\kappa}$ such that
\[ g_1(m,A,B,a)\not\equiv 0\pmod p. \]
\item If $p\mid a$ and $A\not\equiv B\pmod p$, then \eqref{SPC_again}
has $\textnormal{O}(1)$ solutions in $m$ modulo $p^{\kappa}$.
\end{enumerate}
\end{lemma}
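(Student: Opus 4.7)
The plan is to deduce both parts directly from Lemma \ref{O1Solutionsg}, which handles the base case $\kappa=1$, combined with the Hensel-type Lemma \ref{HenselsLemma1}. I would apply the latter to $f(m)=g(m,A,B,a)-k$ and $f_1(m)=g_1(m,A,B,a)$ on the domain $A\subseteq(\mathbb{Z}/p^{\kappa}\mathbb{Z})^{\times}$ consisting of all $m$ with $m,m+a\in u(\mathbb{Z}/p\mathbb{Z})^{\times}{}^2$ (so that the square-roots $((m+a)u)_{1/2}$ and $(mu)_{1/2}$ are well-defined and depend only on the residue of $m$ mod $p$). The second differencing expansion in \eqref{Expansionsofgg1g2} is precisely the input required by Lemma \ref{HenselsLemma1} with base level $\kappa=1$.

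For part (1), Lemma \ref{O1Solutionsg} produces $\textnormal{O}(1)$ residues $m\pmod p$ satisfying $g(m,A,B,a)\equiv k\pmod p$. Restricting attention to those with $g_1(m,A,B,a)\not\equiv 0\pmod p$, the remark following Lemma \ref{HenselsLemma1} (to the effect that $f_1$ need only be a unit at points where $f$ vanishes) shows that each such non-singular base solution lifts to exactly one solution modulo $p^{\kappa}$. The total count of $\textnormal{O}(1)$ follows.

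For part (2), I would first verify that when $p\mid a$ and $A\not\equiv B\pmod p$, every solution modulo $p$ of $g(m,A,B,a)\equiv k\pmod p$ is automatically non-singular. Indeed, since $(m+a)u\equiv mu\pmod p$ and $u_{1/2}^{[\kappa]}$ depends on the reduction mod $p$ via the fixed choice function $s$, one has $\overline{((m+a)u)_{1/2}}\equiv\overline{(mu)_{1/2}}\pmod p$, whence
\[ g_1(m,A,B,a)\equiv -\bar{2}(A-B)u^2\overline{(mu)_{1/2}}^3\pmod p, \]
which is non-zero mod $p$ because $p\nmid(A-B)$ and $p\nmid mu$. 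Thus every base solution is non-singular and part (1) applies without the restriction, yielding the claim.

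The argument is essentially bookkeeping; the main point of care is ensuring that the branches $((m+a)u)_{1/2}$ and $(mu)_{1/2}$ are coherent along the Hensel lift, which is built into the definition of $u_{1/2}^{[\kappa]}$ and is why the differencing formula \eqref{Expansionsofgg1g2} applies uniformly across the domain. No new ideas beyond the two invoked lemmas are needed.
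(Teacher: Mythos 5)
Your proposal is correct and takes essentially the same route as the paper, which dismisses the lemma as "an immediate consequence of Lemma~\ref{O1Solutionsg} and Lemma~\ref{HenselsLemma1}" and records exactly your observation for part~(2) (that $g_1(m,A,B,a)\equiv 0\pmod p$ has no solutions when $p\mid a$ and $A\not\equiv B\pmod p$). Your elaboration fills in the bookkeeping the paper leaves implicit --- using the second line of \eqref{Expansionsofgg1g2} as the differencing hypothesis of Lemma~\ref{HenselsLemma1}, restricting the domain to base solutions where $g_1$ is a unit, and verifying the coherence of square-root branches --- all of which is exactly what one must check.
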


The remainder of this subsection is concerned with the singular solutions to $g(m,A,B,a)\equiv k\pmod{p^{\kappa}}$ in the case $p\nmid a$, that is, those solutions for which $g_1(m,A,B,a)\equiv 0\pmod p$. The following lemma, which will ensure non-singularity of certain congruences, is an elementary exercise.

\begin{lemma}
\label{NonSingularity}
Let $p\neq 2$, $p\nmid a$, and $p\nmid A$ or $p\nmid B$. Then, the system of congruences
\[ g(m,A,B,a)\equiv g_1(m,A,B,a)\equiv 0\pmod p \]
has no solutions in $m$. If additionally $p\neq 3$, then the system of congruences
\[ g_1(m,A,B,a)\equiv g_2(m,A,B,a)\equiv 0\pmod p \]
has no solutions in $m$.
\end{lemma}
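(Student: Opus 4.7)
The strategy is to reduce both claims to the observation that a relation of the form $X^2\equiv Y^2\pmod p$ would force $p\mid a$, against our standing hypothesis. Let $X:=\overline{((m+a)u)_{1/2}}$ and $Y:=\overline{(mu)_{1/2}}$, which are well-defined and nonzero in $(\mathbb{Z}/p\mathbb{Z})^{\times}$ whenever $m$ and $m+a$ lie in $u(\mathbb{Z}/p\mathbb{Z})^{\times}{}^2$; moreover, $X^2\equiv\overline{(m+a)u}$ and $Y^2\equiv\overline{mu}\pmod p$, so
\[ X^2-Y^2\equiv\overline{(m+a)u}-\overline{mu}\equiv -\overline{mu\cdot(m+a)u}\cdot au\pmod p. \]
Since $p\nmid au$ (as $u$ is a unit and $p\nmid a$), we see that $X^2\equiv Y^2\pmod p$ is impossible. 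Rewriting the three functions in terms of $X,Y$, we have
\[ g\equiv u(AX-BY),\quad g_1\equiv\bar 2u^2(BY^3-AX^3),\quad g_2\equiv 3\cdot\bar 4\cdot u^3(AX^5-BY^5)\pmod p. \]

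For (a), suppose $g\equiv g_1\equiv 0\pmod p$, so $AX\equiv BY$ and $AX^3\equiv BY^3$. If $p\mid A$, then $BY\equiv 0$; since $Y$ is a unit this forces $p\mid B$, contradicting the hypothesis $p\nmid A$ or $p\nmid B$. Hence $AX$ is a unit, and so is $BY$. Multiplying $AX\equiv BY$ by $X^2$ and using $AX^3\equiv BY^3$ gives $BY\cdot X^2\equiv BY\cdot Y^2\pmod p$, whence $X^2\equiv Y^2\pmod p$, a contradiction by the preceding paragraph.

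For (b), the hypothesis $p\neq 3$ ensures that $g_1\equiv g_2\equiv 0\pmod p$ is equivalent to $AX^3\equiv BY^3$ and $AX^5\equiv BY^5$. The same dichotomy as above shows that both sides must be units (else $p\mid A$ and $p\mid B$). Multiplying $AX^3\equiv BY^3$ by $X^2$ and using $AX^5\equiv BY^5$ yields $BY^3(X^2-Y^2)\equiv 0\pmod p$, hence $X^2\equiv Y^2\pmod p$, again a contradiction.

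The entire argument is short and elementary; the only conceivable obstacle is bookkeeping the substitution for $X,Y$ and verifying that the two hypotheses ($p\nmid a$ and the non-divisibility of $A$ or $B$) are each used exactly where claimed, so no additional estimates or tools beyond the identity $X^2-Y^2\equiv-au\cdot\overline{mu(m+a)u}\pmod p$ are needed.
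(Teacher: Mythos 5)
Your proof is correct and takes essentially the same approach as the paper: combine the two congruences to deduce $X^2\equiv Y^2\pmod p$ and observe that this forces $p\mid au$. The paper's own proof elides the check that $p\nmid A$ and $p\nmid B$ (which is needed before one can cancel and reach $X^2\equiv Y^2$); you spell this step out explicitly, which is a worthwhile clarification but not a different argument.
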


\begin{proof}
We consider the first statement; the second is entirely analogous. Assume that
\[ A\overline{\big((m+a)u\big)_{1/2}}\equiv B\overline{(mu)_{1/2}}\pmod p,\qquad A\overline{\big((m+a)u\big)_{1/2}}^3\equiv B\overline{(mu)_{1/2}}^3\pmod p. \]
Then $(m+a)u\equiv mu\pmod p$, contradicting $p\nmid (au)$.
\end{proof}

We can use the previous simple observation in the proof of the following.

\begin{lemma}
\label{Henselg1}
Let $p\not\in\{2,3\}$, $p\nmid a$, and $p\nmid A$ or $p\nmid B$. Then the congruence
\[ g_1(m,A,B,a)\equiv 0\pmod p \]
has $\textnormal{O}(1)$ solutions $m^{\flat}_1,\dots,m^{\flat}_{\omega}$. Furthermore, for every $\kappa\geqslant 1$, the congruence
\[ g_1(m,A,B,a)\equiv 0\pmod {p^{\kappa}} \]
has exactly $\omega$ solutions modulo $p^{\kappa}$. In fact, these solutions may be written as $m^{[\kappa]}_1,\dots,m^{[\kappa]}_{\omega}$ with $m^{[\kappa]}_i\equiv m^{\flat}_i$ for every $1\leqslant i\leqslant\omega$.
\end{lemma}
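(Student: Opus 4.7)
The plan is to first establish the base count of solutions modulo $p$, and then lift each of them uniquely to all higher prime powers by invoking the Hensel-type statement in Lemma \ref{HenselsLemma1}.

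For the base case $\kappa=1$, I would eliminate the half-integer powers appearing in
\[ g_1(m,A,B,a)=-\bar{2}Au^2\overline{\big((m+a)u\big)_{1/2}}^3+\bar{2}Bu^2\overline{(mu)_{1/2}}^3 \]
by squaring the congruence $g_1(m,A,B,a)\equiv 0\pmod p$. Since $\overline{((m+a)u)_{1/2}}^2\equiv \overline{(m+a)u}\pmod p$ and similarly for $\overline{(mu)_{1/2}}^2$, this reduces to $A^2\overline{m+a}^3\equiv B^2\bar m^3\pmod p$, or equivalently, after clearing denominators,
\[ A^2 m^3\equiv B^2(m+a)^3\pmod p. \]
If $A^2\not\equiv B^2\pmod p$ this is a nontrivial cubic in $m$, and so has $\textnormal{O}(1)$ solutions. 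If instead $A^2\equiv B^2\not\equiv 0\pmod p$ the cubic terms cancel and the congruence reduces to $3aB^2m^2+3a^2B^2m+a^3B^2\equiv 0\pmod p$, which is a genuine quadratic (using $p\neq 3$, $p\nmid a$, and $B\not\equiv 0\pmod p$) and so again has $\textnormal{O}(1)$ solutions. The remaining case where exactly one of $A,B$ vanishes modulo $p$ yields $\bar m^3\equiv 0$ or $\overline{m+a}^3\equiv 0\pmod p$, which has no admissible solutions (since $m\in u(\mathbb{Z}/p\mathbb{Z})^{\times2}$). In every case, the squared congruence has at most finitely many roots, and hence so does the original $g_1\equiv 0\pmod p$; call these $m_1^{\flat},\dots,m_{\omega}^{\flat}$ with $\omega=\textnormal{O}(1)$.

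For the lift to higher $\kappa$, I would apply Lemma \ref{HenselsLemma1} with $f=g_1(\,\cdot\,,A,B,a)$ and $f_1=g_2(\,\cdot\,,A,B,a)$. The required compatibility
\[ g_1(m+p^{\mu}t,A,B,a)-g_1(m,A,B,a)-p^{\mu}g_2(m,A,B,a)\,t\in p^{\mu+1}\mathbb{Z}/p^{\lambda}\mathbb{Z} \]
is precisely the third line of the differencing expansions \eqref{Expansionsofgg1g2}. The crucial non-degeneracy hypothesis $g_2(m_i^{\flat},A,B,a)\in(\mathbb{Z}/p\mathbb{Z})^{\times}$ is supplied by the second statement of Lemma \ref{NonSingularity}, which is exactly the place where the excluded prime $p=3$ enters. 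With these two ingredients, Lemma \ref{HenselsLemma1} yields $K(p^{\kappa})=K(p)=\omega$ for every $\kappa\geqslant 1$, and moreover produces the solutions as unique Hensel lifts $m_i^{[\kappa]}\equiv m_i^{\flat}\pmod p$, which is the final assertion of the lemma.

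The only real subtlety is in the mod-$p$ counting step, specifically the degenerate case $A^2\equiv B^2\pmod p$ where one must verify that the reduced polynomial is still nondegenerate, which is precisely why the hypotheses $p\neq 3$ and $p\nmid a$ are imposed; everything else is formal once the two supporting lemmas (\ref{HenselsLemma1} and \ref{NonSingularity}) are in hand.
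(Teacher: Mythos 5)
Your proof is correct and takes essentially the same route as the paper: square the congruence $g_1\equiv 0\pmod p$ to reduce to a nonzero polynomial congruence of degree at most three with $\textnormal{O}(1)$ roots, and then lift these to arbitrary $p^{\kappa}$ via Lemma \ref{HenselsLemma1}, with the required unit-invertibility of $g_2$ at the base solutions supplied by the second statement of Lemma \ref{NonSingularity}. One small remark: $p\neq 3$ is not actually needed for the mod-$p$ count in your case $A^2\equiv B^2$, since the constant term $a^3B^2$ of the reduced polynomial is already a unit regardless; as you correctly observe later, $p\neq 3$ enters only through Lemma \ref{NonSingularity} in the Hensel lifting step.
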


\begin{proof}
We start with the congruence $g_1(m,A,B,a)\equiv 0\pmod p$, which we rewrite as
\[ A\overline{\big((m+a)u\big)_{1/2}}^3\equiv B\overline{(mu)_{1/2}}^3. \]
Squaring both sides and rearranging, it follows that
\[ \bar{A}^2(m+a)^3-\bar{B}^2m^3\equiv 0\pmod p. \]
This is at most a cubic congruence in $m$ modulo $p$, and certainly its leading and constant coefficients cannot both vanish. Therefore it has $\text{O}(1)$ solutions modulo $p$, say, $m^{\flat}_1,\dots,m^{\flat}_{\omega}$. According to Lemma~\ref{NonSingularity}, each of these solutions satisfies $g_2(m,A,B,a)\equiv 0\pmod p$. Thus the remaining claims follow, in light of \eqref{Expansionsofgg1g2}, from  Lemma \ref{HenselsLemma1}.
\end{proof}

Applying Lemma~\ref{Henselg1} with $\kappa=s$, we obtain $\omega$ solutions
\[ m_1,\dots,m_{\omega} \]
satisfying $g_1(m,A,B,a)\equiv 0\pmod{p^s}$; we denote
\begin{equation}
\label{Definitionki}
k_i=g(m_i,A,B,a).
\end{equation}
We stress again that, according to Lemma~\ref{NonSingularity}, all of these solutions satisfy
\begin{equation}
\label{kiNotZero}
k_i\not\equiv 0\pmod p
\end{equation}
as well as
\[ g_2(m_i,A,B,a)\not\equiv 0\pmod p. \]
We are now ready for the following lemma, which is of key importance in solving our stationary phase problem.

\begin{lemma}
\label{CountingSingularSolutionsLemma}
Let $p\not\in\{2,3\}$, $p\nmid a$, and $p\nmid A$ or $p\nmid B$. Also, let $k\in\mathbb{Z}$ and $1\leqslant\kappa\leqslant s$. Write $\kappa=2\kappa_{\star}+j$ with $j\in\{0,1\}$. The congruence
\[ g(m,A,B,a)\equiv k\pmod{p^{\kappa}} \]
can have solutions such that
\[ g_1(m,A,B,a)\equiv 0\pmod p \]
only if
\[ I(k)=\big\{1\leqslant i\leqslant\omega:k\equiv k_i\bmod{p^{\min(\kappa,2)}}\big\}\neq\emptyset. \]
For each $i\in I(k)$, let
\[ s_i=\begin{cases} p^{\kappa_{\star}}, &p^{\kappa}\mid (k-k_i),\\ p^{\mu}, &p^{2\mu}\exmid (k-k_i)\text{ for some }1\leqslant\mu\leqslant\kappa_{\star},\\ 0,&\text{else}.\end{cases} \]
Then the congruence $g(m,A,B,a)\equiv a\pmod{p^{\kappa}}$ has at most
\[ \ll\sum_{i\in I(k)}s_i \]
solutions modulo $p^{\kappa}$ such that $g_1(m,A,B,a)\equiv 0\pmod p$. In particular, denoting
\[ \rho(k)=\max_{1\leqslant i\leqslant\omega}\ord_p(k-k_i), \]
this number of solutions is
\[ \textnormal{O}\left(p^{\left\lfloor\frac12\min(\rho(k),\kappa)\right\rfloor}\right). \]
\end{lemma}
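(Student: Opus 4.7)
The plan is to reduce the enumeration of singular solutions to a $p$-adically nondegenerate quadratic congruence for $t$, via Taylor-expanding $g$ around each $m_i$, and then to count by a case analysis on $v := \ord_p(k - k_i)$. By Lemma~\ref{Henselg1}, any $m$ satisfying $g_1(m, A, B, a) \equiv 0 \pmod p$ reduces modulo $p$ to one of $m^{\flat}_1, \dots, m^{\flat}_{\omega}$, and so can be parametrized as $m = m_i + pt$ with $t \in \mathbb{Z}/p^{\kappa - 1}\mathbb{Z}$. Since $g$ is built from the $p$-adic square-root of a unit, it is $p$-adically analytic near $m_i$; iterating the differencing relations~\eqref{Expansionsofgg1g2} (equivalently, using the convergent expansion $(1+x)^{-1/2} \in \mathbb{Z}_p[[x]]$, valid for odd $p$) yields
\[ g(m_i + pt) = k_i + g_1(m_i) \cdot pt + \bar{2} g_2(m_i) \cdot p^2 t^2 + \sum_{r \geq 3} c_r \cdot p^r t^r, \qquad c_r \in \mathbb{Z}_p. \]
Three structural facts drive the remainder: (i) $g_1(m_i) \equiv 0 \pmod{p^s}$ by construction of the $m_i$, so the linear term is $\equiv 0 \pmod{p^\kappa}$; (ii) $g_2(m_i) \in \mathbb{Z}_p^{\times}$ by Lemma~\ref{NonSingularity}; and (iii) each term of order $r \geq 3$ has $p$-adic valuation $\geq r(1 + \nu) > 2 + 2\nu$, where $\nu := \ord_p(t)$, so the quadratic term strictly dominates whenever $2 + 2\nu < \kappa$. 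The immediate consequence $g(m_i + pt) \equiv k_i \pmod{p^{\min(\kappa, 2)}}$ forces the necessary condition $i \in I(k)$.

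For a fixed $i \in I(k)$, I would then case-analyze on $v = \ord_p(k - k_i)$. If $v$ is odd with $v < \kappa$, the quadratic-dominated left-hand side has even $p$-adic valuation, contradicting the odd valuation of $k - k_i$, and there are no solutions (consistent with $s_i = 0$). If $v \geq \kappa$, a valuation argument via (iii) forces $t \equiv 0 \pmod{p^{\lceil (\kappa - 2)/2 \rceil}}$, admitting exactly $p^{\kappa_\star} = s_i$ residues modulo $p^{\kappa - 1}$ in either parity $j \in \{0, 1\}$. If $v = 2\mu$ with $1 \leq \mu \leq \kappa_\star$, balancing valuations forces $\nu = \mu - 1$ (together with an additional contribution from $\nu \geq \kappa_\star$ only in the edge case $\mu = \kappa_\star$, $j = 0$); substituting $t = p^{\mu - 1} u$ and dividing by $p^{2\mu}$ produces
\[ g_2(m_i) \cdot u^2 \equiv -2(k - k_i)/p^{2\mu} \pmod{p^{\kappa - 2\mu}}, \]
perturbed by a $p$-adic tail that is Hensel-absorbable. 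Since $g_2(m_i)$ is a unit, Lemma~\ref{HenselsLemma1} yields $O(1)$ solutions for $u$ modulo $p^{\kappa - 2\mu}$, lifting to $\ll p^\mu = s_i$ residues of $t$ modulo $p^{\kappa - 1}$ (the edge-case contribution $p^{\kappa_\star - 1}$ is subsumed in this bound). Summing over $i \in I(k)$ then gives the asserted bound $\sum_{i \in I(k)} s_i$, and the ``in particular'' estimate follows from $\omega = \textnormal{O}(1)$ and the universal bound $s_i \leq p^{\lfloor \min(\ord_p(k - k_i), \kappa)/2 \rfloor}$, maximized at an index realizing $\rho(k)$.

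The main obstacle will be organizing the Case $v = 2\mu$ cleanly: verifying that the cubic-and-higher tail $\sum_{r \geq 3} c_r p^r t^r$ is absorbed harmlessly after the rescaling $t = p^{\mu - 1} u$ so that the rescaled equation is genuinely Hensel-solvable, and accounting precisely for the boundary subrange $\nu \geq \kappa_\star$ when $\mu = \kappa_\star$ and $j = 0$. Both points rest on the unit property $g_2(m_i) \in \mathbb{Z}_p^{\times}$ supplied by Lemma~\ref{NonSingularity}; without it the critical point $m_i$ would be degenerate and the quadratic reduction would fail, so confirming this structural nondegeneracy is in fact the conceptual heart of the argument.
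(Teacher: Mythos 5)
Your proposal is correct and reaches the same conclusion via the same conceptual strategy as the paper's proof: stationary phase near the nondegenerate critical points $m_i$, using $g_1(m_i)\equiv 0\pmod{p^s}$ (so the linear term is invisible at scale $p^{\kappa}$) together with $g_2(m_i)\in\mathbb{Z}_p^\times$ from Lemma~\ref{NonSingularity} (so the quadratic term is genuinely nondegenerate), followed by a Hensel-type count. The difference is technical packaging. The paper lifts iteratively, re-centering at each step with the two-term expansion \eqref{Expansionsofgg1g2}: it first solves a quadratic congruence modulo $p$ for the leading digit of $t$, then performs linear liftings $\varsigma\mapsto\varsigma+1$ (the linear term dominates because $p^\mu$ exactly divides $g_1(m)$ once the leading digit is chosen nontrivially). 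You instead expand $g(m_i+pt)$ once as a $\mathbb{Z}_p[[t]]$-series — valid since $\binom{-1/2}{r}\in\mathbb{Z}_p$ for $p\neq 2$ — and invoke Lemma~\ref{HenselsLemma1} in a single step after the rescaling $t=p^{\mu-1}u$. Your ``obstacle'' checklist is the right one, and the checks do go through: after rescaling the tail $\sum_{r\geq 3}c_r p^{(r-2)\mu}u^r$ still has $\mathbb{Z}_p$ coefficients with valuation at least $\mu\geq 1$, and the formal derivative $F'(u)$ is a unit at every root because $g_2(m_i)$ and $(k-k_i)/p^{2\mu}$ are both units (forcing every root $u$ to be a unit). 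The paper's step-by-step variant requires only the finitely-terminating estimate $\mathbf{M}_{p^{3\kappa}}$ from \eqref{Expansionsofgg1g2}, so it is slightly more self-contained, while your one-shot version is more compact. Finally, the ``edge case'' $\mu=\kappa_\star$, $j=0$ that you flag is in fact a non-issue: there $v = 2\kappa_\star = \kappa$, so it falls under your $v\geq\kappa$ case and matches the first branch $s_i = p^{\kappa_\star}$ in the lemma's definition; no separate contribution need be accounted for.
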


\begin{proof}
According to Lemma~\ref{Henselg1}, every $m$ such that $g_1(m,A,B,a)\equiv 0\pmod p$ satisfies $m\equiv m_i\pmod p$ for exactly one $1\leqslant i\leqslant\omega$. If $m\equiv m_i\pmod {p^s}$, then according to \eqref{Expansionsofgg1g2} we have that
\[ g(m,A,B,a)\equiv k_i\pmod{p^s}, \quad g_1(m,A,B,a)\equiv 0\pmod{p^s}. \]
Otherwise, write $m=m_i+p^{\mu}t$ for some $1\leqslant\mu<s$ and $p\nmid t$. Using \eqref{Expansionsofgg1g2}, we find that
\[ p^{2\mu}\exmid \big(g(m,A,B,a)-k_i\big),\quad p^{\mu}\exmid g_1(m,A,B,a). \]
In either case, we see that $p^2\mid\big(g(m,A,B,a)-k_i\big)$.

If $\kappa=1$, this shows that solutions of $g(m,A,B,a)\equiv k\pmod p$ such that $g_1(m,A,B,a)\equiv 0\pmod p$ exist only if $k\equiv k_i\pmod p$ for some $1\leqslant i\leqslant\omega$ and that each such solution $m$ must satisfy $m\equiv m_i\pmod p$ for some $i\in I(k)$; in particular, the number of solutions is $\text{O}(1)$. This completes the proof in the case $\kappa=1$.

If $2\leqslant\kappa\leqslant s$, then $k\equiv k_i\pmod{p^2}$ and so $I(k)\neq\emptyset$. We distinguish two cases: $k\equiv k_i\pmod{p^{\kappa}}$ and $k\not\equiv k_i\pmod{p^{\kappa}}$.

In the first case, write $\kappa=2\kappa_{\star}+j$, $\kappa_{\star}\geqslant 1$, $j\in\{0,1\}$. We have that $p^{\kappa}\mid (k-k_i)$, and the congruence to be solved is equivalent to
\[ g(m,A,B,a)\equiv k_i\pmod{p^{\kappa}},\quad m\equiv m_i\pmod p. \]
One solution of this congruence is $m\equiv m_i\pmod{p^{\kappa}}$. Otherwise, and writing $m=m_i+p^{\mu}t$ for some $1\leqslant\mu<s$ and $p\nmid t$, we cannot have $2\mu<\kappa$, that is, we must have $2\mu\geqslant 2\kappa_{\star}+j$ and hence $\mu\geqslant\kappa_{\star}+j$. Keeping in mind that we must have $m\equiv m_i\pmod{p^{\mu}}$, we obtain at most
\[ \text{O}(p^{\kappa-\mu})=\text{O}(p^{\kappa_{\star}}) \]
solutions for $m$ modulo $p^{\kappa}$.

In the second case, let $p^{\lambda}\exmid (k-k_i)$ for some $2\leqslant\lambda<\kappa$. In that case, $p^{\lambda}\exmid \big(g(m,A,B,a)-k_i)$, and so we must have $\lambda=2\mu$ for some $1\leqslant\mu<\lambda<\kappa$. Therefore, $p^{\mu}\exmid (m-m_i)$, and $p^{\mu}\exmid g_1(m,A,B,a)$.

Fix one such solution $m_0$. We now count the number of solutions of
\begin{equation}
\label{ToBeCountedSemiSingular}
g(m,A,B,a)\equiv k\pmod{p^{\mu+\varsigma}},\quad m\equiv m_0\pmod{p^{\mu}}
\end{equation}
modulo $p^{\varsigma}$ for every $\mu\leqslant\varsigma\leqslant\kappa$. Note that $\kappa-\mu\geqslant \mu+1$. For $\varsigma=\mu$, we obviously have exactly one such solution.

We use the second relationship from \eqref{Expansionsofgg1g2}:
\begin{equation}
\label{WithIota}
g(m+p^{\iota}t,A,B,a)=g(m,A,B,a)+g_1(m,A,B,a)\cdot p^{\iota}t+\bar{2}\cdot g_2(m,A,B,a)\cdot p^{2\iota}t^2+\mathbf{M}_{p^{3\iota}}.
\end{equation}
Using \eqref{WithIota} with $\iota=\mu$, we see that the congruence
\[ g(m_0+p^{\mu}t,A,B,a)\equiv k\pmod{p^{2\mu+1}} \]
is equivalent to
\[ \bar{2}\cdot g_2(m_0,A,B,a)\cdot t^2+\frac{g_1(m_0,A,B,a)}{p^{\mu}}t+\frac{g(m_0,A,B,a)-k}{p^{2\mu}}\equiv 0\pmod p. \]
This is a nontrivial quadratic congruence in $t$, and so it has $\text{O}(1)$ solutions in $t$ modulo $p$. Corresponding to this are $\text{O}(1)$ solutions $m$ modulo $p^{\mu+1}$ of \eqref{ToBeCountedSemiSingular} with $\varsigma=\mu+1$.

We now prove that, given a $\varsigma\geqslant \mu+1$ and a solution of $m_1$ of
\[ g(m,A,B,a)\equiv k\pmod{p^{\mu+\varsigma}}, \]
there exists a unique $m_2$ modulo $p^{\varsigma+1}$ such that
\[ g(m,A, B,a)\equiv k\pmod{p^{\mu+\varsigma+1}},\quad m_2\equiv m_1\pmod{p^{\varsigma}}. \]
Indeed, writing $m_2=m_1+p^{\varsigma}t$ and using \eqref{WithIota} with $\iota=\varsigma$ (and noting that $2\varsigma\geqslant\mu+\varsigma+1$), the congruence $g(m_1+p^{\varsigma}t,A,B,a)\equiv k\pmod{p^{\mu+\varsigma}}$ is equivalent to
\[ \frac{g_1(m,A,B,a)}{p^{\mu}}t+\frac{g(m,A,B,a)-k}{p^{\mu+\varsigma}}\equiv 0\pmod{p}. \]
This is a nontrivial linear congruence in $t$, and so it has a unique solution in $t$ modulo $p$. Corresponding to this is a unique solution $m_2$ modulo $p^{\varsigma+1}$ of $g(m,A,B,a)\equiv k\pmod{p^{\mu+\varsigma+1}}$ such that $m_2\equiv m_1\pmod{p^{\varsigma}}$.

Putting everything together, we have proved that, for every $\varsigma\geqslant\mu$, the system \eqref{ToBeCountedSemiSingular} has $\text{O}(1)$ solutions modulo $p^{\varsigma}$. In particular, there are $\text{O}(1)$ solutions modulo $p^{\kappa-\mu}$ of
\[ g(m,A,B,a)\equiv k\pmod{p^{\kappa}}\quad m\equiv m_0\pmod{p^{\mu}}. \]
Adding over all $\text{O}(1)$ values of $m_0$, we finally obtain
\[ \text{O}(p^{\mu}) \]
solutions of $g(m,A,B,a)\equiv k\pmod{p^{\kappa}}$ modulo $p^{\kappa}$. 
\end{proof}

We see from Lemma~\ref{CountingSingularSolutionsLemma} that the numbers $k_i$ play a central role in counting the solutions to $g(m,A,B,a)\equiv k\pmod{p^{\kappa}}$ such that $g_1(m,A,B,a)\equiv 0\pmod p$. In the following lemma, we make the dependence of these special values on the parameter $a$ a bit more explicit.

\begin{lemma}
\label{FiniteSetT}
Let $p\neq 2$, $p\nmid a$, and $p\nmid A$ or $p\nmid B$, and let $k_1, \ldots, k_{\omega}$ be defined as in \eqref{Definitionki}. There exists a finite set $T \subseteq \mathbb{Z}\setminus p\mathbb{Z}$ of absolutely bounded cardinality whose elements  depend on $p^s$, $A$, and $B$ only, such that for every $k\in\mathbb{Z}$ and every $1\leqslant\lambda\leqslant k$, the congruence $k\equiv k_i\pmod{p^{\lambda}}$ for some $1\leqslant i\leqslant\omega$ implies that
\[ k^2a\equiv t\pmod{p^{\lambda}} \]
for some $t\in T$.
\end{lemma}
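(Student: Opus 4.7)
The plan is to compute $k_i$ explicitly via the square-root formulas and observe that, although the critical points $m_i$ depend on $a$, the product $k_i^2a$ modulo $p^s$ does not: it is determined up to a choice of cube root by $p^s$, $A$, $B$ alone.

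First I would unpack the defining relation $g_1(m_i,A,B,a)\equiv 0\pmod{p^s}$, which (after cancelling $\bar 2u^2$) reads $A\,\overline{((m_i+a)u)_{1/2}}^{\,3}\equiv B\,\overline{(m_iu)_{1/2}}^{\,3}\pmod{p^s}$. Since both sides are units for $p$ odd, squaring and using $\overline{x_{1/2}}^{\,2}\equiv\bar x$ yields
\[ A^2m_i^3\equiv B^2(m_i+a)^3\pmod{p^s}. \]
Setting $r_i:=m_i\,\overline{(m_i+a)}\bmod p^s$, the ratio $r_i$ is therefore a cube root of $B^2\overline{A^2}$ modulo $p^s$. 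Since $1-r_i\equiv a\,\overline{(m_i+a)}\pmod{p^s}$ and $p\nmid a$, we have $r_i\not\equiv 1\pmod p$. Crucially, the set of admissible $r_i$ has cardinality at most three and depends only on $p^s$, $A$, $B$.

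Next, writing $x_i=\overline{((m_i+a)u)_{1/2}}$ and $y_i=\overline{(m_iu)_{1/2}}$, I use $Ax_i^{\,3}\equiv By_i^{\,3}\pmod{p^s}$ to eliminate $B$ via
\[ Ax_i-By_i\equiv Ax_i\cdot\frac{y_i^{\,2}-x_i^{\,2}}{y_i^{\,2}}\equiv Ax_i\cdot\frac{a}{m_i+a}\pmod{p^s}, \]
using $y_i^{\,2}-x_i^{\,2}\equiv\bar u\cdot a\,\overline{m_i(m_i+a)}$ and $\overline{y_i^{\,2}}\equiv m_iu$. Hence $k_i=u(Ax_i-By_i)\equiv Aux_i\cdot a/(m_i+a)$, and squaring together with $x_i^{\,2}\equiv\bar u\,\overline{(m_i+a)}$ and multiplying by $a$ gives the key identity
\[ k_i^{\,2}\,a\equiv A^2u\Bigl(\frac{a}{m_i+a}\Bigr)^{\!3}\equiv A^2u(1-r_i)^3\pmod{p^s}. \]

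It remains to set
\[ T=\bigl\{A^2u(1-r)^3\bmod p^s:\ r^3\equiv B^2\overline{A^2}\pmod{p^s},\ r\not\equiv 1\pmod p\bigr\}, \]
which has at most three elements, each coprime to $p$, and depends only on $p^s$, $A$, $B$. For $\lambda\leqslant s$, the congruence $k\equiv k_i\pmod{p^\lambda}$ forces $k^2a\equiv k_i^{\,2}a\pmod{p^\lambda}$, which by the above identity reduces modulo $p^\lambda$ to some $t\in T$. There is no real obstacle here; the entire argument is algebraic, and the content lies in recognising that the stationary condition $g_1(m_i)\equiv 0$ decouples the $a$-dependence of $k_i^{\,2}a$ from that of the individual $m_i$.
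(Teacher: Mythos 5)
Your argument is correct and takes a genuinely different (and arguably cleaner) route than the paper's proof. The paper normalizes $a$ by fixing coset representatives $v_1,v_2$ of $(\mathbb{Z}/p\mathbb{Z})^{\times}/(\mathbb{Z}/p\mathbb{Z})^{\times 2}$, builds $T$ out of the critical data of the four reference problems $g_1(\cdot,A,\pm B,v_j)\equiv 0$, and then, for a general $a$ in the coset of $v_j$, changes variables $m\equiv a\bar{v}_jx$ and chases branch-of-square-root signs $\epsilon_1,\epsilon_2$ to reduce to the reference case. You instead isolate an intrinsic invariant: squaring the stationary condition gives $A^2m_i^3\equiv B^2(m_i+a)^3\pmod{p^s}$, so the ratio $r_i=m_i\,\overline{(m_i+a)}$ is one of the at most three cube roots of $B^2\overline{A^2}\pmod{p^s}$ (a set independent of $a$), and the explicit identity $k_i^2a\equiv A^2u(1-r_i)^3\pmod{p^s}$ then makes the $a$-independence of $T=\{A^2u(1-r)^3:r^3\equiv B^2\overline{A^2},\ r\not\equiv 1\}$ manifest. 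This is shorter and avoids the change of variables entirely; it also produces a smaller set $T$ (size at most $3$, versus the paper's a priori up to $12$). Two small remarks: your manipulations require $p\nmid AB$, but if exactly one of $A,B$ is a unit then $g_1\equiv 0\pmod p$ has no solutions, so $\omega=0$ and the lemma is vacuous with $T=\emptyset$ --- worth a sentence; and, as in the paper's version, the set $T$ implicitly depends also on the fixed coset representative $u$, which is part of the ambient data (determined by $n_1,n_2$), so this is consistent with the intended meaning of ``depends on $p^s$, $A$, $B$ only''.
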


\begin{proof}
Let $\{v_1,v_2\}\in(\mathbb{Z}/p\mathbb{Z})^{\times}$ be fixed representatives of the two cosets of the subgroup $(\mathbb{Z}/p\mathbb{Z})^{\times}{}^2$, and let $\bar{v}_jv_j\equiv 1\pmod{p^s}$. According to Lemma~\ref{Henselg1}, each of the four congruences
\[ g_1(m,A,\epsilon B,v_j)\equiv 0\pmod{p^s}, \]
where $\epsilon\in\{\pm 1\}$ and $j\in\{1,2\}$, has $\text{O}(1)$ solutions modulo $p^s$, which we denote as
\[ m^{\epsilon,v_j}_1,\dots,m^{\epsilon,v_j}_{\omega(\epsilon,v_j)}. \]
Let
\[ k^{\epsilon,v_j}_r=g\big(m^{\epsilon,v_j}_r,A,\epsilon B,v_j\big). \]
Recall that $k^{\epsilon,v_j}_r\not\equiv 0\pmod p$ by \eqref{kiNotZero}. We claim that the set
\[ T=\left\{\big(k^{\epsilon,v_j}_r\big)^2\bar{v}_j:\epsilon\in\{\pm 1\},\,\,j\in\{1,2\},\,\,1\leqslant r\leqslant\omega(\epsilon,v_j)\right\} \]
satisfies all our properties.

Clearly, it suffices to prove that, for every $p\nmid a$, and with $k_1, \ldots, k_{\omega}$ defined as in \eqref{Definitionki}, we have that, for every $1\leqslant i\leqslant\omega$, there exists a $t\in T$ such that
\[ k_i^2a\equiv g(m_i,A,B,a)^2a\equiv t\pmod{p^s}. \]

Indeed, let $v_j\in\{v_1,v_2\}$ be the chosen representative of the coset $a(\mathbb{Z}/p\mathbb{Z})^{\times}{}^2$. The values $m=m_i$ are solutions of the congruence
\[ A\overline{\big((m+a)u\big)_{1/2}}^3-B\overline{(mu)_{1/2}}^3\equiv 0\pmod{p^s}. \]
Write
\[ m\equiv a\bar{v}_jx\pmod{p^s}, \]
and let $\epsilon_1=\epsilon_1(x,v_j,a)$ and $\epsilon_2=\epsilon_2(x,v_j,a)$ be such that
\begin{gather*}
\big((x+v_j)u(a\bar{v}_j)\big)_{1/2}\equiv\epsilon_1\cdot\big((x+v_j)u\big)_{1/2}(a\bar{v}_j)_{1/2}\pmod{p^s},\\
\big((xu)(a\bar{v}_j)\big)_{1/2}=\epsilon_2\cdot (xu)_{1/2}(a\bar{v}_j)_{1/2}\pmod{p^s}.
\end{gather*}
We stress that $\epsilon_1$ and $\epsilon_2$ may depend on $x$, and that the definition of the set $T$ is such that this causes no problem. With this change of variables, the above congruence is equivalent to the following congruence in $x$ such that $(x+v_j)u,\,xu\in(\mathbb{Z}/p\mathbb{Z})^{\times}{}^2$:
\[ g_1(x,A,\epsilon_1\epsilon_2B,v_j)=A\overline{\big((x+v_j)u\big)_{1/2}}^3-\epsilon_1\epsilon_2 B\overline{(xu)_{1/2}}^3\equiv 0\pmod{p^s}. \]
According to Lemma~\ref{Henselg1}, this means that
\[ x\equiv m^{\epsilon_1\epsilon_2,v_j}_r\pmod{p^s} \]
for some $1\leqslant r\leqslant\omega(\epsilon_1\epsilon_2,v_j)$. Consequently, we find that
\begin{align*}
k_i=g(m_i,A,B,a)
&=A\overline{\big((m+a)u\big)_{1/2}}-B\overline{(mu)_{1/2}}\\
&\equiv\epsilon_1A\overline{\big((x+v_j)u\big)_{1/2}}\cdot\overline{(a\bar{v}_j)_{1/2}}-\epsilon_2 B\overline{(xu)_{1/2}}\cdot\overline{(a\bar{v}_j)_{1/2}}\\
&\equiv\epsilon_1g\big(m^{\epsilon_1\epsilon_2,v_j}_r,A,\epsilon_1\epsilon_2B,v_j\big)\overline{(a\bar{v}_j)_{1/2}}\pmod{p^s}.
\end{align*}
This final congruence implies that
\[ k_i^2a\equiv \big(k^{\epsilon_1\epsilon_2,v_j},r\big)^2v_j\pmod{p^s}, \]
and the right-hand side is an element of the set $T$ by construction.
\end{proof}

As a consequence of Lemmas~\ref{CountingSingularSolutionsLemma} and \ref{FiniteSetT}, we obtain the following compact statement.

\begin{lemma}
\label{FinalCountSingular}
Let $p>3$, $p\nmid a$, and $p\nmid A$ or $p\nmid B$, and let $1\leqslant\kappa\leqslant s$. There exists a finite set $T \subseteq \mathbb{Z}\setminus p\mathbb{Z}$ of absolutely bounded cardinality whose elements  depend on $p^s$, $A$, and $B$ only,  such that the number of solutions of the congruence
\[ g(m,A,B,a)\equiv k\pmod{p^{\kappa}} \]
such that
\[ g_1(m,A,B,a)\equiv 0\pmod p \]
is at most
\[ \textnormal{O}\left(p^{\left\lfloor\frac12\min(\tilde{\rho}(k^2a),\kappa)\right\rfloor}\right), \]
where
\[ \tilde{\rho}(\ell)=\max_{t\in T}\ord_p(\ell-t). \]
\end{lemma}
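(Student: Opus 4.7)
The plan is straightforward because this lemma is designed to be an immediate synthesis of the two preceding results. First I would invoke Lemma~\ref{CountingSingularSolutionsLemma}, which already provides the bound $\mathrm{O}(p^{\lfloor\frac12\min(\rho(k),\kappa)\rfloor})$ on the number of solutions, where $\rho(k)=\max_{1\leq i\leq\omega}\ord_p(k-k_i)$ is defined via the internal ``singular stationary values'' $k_i=g(m_i,A,B,a)$. The point is that $\rho(k)$ depends on $a$ (through the $k_i$), which is inconvenient for the outer Weyl-differencing application, so we want to replace it by a quantity $\tilde\rho(k^2 a)$ defined relative to a finite set $T$ that is independent of $a$.

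For this replacement, I would take $T$ to be exactly the set produced by Lemma~\ref{FiniteSetT}, whose elements depend only on $p^s$, $A$, $B$. It is automatic that $T\subseteq\mathbb{Z}\setminus p\mathbb{Z}$, since each element has the form $(k^{\epsilon,v_j}_r)^2\bar v_j$ with both factors coprime to $p$ (using \eqref{kiNotZero} applied to the auxiliary problems over the coset representatives $v_1,v_2$).

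The only real step is the elementary comparison $\rho(k)\leq\tilde\rho(k^2 a)$. Set $\lambda=\min(\rho(k),s)$; by definition of $\rho(k)$ there exists some index $j$ with $k\equiv k_j\pmod{p^\lambda}$. Lemma~\ref{FiniteSetT} then produces a $t\in T$ with $k^2 a\equiv t\pmod{p^\lambda}$, i.e.\ $\ord_p(k^2 a-t)\geq\lambda$, which gives $\tilde\rho(k^2 a)\geq\lambda$. Since the bound in Lemma~\ref{CountingSingularSolutionsLemma} is monotone in $\rho(k)$ and is capped at $p^{\lfloor\kappa/2\rfloor}$ anyway, substituting $\tilde\rho(k^2 a)$ for $\rho(k)$ in the exponent yields the stated estimate.

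There is essentially no obstacle here; the substantive work was done in Lemmas~\ref{Henselg1}--\ref{FiniteSetT}, and the present lemma is a book-keeping statement that repackages the bound in a form where the $a$-dependence is isolated in the single argument $k^2 a$ of a function $\tilde\rho$ defined by a set $T$ that does not depend on~$a$. This is precisely the shape needed for the subsequent averaging over~$h$ in the proof of Theorem~\ref{klooster-short}, where $a=hHr_2$ varies.
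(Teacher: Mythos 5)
Your proof is correct and follows exactly the route the paper intends: the paper explicitly introduces the lemma as a consequence of Lemmas~\ref{CountingSingularSolutionsLemma} and \ref{FiniteSetT} and gives no further proof, and your write-up supplies the straightforward comparison $\min(\rho(k),\kappa)\leqslant\min(\tilde\rho(k^2a),\kappa)$ (via Lemma~\ref{FiniteSetT} at level $\lambda=\min(\rho(k),s)$, with the degenerate case $\rho(k)=0$ trivial) together with the observation that $T$ is the set from Lemma~\ref{FiniteSetT} and so has the required properties.
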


\subsection{Stationary phase estimates}
In this subsection, we use the facts from subsection~\ref{HenselLemmata} about the number of solutions to the stationary phase congruence \eqref{BasicCongruence}, below, to estimate the sum
\[ \tilde{\Sigma}:=\Sigma[A,B,a,k;p^s]=\sumstar_{\substack{m\bmod p^s\\ m,m+a\in n_1(\mathbb{Z}/p\mathbb{Z})^{\times}{}^2}}e\left(\frac{f[m,A,B,a,k]}{p^s}\right). \]
We recall our general assumptions 
\[ s\geqslant 2,\,\,p> 3,\text{ and }p\nmid A\text{ or }p\nmid B, \]
as well as
\[ \text{either}\quad p\nmid a\quad\text{or}\quad p\mid a,\,\,A\not\equiv B\pmod p. \]

Write $s=2\kappa+j$, $\kappa\geqslant 1$, $j\in\{0,1\}$. Applying the usual stationary phase argument and the first equality in \eqref{Expansionsofgg1g2}, we find that
\begin{equation}
\label{StationaryPhaseBase}
\begin{aligned}
\tilde{\Sigma}=p^{\kappa}&\sumstar_{\substack{m\bmod{p^{\kappa}},\,\,m,m+a\in n_1(\mathbb{Z}/p\mathbb{Z})^{\times}{}^2\\g(m,A,B,a)\equiv k\bmod{p^{\kappa}}}}e\left(\frac{f[m,A,B,a,k]}{p^s}\right)\\
&\qquad\qquad{}\times\sum_{t\bmod{p^j}}e\left(\frac{\big[\big(g(m,A,B,a)-k\big)/p^{\kappa}\big]\cdot t+\bar{2}g_1(m,A,B,a)\cdot t^2}{p^j}\right).
\end{aligned}
\end{equation}

The outer sum is indexed by solutions of the congruence
\begin{equation}
\label{BasicCongruence}
g(m,A,B,a)\equiv k\pmod{p^{\kappa}}
\end{equation}
modulo $p^{\kappa}$. We write 
\[ \tilde{\Sigma}=\tilde{\Sigma}_0+\tilde{\Sigma}_1, \]
where $\tilde{\Sigma}_0$ and $\tilde{\Sigma}_1$ denote the contributions to the right-hand side of \eqref{StationaryPhaseBase} from those solutions to \eqref{BasicCongruence} for which $g_1(m,A,B,a)\not\equiv 0\pmod p$ and those for which $g_1(m,A,B,a)\equiv 0\pmod p$, respectively.

We first consider $\tilde{\Sigma}_0$. According to Lemma~\ref{NonsingularSolutionsCount}, the congruence \eqref{BasicCongruence} has $\text{O}(1)$ solutions such that $g_1(m,A,B,a)\not\equiv 0\pmod p$. Moreover, in this case, the inner sum in \eqref{StationaryPhaseBase} is a non-trivial quadratic Gau{\ss} sum and is $\text{O}(p^{j/2})$. Combining everything, we find that
\[ \tilde{\Sigma}_0\ll p^{\kappa+(j/2)}=p^{s/2}. \]

We next consider the sum $\tilde{\Sigma}_1$; note that this sum can only be nonempty if $p\nmid a$. Let the finite set $T$ and $\tilde{\rho}(\ell)$ be as in Lemma~\ref{FinalCountSingular}. The number of solutions of \eqref{BasicCongruence} such that $g_1(m,A,B,a)\equiv 0\pmod p$ is
\[ \text{O}\left(p^{\left\lfloor\frac12\min(\tilde{\rho}(k^2a),\kappa)\right\rfloor}\right). \]
If $j=0$, then this shows that
\[ \tilde{\Sigma}_1\ll p^{s/2+\left\lfloor\frac12\min(\tilde{\rho}(k^2a),\kappa)\right\rfloor}. \]
If $j=1$, then the inner sum in \eqref{StationaryPhaseBase} is actually a complete exponential sum with a linear phase, so that only the terms with $g(m,A,B,a)\equiv k\pmod{p^{\kappa+1}}$ contribute. We find that, in this case,
\[ \tilde{\Sigma}_1\ll p^{\kappa+\left\lfloor\frac12\min(\tilde{\rho}(k^2a),\kappa+1)\right\rfloor}=p^{s/2+\lfloor\frac12\min(\tilde{\rho}(k^2a),\kappa+1)\rfloor-\frac12}. \]

Putting everything together, we have proved the following estimate.
\begin{lemma}
\label{CoprimeCase}
Let $p > 3$, and $p\nmid A$ or $p\nmid B$, and $s\geqslant 2$.
\begin{enumerate}
\item If $p\nmid a$, then, letting the finite set $T\subset\mathbb{Z}\setminus p\mathbb{Z}$ and $\tilde{\rho}(\ell)$ be as in Lemma~\ref{FinalCountSingular}, we have that
\[ \tilde{\Sigma}\ll p^{s/2}+p^{\lfloor \frac12s\rfloor+\min\big(\lfloor\frac12\tilde{\rho}(k^2a)\rfloor,\lfloor\frac14(s+1)\rfloor\big)}. \]
In particular, writing $q=q_{\square}^2q_1$ with $q_1\in\{1,p\}$, we have that
\[ \tilde{\Sigma}\ll q^{1/2}\big(k^2a-T,q_{\square}\big)^{1/2}, \]
as well as $\tilde{\Sigma}\ll q^{1/2}$ if $\tilde{\rho}(k^2a)\leqslant 2$ or if $s=2$.
\item If $p\mid a$ and $A\not\equiv B\pmod p$, then
\[ \tilde{\Sigma}\ll q^{1/2}. \]
\end{enumerate}
\end{lemma}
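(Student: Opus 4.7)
The plan is to follow the standard $p$-adic stationary phase route, decomposing $\tilde\Sigma$ according to whether the critical point is regular or singular, and then to use the counting lemmas for stationary-phase congruences from Subsection~\ref{HenselLemmata}. Write $s=2\kappa+j$ with $j\in\{0,1\}$, and detect the congruence $m\equiv m_0\pmod{p^{\kappa}}$ by substituting $m\mapsto m+p^\kappa t$ and averaging over $t\bmod p^{\kappa+j}$. Using the differencing expansion \eqref{Expansionsofgg1g2} for $f[m,A,B,a,k]$, the contribution of cubic and higher terms dies modulo $p^s$, so one obtains the identity \eqref{StationaryPhaseBase}: an outer sum over solutions of the stationary-phase congruence $g(m,A,B,a)\equiv k\pmod{p^\kappa}$, weighted by inner Gau{\ss}-type sums of length $p^j$ with linear coefficient $(g(m,A,B,a)-k)/p^\kappa$ and quadratic coefficient $\bar{2}g_1(m,A,B,a)$.

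The next step is to split $\tilde\Sigma=\tilde\Sigma_0+\tilde\Sigma_1$ according to whether $g_1(m,A,B,a)\not\equiv 0\pmod p$ (regular critical points) or $g_1(m,A,B,a)\equiv 0\pmod p$ (singular critical points). For $\tilde\Sigma_0$, Lemma~\ref{NonsingularSolutionsCount} gives $\mathrm{O}(1)$ regular solutions mod $p^\kappa$; the inner sum is then a non-degenerate quadratic Gau{\ss} sum of size $\mathrm{O}(p^{j/2})$ when $j=1$ (and trivially $\mathrm{O}(1)$ when $j=0$), so $\tilde\Sigma_0\ll p^{\kappa+j/2}=p^{s/2}$. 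In case~(2), Lemma~\ref{NonSingularity} (more precisely the remark following Lemma~\ref{NonsingularSolutionsCount}) shows that $g_1\equiv 0\pmod p$ has no solutions when $p\mid a$ and $A\not\equiv B\pmod p$, so $\tilde\Sigma_1=0$ and the $p^{s/2}$ bound is the whole story.

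For case~(1), $\tilde\Sigma_1$ is handled by Lemma~\ref{FinalCountSingular}, which produces the finite set $T\subset\mathbb{Z}\setminus p\mathbb{Z}$ of Lemma~\ref{FiniteSetT} and bounds the number of singular solutions modulo $p^\kappa$ by $\mathrm{O}\bigl(p^{\lfloor\tfrac12\min(\tilde\rho(k^2a),\kappa)\rfloor}\bigr)$. If $j=0$ one estimates the inner sum trivially to get $\tilde\Sigma_1\ll p^{s/2+\lfloor\tfrac12\min(\tilde\rho(k^2a),\kappa)\rfloor}$. If $j=1$ the inner sum is linear in $t$, so it forces the additional congruence $g(m,A,B,a)\equiv k\pmod{p^{\kappa+1}}$; applying Lemma~\ref{FinalCountSingular} with $\kappa+1$ in place of $\kappa$ yields $\tilde\Sigma_1\ll p^{s/2+\lfloor\tfrac12\min(\tilde\rho(k^2a),\kappa+1)\rfloor-\tfrac12}$. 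In either case, the exponent in the singular contribution is at most $\lfloor s/2\rfloor+\min(\lfloor\tfrac12\tilde\rho(k^2a)\rfloor,\lfloor\tfrac14(s+1)\rfloor)$, giving the displayed bound. Rewriting $q=q_\square^2q_1$ with $q_1\in\{1,p\}$, one has $p^{\min(\lfloor\tilde\rho(k^2a)/2\rfloor,\lfloor(s+1)/4\rfloor)}\ll (k^2a-T,q_\square)^{1/2}$ by the definition of $\tilde\rho$ and of $q_\square$, which yields the clean product form. The remark that the factor may be dropped for $s=2$ (equivalently $q_\square\mid p$) or when $\tilde\rho(k^2a)\leqslant 2$ is immediate, since then $\lfloor\tfrac12\min(\tilde\rho(k^2a),\kappa+1)\rfloor\leqslant 1$ contributes nothing beyond $p^{s/2}$.

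The main obstacle, already surmounted in Subsection~\ref{HenselLemmata}, is the careful count of singular critical points; the rest of the argument is just bookkeeping of the resulting two-variable Gau{\ss} sums and translating the $p$-adic proximity condition $\tilde\rho(k^2a)\geqslant\lambda$ into the gcd factor $(k^2a-T,q_\square)^{1/2}$. No new ideas are required beyond the lemmas already proved.
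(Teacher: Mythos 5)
Your proof reproduces the paper's own argument: the stationary phase identity \eqref{StationaryPhaseBase}, the split $\tilde\Sigma=\tilde\Sigma_0+\tilde\Sigma_1$ according to whether $g_1(m,A,B,a)$ is a unit modulo $p$, Lemma~\ref{NonsingularSolutionsCount} for the regular part, the vanishing of $\tilde\Sigma_1$ in case (2), Lemma~\ref{FinalCountSingular} for the singular count in case (1), and the case split $j\in\{0,1\}$ with the observation that for $j=1$ the singular inner sum degenerates to a Ramanujan-type sum forcing $g(m,A,B,a)\equiv k\pmod{p^{\kappa+1}}$. The arithmetic that converts the two cases into the single exponent $\lfloor s/2\rfloor+\min\bigl(\lfloor\tfrac12\tilde\rho(k^2a)\rfloor,\lfloor\tfrac14(s+1)\rfloor\bigr)$ checks out, as does the translation into $q^{1/2}(k^2a-T,q_\square)^{1/2}$.

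One caveat on your final sentence: the claim that $\lfloor\tfrac12\min(\tilde\rho(k^2a),\kappa+1)\rfloor\leqslant 1$ ``contributes nothing beyond $p^{s/2}$'' is not literally correct. If $\tilde\rho(k^2a)=2$ and $s\geqslant 3$, then $\lfloor\tfrac12\min(\tilde\rho,\kappa+1)\rfloor=1$, and the singular contribution is $\ll p^{\lfloor s/2\rfloor+1}$ (for $j=0$) or $\ll p^{s/2+1/2}$ (for $j=1$), both of which strictly exceed $q^{1/2}=p^{s/2}$; only $\tilde\rho(k^2a)\leqslant 1$ or $s=2$ forces the minimum to be $0$. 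The statement $\tilde\Sigma\ll q^{1/2}$ under the hypothesis $\tilde\rho(k^2a)\leqslant 2$ therefore does not follow from the main displayed bound, and the paper offers no separate justification for it either. This is, however, a peripheral remark: downstream (Lemma~\ref{FinalEstimateSigma}, Proposition~\ref{expsums}, Theorem~\ref{klooster-short}) the second factor is only dropped under the cube-free condition, which after the reduction \eqref{ReductionFormula} corresponds to $s\leqslant 2$, the case that is genuinely covered. So no result in the paper is affected, but you should tighten (or drop) the $\tilde\rho(k^2a)\leqslant 2$ clause rather than assert it as immediate.
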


Combining Lemma~\ref{CoprimeCase} and the bound \eqref{primecasebound}, which covers the case $s=1$, we obtain Lemma~\ref{FinalEstimate}. \qed

 
 \section{Proof of Theorem \ref{variant}}\label{mass} 
 
In this section we indicate the necessary changes if $f_1$ and $f_2$ are Maa{\ss} forms. The Voronoi formula, Lemma \ref{vor}, reads as follows (see e.g.\ \cite[Proposition 1]{HM}).
\begin{lemma}\label{vor1} Let $c \in \mathbb{N}$, $b \in \mathbb{Z}$, and assume $(b, c) = 1$. Let $V$ be a  smooth compactly supported function, and let $N > 0$. Let $\lambda(n)$ denote the normalized Hecke eigenvalues of a  cuspidal Maa{\ss} newform with spectral parameter $t$ for $\SL_2(\mathbb{Z})$.  Then
\begin{displaymath}
  \sum_n \lambda(n) e\left(\frac{bn}{c}\right) V\left(\frac{n}{N}\right) = \frac{N}{c}  \sum_{\pm} \sum_n \lambda(n) e\left(\mp\frac{\bar{b}n}{c}\right) \mathring{V}^{\pm}\left(\frac{n}{c^2/N}\right) 
\end{displaymath} 
where
\[ \mathring{V}^{\pm}(y) = \int_0^{\infty} V(x) \mathcal{J}_{2it}^{\pm}(4\pi \sqrt{xy}) dx \]
with the notation as in \eqref{defJ+}.  
\end{lemma}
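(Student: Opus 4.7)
The plan is to derive this formula in close parallel with the holomorphic case (Lemma~\ref{vor}), via Mellin inversion and the functional equation of the additively twisted $L$-series. First I would write $V(y) = \frac{1}{2\pi i}\int_{(\sigma)} \widehat{V}(-s) y^{s}\, ds$ for $\sigma > 1$ large enough, so that the left-hand side becomes
\[ \frac{1}{2\pi i}\int_{(\sigma)} \widehat{V}(-s) N^{s} L_{b/c}(-s,f)\, ds, \qquad L_{b/c}(s,f) := \sum_{n \geq 1} \lambda(n) e(bn/c) n^{-s}. \]
For level $1$, the cusps $\infty$ and $0$ are $\SL_2(\mathbb{Z})$-equivalent via the Fricke involution, so the Atkin--Lehner/Mellin machinery shows that $L_{b/c}(s,f)$ continues to an entire function and satisfies a functional equation
\[ L_{b/c}(s,f) = \frac{1}{c^{1-2s}} \sum_{\pm} G^{\pm}(s,t_f) \cdot L_{\mp\bar b/c}^{\pm}(1-s,f), \]
where $L_{b/c}^{\pm}(s,f) = \sum_{n \geq 1} \lambda(n)(\pm 1)^{\delta} e(bn/c) n^{-s}$ accounts for the Fourier coefficients at $\pm n$ (which are equal up to the $\pm 1$ eigenvalue of $z \mapsto -\bar z$), and $G^{\pm}(s,t_f)$ are explicit ratios of gamma factors involving $\Gamma(\tfrac{s}{2}\pm it_f)$ and $\Gamma(\tfrac{1-s}{2}\pm it_f)$, with an additional shift for the odd parity case.

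Next I would shift the contour to $\Re(s) = -\sigma$, picking up no residues, and insert the functional equation. After the substitution $s \mapsto 1-s$, the two resulting contour integrals take the shape
\[ \frac{N}{c} \sum_{n \geq 1} \lambda(n) e(\mp \bar b n/c) \cdot \frac{1}{2\pi i}\int_{(\sigma')} \widehat{V}(s-1) \left(\frac{n N}{c^{2}}\right)^{-s} G^{\pm}(1-s,t_f)\, ds. \]
The final step is to recognize the inner Mellin--Barnes integrals as $\mathring V^{\pm}(nN/c^{2})$ for the transforms in the statement. This amounts to verifying the formulas
\[ \widehat{\mathcal{J}_{2it}^{\pm}}(s) = c_{\pm}\, \Gamma\!\left(\tfrac{s}{2}+it\right)\Gamma\!\left(\tfrac{s}{2}-it\right) \cdot H^{\pm}(s,t), \]
which in turn follow from standard tables (e.g.\ \cite[6.561]{GR} for $K_{2it}$, and the corresponding formula for $J_{2it}-J_{-2it}$) and the reflection formula $\Gamma(s)\Gamma(1-s) = \pi/\sin(\pi s)$ to convert $G^{\pm}(1-s,t_f)$ into the required shape.

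The only real obstacle is the bookkeeping of signs and parities: matching the sign of $\bar b$ on the dual frequency with the correct kernel $\mathcal{J}_{2it_f}^{\pm}$, and absorbing the parity factor of $f$ into the hidden sign of $\rho_f(\mp n)$ on the dual side. All other ingredients are routine, and since the formula is used in the classical literature in essentially the stated form (as referenced after the lemma), I would simply cite \cite{HM} or \cite{KMV1} for the detailed verification of the gamma-factor identification.
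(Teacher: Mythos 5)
The paper does not prove Lemma~\ref{vor1}; it cites it directly from \cite[Proposition~1]{HM}. Your proposal correctly outlines the standard derivation used in that reference (Mellin inversion of $V$, analytic continuation and functional equation of the additively twisted $L$-series under $z\mapsto -1/z$, contour shift, and identification of the resulting Mellin--Barnes integrals with the Hankel-type kernels $\mathcal{J}^{\pm}_{2it}$), so your approach matches the source the paper relies on, with the gamma-factor bookkeeping sensibly deferred to the literature.
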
 
Note that $\mathring{V}^{\pm}(y)$ is again a Schwartz class function.
The Gamma factors in the Mellin transform of the weight function \eqref{afe}   depend on the parity of $\chi$, so we sum over odd and even characters separately\footnote{The dependence of the Gamma factors and the root number on the parity of $\chi$ is missing in \cite[p.\ 3-4]{St}.}. Note that the root number of $L(s, f_1 \otimes \chi) \overline{L(s, f_2\otimes \chi)}$ is the product of the signs of $f_1$ and $f_2$, and in particular independent of $\chi$. This yields a congruence condition $n \equiv \pm m$ (mod $d$) for various divisors $d\mid q$. The treatment of the diagonal term $n=m$ remains unchanged, but in \eqref{snmd} we define
\[  S_{N, M, d, q} := \frac{d}{(NM)^{1/2}} \sum_{\substack{n \equiv \pm m \bmod{d}\\ (nm, q) = 1\\ n \not= m}}  \lambda_1(m)\lambda_2(n)  V_1\left(\frac{m}{M}\right) V_2\left(\frac{n}{N}\right). \]
Correspondingly, the definition of $\mathcal{D}(\ell_1, \ell_2, h, N, M)$ in \eqref{defD} is changed into
\begin{equation}\label{defDnew}
\mathcal{D}(\ell_1, \ell_2, h, N , M ) = \sum_{\ell_1 n \mp \ell_2 m = h} \lambda_1(m)\lambda_2(n) V_1\left(\frac{\ell_2 m}{M}\right) V_2\left(\frac{\ell_1 n}{N}\right).
\end{equation}
As remarked in \cite{Bl}, the results in this paper hold for Maa{\ss} forms as well, and they are also insensitive to a change of sign in the summation condition. The proof of Proposition~\ref{bound1} in Section \ref{HeckeResidueClasses} requires only some extra signs at the appropriate places.

In Sections \ref{SpectralDecompositionSection} and \ref{ShiftedSumsAverageSection}, we need to keep track of various extra signs, which arise from two principal sources while following the arguments in Subsections~\ref{CircleMethodSubsection} and \ref{VoronoiSummationSubsection}. One source of extra signs comes from  \eqref{defDnew}, so that the analogue of \eqref{deta} is 
 \begin{displaymath}
\begin{split}
& \mathcal{D}_{z, \eta}(\ell_1, \ell_2, h, N, M) \\
 &= \frac{1}{\Lambda} \sum_{\ell_1\ell_2 \mid c} w_0\left(\frac{c}{C}\right) \underset{d  \bmod{c}}{\left. \sum \right.^{\ast}} \sum_{n, m} \lambda_1(m)\lambda_2(n) e\left(\frac{d}{c}(\ell_1n \mp \ell_2m - h)\right)     W_{\eta  M}\left(\pm \frac{\ell_1 n -h}{ M}\right)V_{\pm z, \eta M}\left(\frac{\ell_2 m}{M}\right) 
\end{split} 
 \end{displaymath}
 The other source of extra signs are the two applications of Lemma \ref{vor1} in the situation of \eqref{firstvor} and \eqref{secondvor}. In \eqref{secondvor}, we encounter integral transforms of the shape
$$ W_{\eta M}^{\ast}\left(\frac{h\ell_1 n}{c^2}, \pm \frac{M\ell_1n}{c^2}\right)$$
where $$W^{\ast}_{\eta M}(z, w) =  \int_0^{\infty} W_{\eta M}(y) \mathcal{J}_{2it}^{\pm}(4\pi\sqrt{yw + z}) dy.$$
Here $w$ can be negative, but by \eqref{sizeh} we can guarantee $5|w| \leq z$, and we always have $z \geq (4C^2)^{-1}$. In particular, we can add a smooth redundant weight function $W_0( h\ell_1 nc^{-2}, \pm  M\ell_1n c^{-2})$ with $z_0 = (4C^2)^{-1}$ as in the remark after Corollary \ref{cor8} without changing the expression. 

Now Lemma \ref{analyzeW} applies to the relevant integral transform with $\mathcal{J}^+_{2it}$ in place of $J_{\kappa-1}$ (here we assume the Selberg eigenvalue conjecture, i.e.\ $t \in \mathbb{R}$, for convenience). For $\mathcal{J}^-_{2it}$ one can simply use the rapid decay of the Bessel-$K$-function to obtain a trivial decomposition of the type \eqref{decompW} with
\[ W_+(z, w) = W^{\ast}(z, w) e(-2\sqrt{z})= \int_0^{\infty} W(y) \mathcal{J}_{2it}^{-}(4\pi\sqrt{yw + z}) dy \, e(-2\sqrt{z}) \]
and $W_-(z, w) = 0$. This satisfies the stronger bound
 \begin{equation}\label{boundWpm2}
   z^i |w|^j \frac{\partial^i}{\partial z^i}    \frac{\partial^j}{\partial w^j} W_{\pm}(z, w) \begin{cases}
   = 0, & z  \geq C^{\varepsilon},\\
   \ll C^{\varepsilon(i+j)} , & \text{otherwise.}\end{cases}
 \end{equation}
 for any   $i, j \in \mathbb{N}_0$. 
 
Hence in the case of terms involving  $\mathcal{J}_{2it}^{+}$ in the application of Lemma~\ref{vor1} to \eqref{secondvor}, the ranges in \eqref{sizes} remain the same. In the case of terms involving $\mathcal{J}_{2it}^{-}$, we have even stronger conditions
\begin{equation}\label{sizes1}
\ell_1 n \leq \mathcal{N}_0^{-} := \frac{C^{2+\varepsilon}}{N}, \quad \ell_2m \leq \mathcal{M}_0
\end{equation}
from \eqref{boundWpm2}. At the end of subsection~\ref{VoronoiSummationSubsection}, we thus end up with the spectral analysis of terms involving six types of Kloosterman sums:
\def\memph#1{\ \hbox to .65in{\emph{#1}\hfill}}
\begin{itemize}
\item \memph{Case I:} $S(\ell_1 n - \ell_2m, h, c)$, $\ell_1 n > \ell_2 m$. This is the case of $\Sigma_+$.
\item \memph{Case II:} $S(\ell_1 n - \ell_2m, h, c)$, $\ell_1 n < \ell_2 m$. This is the case of $\Sigma_-$.
\item \memph{Case III:} $S(-\ell_1 n - \ell_2 m, h, c)$ under the size constraint  \eqref{sizes1}.
\item \memph{Case IV:}  $S(\ell_1 n + \ell_2 m, h, c)$.
\item \memph{Case V:} $S(-\ell_1 n + \ell_2 m, h, c)$, $\ell_1 n > \ell_2 m$ under the size constraint  \eqref{sizes1}.
\item \memph{Case VI:}  $S(-\ell_1 n + \ell_2 m, h, c)$, $\ell_1 n < \ell_2 m$ under the size constraint  \eqref{sizes1}.
\end{itemize}
Case IV is identical to Case I with minor sign changes. Cases III, V and VI are much simpler than Cases I and II because of the stronger size conditions \eqref{sizes1} (coming from the rapid decay of the Bessel $K$-function), but formally one can treat Case VI as Case I using the same sign Kuznetsov formula, and Cases III and V as Case II using the opposite sign formula. Note that $\mathcal{N}_0^- \leq \mathcal{M}_0$, so that in the notation of Sections \ref{SpectralDecompositionSection} and \ref{ShiftedSumsAverageSection} we automatically have $\mathcal{M}, \mathcal{K}, \mathcal{N} \leq \mathcal{M}_0$ if \eqref{sizes1} holds. \qed

\section{Proof of Theorem \ref{theorem-rudnicksound}}   
 
Let $V$ be a fixed smooth function that is 1 on $[0, 1]$ and vanishes on $[2, \infty)$. Let
\[ X := q^{1/1000}. \]
Define
\[ A(\chi) := \sum_{a, b}  \frac{\lambda_1(a)\lambda_2(b) (\chi(a) \bar{\chi}(b) + \bar{\chi}(a) \chi(b))}{\sqrt{ab}}V\left(\frac{ab}{X}\right). \]
 By the Cauchy-Schwarz inequality, we have
\begin{displaymath}
\begin{split}
&\Bigl|  \sumstar_{\chi \bmod{q}}  L(1/2, f_1 \otimes \chi) \overline{L(1/2, f_2 \otimes \chi)} A(\chi) \Bigr|^2  \leq \sumstar_{\chi \bmod{q}}  \bigl(L(1/2, f_1 \otimes \chi) \overline{L(1/2, f_2 \otimes \chi)}\bigr)^2  \sumstar_{\chi \bmod{q}}    A(\chi)^2.
 \end{split}
 \end{displaymath}
Note that both $A(\chi)$  and $L(1/2, f_1 \otimes \chi) \overline{L(1/2, f_2 \otimes \chi)}$ are real (cf.\ \eqref{afe1}), so that we do not need absolute values on the right hand side. We conclude
\begin{equation}\label{CS}
 \sumstar_{\chi \bmod{q}}  \Bigl(L(1/2, f_1 \otimes \chi) \overline{L(1/2, f_2 \otimes \chi)}\Bigr)^2 \geq \frac{|S_1|^2}{S_2},
\end{equation}
where
\[ S_1 := \sumstar_{\chi \bmod{q}}  L(1/2, f_1 \otimes \chi) \overline{L(1/2, f_2 \otimes \chi)} A(\chi), \quad S_2 :=   \sumstar_{\chi \bmod{q}}    A(\chi)^2. \]
Clearly,
\[ S_2 =  2\sum_{d \mid q} \phi(d) \mu(q/d)\Bigl( \sum_{\substack{a_1a_2 \equiv b_1b_2 \bmod{d}\\ (a_1a_2 b_1b_2, p) = 1}} + \sum_{\substack{a_1b_2 \equiv a_2b_1 \bmod{d}\\ (a_1a_2 b_1b_2, p) = 1}}\Bigr) \frac{\lambda_1(a_1)\lambda_1(a_2) \lambda_2(b_1)\lambda_2(b_2)}{\sqrt{a_1a_2b_1b_2}} V\left(\frac{a_1b_1}{X}\right) V\left(\frac{a_2b_2}{X}\right). \]
Here $ d \in \{q, q/p\}$, and the support of $V$ implies that the congruences are equalities, so that
\[ S_2 = 2 \psi(q)  \Bigl( \sum_{\substack{a_1a_2 =b_1b_2  \\ (a_1a_2 b_1b_2, p) = 1}} + \sum_{\substack{a_1b_2 = a_2b_1 \\ (a_1a_2 b_1b_2, p) = 1}}\Bigr) \frac{\lambda_1(a_1)\lambda_1(a_2) \lambda_2(b_1)\lambda_2(b_2)}{\sqrt{a_1a_2b_1b_2}} V\left(\frac{a_1b_1}{X}\right) V\left(\frac{a_2b_2}{X}\right) = S_{21} + S_{22}, \]
say. By Mellin inversion we have
\[ S_{21} = 2 \psi(q)  \int_{(1)} \int_{(1)} \widehat{V}(s)\widehat{V}(t) X^{s+t} \sum_{\substack{a_1a_2 =b_1b_2  \\ (a_1a_2 b_1b_2, p) = 1}}\frac{\lambda_1(a_1)\lambda_1(a_2) \lambda_2(b_1)\lambda_2(b_2)}{(a_1b_1)^{s+\frac{1}{2}} (a_2b_2)^{t+\frac{1}{2}}} \frac{ds \, dt}{(2\pi i)^2}. \] 
In $\Re s, \Re t > -1/10$, say, the double Dirichlet series can be expanded into an Euler product: 
 \begin{displaymath}
\begin{split}
\prod_{p \nmid \ell}\Bigl(1 &+ \frac{2 \lambda_1(\ell) \lambda_2(\ell) }{\ell^{1+s + t}} + \frac{ \lambda_1(\ell) \lambda_2(\ell) }{\ell^{1+2s}} + \frac{ \lambda_1(\ell) \lambda_2(\ell) }{\ell^{1+2t}}  + O\bigl(\frac{1}{\ell^{3/2}}\bigr)\Bigr)  \\
  & = L(1 + s + t, f_1 \times f_2)^2 L(1 + 2s, f_1 \times f_2) L(1 + 2t, f_1 \times f_2) H_{21}(s, t)
  \end{split}
  \end{displaymath}
with a holomorphic Euler product $H_{21}(s, t) $ that converges absolutely  in $\Re s, \Re t > -1/10$ and   is uniformly  bounded (from above and beyond) in $q$ in the same vertical strip. Shifting contours, we find that
\[ S_{21}= 2 \psi(q) L(1, f_1 \times f_2)^4H(0, 0) + O(\psi(q) X^{-1/10}). \]
Similarly, we have
\begin{displaymath}
\begin{split}
S_{22} =  2 \psi(q)  \int_{(1)} \int_{(1)}  &  L(1 + s + t, f_1 \times f_1) L(1 + s + t, f_2 \times f_2)  L(1 + 2s, f_1 \times f_2) L(1 + 2t, f_1 \times f_2) \\
& \times \widehat{V}(s)\widehat{V}(t) X^{s+t}H_{22}(s, t) \frac{ds \, dt}{(2\pi i)^2}.
\end{split}
\end{displaymath}
The integrand in this double integral has a double pole at $s+t=0$ and two simple poles at $s=0$ and $t=0$.  We first shift to $\Re s, \Re t = 1/20$. Then we shift to $\Re s = -1/10$, picking up two poles at $s=0$ at $s = -t$,  and then to $\Re t = -1/10$ picking up one pole at $t=0$. In this way we obtain   
 $S_{22} \asymp \psi(q) (\log X)^2 \asymp \psi(q) (\log q)^2$, and we conclude
\begin{equation}\label{ss2}
S_2 \ll \psi(q) (\log q)^2.
\end{equation} 

\bigskip

Next we turn to the analysis of $S_1$. Here we use the approximate functional equation \eqref{afe1} to write
\[ S_1 = 2\sum_{d \mid q} \phi(d) \mu(q/d)\Bigl( \sum_{\substack{a_1a_2 \equiv b_1b_2 \bmod{d}\\ (a_1a_2 b_1b_2, p) = 1}} + \sum_{\substack{a_1b_2 \equiv a_2b_1 \bmod{d}\\ (a_1a_2 b_1b_2, p) = 1}}\Bigr) \frac{\lambda_1(a_1)\lambda_1(a_2) \lambda_2(b_1)\lambda_2(b_2)}{\sqrt{a_1a_2b_1b_2}} W\left(\frac{a_1b_1}{q^2}\right) V\left(\frac{a_2b_2}{X}\right). \]
Note that this has (by design) the same shape as $S_2$, except that the range of summation of the $a_1, b_1$ variables is much longer. We decompose $S_1 = M_1 + E_1$ where $M_1$ represents the diagonal contributions and $E_1$ is the rest. By the same argument as before, we find that
\begin{equation}\label{M1}
M_1 \asymp\psi(q)  \log X \log q \asymp \psi(q) (\log q)^2.
\end{equation}
For the error term, we first estimate trivially (using \eqref{deligne} for convenience)
\[ E_1 \ll q \sum_{d \in \{q, q/p\}}  \sum_{\substack{a_2, b_2 \ll X\\(a_2b_2, p) = 1}} \frac{1}{(a_2b_2)^{1/2 - \varepsilon}} \Bigl| \Bigl(\sum_{\substack{a_1a_2 \equiv b_1b_2 \bmod{d}\\ (a_1  b_1, p) = 1\\ a_1a_2 \not= b_1b_2}} + \sum_{\substack{a_1b_2 \equiv a_2b_1 \bmod{d}\\ (a_1  b_1 , p) = 1\\ a_1b_2 \not= a_2b_1}}\Bigr) \frac{\lambda_1(a_1)\lambda_2(b_1)  }{\sqrt{a_1 b_1 }} W\left(\frac{a_1b_1}{q^2}\right) \Bigr|. \]
We show in detail how to treat the first term, since the second one is very similar. Injecting a smooth partition of unity and arguing as in the beginning of Section \ref{off}, we need to estimate
\[ E(A, B) := \frac{q}{\sqrt{AB}} \sum_{d \in \{q, q/p\}}  \sum_{\substack{a_2, b_2 \ll X\\(a_2b_2, p) = 1}} \frac{1}{(a_2b_2)^{1/2 - \varepsilon}} \Bigl|\sum_{\substack{a_1a_2 \equiv b_1b_2 \bmod{d}\\ (a_1  b_1, p) = 1\\ a_1a_2 \not= b_1b_2}} \lambda_1(a_1)\lambda_2(b_1) V_1\left(\frac{a_1}{A}\right) V_2\left(\frac{b_1}{B}\right)\Bigr| \]
for smooth compactly supported weight functions $V_1, V_2$ satisfying \eqref{derivative}, and $AB \ll q^{2+\varepsilon}$. Without loss of generality, consider the case $B \geq A$. We estimate $E(A,B)$ in two ways. First, we remove the coprimality condition by M\"obius inversion, getting
\begin{displaymath}
\begin{split}
E(A, B) &\ll  \frac{q}{\sqrt{AB}} \sum_{d \in \{q, q/p\}}  \sum_{\substack{a_2, b_2 \ll X\\(a_2b_2, p) = 1}}  \sum_{f \mid g \mid p}  \frac{| \lambda_1(g/f)|}{(a_2b_2)^{1/2 - \varepsilon}} \Bigl|  \sum_{\substack{fga_1a_2 \equiv b_1b_2 \bmod{d}\\  fga_1a_2 \not= b_1b_2}}   \lambda_1(a_1)\lambda_2(b_1)   V_1\left(\frac{fga_1}{A}\right) V_2\left(\frac{b_1}{B}\right)\Bigr|  \\
& \ll \frac{q}{\sqrt{AB}} \sum_{d \in \{q, q/p\}}  \sum_{\substack{a_2, b_2 \ll X\\(a_2b_2, p) = 1}}  \sum_{f \mid g \mid p}  \frac{1}{(a_2b_2)^{1/2 - \varepsilon}}  |\mathcal{S}(fga_2, b_2, d, Aa_2, Bb_2)|,
\end{split}
\end{displaymath}
using the notation \eqref{average}. Provided $A \gg BX$ or $B \gg AX$ with a sufficiently large implied constant, we find  by Proposition \ref{prop3} that 
\begin{equation}\label{E1}
\begin{split}
E(A, B) &\ll \frac{q^{1+\varepsilon}X}{\sqrt{AB}} \left(\frac{BX}{q^{1/2}} + \frac{B^{5/4} A^{1/4}X^{3/2}}{q} + \frac{B^{3/4} A^{1/4}X}{q^{1/4}} + \frac{BA^{1/2}X^{3/2}}{q^{3/4}}\right)\\
& \ll \frac{q^{1+\varepsilon}}{\sqrt{AB}} X^{5/2}   \left(\frac{ A^{1/4} B^{3/4}}{q^{1/4}} + \frac{B}{q^{1/2}}\right)
\end{split}
\end{equation}
since  $AB \leq q^{2+\varepsilon}$.   If $A \ll BX \ll A X^2$, we have the individual bound
\begin{equation}\label{E2}
E(A, B) \ll \frac{q^{1+\varepsilon}X}{\sqrt{AB}}  X^{5/2} B^{1/2 + \theta}.
\end{equation}
Note that, up to powers of $X$, this is comparable to \eqref{shiftedbound} and \eqref{auxbound} with $N$ and $M$ replaced with $B$ and $A$ respectively. 

Alternatively, we write 
\[ E(A, B) =  \frac{q}{\sqrt{AB}}\sum_{d \in \{q, q/p\}}  \sum_{\substack{a_2, b_2 \ll X\\(a_2b_2, p) = 1}} \frac{1}{(a_2b_2)^{1/2 - \varepsilon}} \Bigl|\sum_{p \nmid a_1} \lambda_1(a_1) V_1\left(\frac{a_1}{A}\right) \sum_{b_1 \equiv a_2\overline{b_2} a_1 \bmod{d}} \lambda_2(b_1) V_2\left(\frac{b_1}{B}\right)\Bigr|. \]
Arguing as in Section \ref{HeckeResidueClasses}, the innermost sum equals
\[ \frac{1}{d} \sum_{r \mid d} \frac{B}{r} \sum_{b} S(a_2 \overline{b_2} a_1, b, r) \lambda_2(b) \mathring{V}_2\left(\frac{bB}{r^2}\right), \]
 and hence, by an application of the Cauchy-Schwarz inequality and \eqref{deligne},
 \[ E(A, B) \ll q \sum_{d \in \{q, q/p\}}  \sum_{\substack{a_2, b_2 \ll X\\(a_2b_2, p) = 1}} \frac{1}{(a_2b_2)^{1/2 - \varepsilon}} \sum_{r \mid d} \frac{B^{1/2}}{dr} \Bigl( \sum_{n_1, n_2 \ll r^2q^{\varepsilon}/B} |\mathcal{S}_A(a_2 \overline{b_2}  n_1, a_2 \overline{b_2} n_2, r)|\Bigr)^{1/2}, \]
 where, as in \eqref{DefMathcalSM}, we write
\[ \mathcal{S}_A(a_2 \overline{b_2}  n_1, a_2 \overline{b_2} n_2, r) = \sum_{\substack{m \asymp A\\ (m, p) = 1}} S(m, a_2 \overline{b_2}  n_1,  r) S(m, a_2 \overline{b_2} n_2, r). \]
By Theorem \ref{klooster-short}, we conclude as in the proof of Proposition \ref{bound1} that
\begin{equation}\label{E3}
E(A, B) \ll \frac{q^{1+\varepsilon}}{B^{1/2}}X\left(A^{1/4} q^{7/12} + A^{1/2} q^{5/12} + q^{2/3}\right).
\end{equation}
Combining \eqref{E1}, \eqref{E2}, and \eqref{E3}, we conclude as in Section \ref{opti} that
\[ E(A, B) \ll q^{65/66 + \varepsilon} X^{5/2}. \]
Together with \eqref{M1}, this estimate shows that
\begin{equation}
\label{EstimateS1}
S_1 \gg \psi(q) (\log q)^2.
\end{equation}
Combining \eqref{CS}, \eqref{ss2}, and \eqref{EstimateS1}, we complete the proof of Theorem~\ref{theorem-rudnicksound}. \qed

\end{document}